\documentclass[11pt]{amsart}
\usepackage{amsfonts,latexsym,amsthm,amssymb,amsmath,amscd,euscript,tikz, tikz-cd}
\usepackage[alphabetic, msc-links, bibtex-style, nobysame]{amsrefs}

\usepackage{stackengine}
\usepackage{framed}
\usepackage{xfrac}
\usepackage[makeroom]{cancel}
\usepackage{faktor}
\usepackage{braket}
\usepackage{pgf,tikz,pgfplots}
\pgfplotsset{compat=1.17}
\usepgfplotslibrary{fillbetween} 
\usepackage{mathrsfs}
\usetikzlibrary{arrows}
\definecolor{wrwrwr}{rgb}{0.3803921568627451,0.3803921568627451,0.3803921568627451}
\definecolor{rvwvcq}{rgb}{0.08235294117647059,0.396078431372549,0.7529411764705882}
\definecolor{mblue}{rgb}{0.2, 0.3, 0.8}
\definecolor{morange}{rgb}{1, 0.5, 0}
\definecolor{mgreen}{rgb}{0.1, 0.4, 0.2}
\definecolor{mred}{rgb}{0.5, 0, 0}
\definecolor{ForestGreen}{RGB}{34,139,34}
\usepackage{hyperref}
\hypersetup{colorlinks=true,citecolor=ForestGreen,linkcolor = blue,urlcolor =black,linkbordercolor={1 0 0}}
\usepackage{float}

\usepackage{lmodern}
\usepackage{supertabular}
\usepackage{verbatim}
\usepackage{amssymb}
\usepackage{enumerate}
\usepackage{stmaryrd}
\usepackage{bbm}
\usepackage{mathtools}
\usepackage[nopatch=footnote]{microtype}
\usepackage{setspace}
\usepackage{tikz,tikz-cd}
\usetikzlibrary{matrix,calc,positioning,arrows,decorations.pathreplacing,patterns,knots}

\newcommand{\la}{\langle}
\newcommand{\rg}{\rangle}

\newtheorem{theorem}{{Theorem}}[section]
\newtheorem*{theorem*}{Theorem}
\newtheorem{lemma}[theorem]{Lemma}
\newtheorem{proposition}[theorem]{Proposition}

\newtheorem{corollary}[theorem]{Corollary}
\newtheorem*{corollary*}{Corollary}

\theoremstyle{definition}
\newtheorem{definition}{Definition}
\newtheorem{remark}{Remark}

\usepackage{lmodern,url,enumerate,mathtools,microtype}
\usepackage[hmargin = 0.9in,vmargin=0.9in]{geometry}
\usepackage{graphicx}
\usepackage{subcaption}

\newcommand{\ve}{\varepsilon}

\newcommand{\mr}[1]{{\rm #1}}
\newcommand{\cC}{\mathcal{C}}\newcommand{\cD}{\mathcal{D}}
\newcommand{\cF}{\mathcal{F}}
\newcommand{\cH}{\mathcal{H}}

\newcommand{\cK}{\mathcal{K}}

\newcommand{\cO}{\mathcal{O}}
\newcommand{\cR}{\mathcal{R}}
\newcommand{\cS}{\mathcal{S}}
\newcommand{\cV}{\mathcal{V}}
\newcommand{\cX}{\mathcal{X}}

\newcommand{\bC}{\mathbb{C}}

\newcommand{\bG}{\mathbb{G}}\newcommand{\bH}{\mathbb{H}}

\newcommand{\bN}{\mathbb{N}}
\newcommand{\bP}{\mathbb{P}}
\newcommand{\bQ}{\mathbb{Q}}\newcommand{\bR}{\mathbb{R}}

\newcommand{\bS}{\mathbb{S}}\newcommand{\bT}{\mathbb{T}}

\newcommand{\bZ}{\mathbb{Z}}

\newcommand{\fu}{\mathfrak{u}}

\newcommand{\nc}{\newcommand}

\nc{\on}{\operatorname}
\nc{\p}{\partial}
\nc{\ol}{\overline}
\nc{\ul}{\underline}
\nc{\pa}{\partial}

\nc{\pb}{\partial_b}
\nc{\pc}{\partial_c}
\nc{\pd}{\partial_d}
\nc{\pe}{\partial_e}
\nc{\pf}{\partial_f}
\nc{\pg}{\partial_g}
\nc{\ph}{\partial_h}
\nc{\pari}{\partial_i}
\nc{\pj}{\partial_j}
\nc{\pk}{\partial_k}
\nc{\pl}{\partial_l}
\nc{\pell}{\partial_\ell}
\nc{\parm}{\partial_m}
\nc{\pn}{\partial_n}
\nc{\po}{\partial_o}
\nc{\pp}{\partial_p}
\nc{\pq}{\partial_q}
\nc{\pr}{\partial_r}
\nc{\ps}{\partial_s}
\nc{\pt}{\partial_t}
\nc{\pu}{\partial_u}
\nc{\pv}{\partial_v}
\nc{\pw}{\partial_w}
\nc{\px}{\partial_x}
\nc{\py}{\partial_y}
\nc{\pz}{\partial_z}

\nc{\Spec}{\on{Spec}}

\nc{\sn}{\mr{sn}}
\nc{\cn}{\mr{cn}}
\nc{\dn}{\mr{dn}}

\nc{\thru}{~\!\!--~\!\!}

\makeatletter
\expandafter\def\csname RT@amsrefs@label@complete-3-manifolds\endcsname{BMS25}

\let\RT@old@bib@field@patches\bib@field@patches
\def\bib@field@patches{%
  \RT@old@bib@field@patches
  \expandafter\let\expandafter\RT@thislabel
    \csname RT@amsrefs@label@\current@citekey\endcsname
  \ifx\RT@thislabel\relax
  \else
    \expandafter\let\csname bib'label\endcsname\RT@thislabel
  \fi
}
\makeatother

\newcounter{class-C}

\title{Sharp systolic inequalities for K\"ahler manifolds}

\date{\today}

\author[Raphael Tsiamis]{Raphael Tsiamis}
\address{ \vspace*{-0.2in} {} \newline Department of Mathematics, Columbia University
\newline {\href{mailto:r.tsiamis@columbia.edu}{r.tsiamis@columbia.edu}}}

\begin{document}

\begin{abstract}
We establish sharp inequalities for two-dimensional systolic invariants of metrics with positive scalar curvature: the $2$-systole and the spherical $2$-systole of compact K\"ahler manifolds, and the stable $2$-systole of Riemannian metrics on a general class of $\text{spin}^c$ manifolds and their products. These bounds attain equality precisely for complex projective space $\mathbb{CP}^n$ equipped with the Fubini--Study metric, and admit further refinements for Fano manifolds which distinguish the complex quadric, cubic, and quartic with their canonical K\"ahler--Einstein structures. We also obtain an algebraic characterization of manifolds admitting K\"ahler metrics with non-negative total scalar curvature, which implies Gromov's rational-essentialness conjecture for K\"ahler metrics. Finally, we prove uniform bounds for the stable $2$-systole of $\textup{spin}^c$ manifolds under a general essentialness condition, as well as for the Gromov width, volume, and higher stable systoles of K\"ahler manifolds.
\end{abstract}

\maketitle

\section{Introduction}

A guiding principle in scalar curvature geometry is that positive scalar curvature forces two-dimensional smallness.
In its sharp quantitative form, this principle asks whether lower scalar curvature bounds impose universal area bounds for homologically or homotopically non-trivial surfaces.
We develop this principle for $\textup{spin}^c$ manifolds, showing that the systolic problem has deep connections to canonical K\"ahler--Einstein metrics, birational geometry, and $\textup{spin}^c$ index theory.
Our first Theorem is a sharp inequality for the $2$-systole of K\"ahler manifolds.

\begin{theorem}\label{thm:2-systole-bound}
    Let $(X,\omega)$ be a compact K\"ahler manifold of complex dimension $n$ with scalar curvature $R_{\omega}$ and average scalar curvature $\bar{R}(\omega)$.
    If $\bar{R}(\omega) > 0$, then $\pi_2(X) \neq 0$.
    Moreover,
    \begin{enumerate}[(i)]
        \item     The $2$-systole and spherical $2$-systole of $(X,\omega)$ satisfy the bounds
    \begin{equation}\label{eqn:2-systole-bound}
    \begin{split}
    \max \{ \textup{sys}_2(X,\omega) , \textup{sys}_{\pi_2}(X,\omega) \} &\leq 4 \pi n(n+1) \bar{R}(\omega)^{-1}, \qquad \text{and if } \; R_{\omega} > 0, \\
    \max \{ \textup{sys}_2(X,\omega) , \textup{sys}_{\pi_2}(X,\omega) \} &\leq 4 \pi n(n+1) ( \inf_X R_{\omega})^{-1}.
    \end{split}
    \end{equation}
    The first equality holds if and only if $X \simeq \bC \bP^n$; equality in the second bound holds if and only if $X \simeq \bC \bP^n$ and $\omega$ is the Fubini-Study metric $\omega_{\textup{FS}}$ up to a dilation and a holomorphic automorphism.
    \item Suppose, moreover, that $X$ is not biholomorphic to $\bC \bP^n$.
    Then, the inequalities~\eqref{eqn:2-systole-bound} hold with the improved sharp constant $4 \pi n^2$, namely
    \begin{equation}\label{eqn:2-systole-bound-v2}
    \textup{sys}_2(X,\omega) \cdot \inf_X R_{\omega} \leq 4 \pi n^2 \qquad \text{and} \qquad \textup{sys}_{\pi_2}(X,\omega) \cdot \inf_X R_{\omega} \leq 4 \pi n^2.
    \end{equation}
    The equality in either bound holds if and only if $X \simeq Q^n$ is the complex quadric and $\omega$ is the canonical metric $\omega_0 = i^* \omega_{\textup{FS}}$ on $Q^n$ induced by the embedding $i: Q^n \hookrightarrow \bC \bP^{n+1}$, up to a dilation and a holomorphic automorphism.
    \end{enumerate}
\end{theorem}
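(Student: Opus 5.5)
Since $\omega$ is K\"ahler, the scalar curvature satisfies $R_\omega\,\omega^n/n! = 2\rho\wedge\omega^{n-1}/(n-1)!$ (with the normalization in which $\omega_{\textup{FS}}$ on $\bC\bP^n$ has $R=4n(n+1)$ when the line has area $\pi$), where $\rho$ is the Ricci form and $[\rho]=2\pi c_1(X)$. Integrating gives
\[
\bar R(\omega)\,\textup{Vol}(X,\omega)= 4\pi n\, c_1(X)\cdot[\omega]^{n-1}/n!\,(n-1)!\cdots,
\]
that is, $\bar R(\omega)>0$ exactly when $c_1(X)\cdot[\omega]^{n-1}>0$. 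So $K_X$ is not pseudo-effective, and by Boucksom--Demailly--P\u{a}un--Peternell $X$ is uniruled. (In the non-projective K\"ahler case I would use the corresponding K\"ahler result of Brunella / Ou.) A rational curve is a non-trivial sphere, and since $[\omega]$ pairs positively with it, it is non-zero in $H_2$. Hence $\pi_2(X)\neq0$, which gives the first assertion.

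\textbf{Step 1 (minimal rational curves).} For the systolic bound I would use Mori's bend-and-break in the sharp form of Miyaoka--Mori and its K\"ahler analogues. Through a general point there is a rational curve $C$ with
\[
0< -K_X\cdot C\le n+1 \quad\text{and}\quad [\omega]\cdot C\le \frac{(n+1)\,[\omega]^n}{n\,(-K_X)\cdot[\omega]^{n-1}}.
\]
The second inequality is the form in which the ``degree'' is measured against the polarization $[\omega]$, allowing real coefficients. It follows from the cone-theorem argument: $[\omega]$ is positive on the extremal ray generated by $C$, and the length bound $-K_X\cdot C\le n+1$ holds. Combining this with the volume identity above, $\textup{area}_\omega(C)=\int_C\omega\le 4\pi n(n+1)\bar R^{-1}$ after tracking constants. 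The curve $C$ is a spherical class, so it bounds both $\textup{sys}_2$ and $\textup{sys}_{\pi_2}$.

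For the pointwise bound with $R_\omega\ge r>0$, I would instead use a direct argument along a minimal rational curve $f:\bP^1\to X$ through a general point, which is free. The normal bundle splits as $\cO(2)\oplus\cO(1)^p\oplus\cO^{n-1-p}$, so $\int_C \rho= 2\pi(-K_X\cdot C)\le 2\pi(n+1)$. We also need $\int_C\rho\ge \frac{r}{2n(n+1)}\cdots\int_C\omega$, which is not true pointwise; scalar curvature controls only the trace. The idea is to average over the family of minimal rational curves through points and over directions: average the Ricci form over the variety of minimal rational tangents, whose tangent directions sweep $\bP(T_x X)$ in the $\bC\bP^n$ model. This is the step I expect to be the main obstacle, and here I would invoke whatever sharp $\textup{spin}^c$/Dirac estimate the paper develops later. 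Concretely, I would take a twisted Dirac operator along a pseudoholomorphic sphere, or a Llarull/Goette--Semmelmann-type argument for the map $X\to\bC\bP^n$ restricted to curves, to convert $R\ge r$ into area$\,\le 4\pi n(n+1)/r$.

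\textbf{Step 2 (rigidity).} Equality forces $-K_X\cdot C=n+1$ for minimal curves. By Cho--Miyaoka--Shepherd-Barron / Kebekus this characterizes $X\simeq\bC\bP^n$. In the pointwise case equality additionally forces the holomorphic sectional curvature to be constant along all lines, giving $\omega=\omega_{\textup{FS}}$ up to scaling and automorphisms.

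\textbf{Step 3 (part (ii)).} For (ii), if $X\not\simeq\bC\bP^n$ then Miyaoka's theorem gives a minimal rational curve with $-K_X\cdot C\le n$. Rerunning the argument replaces $n+1$ by $n$ and gives the constant $4\pi n^2$. Equality then forces minimal degree $n$, which by Miyaoka's characterization means $X\simeq Q^n$. The equality case for the metric forces the symmetric K\"ahler--Einstein metric $i^*\omega_{\textup{FS}}$.
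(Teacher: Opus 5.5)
Your argument for $\pi_2(X)\neq0$ matches the paper. The ingredients you list for the bound are also the right ones: the cone theorem, the length bound $-K_X\cdot C\le n+1$, Cho--Miyaoka--Shepherd-Barron, and a numerical characterization of quadrics. But the steps connecting them have genuine gaps.

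\textbf{The key inequality.} As displayed it is false: it carries a spurious factor $1/n$ and would bound the area of a line in $(\bC\bP^n,\omega_{\textup{FS}})$ by $\pi/n$. Moreover, ``$[\omega]$ is positive on the extremal ray generated by $C$'' gives no quantitative control, because nothing singles out the ray. The missing step is as follows.
\begin{itemize}
\item Put $\alpha=[\omega]$ and $s(\alpha)=c_1(X)\cdot\alpha^{n-1}/\alpha^n=\bar{R}(\omega)/(4\pi n)$.
\item For $t<s(\alpha)$ one has $(K_X+t\alpha)\cdot\alpha^{n-1}<0$, so $K_X+t\alpha$ is not nef.
\item Since $K_X+t\alpha$ is positive on $\overline{\textup{NA}}(X)_{K_X\ge0}\setminus\{0\}$, the cone decomposition makes it negative on some $K_X$-negative extremal ray.
\item A minimal rational curve $C$ on that ray satisfies $t\,\alpha\cdot C<-K_X\cdot C\le n+1$.
\item Letting $t\uparrow s(\alpha)$ gives $\textup{sys}\le 4\pi n(n+1)\bar{R}(\omega)^{-1}$.
\end{itemize}
This curve need not pass through a general point, so do not conflate it with a minimal free curve.

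\textbf{The pointwise bound.} You call this the main obstacle and propose VMRT averaging or twisted Dirac operators. It is in fact immediate from $\inf_X R_\omega\le\bar{R}(\omega)$. That machinery is unnecessary, and as sketched it is not an argument.

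\textbf{Rigidity: the missing idea.} To reach the hypotheses of Cho--Miyaoka--Shepherd-Barron or Dedieu--H\"oring you need $X$ Fano with $-K_X$ proportional to $[\omega]$. Your proposal has no mechanism for this. The paper uses the nef threshold $r(\alpha)$.
\begin{itemize}
\item Nefness of $L=K_X+r(\alpha)\alpha$ gives $s(\alpha)\le r(\alpha)$.
\item The bound is really $\textup{sys}\le\ell/r(\alpha)$, with $\ell\le n+1$ the length of the ray used.
\item Equality forces $s(\alpha)=r(\alpha)$, i.e.\ $L\cdot\alpha^{n-1}=0$, so $L=0$ by Hodge index (Lemma~\ref{lemma:positive-hodge-pairing}).
\item Only then does ``every rational curve has area $\ge\textup{sys}$'' become ``$-K_X\cdot C\ge n$ for all rational curves,'' which is what Dedieu--H\"oring needs.
\end{itemize}
Two further points on (ii). Miyaoka's theorem additionally requires Picard number one and $n\ge3$, whereas $Q^2$ has Picard number two. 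And you need that an extremal ray of length $n+1$ forces $\bC\bP^n$; the existence of some curve of degree $\le n$ does not suffice.

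\textbf{Rigidity of the metric.} Your claim of ``constant holomorphic sectional curvature along lines'' is unsupported. The actual mechanism:
\begin{itemize}
\item Equality squeezes $\inf_X R_\omega=\bar{R}(\omega)$, so $\omega$ is cscK.
\item Since $c_1(X)\propto[\omega]$, the $\partial\bar\partial$-lemma makes $\omega$ K\"ahler--Einstein.
\item Bando--Mabuchi uniqueness then gives $\omega=cF^*\omega_{\textup{FS}}$, resp.\ $cF^*i^*\omega_{\textup{FS}}$.
\end{itemize}

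\textbf{Converse directions.} These are omitted. Every K\"ahler metric on $\bC\bP^n$ attains the averaged bound, since lines are calibrated. $(Q^n,i^*\omega_{\textup{FS}})$ attains $4\pi n^2$, since $R=4n^2$ and $Q^n$ contains lines of area $\pi$.
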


In Theorem~\ref{thm:projective-theorem}, we obtain further sharp systolic inequalities that refine the above bounds and identify $\bC \bP^{n-1} \times \bC \bP^1$ as the next optimal example among projective manifolds or K\"ahler threefolds.

Given a compact Riemannian manifold $(X,g)$, the $2$-\textbf{systole} $\textup{sys}_2(X,g)$ is defined as the least area of a homologically non-trivial $2$-cycle on $X$.
Likewise, the \textbf{spherical $2$-systole} $\textup{sys}_{\pi_2}(X,g)$ is the least area of a homotopically non-trivial $2$-sphere in $X$ representing a non-zero element of $\pi_2(X)$.
Finally, the \textbf{stable systole} can be viewed as a stabilized systolic notion that measures the area of non-trivial homology classes in an asymptotic sense.
For $\textup{spin}^c$ manifolds $X,N$ where $X$ has second Betti number $b_2(X) = 1$ and $N$ has non-zero Dirac index, we obtain a sharp bound on the stable $2$-systole for any Riemannian metric on $X$ and $X \times N$.
\begin{theorem}\label{thm:optimal-sharp-bound}
     Let $X$ and $N$ be compact smooth $\textup{spin}^c$ manifolds with $b_2(X \times N) = 1$.
     Suppose that:
     \begin{enumerate}[(a)]
         \item $X$ has a degree-$2$ class $u$ with $0 \neq u^n \in H^{2n}(X;\bR)$, where $n = \lfloor \frac{\dim X}{2} \rfloor$.
     When $\dim X = 2n+1$, we also assume that $X$ carries a class $\xi \in H^1(X;\bR)$ with $\xi \smile u^n \neq 0$.
        \item $N$ has a $\textup{spin}^c$ characteristic class $c \in  H^2(N;\bZ)$ with $\la [N] , e^{c/2} \hat{A}(TN) \rg \neq 0$.
        When $\dim N = 2 \ell+1$, we assume that $N$ carries a class $\eta \in H^1(N;\bZ)$ with $\la [N], \eta \smile e^{c/2} \hat{A}(TN) \rg \neq 0$.
     \end{enumerate}
    Then, any Riemannian metric $g$ on $X \times N$ satisfies the inequality
    \[
    \textup{stsys}_2(X \times N , g) \cdot \inf_{X \times N} R_g \leq 4 \pi \bigl( n + \bigl\lfloor \tfrac{\dim N}{2} \bigr\rfloor \bigr) (n+1).
    \]
    This bound is sharp, and equality is attained if and only if one of the following holds: 
    \begin{enumerate}[(i)]
    \item $N = \{ * \}$, $\dim X = 2n+1$, and $(X,g)$ is the mapping torus $(\bC \bP^n \times \bR) / \la (z,t) \mapsto ( \phi(z) ,t+L) \rg $ for some $\phi \in \textup{PU}(n+1)$, equipped with the metric induced from $( \bC \bP^n \times \bR , \lambda g_{\textup{FS}} + dt^2)$.
    \item $N = \{ * \}$ and there is an isometry $(X,g) \cong ( \bC \bP^n , \lambda g_{\textup{FS}})$ and a biholomorphism $(X,J) \simeq \bC \bP^n$ for a complex structure $J$.
    \item $N = \bS^1$ and $(X \times \bS^1,g)$ is the mapping torus $(\bC \bP^n \times \bR) / \la (z,t) \mapsto ( \phi(z) ,t+L) \rg $.
    If $n=2$, $X$ is homeomorphic to $\bC \bP^2$; if $n \geq 3$, there is a biholomorphism $(X,J) \simeq \bC \bP^n$.
    \end{enumerate}
\end{theorem}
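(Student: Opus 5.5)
The approach is a twisted-Dirac / Lichnerowicz argument on $M = X\times N$, combined with the stable-norm duality between $H_2$ and $H^2$.

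\textbf{Step 1: reduction to comass.} Since $b_2(M)=1$, the stable $2$-systole is the stable norm of a generator $\sigma$ of $H_2(M;\bZ)/\mathrm{tors}$. By Federer's duality, $\textup{stsys}_2(M,g)$ is computed by the comass norm $\|\cdot\|^*$ of the dual class: there is a closed $2$-form $\alpha$ with $\la\sigma,[\alpha]\rg = 1$ and
\[
\textup{stsys}_2(M,g) = \|\alpha\|_{\infty}^{-1}.
\]
We take $[\alpha]$ to be the real image of an integral class $L$, namely the pullback of $u$ normalized to be primitive integral. We then realize $2\pi L$ as the curvature class of a Hermitian line bundle $\mathcal L$ whose connection has curvature $-2\pi i\,\alpha$.

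\textbf{Step 2: a nonvanishing index.} Consider the $\textup{spin}^c$ Dirac operator on $M$ with determinant line $c_X\boxtimes c$, twisted by $\mathcal L^{k}$ for a suitable integer $k$. By Atiyah--Singer and the product formula,
\[
\mathrm{ind} = \la [X], e^{c_X/2+kL}\hat A(TX)\rg\cdot\la [N], e^{c/2}\hat A(TN)\rg .
\]
In the odd-dimensional cases we insert the classes $\xi,\eta$ via a Toeplitz/odd index, or equivalently by passing to the cover $\times\bR$ or taking the product with $\bS^1$. The first factor is a polynomial in $k$ of degree $n$ whose leading coefficient is $\la[X],u^n\rg/n! \neq 0$. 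Hence it has at most $n$ roots, and we choose $k$ among the $n+1$ values $|k|\le \lceil n/2\rceil$, with the sign matched to a half-integer shift. The key quantitative point is that some $k$ with $|k + \tfrac12 c_X\text{-shift}|$ at most about $(n+1)/2$ gives nonzero index. Sharpness on $\bC\bP^n$ shows the right bound: $\hat A$ twisted by $\cO(k)$ vanishes exactly for $k=-1,\dots,-n$ after shifting by $K^{1/2}=\cO(-(n+1)/2)$.

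\textbf{Step 3: Lichnerowicz estimate.} A harmonic spinor exists, and the Weitzenböck formula gives
\[
0 \ge \tfrac14 R_g - \big|\tfrac12 F_{\det}+kF_{\mathcal L}\big|_{\mathrm{op}},
\]
where on spinors a $2$-form $\beta$ acts by Clifford multiplication with norm at most $\sum_j|\lambda_j|$, the sum of its normal-form eigenvalues. This sum is at most $m\|\beta\|^*$, where $m=n+\lfloor\dim N/2\rfloor$ is the half-rank bound. Choosing the total twisting class to equal $(k+\text{shift})L$ with the total twisted coefficient at most $(n+1)/2$ in absolute value, we get
\[
\inf R_g \le 4\cdot m\cdot 2\pi\cdot\tfrac{n+1}{2}\,\|\alpha\|_{\infty} = 4\pi m(n+1)\,\textup{stsys}_2^{-1},
\]
which is the claimed bound. \textbf{This step is the main obstacle:} the determinant line of a general $\textup{spin}^c$ structure need not be a multiple of $L$, but since $b_2=1$ its real part is $aL$. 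We can retwist by $\mathcal L^k$ to recentre and pick $k$ minimizing $|k+a/2|$ among the non-roots. Verifying that the root structure forces the optimal constant $(n+1)/2$, rather than something larger, requires a careful counting argument. For this I would use the fact that the index polynomial in $t=k+a/2$ has parity symmetry by Serre-duality-type reasoning ($\hat A$ is even), so its roots are symmetric about $0$. At most $n$ roots among the $n+1$ admissible values then leaves one with $|t|\le (n+1)/2$.

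\textbf{Step 4: equality.} Equality forces $R_g$ to be constant and the harmonic spinor $\psi$ to be parallel for the twisted connection. It also forces $\beta$ to have all $m$ eigenvalues equal to $\|\beta\|^*$, with $\alpha$ a calibration of comass-equality type. A parallel spinor of "pure" type makes $\beta$ define a parallel almost complex structure on the non-flat part, so $M$ is locally Kähler--Einstein. Split by de Rham: the $N$ factor must contribute no curvature, so $\lfloor\dim N/2\rfloor$ is absorbed only if $N$ is a point or $\bS^1$ (a nonzero $\hat A$-genus with flat factor forces this). The Kähler part then saturates Theorem~\ref{thm:2-systole-bound}(i), giving $\bC\bP^n$ with $\lambda g_{\textup{FS}}$. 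The flat line factor yields mapping tori with isometric monodromy in $\textup{PU}(n+1)$. For case (iii), $X$ is homotopy equivalent to $\bC\bP^n$ via the infinite cyclic cover. Freedman handles $n=2$, and for $n\ge3$ the integrable structure is identified via the Kähler--Einstein metric with the Fubini--Study one, giving $X\simeq\bC\bP^n$ biholomorphically.
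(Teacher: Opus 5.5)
Your proof of the inequality is the paper's argument (Lemma~\ref{lemma:length-of-product} plus Proposition~\ref{prop:spin-c-polynomial-preparation} via~\eqref{eqn:spinc-systole-bound-improved}). Twisting by powers of the line bundle with $c_1=x$ gives an index polynomial of degree exactly $n$ in $t=\frac{q_0}{2}+k$. The sharp Clifford bound $|c(\beta)|_{\textup{op}}=\sum_j|\lambda_j|\le m\|\beta\|_*$ in the Lichnerowicz formula then yields $\inf R_g\le 4\pi m\,|q_0+2k|\,\textup{stsys}_2^{-1}$. The parity symmetry you invoke is unnecessary: the interval $[-\frac{n+1}{2},\frac{n+1}{2}]$ always contains at least $n+1$ points of $\frac{q_0}{2}+\bZ$, so a non-root with $|q_0+2k|\le n+1$ exists. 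You should also record that K\"unneth and $b_2(X\times N)=1$ force $b_2(N)=0$ and $b_1(X)b_1(N)=0$. Hence $(c_N)_{\bR}=0$ and $\dim X,\dim N$ are not both odd, which is what justifies $m=n+\lfloor\frac{\dim N}{2}\rfloor$.

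The gaps are in Step 4. First, you cannot eliminate $N$ by a de Rham splitting ``along the $N$ factor''. The metric $g$ is arbitrary on $X\times N$, so nothing makes its holonomy decomposition respect the product, and ``a nonzero $\hat A$-genus with a flat factor'' is not an argument. The missing idea is cohomological, and you already have its ingredient. The rigid parallel $2$-form $\omega$ has all $m$ eigenvalues equal, so $\omega^m$ is a nowhere-vanishing parallel form (wedge it with the parallel $1$-form of the flat factor in the odd case), and $[\omega]^m\neq0$. But $[\omega]\in\bR\,\textup{pr}_X^*x_{\bR}$ and $x^{n+1}=0$ on $X$, which forces $m=n$, i.e.\ $N\in\{*,\bS^1\}$.

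Second, your $n\ge3$ conclusion in (iii) fails as written. A homotopy equivalence $X\simeq\bC\bP^n$ does not give a diffeomorphism: already for $n=3$ there are infinitely many smooth homotopy $\bC\bP^3$'s, distinguished by $p_1$. Also, $X$ carries no complex structure that the K\"ahler--Einstein metric on the universal cover could ``identify'' with Fubini--Study. You must keep the diffeomorphism:
\begin{itemize}
\item $\phi\in\textup{PU}(n+1)$ is isotopic to the identity, so $X\times\bS^1\cong\bC\bP^n\times\bS^1$;
\item $\pi_1(X)\times\bZ\cong\bZ$ then gives $\pi_1(X)=1$, so the universal covers satisfy $X\times\bR\cong\bC\bP^n\times\bR$;
\item the simply connected h-cobordism/cancellation theorem (Lemma~\ref{lemma:simplification}) gives $X\cong\bC\bP^n$ for $2n\neq4$, after which you pull back $J$.
\end{itemize}
Your Freedman argument for $n=2$ is fine, since smoothness excludes the Chern manifold.

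Two smaller points. ``Equality forces $\psi$ to be parallel'' presupposes a smooth form realizing $\|x_{\bR}\|_{\textup{cm}}$, whereas the inequality is only obtained in the limit $\delta\downarrow0$. A limiting argument is needed; the paper defers this to Cecchini--Hirsch--Zeidler, Theorems 3.2 and 3.4. In the odd case, applying Theorem~\ref{thm:2-systole-bound}(i) to the compact factor $Y$ of $\tilde M\cong Y\times\bR$ requires comparing systoles of $Y$ with $\textup{stsys}_2(M,g)$, which you do not do. The paper instead computes $i_Y=n+1$ in Proposition~\ref{prop:deduce-the-mapping-torus} and applies Kobayashi--Ochiai. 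In the even case, your appeal to Theorem~\ref{thm:2-systole-bound}(i) via $\textup{sys}_2\ge\textup{stsys}_2$ is a valid alternative to the paper's route through $\ell(X)\le i_X$ and Kobayashi--Ochiai.
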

The existence of a non-trivial top wedge class $0 \neq u^n \in H^{2n}(M;\bR)$ in condition $(a)$ can be viewed as a weak cohomological symplectic property for even-dimensional manifolds, while the odd-dimensional assumption can be viewed as a cohomological \textit{cosymplectic} property, cf.~\cite{ludden}.
The manifolds $N$ satisfying property $(b)$ form a large, natural family including $\bS^1$, all closed oriented surfaces, all $3$-manifolds with $b_1>0$, all $4$-manifolds with $b_2>0$, all symplectic manifolds, and all $2$-essential or $(2,1)$-essential manifolds.
In Section~\ref{section:topology-constructions}, we present various examples and topological constructions of such manifolds.
In Proposition~\ref{prop:K-cowaist-bound}, we obtain a stronger version of Theorem~\ref{thm:optimal-sharp-bound} using area-enlargeability.

When working in the class of \textbf{Fano manifolds}, namely K\"ahler manifolds with positive first Chern class $c_1(X)>0$, we can establish a significant refinement of the above result for arbitrary Riemannian metrics in Theorem~\ref{thm:fano-refinement}.
Our result employs the classification of Fano manifolds of Picard number $1$, involving del Pezzo and Mukai $n$-folds, discussed in Theorem~\ref{thm:fano-index-classification}.
Theorem~\ref{thm:fano-refinement} can therefore be viewed as a topological gap theorem for Fano manifolds.

Moreover, we obtain uniform bounds for the stable $2$-systole of a large class of $\textup{spin}^c$ manifolds satisfying a natural topological largeness condition, which we call $(2,c)$\textbf{-essentialness}.
Roughly, this means that the fundamental class of the manifold yields an enlargeable homology class after intersection with codimension-two cohomology classes and $\textup{spin}^c$-twisted $\hat{A}$-classes.
Our definition is inspired by the works of Hanke~\cites{enlargeability-index-theory , hanke-essentialness  } and interpolates, in all dimensions, between the properties of area-enlargeability and $2$-essentialness in the sense of Gromov-Lawson~\cite{gromov-lawson-PSC}; see Definition~\ref{def:(2,c)-essential} for the precise formulation.
This condition is ideally suited for deriving uniform $2$-systolic inequalities for $\textup{spin}^c$ manifolds depending only on their second Betti number, and we prove the following bound.

\begin{theorem}\label{thm:uniform-non-sharp}
    For every $n, r \in \bN^*$, there exists a constant $C_{n,r}$ such that every Riemannian metric on a $(2,c)$-essential $\textup{spin}^c$ manifold $M$ of dimension at most $n$ with $b_2(M) \leq r$ satisfies
    \[
    \textup{stsys}_2(M,g) \cdot \inf_M R_g \leq C_{n,r}.
    \]
\end{theorem}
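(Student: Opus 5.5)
Our approach is proof by contradiction, combining the $\textup{spin}^c$ Dirac operator twisted by an almost flat bundle (the Gromov--Lawson/Hanke enlargeability method) with a line-bundle twist chosen from a short integral basis of $H^2$. Normalize $\inf_M R_g = 1$ by scaling. Suppose $\textup{stsys}_2(M,g)$ is very large. The stable norm on $H_2(M;\bR)$ is a norm on a space of dimension $b_2 \le r$, and its dual is the comass norm on $H^2(M;\bR)$. Large stable systole means every nonzero integral class $h \in H_2(M;\bZ)/\text{tors}$ has large stable norm. By duality and Minkowski's theorem (the second theorem applied to the dual lattice $H^2(M;\bZ)/\text{tors}$), there is a basis $\alpha_1,\dots,\alpha_{b}$ of the integral lattice in $H^2$ whose comass norms satisfy $\|\alpha_i\|^* \le C(r)/\textup{stsys}_2(M,g)$, up to a constant depending only on $r$. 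Each $\alpha_i$ is then represented by a closed $2$-form of small pointwise norm, and hence by the curvature of a Hermitian line bundle $L_i$ with connection whose curvature is uniformly small. This is the step where the dependence on $b_2 \le r$ enters.

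Next we use $(2,c)$-essentialness, via Definition~\ref{def:(2,c)-essential}. This provides a codimension-two cohomology class, expressible as an integral combination of the $\alpha_i$, and a $\textup{spin}^c$-twisted $\hat A$-class whose cap product with $[M]$ is enlargeable. Using the lattice basis, we rewrite the relevant polynomial $P(\alpha_1,\dots,\alpha_b)$ together with the characteristic class $c$ so that a twisted index pairing $\la [M], \text{ch}(E\otimes L^{\otimes k}) e^{c/2}\hat A(TM)\rg$ is nonzero. Here $E$ is an $\varepsilon$-flat bundle pulled back from the enlargeable structure, and $L^{\otimes k}$ ranges over finitely many tensor powers of the $L_i$. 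The key point is that the exponents $k$ can be chosen from a finite set depending only on $n$ and $r$. This is possible because a nonzero polynomial of degree $\le n$ in $b \le r$ variables cannot vanish on every point of a fixed finite grid $\{0,\dots,n\}^b$. So some twist with bounded exponents has nonzero index.

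We then apply the $\textup{spin}^c$ Lichnerowicz formula
\[
D^2 = \nabla^*\nabla + \tfrac{R_g}{4} + \tfrac12 \mathcal{R}^{\det} + \mathcal{R}^{E\otimes L^{k}}.
\]
The curvature of the determinant line bundle of the $\textup{spin}^c$ structure is only cohomologically determined, so we choose its connection form to be the harmonic (or comass-minimizing) representative of $c$, expressed in the short basis. The integral coefficients of $c$ in that basis are not a priori bounded, and this is the main obstacle. We would handle it by including $-c/2$ in the twisting: we twist by a line bundle realizing $L^{-m}$ with $2m \approx c$ modulo the allowed grid, which rewrites $e^{c/2}$ as a bounded-exponent combination. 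If parity issues prevent this, we pass to $\bZ/2$ adjustments absorbed into the finite grid.

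With all curvature terms bounded by $C'(n,r)\bigl(\textup{stsys}_2^{-1}+\varepsilon\bigr)$, positivity $R_g \ge 1$ forces $D$ to be invertible once $\textup{stsys}_2 > C_{n,r}$, for $\varepsilon$ small. This contradicts the nonvanishing index, and hence $\textup{stsys}_2(M,g)\cdot\inf_M R_g \le C_{n,r}$. For odd-dimensional $M$, we cross with $\bS^1$ or use the degree-one class in the definition, exactly as in the odd cases of Theorem~\ref{thm:optimal-sharp-bound}. Doing so changes $n$ by one and leaves $b_2$ controlled.
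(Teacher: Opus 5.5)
Your proposal is correct and follows essentially the paper's route, which the paper packages as Propositions~\ref{prop:bound-stsys-from-cowaist} and~\ref{prop:K-cowaist-bound}: a short $\bZ$-basis of $H^2(M;\bZ)/\textup{tors}$ from geometry of numbers (Lemma~\ref{lemma:uniform-bound-lattice}), a grid argument giving bounded twists with nonzero index against the Bott bundle pulled back by area-enlargeable maps, and a reduction of the $\textup{spin}^c$ class to coefficients in $\{0,1\}$ in that basis before applying the Lichnerowicz formula. The differences are cosmetic: you twist by $\bigotimes_i L_i^{k_i}$ directly rather than by the virtual bundles $F^{\pm}_t$ (you should still compare with the trivially twisted operator to isolate the $f^*\theta$ contribution), and you treat odd dimensions by crossing with $\bS^1$ instead of spectral flow, which works once you note that $b_2(M\times\bS^1)=b_2(M)+b_1(M)$ is not controlled but only classes pulled back from $M$, whose comass does not increase, are ever used.
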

The family of $(2,c)$-essential manifolds is preserved under natural topological operations, described in Proposition~\ref{prop:(2,c)-essential-preserved}, which make Theorem~\ref{thm:uniform-non-sharp} applicable for a very large class of manifolds.
Likewise, $\textup{spin}^c$ structures provide a very general extension of spin manifolds that includes all symplectic manifolds and all almost-complex manifolds.
We refer the reader to Section~\ref{section:index-theory-preliminaries} for more details.

We highlight the following important application of our result for Fano manifolds.
\begin{corollary}\label{cor:fano-admissible}
    For every $n,r \in \bN^*$, there exists a constant $C_{n,r}$ such for every compact smooth manifold $X^{2n}$ diffeomorphic to a Fano $n$-fold, the following holds.
    Consider a $(2,c)$-essential manifold $F$ with $\dim F, b_1(F), b_2(F) \leq r$ and let $M$ be any of the manifolds
    \[
    X \times F, \qquad X \# F, \qquad \textup{Bl}_Z X \quad \text{for } \; Z \subset X \quad \text{smooth (almost-)complex with } \; b_0(Z)\leq r,
    \]
    or the total space of a smooth fiber bundle $Q \hookrightarrow M \xrightarrow{\pi} F$ with structure group in $\textup{Aut}(Q)$ and fiber $Q$ having $H^q(Q,\cO_Q) = 0$ for all $q>0$ and $\dim Q, b_1(Q), b_2(Q) \leq r$.
Then, we have
\[
\textup{stsys}_2(M,g) \cdot \inf_M R_g \leq C_{n,r}
\]
for every Riemannian metric $g$ on $M$.
In particular, the result applies to all bounded iterated blowups of $X$, products with $2$-essential, $(2,1)$-essential, or enlargeable factors, and projective, Grassmannian, partial flag, or full flag bundles of bounded rank over any of the resulting manifolds.
\end{corollary}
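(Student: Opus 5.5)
The plan is to reduce Corollary~\ref{cor:fano-admissible} to Theorem~\ref{thm:uniform-non-sharp}. For each manifold $M$ in the list, I would verify two things. First, $M$ is $(2,c)$-essential for a suitable $\textup{spin}^c$ structure. Second, $\dim M$ and $b_2(M)$ are bounded by a constant depending only on $n$ and $r$. The constant $C_{n,r}$ is then the maximum of the constants that Theorem~\ref{thm:uniform-non-sharp} gives for these dimension and Betti-number bounds.

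The starting point is $X$ itself. A Fano $n$-fold is simply connected with $H^q(X,\cO_X)=0$ for $q>0$, so its Todd genus is $\la [X], e^{c_1/2}\hat{A}(TX)\rg = \chi(X,\cO_X) = 1 \neq 0$. We also have $u^n\neq 0$ for a Kähler class $u$, and hence $X$ is $(2,c)$-essential with $c=c_1(X)$; the intersection with $u^{n-1}$ produces a curve class pairing non-trivially with the twisted $\hat A$-class. Only the spin$^c$ structure on $X$ depends on the complex structure, but it is a smooth invariant through $c_1$, which is why "diffeomorphic to a Fano $n$-fold" suffices. I also need a bound on $b_2(X)$ in terms of $n$. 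Here I would invoke the boundedness of Fano $n$-folds (Kollár--Miyaoka--Mori), which gives finitely many deformation types in each dimension and hence a uniform bound $b_2(X)\le B_n$.

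Next, I would apply the closure properties of Proposition~\ref{prop:(2,c)-essential-preserved}. Products $X\times F$ are $(2,c)$-essential with $b_2\le b_2(X)+b_2(F)+b_1(X)b_1(F)=b_2(X)+b_2(F)$. Connected sums $X\# F$ have $b_2$ additive. Blowups $\textup{Bl}_Z X$ along almost-complex $Z$ have $b_2(\textup{Bl}_ZX)\le b_2(X)+b_0(Z)\le B_n+r$, and the blowup formula preserves both $\chi(\cO)$ and the top power of a suitable class. For a fiber bundle $Q\to M\to F$ with $H^q(Q,\cO_Q)=0$, the twisted $\hat A$-genus of the fiber is $1$, so a multiplicativity (family index) argument transfers essentialness from $F$ to $M$. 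The Leray--Serre spectral sequence then gives $b_2(M)\le b_2(Q)+b_1(Q)b_1(F)+b_2(F)$, which is bounded in terms of $r$. In every case $\dim M\le 2n+r$ (or $2n+r+r$ in the bundle case). Theorem~\ref{thm:uniform-non-sharp} then gives the bound. The "in particular" clause follows by iterating these operations finitely many times. For that I would note that $2$-essential, $(2,1)$-essential and enlargeable manifolds are $(2,c)$-essential, and that projective, Grassmannian and flag bundles have fibers $Q$ that are rational homogeneous, so their higher $\cO$-cohomology vanishes.

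I expect the main obstacle to be the uniform bound on $b_2(X)$ over all Fano $n$-folds, which requires the deep boundedness theorem. The rest is bookkeeping, together with checking that each topological operation falls under Proposition~\ref{prop:(2,c)-essential-preserved}, particularly the bundle case, where the spin$^c$ structure on $M$ must be built from those on $Q$ and $F$ compatibly with the structure group $\textup{Aut}(Q)$.
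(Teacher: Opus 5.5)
Your proposal is correct and follows the paper's argument. Membership of each $M$ in $\cF$ comes from the closure properties of Proposition~\ref{prop:(2,c)-essential-preserved}, the uniform Betti bound for Fano $n$-folds comes from boundedness (Lemma~\ref{lemma:volume-of-fano}), and Theorem~\ref{thm:uniform-non-sharp} is then applied with the resulting bounds on $\dim M$ and $b_2(M)$. One small point: your Leray--Serre term $b_1(Q)b_1(F)$ assumes untwisted coefficients, but this does no harm, because $H^1(Q,\cO_Q)=0$ already forces $b_1(Q)=0$ (by the Fr\"olicher spectral sequence), so $b_2(M)\le b_2(Q)+b_2(F)$.
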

Theorems~\ref{thm:2-systole-bound}, \ref{thm:optimal-sharp-bound}, \ref{thm:uniform-non-sharp}, \ref{thm:projective-theorem}, and \ref{thm:fano-refinement} continue a long line of work relating scalar curvature, topology, and quantitative metric geometry.
Classical systolic geometry begins with Loewner's inequality for the $2$-torus and was developed by Gromov through the filling-radius, essential-manifold, and intersystolic theories~\cites{gromov-filling, gromov-systoles}.
In another direction, the works of Schoen--Yau and Gromov--Lawson showed that positive scalar curvature imposes strong global topological restrictions~\cites{schoen-yau-I, schoen-yau-II, gromov-lawson-spin, gromov-lawson-PSC}.
A central problem is to make these restrictions quantitative: one expects lower scalar curvature bounds to force upper bounds for appropriate widths, filling radii, and systolic quantities.
This point of view has been developed in several directions, with important contributions including~\cites{diastolic-isoperimetric, guth-systolic, guth-volume, uryson-width, katz, LLNR, length-maximo, sabourau}.

Bray, Brendle, Eichmair, and Neves~\cites{ BBEN , bray-brendle-neves } studied homotopy-related $2$-systoles of closed Riemannian $3$-manifolds $(M,g)$ with non-vanishing second homotopy group and positive scalar curvature.
Concretely, they obtained the sharp inequalities
\begin{equation}\label{eqn:pi2-systolic-inequality}
\textup{sys}_{\pi_2}(M,g) \cdot \inf_M R_g \leq 8 \pi \qquad \text{and} \qquad \textup{sys}_{\bR \bP^2}(M,g) \cdot \inf_M R_g \leq 12 \pi
\end{equation}
where $R_g$ denotes the scalar curvature of $g$ and $\textup{sys}_{\bR \bP^2}$ is the `$\bR \bP^2$-systole'' of closed $3$-manifolds, namely the least area of an embedded $\bR \bP^2$.
Both equalities are sharp, with equality attained by a finite quotient of $\bS^2 \times \bS^1$ and by $\bR \bP^3$, respectively.
Additionally, Zhu~\cite{zhu-rigidity} extended the bound~\eqref{eqn:pi2-systolic-inequality} to the non-compact case, and Xu~\cite{xu-pi2} improved this inequality for closed $3$-manifolds not covered by $\bS^2 \times \bS^1$.
Finally, Stern~\cite{stern-scalar-curvature} proved the homological analogue of the bound~\eqref{eqn:pi2-systolic-inequality}.

Gromov's publications \textit{Four Lectures} and \textit{101 Questions} have introduced fundamental new ideas on this topic and formulated several influential directions and questions concerning the metric consequences of lower scalar curvature bounds~\cites{gromov-101, gromov-four-lectures}.
Recently, the topic of systolic inequalities for higher-dimensional spaces has received renewed attention, with Stryker proving a number of interesting $2$-systole bounds for certain spin manifolds~\cite{stryker}.
Notably, the beautiful work of Cecchini-Hirsch-Zeidler~\cite{sven-2-systole} obtained a sharp stable $2$-systole inequality for positive scalar curvature metrics on $\bC \bP^n$ and $\bC \bP^n \times \bS^1$, and Sha~\cite{sha} obtained a sharp estimate for the $2$-systole of K\"ahler metrics with positive scalar curvature on $4$-manifolds.
See also~\cites{ orikasa , improved-cowaist}.

A related systolic notion for $2$-dimensional area is the \textbf{Gromov width} of a symplectic $2n$-manifold $(X,\omega)$, defined as the maximal area of a standard symplectic $2n$-ball that can be symplectically embedded into $(X,\omega)$.
This quantity is related to Gromov's non-squeezing theorem in symplectic geometry.
We obtain the following bound on the Gromov width of K\"ahler manifolds.
\begin{theorem}\label{thm:gromov-width-simple}
    Let $X^n$ be a compact complex $n$-manifold with no global holomorphic $2$-forms.
    Given any K\"ahler metric $\omega$ on $X$ with positive total scalar curvature, the Gromov width of $(X,\omega)$ satisfies
    \begin{equation}\label{eqn:gromov-width-bound-1}
        w_G(X,\omega) \leq 8 \pi n^2 \bar{R}(\omega)^{-1}
    \end{equation}
    where $\bar{R}(\omega)$ denotes the average scalar curvature.
\end{theorem}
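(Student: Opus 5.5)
Our approach combines Gromov's pseudoholomorphic-curve bound on the width with a rational curve of controlled degree produced from positive total scalar curvature.

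\textbf{Step 1: reduce to a curve bound.} By Gromov's non-squeezing and monotonicity argument, if for a generic $\omega$-tame almost complex structure $J$ there is a non-trivial $J$-holomorphic sphere (or a class $A\in H_2(X;\bZ)$ with non-vanishing Gromov--Witten invariant through a point) with $\omega(A)=a$, then $w_G(X,\omega)\le a$. So it suffices to find such a class $A$ with
\[
\omega(A)\le 8\pi n^2\,\bar R(\omega)^{-1}.
\]

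\textbf{Step 2: algebro-geometric input.} Since $H^0(X,\Omega^2)=0$, we have $h^{2,0}=0$. The Kähler cone is then open in $H^2(X;\bR)=H^{1,1}$, so $X$ is projective by Kodaira. The total scalar curvature is
\[
\int_X R_\omega\,\omega^n/n! \;=\; 4\pi\, c_1(X)\cdot[\omega]^{n-1}/(n-1)!
\]
up to normalization, so $\bar R(\omega)>0$ means $K_X\cdot[\omega]^{n-1}<0$. Thus $K_X$ is not pseudo-effective (BDPP), and $X$ is uniruled by Boucksom--Demailly--P\u{a}un--Peternell and Miyaoka--Mori. The Miyaoka--Mori bend-and-break estimate then gives, through a general point, a rational curve $C$ with
\[
[\omega]\cdot C \;\le\; 2n\,\frac{[\omega]^n}{-K_X\cdot[\omega]^{n-1}}.
\]
I would obtain this via the Miyaoka--Mori/Shepherd-Barron version for movable classes $\alpha=[\omega]^{n-1}$: there is a covering family of rational curves $C$ with $-K_X\cdot C\le n+1$ and $C$ bounded by $\omega$-degree. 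The cleaner route is the real-coefficient bend-and-break of Boucksom--Demailly--P\u{a}un--Peternell, which gives $\frac{\omega\cdot C}{-K_X\cdot C}\le \frac{\omega^n}{-K_X\cdot\omega^{n-1}}$ together with $-K_X\cdot C\le 2n$. Combining the two yields the displayed inequality. With $\bar R = n\cdot 4\pi\,(c_1\cdot\omega^{n-1})/\omega^n$ (the Kähler normalization $R=2\Lambda\rho$ gives $\bar R = 4\pi n\,c_1[\omega]^{n-1}/[\omega]^n$), this becomes $\omega(C)\le 2n\cdot 4\pi n/\bar R = 8\pi n^2/\bar R$.

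\textbf{Step 3: from algebraic to symplectic.} A covering family of rational curves through a general point has a minimal-degree subfamily whose class has non-zero genus-zero Gromov--Witten invariant with a point insertion. By Kollár and Ruan, for uniruled projective manifolds the free rational curves of minimal degree through a general point are unobstructed enough to give a non-vanishing invariant $\langle[pt],\dots\rangle_{0,A}$, possibly after decomposing into components of no larger $\omega$-area. This gives a $J$-sphere through every point for generic tame $J$ with area at most $\omega(C)$, and the monotonicity lemma yields $w_G\le\omega(A)$.

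\textbf{Main obstacle.} The crux is obtaining the sharp constant $2n$ in the degree bound with real Kähler classes, and ensuring that the Gromov--Witten invariant of a class of no larger area is non-zero. I would first try the degree bound via Mori's bend-and-break in characteristic $p$ applied to a complete intersection curve for rational approximations of $[\omega]$, then pass to the limit.
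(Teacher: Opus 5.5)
Your proposal is correct. Its skeleton matches the paper's: projectivity from $h^{2,0}=0$ via Kodaira, Miyaoka--Mori bend-and-break with constant $2n$, and a bound on $w_G$ by the area of a minimal rational curve through a nonvanishing point-constrained Gromov--Witten invariant.

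The difference is the test curve you feed into bend-and-break. The paper deduces the statement from Theorem~\ref{thm:gromov-width-general} with $\beta=0$, so $H=\alpha$, and then proceeds in four steps:
\begin{itemize}
\item it introduces the pseudo-effective threshold $e(H)=\inf\{t>0 : K_X+tH \text{ pseudo-effective}\}$;
\item for each $t<e(H)$ it uses BDPP--Nystr\"om duality to find a movable curve $\gamma_t$ with $(K_X+tH)\cdot\gamma_t<0$;
\item bend-and-break then gives $w_G\le 2n/e(H)$;
\item separately, it proves $s(\alpha)\le e(H)$, where $s(\alpha)=c_1(X)\cdot\alpha^{n-1}/\alpha^n$.
\end{itemize}
You instead use the complete-intersection class $[\omega]^{n-1}$ itself, through rational approximations. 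This class is movable and already satisfies $K_X\cdot[\omega]^{n-1}<0$, so you get $2n/s(\alpha)$ at once and need neither the duality theorem nor the threshold. Your route is more elementary in this setting. The paper's route gives the intermediate bound $2n/e(H)$, which is at least as strong. More importantly, it extends to horizontal classes outside $N^1(X)_{\bR}$, where complete-intersection curves in the class $[\omega]^{n-1}$ do not exist.

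Two details should be made precise.
\begin{itemize}
\item In Step~3 the minimal covering family must be chosen so that its $\omega$-area is controlled. ``Decomposing into components'' is too vague for this. Follow the paper: take rational ample $\tilde H_j\to[\omega]$ with $\tilde H_j-[\omega]$ nef. Run bend-and-break with polarization $\tilde H_j$, and take a minimal $\tilde H_j$-degree covering family. Then $\omega$-area is bounded by $\tilde H_j$-degree, and $w_G\le 2n/s(\alpha)+o(1)$ follows by letting $j\to\infty$.
\item In the width argument, the almost complex structure must be standard on the embedded ball, not generic. The curve through the centre then comes from Gromov compactness.
\end{itemize}
You can also drop the ``real-coefficient bend-and-break'' alternative, which you attribute loosely to BDPP; the Miyaoka--Mori argument you give first suffices.
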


The bound~\eqref{eqn:gromov-width-bound-1} will follow from the more general Theorem~\ref{thm:gromov-width-general}, which we will state and establish after introducing some relevant terminology and preliminaries in Section~\ref{section:alg-geo-preliminaries}.
We also highlight the work of Biran-Cieliebak~\cite{biran-cieliebak}*{Theorem F}, who obtained bounds for the Gromov width of a special class of projective manifolds with integral divisors, for example $\bC \bP^n \times \bC \bP^m$, subject to a combination of symplectic, topological, and algebraic conditions called a \textit{subcritical Weinstein polarization}.

In the K\"ahler setting, scalar curvature is strongly constrained by the underlying complex geometry.
For K\"ahler surfaces, this is reflected in LeBrun's work relating scalar curvature, Kodaira dimension, and Yamabe invariants~\cites{lebrun-yamabe, lebrun-surfaces, albanese-lebrun}.
Gromov also emphasized the special role of even-dimensional systoles in K\"ahler and symplectic geometry, especially for projective spaces~\cite{gromov-systoles}.
The index-theoretic side is closely related to the scalar curvature comparison theorems of Llarull and Goette--Semmelmann.
In particular, Goette--Semmelmann proved $\textup{spin}^c$ comparison theorems showing that K\"ahler manifolds with positive Ricci curvature are extremal under area-nonincreasing maps of non-zero degree, and established related extremality results for compact symmetric spaces~\cites{llarull, goette-semmelman}.

Developing further these connections between scalar curvature and complex geometry, we prove that a compact K\"ahler manifold admitting a K\"ahler metric of non-negative total scalar curvature is either uniruled or Calabi--Yau.
See also~\cites{heier-wong , dicerbo } for related work on projective manifolds.
\begin{theorem}\label{thm:scalar-curvature-implies-uniruled}
    If a compact K\"ahler manifold $X$ admits a K\"ahler metric with non-negative total scalar curvature, then one of the following situations must occur:
    \begin{enumerate}[(i)]
        \item $X$ is uniruled, in which case $\pi_2(X) \neq 0$; or
        \item $X$ is Calabi-Yau, and a finite \'etale cover $\tilde{X} \to X$ splits as a product
        \[
        \tilde{X} \simeq T \times { \textstyle \prod_i} Y_i \times { \textstyle \prod_j } Z_j
        \]
        where $T$ is a complex torus, $Y_i$ are simply connected projective irreducible Calabi-Yau manifolds of complex dimension $\geq 3$, and $Z_j$ are compact simply connected hyperk\"ahler manifolds.
    \end{enumerate}
\end{theorem}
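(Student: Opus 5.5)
The plan is to convert the curvature hypothesis into an intersection-theoretic statement about $K_X$, then split into two cases according to whether $K_X$ is pseudoeffective. For a K\"ahler metric $\omega$ the Ricci form represents $2\pi c_1(X)$ and $R_\omega\,\omega^n = 2n\,\mathrm{Ric}(\omega)\wedge\omega^{n-1}$, up to the paper's normalisation. Hence
\[
\int_X R_\omega \,\omega^n = 4\pi n \int_X c_1(X)\wedge [\omega]^{n-1}.
\]
Non-negative total scalar curvature is therefore the statement $K_X\cdot[\omega]^{n-1}\le 0$ for some K\"ahler class $[\omega]$.

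\emph{Case 1: $K_X$ is not pseudoeffective.} By the theorem of Boucksom--Demailly--P\u{a}un--Peternell, in the K\"ahler generality established by Brunella and Ou and by Cao--H\"oring, such an $X$ is uniruled. A uniruled manifold contains a rational curve, and there is a minimal rational curve $f\colon\mathbb{P}^1\to X$ with $K_X\cdot f_*[\mathbb{P}^1]<0$. This class is nonzero in $H_2$ and pairs nontrivially with $[\omega]$. The expected route to $\pi_2(X)\ne 0$ is to pass to the universal cover $\tilde X$. It is covered by lifted rational curves, and a nonconstant holomorphic sphere in the K\"ahler manifold $\tilde X$ has positive area $\int f^*\tilde\omega>0$. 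It is therefore nonzero in $H_2(\tilde X)$, which equals $\pi_2(\tilde X)=\pi_2(X)$ by Hurewicz.

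\emph{Case 2: $K_X$ is pseudoeffective.} Pairing a pseudoeffective class with the movable class $[\omega]^{n-1}$ gives $K_X\cdot[\omega]^{n-1}\ge 0$. Combined with the hypothesis, this forces $K_X\cdot[\omega]^{n-1}=0$, so $\int R_\omega=0$. The main obstacle is to deduce from this that $c_1(X)=0$ in $H^{1,1}(X,\mathbb{R})$. I would use the divisorial Zariski decomposition (Boucksom), $K_X=P+N$, with $N$ effective and $P$ modified nef. Because $\omega$ is K\"ahler, $N\cdot\omega^{n-1}=0$ forces $N=0$. Then $P\cdot\omega^{n-1}=0$ for the nef-in-codimension-one class $P$. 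The Hodge index/Hovanskii--Teissier type argument for a K\"ahler class then gives $P\equiv 0$, since a pseudoeffective class with zero degree against $\omega^{n-1}$ is numerically trivial. An alternative I would try first: the pseudoeffective $K_X$ carries a closed positive current $T$ with $\int T\wedge\omega^{n-1}=0$, which forces $T=0$ as a measure and hence $c_1(X)=0$ in real cohomology.

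Once $c_1(X)_{\mathbb{R}}=0$, Yau's theorem gives a Ricci-flat K\"ahler metric on $X$, so $X$ is Calabi--Yau. The Beauville--Bogomolov decomposition then supplies a finite \'etale cover $\tilde X\simeq T\times\prod Y_i\times\prod Z_j$. Here $T$ is a torus, the $Y_i$ are strict Calabi--Yau (projective, simply connected, $h^{p,0}=0$ for $0<p<\dim$, and $\dim\ge3$), and the $Z_j$ are hyperk\"ahler. This yields (ii) and completes the dichotomy.
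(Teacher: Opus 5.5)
Your proof is correct and follows the paper's outline. Ou's criterion handles the case where $K_X$ is not pseudo-effective. The pseudo-effective case is reduced to showing $c_1(X)=0$ in $H^{1,1}(X;\bR)$, and Beauville--Bogomolov finishes. The routes differ only in that middle step.

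\emph{The paper's route.} It fixes a positive current $T\in 2\pi c_1(K_X)$ and a K\"ahler form $\omega_0$ with $\bar R(\omega_0)=0$. It pairs $T$ against the strongly positive form $\omega_0^{n-1}-(\varepsilon\omega)^{n-1}$ to show that $\bar R$ vanishes on the whole K\"ahler cone. It then differentiates $c_1(X)\cdot(\alpha+t\eta)^{n-1}=0$ at $t=0$ and concludes from the non-degeneracy of the Hodge--Riemann form $(\gamma,\eta)\mapsto\gamma\cdot\eta\cdot\alpha^{n-2}$.

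\emph{Your route.} Your ``alternative'' does this in one line. The equality $\int_X T\wedge\omega^{n-1}=0$ says the trace measure of $T$ has zero total mass. The coefficients of a positive $(1,1)$-current are dominated by its trace measure, so $T=0$ and hence $c_1(K_X)=0$ in real cohomology. This needs only one K\"ahler class and pointwise positivity, with no Hodge index theorem. The paper's argument stays at the level of intersection numbers after its single positivity input. But it already has $T$ and $\omega_0$ with $\int_X T\wedge\omega_0^{n-1}=0$, so your shortcut would apply there verbatim.

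You should lead with the current argument and drop the divisorial Zariski decomposition. It is redundant, and the reason you give for $P\equiv 0$ is not really Hodge index or Khovanskii--Teissier. Those would first require checking $P^2\cdot\omega^{n-2}\ge 0$ for the modified nef part. The reason is exactly the trace-mass fact, which applies to $K_X$ directly.

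In Case 1, passing to the universal cover is harmless but unnecessary. A null-homotopic sphere is null-homologous in $X$, so $\int f^*\omega>0$ already shows $[f]\neq 0$ in $\pi_2(X)$. This is how the paper argues.
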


Theorem~\ref{thm:scalar-curvature-implies-uniruled} implies a central conjecture of Gromov for K\"ahler metrics.
A fundamental question in scalar curvature geometry, dating back to the work of Gromov-Lawson and Schoen-Yau~\cites{gromov-lawson-PSC , schoen-yau-structure}, is whether manifolds with contractible universal cover, called \textbf{aspherical}, obstruct positive scalar curvature.
Since the $n$-torus $\bT^n$ is aspherical, this question extends the Geroch conjecture.
Important work of Chodosh-Li, Gromov, and other authors~\cites{ generalized-soap-bubbles , gromov-aspherical-dim-5 , chodosh-li-liokumovich , chen-chu-zhu } shows that this property holds up to dimension $5$, and is expected to hold in all dimensions.
A more ambitious conjecture of Gromov asserts that \textbf{rationally essential manifolds} do not admit PSC metrics~\cites{gromov-101 , gromov-four-lectures}, with connections to the Novikov conjecture on topological invariance of certain polynomials of Pontryagin classes, described in~\cite{gromov-four-lectures}*{p. 25}.
A manifold $M^n$ is \textbf{essential} if the classifying map $f : M \to B \pi_1(M)$ of its universal covering cannot be deformed onto the $(n-1)$-skeleton, cf.~\cite{gromov-lawson-PSC}.
\begin{corollary}\label{corollary:gromov}
An essential manifold cannot admit a K\"ahler metric of positive total scalar curvature. 
In particular, Gromov's conjecture holds for K\"ahler metrics.
\end{corollary}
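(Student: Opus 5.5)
The plan is to derive this from Theorem~\ref{thm:scalar-curvature-implies-uniruled}. Let $X$ be a compact K\"ahler manifold of complex dimension $n$, with a K\"ahler metric $\omega$ of positive total scalar curvature. In particular the total scalar curvature is non-negative, so $X$ is either uniruled or Calabi--Yau with the splitting described in (ii).

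First I rule out the Calabi--Yau case. For a K\"ahler metric, the total scalar curvature is topological:
\[
\int_X R_\omega \, \frac{\omega^n}{n!} = c \cdot \langle c_1(X) \smile [\omega]^{n-1}, [X] \rangle
\]
for a positive dimensional constant $c$, because $\rho_\omega$ represents $2\pi c_1(X)$. In case (ii), $c_1(X)=0$ in $H^2(X;\bR)$, since a finite \'etale cover has vanishing real first Chern class. Hence the total scalar curvature vanishes, contradicting positivity. So $X$ is uniruled.

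Next I show that a uniruled compact K\"ahler manifold is not essential. This is the main step. The natural approach is to show that the classifying map $f: X \to B\pi_1(X)$ kills the fundamental class, i.e.\ $f_*[X]=0$ in $H_{2n}(B\pi_1(X);\bZ)$, and to deal with twisted coefficients if $X$ is non-orientable. Here $X$ is complex and hence oriented. The criterion I would use is the standard one: for $\dim X\ge 3$, $X$ is essential if and only if $f_*[X]\neq 0$ in $H_{2n}(B\pi;\bZ)$, by Babenko. When $n=1$, $X$ is a uniruled curve, hence $\bC\bP^1$, which is simply connected and therefore inessential; this case can be handled directly.

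To prove $f_*[X]=0$, I would use the covering family of rational curves. Uniruledness gives a dominant, generically finite rational map $\Phi: Y \times \bC\bP^1 \dashrightarrow X$ with $\dim Y = n-1$. Resolving its indeterminacy yields a compact K\"ahler manifold $\tilde W$ with a surjective holomorphic map $\tilde W \to X$ of positive degree $d$. On $\tilde W$, I would try to arrange the situation so that every $\bC\bP^1$-fiber maps to a null-homotopic loop configuration; each fiber is simply connected, so it maps into $B\pi$ trivially.

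Cleaner is the MRC (rationally connected) quotient. Rational curves through a general point lift to the universal cover and lie in fibers of $X \dashrightarrow Z$, and rationally connected fibers are simply connected. Hence $\pi_1(X)\cong\pi_1$ of the resolved base, up to the technicalities of the almost-holomorphic map. Concretely, after replacing $X$ by a blowup $\hat X\to X$, which has degree one and isomorphic $\pi_1$, there is a morphism $\hat X \to Z$ with $\dim Z < n$ and connected fibers whose general fiber is rationally connected. The classifying map of $\hat X$ then factors up to homotopy through $Z$, using that $\pi_1(\hat X)\to\pi_1(Z)$ is an isomorphism (Koll\'ar; Campana). So $f_*[\hat X]$ is the image of a class in $H_{2n}(Z)=0$, because $\dim_\bR Z\le 2n-2$. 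Since the blowup has degree one, $f_*[X]=0$, and $X$ is inessential.

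The main obstacle is this factorization. It requires $\hat X\to Z$ to induce a $\pi_1$-isomorphism and the map to $B\pi$ to factor through the base despite singular fibers. I would first try citing Koll\'ar's theorem that $\pi_1$ is a birational invariant and that the fibration has simply connected general fibers, together with the surjectivity of $\pi_1(\text{fiber})\to\pi_1(\hat X)\to\pi_1(Z)\to 1$. In the K\"ahler (non-projective) setting, I would invoke Campana's MRC construction.

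Finally, ``in particular'' follows because rationally essential manifolds are essential.
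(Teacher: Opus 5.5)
Your proposal is correct and follows essentially the same route as the paper. You first get uniruledness from Theorem~\ref{thm:scalar-curvature-implies-uniruled}: you discard the Calabi--Yau branch because $c_1(X)_{\bR}=0$ forces zero total scalar curvature, whereas the paper's proof reads uniruledness off directly from $K_X\cdot[\omega]^{n-1}<0$. You then pass to a degree-one smooth model of the MRC fibration, use Koll\'ar's $\pi_1$-isomorphism to factor the classifying map through the lower-dimensional base, and conclude $(c_X)_*[X]=0$, exactly as the paper does. The detour through $Y\times\bC\bP^1$ and the appeal to Babenko's criterion are unnecessary, since the paper defines essentialness homologically.
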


We note that Balacheff-Gil Moreno de Mora Sard\`a-Sabourau~\cite{complete-3-manifolds}*{Proposition 6.6} proved a weaker version of Gromov's conjecture for universal covers $\tilde{M}$ with bounded fill-radius.

Finally, we establish a uniform bound on the volume and the higher stable systoles of K\"ahler metrics with positive total scalar curvature, which in particular applies to complex projective space.
We refer the reader to Definition~\ref{def:nef-and-big} for the notion of nef and big classes used below.
For Fano manifolds, we obtain a uniform dimensional bound combined with a complete geometric characterization.

\begin{theorem}\label{thm:nef-and-big-constant}
    Let $X$ be a compact K\"ahler manifold such that every non-zero nef class is big.
    Then, there exists a constant $C_X$ such that any K\"ahler metric with positive total scalar curvature satisfies
    \begin{equation}\label{eqn:stsys-general-CX}
    \textup{Vol}(X,\omega) + \sum_{p_1 + \cdots + p_d = n} \prod_{i=1}^d \textup{stsys}_{2p_i} (X,\omega) \leq C_X \bar{R}(\omega)^{-n}
    \end{equation}
    for $\textup{stsys}_{2p_i}(X,\omega)$ the stable systoles of $(X,\omega)$ and $\bar{R}(\omega)$ the average scalar curvature.

    Moreover, there exists a dimensional constant $C_n$ such that for every compact Fano complex $n$-fold $X$, one of the following properties holds:
    \begin{enumerate}[(i)]
    \item there exists a projective, surjective morphism $f: X \to Y$ onto a normal projective variety $Y$ of Fano type, with $0<\dim Y<n$; or
    \item every K\"ahler metric on $X$ satisfies
    \begin{equation}\label{eqn:stsys-fano-proof}
    \textup{Vol}(X,\omega) + \sum_{p_1 + \cdots + p_d = n} \prod_{i=1}^d \textup{stsys}_{2p_i} (X,\omega) \leq C_n \bar{R}(\omega)^{-n}.
    \end{equation}
\end{enumerate}
In particular, every compact K\"ahler $n$-fold $(X,\omega)$ with $h^{1,1}(X) = 1$ and $\bar{R}(\omega)>0$ satisfies the bound~\eqref{eqn:stsys-fano-proof}. 
Moreover, every K\"ahler metric $\omega$ on $\bC \bP^n$ with $R_{\omega} \geq 4n(n+1) = R_{\textup{FS}}$ has $\textup{Vol}(\bC \bP^n, \omega) \leq \frac{\pi^n}{n!}$, with equality precisely for the Fubini-Study metric $\omega_{\textup{FS}}$ up to a holomorphic automorphism of $\bC \bP^n$.
\end{theorem}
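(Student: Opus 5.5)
The plan is to reduce everything to intersection theory on the Kähler cone using the Chern–Weil formula for total scalar curvature. With the normalization $R_{\textup{FS}} = 4n(n+1)$ one has
\[
\int_X R_\omega \, \frac{\omega^n}{n!} \;=\; 4\pi \int_X c_1(X) \wedge \frac{\omega^{n-1}}{(n-1)!},
\]
so $\bar R(\omega)\,\textup{Vol}(X,\omega) = 4\pi n\, c_1(X)\cdot[\omega]^{n-1}/n!$ depends only on $\alpha = [\omega]$. Under $\omega \mapsto \lambda\omega$, $\bar R$ scales by $\lambda^{-1}$, the volume by $\lambda^n$, and $\textup{stsys}_{2p_i}$ by $\lambda^{p_i}$. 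Hence both sides of \eqref{eqn:stsys-general-CX} are homogeneous of the same degree, and we may normalize $\bar R(\omega) = 1$.

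\textbf{First part.} After the normalization, the condition reads $\alpha^n = 4\pi n\, c_1\cdot\alpha^{n-1}$. I claim the set $S$ of Kähler classes satisfying this is bounded in $H^{1,1}(X;\bR)$. Suppose $\alpha_k \in S$ with $|\alpha_k| \to \infty$, and put $\beta_k = \alpha_k/|\alpha_k|$. Then $\beta_k^n = 4\pi n\, c_1\cdot\beta_k^{n-1}/|\alpha_k| \to 0$. A subsequential limit $\beta$ is a nonzero nef class with $\beta^n = 0$, i.e.\ nef but not big, contradicting the hypothesis. So $|\alpha| \leq A_X$ on $S$, and immediately $\textup{Vol} = \alpha^n/n! \leq C A_X^n$.

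For the systoles, I fix once and for all nonzero classes $h_p \in H_{2p}(X;\bZ)$ represented by effective complex cycles $Z_p$. Since $\bar R>0$ forces $c_1\cdot\alpha^{n-1}>0$, $K_X$ is not pseudoeffective, so $X$ is uniruled by Boucksom–Demailly–Păun–Peternell (and Brunella in the Kähler case). This gives rational curves for $p=1$; for higher $p$ I would use subvarieties swept out by these curves, or ample complete intersections when $X$ is projective. By Wirtinger, complex cycles are calibrated, so
\[
\textup{stsys}_{2p}(X,\omega) \leq \textup{mass}(Z_p) = \int_{Z_p} \frac{\alpha^p}{p!} \leq C|\alpha|^p .
\]
Multiplying over any partition $p_1+\cdots+p_d = n$ and summing the finitely many terms gives \eqref{eqn:stsys-general-CX}.

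\textbf{Main obstacle.} I expect the hardest step to be producing, for every $p$, a cycle with a uniform mass bound when $X$ is merely Kähler. If this fails, the fallback is to bound the stable norm by duality with a fixed basis of harmonic forms and use boundedness of $S$, which would need a separate argument.

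\textbf{Fano dichotomy.} If every nonzero nef class is big we are in (ii) with constant $C_X$. Otherwise, by the cone theorem the nef cone of a Fano manifold is rational polyhedral, and a nef non-big face is semiample by the basepoint-free theorem. Its contraction $f: X\to Y$ has $0<\dim Y<n$ (positive because the class is nonzero, less than $n$ because it is not big), and $Y$ is of Fano type by Fujino–Gongyo; this is (i). To make $C_X$ depend only on $n$, I would invoke boundedness of smooth Fano $n$-folds (Kollár–Miyaoka–Mori), which gives finitely many deformation families. Along each family the nef cone and intersection numbers are locally constant (Wiśniewski), so the bound on $A_X$, and hence $C_X$, is constant on each family. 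Taking the maximum over the families gives $C_n$.

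\textbf{The case $h^{1,1}=1$.} Here the nef cone is the ray spanned by $[\omega]$, so every nonzero nef class is ample, hence big. Moreover $c_1 = t[\omega]$, and $\bar R>0$ forces $t>0$, so $X$ is Fano and the bound follows from either of the previous parts.

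\textbf{Rigidity for $\bC\bP^n$.} Write $[\omega] = s[\omega_{\textup{FS}}]$. Then $\bar R(\omega) = 4n(n+1)/s$. If $R_\omega \geq 4n(n+1)$, then $\bar R \geq 4n(n+1)$, so $s\leq 1$ and $\textup{Vol} = s^n\pi^n/n! \leq \pi^n/n!$. Equality forces $s=1$ and $R_\omega \equiv 4n(n+1)$ constant, so $\omega$ is a cscK metric in the Fubini–Study class. By uniqueness of cscK (here Kähler–Einstein) metrics up to automorphisms (Bando–Mabuchi), $\omega = \phi^*\omega_{\textup{FS}}$ for some automorphism $\phi$ of $\bC\bP^n$.
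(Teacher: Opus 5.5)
Your volume bound (boundedness of the slice $\{\bar R=1\}$ of the K\"ahler cone, using Demailly--P\u{a}un) is sound and matches the paper in substance. So are the Fano dichotomy, the case $h^{1,1}=1$, and the rigidity argument on $\bC\bP^n$.

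\textbf{The gap.} It lies in the systolic part of the first assertion, where $X$ is only assumed K\"ahler. Your Wirtinger argument needs, for every $2\le p\le n-1$, an irreducible $p$-dimensional analytic subvariety $Z_p\subset X$; only then do you get $\textup{stsys}_{2p}\le \int_{Z_p}\alpha^p/p!$. Uniruledness only supplies $p=1$. Loci swept out by rational curves through very general points stay inside fibres of the MRC fibration, so they cannot reach dimensions above the MRC fibre dimension. Ample complete intersections require $X$ to be projective, and you have not shown that the hypotheses force this.

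Your fallback does not repair this. The comass of a fixed class, or of fixed harmonic forms, with respect to $g_\omega$ depends on the pointwise behaviour of $\omega$, not only on $[\omega]$. So boundedness of the set of normalized classes gives no control on stable norms of non-holomorphic cycles.

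\textbf{The missing ingredient.} You need a genuine stable systolic inequality. The paper uses Bangert--Katz (after Gromov and Hebda). Since $[\omega]^{p_1}\smile\cdots\smile[\omega]^{p_d}=[\omega]^n\neq 0$ in $H^{2n}(X;\bR)$, one has $\prod_i \textup{stsys}_{2p_i}(X,\omega)\le C'_X\,\textup{Vol}(X,\omega)$, with $C'_X$ depending only on $n$ and the Betti numbers. This reduces every systolic product to the volume bound you already have and needs no subvarieties. In the Fano case it only requires the uniform Betti bound coming from boundedness.

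\textbf{Where your route does work.} Your Wirtinger argument succeeds whenever $X$ is projective, so it covers alternative (ii) and the $h^{1,1}=1$ case. For that you should take $Z_p$ to be complete intersections of general members of $|{-m_nK_X}|$, with $m_n$ uniform over all Fano $n$-folds. The masses $(-m_nK_X)^{n-p}\cdot\alpha^p/p!$ are then intersection numbers that are constant along each deformation family. In that setting your approach is a more elementary and explicit alternative to Bangert--Katz.

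\textbf{Minor point.} Uniruledness of compact K\"ahler manifolds with $K_X$ not pseudo-effective is Ou's theorem, not a consequence of Brunella's work.
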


The uniform bounds~\eqref{eqn:stsys-general-CX} and~\eqref{eqn:stsys-fano-proof} are in sharp contrast with the case for Riemannian metrics of positive scalar curvature, where no such uniform volume control is possible; see Remark~\ref{rmk:riemannian-PSC}.

We also note that combining Theorem~\ref{thm:nef-and-big-constant} with Guth's Urysohn width bound~\cites{guth-systolic , guth-volume} gives
\[
    \textup{UW}_{2n-1}(X, \omega) \leq C_X \bar{R}(\omega)^{- \frac{1}{2}}
\]
for any compact K\"ahler manifold $(X,\omega)$ satisfying the above condition, and $C_X = C_n$ is a dimensional constant for $X$ Fano.
Likewise, our systolic bounds have width consequences: in the K\"ahler setting, Theorem~\ref{thm:2-systole-bound} provides homotopically non-trivial $2$-spheres $\Sigma \subset X$ with uniformly bounded Urysohn $1$-width $\textup{UW}_1(\Sigma) \leq c_n \bar{R}(\omega)^{- \frac{1}{2}}$, and Corollary~\ref{corollary:systole} obtains some $\Sigma^{2p} \subset X$ with $\textup{UW}_{2p-1}(\Sigma) \leq c_n \bar{R}(\omega)^{- \frac{1}{2}}$.

\smallskip \noindent \textbf{Acknowledgments.}
I am grateful to James Hotchkiss and Francesco Lin for helpful discussions.
This work was supported in part by the A.G.~Leventis Foundation Scholarship and the Onassis Foundation Scholarship.

\section{Outline of the paper}

We start by establishing some key notions used in our Theorems and explaining the organization of the arguments in the paper.

\subsection{Notation and conventions}

First, we recall the various notions of systoles discussed in Theorems~\ref{thm:2-systole-bound} through~\ref{thm:nef-and-big-constant}.
We also explain the terminology and objects studied in these results.
Throughout the paper, all the manifolds under consideration will be assumed oriented.

\begin{definition}\label{def:systoles}
Let $(X,g)$ be a compact Riemannian manifold.
The \textbf{$k$-systole} of $(X,g)$ is defined as the least area of a homologically non-trivial integral $k$-cycle on $X$, namely
\[
\textup{sys}_k(X,g) := \inf \{ \mathbf{M}(T) : T \; \text{integral $k$-cycle}, \; [T] \neq 0 \in H_k(X;\bZ)/\textup{tors} \}.
\]
We analogously define the \textbf{spherical $k$-systole} of $(X,g)$ as
\[
\textup{sys}_{\pi_k}(X,g) := \inf \{ \textup{Vol}_k(\bS^k, f^* g) : f : \bS^k \to X \; \text{represents a non-trivial class in} \; \pi_k(X) \}. 
\]
The \textbf{stable $k$-systole} of $(X,g)$ is defined as
\[
\textup{stsys}_k(X,g) := \inf \{ \| a \|_{\textup{st}} : a \in H_k(X;\bZ), \; a_{\bR} \neq 0 \in H_k(X;\bR) \}
\]
where the stable norm of a class $a \in H_k(X;\bZ)$ with non-zero image $a_{\bR} \in H_k(X;\bR)$ is given by
\[
\| a \|_{\textup{st}} := \inf \{ \tfrac{1}{m} \textbf{M}(T) : T \; \text{ integral } \; k\text{-cycle}, \; [T] = ma \in H_k(X;\bZ) \}.
\]
Finally, for a K\"ahler metric $\omega$ on $X$ with class $[ \alpha] \in H^{1,1}(X;\bR)$, the \textbf{holomorphic $2$-systole} is
\[
\textup{sys}^{\textup{hol}}_2(X,\omega) = \inf \{ \alpha \cdot C : C \subset X \; \text{irreducible effective curve}, \; [C] \neq 0 \}.
\]
\end{definition}
By a duality argument of Federer~\cite{federer}*{Theorem 4.10}, the stable norm of a homology class has
\begin{equation}\label{eqn:stable-chain-norm}
    \| h \|_{\textup{st}} = \sup \{ \la \alpha , h \rg : \alpha \in H^2(M;\bR), \| \alpha \|_{\textup{cm}} \leq 1 \}, \qquad h \in H_2(M;\bR)
\end{equation}
Here, $\| \alpha \|_{\textup{cm}}$ denotes the \textbf{comass norm} on real $2$-forms, defined by
\begin{align*}
    \| \alpha \|_{\textup{cm}} &:= \inf \bigl\{ \| \beta \|_* : \beta \in \Omega^2(M), \; d \beta = 0, \; [\beta] = \alpha \bigr\}, \qquad \text{where} \\
    \| \beta \|_* &:= \sup_{p \in M} |\beta|_{*}(p), \qquad \text{for } \; |\beta|_*(p) := \sup \{ |\beta_p(v)|_g : v \in { \textstyle \bigwedge^2 T_p M} \; \text{ simple and } \, \|v\|_g = 1 \}, 
\end{align*}
where all vector norms are taken with respect to the metric $g$ on $TM$ and its dual metric on $T^*M$.

In terms of Definition~\ref{def:systoles}, every effective curve defines a real integral $2$-cycle, so we immediately obtain
\[
\textup{stsys}_2(\omega) \leq \textup{sys}_2 (\omega) \leq \textup{sys}^{\textup{hol}}_2 ( [\omega]).
\]
On the other hand, there is no clear comparison between $\textup{sys}_{\pi_2}(\omega)$ and $\textup{sys}^{\textup{hol}}_2$: the former also minimizers over ompetitors spheres that may not be holomorphic, while the latter additionally admits surfaces of positive genus.
For example, given an elliptic curve $E$ and taking $(X,\omega) = (\bC \bP^1 \times E, a \omega_{\bC \bP^1} + b \omega_E)$ with both factors normalized to area $1$, we have $\textup{sys}^{\textup{hol}}_2(\omega) = \min \{ a,b\}$ while $\textup{sys}_{\pi_2} = a$.

Next, we recall the notion of the Gromov width considered in Theorem~\ref{thm:gromov-width-simple}.
\begin{definition}\label{def:gromov-width}
The \textbf{Gromov width} of a symplectic $2n$-manifold $(X,\omega)$ is the supremum of the areas $a = \pi r^2$ of standard symplectic balls $B^{2n}(r)$ that embed symplectically into $(X,\omega)$, namely
\[
w_G(X,\omega) := \sup \{ \pi r^2 : B^{2n}(r) \xhookrightarrow{s} (X,\omega) \}, \qquad \xhookrightarrow{s} \quad \text{a symplectic embedding}.
\]
\end{definition}

In the setting of Corollary~\ref{corollary:gromov}, we recall the notion of essentialness introduced by Gromov~\cite{gromov-filling}.
\begin{definition}\label{def:essential}
Let $M^n$ be a closed, connected, oriented manifold with fundamental group $\pi = \pi_1(M)$.
Let $c_M : M \to B \pi = K ( \pi,1)$ be the classifying map of the universal covering $\tilde{M} \to M$, unique up to homotopy.
The manifold $M^n$ is called \textbf{(Gromov-)essential} if $(c_M)_* [M] \neq 0$ in $H_n ( B \pi ; \bZ)$.
It is called \textbf{rationally essential} if $(c_M)_* [M]_{\bQ} \neq 0$ in $H_n(B \pi;\bQ)$, where $[M]_{\bQ} := [M] \otimes 1 \in H_n(H;\bQ)$.
\end{definition}
Rationally essential manifolds are essential, and aspherical manifolds are rationally essential because they are Eilenberg-Maclane spaces.
A related notion of essentialness from~\cites{gromov-filling, goodwillie-hebda-katz} is as follows:
\begin{definition}\label{def:2-essential}
A manifold $M^{2m}$ is called (cohomologically) $2$-\textbf{essential} if there exist classes $u_i \in H^2(M;\bR)$ with $0 \neq u_1 \cdots u_m \in H^{2m}(M;\bR)$.
A manifold $M^{2m+1}$ is (cohomologically) $(2,1)$-\textbf{essential} if there exists, in addition, a class $\xi \in H^1(M;\bR)$ such that $\xi u_1 \cdots u_m \in H^{2m+1}(M;\bR)$.
\end{definition}

Next, we introduce the general notion of $(2,c)$-\textbf{essentialness} used in Theorem~\ref{thm:uniform-non-sharp}, inspired by landmark works Gromov-Lawson and Hanke-Schick~\cites{gromov-lawson-PSC , enlargeability-index-theory }.
First, let us recall a broader notion of enlargeability for homology classes after Brunnbauer-Hanke~\cite{large-and-small}*{Definition 3.1}.
\begin{definition}\label{def:area-enlargeable}
    Let $M$ be a closed oriented manifold.
    We say that a class $A \in H_k(M;\bQ)$ is $\Lambda^2$-\textbf{enlargeable}, or \textbf{area-enlargeable}, if for every Riemannian metric $g$ on $M$ and every $\ve > 0$, there exists a finite cover $\pi: \tilde{M} \to M$ and a smooth map $f : \tilde{M} \to \bS^k$ such that
    \[
    \| \Lambda^2 df \|_{\pi^* g} \leq \ve \qquad \text{and} \qquad f_* (\pi^! A) \neq 0 \quad \text{in } \; H_k(\bS^k;\bQ).
    \]
    Here, $\| \Lambda^2 df \|$ is the $2$-dilation of $f$ and $\pi^!: H_k(M;\bQ) \to H_k ( \tilde{M};\bQ)$ denotes the homology transfer map on the cover $\tilde{M} \to M$.
    For $k \in \{ 0,1\}$, every non-zero class in $H_k(M;\bQ)$ is area-enlargeable.
\end{definition}
In particular, a closed oriented $n$-manifold $M$ is $\Lambda^2$-\textbf{enlargeable}, or area-enlargeable, when its fundamental class $[M] \in H_n(M;\bQ)$ is area-enlargeable in the above sense.
This condition is weaker than the classical notion of enlargeability due to Gromov-Lawson~\cite{gromov-lawson-PSC}, as it only requires maps to $\bS^n$ with arbitrarily small $2$-dilation rather than a small Lipschitz norm.

Accordingly, we introduce the general notion of $(2,c)$-\textbf{essentialness}, which is inspired by the work of Hanke~\cites{hanke, hanke-essentialness} and extends various notions of $2$-essentialness as follows.
\begin{definition}\label{def:(2,c)-essential}
    Let $M^n$ be a closed oriented manifold.
    We say that $M$ is $(2,c)$\textbf{-essential} if there exists a class $c \in H^2(M;\bZ)$, rational classes $\alpha_1, \dots, \alpha_q \in H^2(M;\bQ)$, some $j \in 2\bN_0$, and a non-zero area-enlargeable homology class $A \in H_{n-2q-j}(M;\bQ)$ such that
    \begin{equation}\label{eqn:alpha-q-A-condition-index}
    (\alpha_1 \cdots \alpha_q \smile [e^{c/2} \hat{A}(TM)]_j) \frown [M] = A, \qquad \text{where }\; [e^{c/2} \hat{A}(TM)]_j \in H^j(M;\bQ). \tag{$\cF$}
    \end{equation}
    Here, $\frown$ denotes the cap product in cohomology, defined by $\la \eta, \omega \frown [M] \rg = \la \omega \smile \eta, [M] \rg$, and $\hat{A}(TM) \in H^{4 \bullet}(M)$ is the $\hat{A}$-class of the tangent bundle, defined via power series of Pontryagin classes.
    In the setting of Theorem~\ref{thm:uniform-non-sharp}, we also require $c \equiv w_2(TM) \; (\on{mod} \; 2)$ to define a $\textup{spin}^c$ structure on $M$.
\end{definition}
When the class $A = (\alpha_1 \cdots \alpha_q \smile [e^{c/2} \hat{A}(TM)]_j) \frown [M]$ is enlargeable in the classical Lipschitz sense, we call $M^n$ \textbf{strongly $(2,c)$-essential}.
This property is preserved under products, cf. Proposition~\ref{prop:(2,c)-essential-preserved}.
The $(2,c)$-essential family $\cF$ is bigraded in $(q,j)$ by~\eqref{eqn:alpha-q-A-condition-index}, combining properties of enlargeability, essentialness, and index-non-degeneracy.
For $j=0$, this notion interpolates between $\Lambda^2$ enlargeable manifolds, for $q=0$, and $2$- or $(2,1)$-essential manifolds as in Definition~\ref{def:2-essential}, for $q = \lfloor \frac{n}{2} \rfloor$.
When $q=0$ and $j = 2 \lfloor \frac{n}{2} \rfloor$, the condition~\eqref{eqn:alpha-q-A-condition-index} amounts to $\la [N], \xi \smile e^{c/2} \hat{A}(TN) \rg \neq 0$ for some class $\xi \in H^{n-2 \lfloor \frac{n}{2} \rfloor}(N;\bQ)$ as in Theorem~\ref{thm:optimal-sharp-bound}.
See~\cite{lawson-michelsohn}*{Ch. II, \S 6 and Ch. III, \S 11} for a standard reference on the $\hat{A}$-class.

In the above definitions, the requirements of real, rational, or integral-modulo-torsion classes are equivalent by the continuity of the polynomial $P : H^2(M)^{\oplus m} \ni (x_1, \dots, x_n) \mapsto x_1 \cdots x_n \to H^{2m}(M)$ together with a rational approximation of the $u_i$.
When $\dim M = 2m+1$, the class $\xi$ can likewise be chosen rational using the non-zero linear functional $L: H^1(X;\bR) \ni \eta \mapsto \la \eta u_1\cdots u_m, [X] \rg \in \bR$, with $H^1(X;\bZ)/\textup{tors}$ spanning $H^1(X;\bR)$. 
We shall use this property without further reference.

\subsection{Outline of the arguments}
Our results draw upon a range of techniques from the birational geometry of K\"ahler manifolds and index theory for $\textup{spin}^c$ manifolds.

In Section~\ref{section:alg-geo-preliminaries}, we establish important properties of rational curves on compact K\"ahler manifolds, especially the structure theory~\eqref{eqn:analytic-mori-cone} \!-\! ~\eqref{eqn:length-estimate-KX} for the cone of positive real $(1,1)$-currents.
These techniques are then applied, in Section~\ref{section:birational-proofs}, to prove Theorems~\ref{thm:2-systole-bound} and~\ref{thm:gromov-width-general}, the general form of Theorem~\ref{thm:gromov-width-simple}, by producing special curves or curve families $C \subset X$, with $C \cong \bC \bP^1$, which have small area and hence bound the $2$-systoles and the Gromov width of $X$. 
In Theorem~\ref{thm:projective-theorem}, we obtain a further refinement of this result for projective manifolds and threefolds.
Moreover, Theorem~\ref{thm:scalar-curvature-implies-uniruled} shows that K\"ahler manifolds with positive total scalar curvature are uniruled, hence admit an \textbf{MRC fibration} (cf. Definition~\ref{def:mrc-quotient}), a canonical map to a lower-dimensional preserving $\pi_1$.
Using this construction, we show that uniruled K\"ahler manifolds are not Gromov-essential, thereby proving Corollary~\ref{corollary:gromov} and Gromov's conjecture for K\"ahler metrics.
The MRC fibration is also useful for establishing higher systole bounds under additional geometric structure; see Corollary~\ref{corollary:systole}.
Finally, Theorem~\ref{thm:nef-and-big-constant} is obtained using a compactness property of the K\"ahler cone and of Fano manifolds in a given dimension to control the volume in terms of the scalar curvature, together with the higher stable systole inequalities of Gromov and Bangert-Katz.

In Section~\ref{section:index-theory-preliminaries}, we introduce the important notion of $\hat{A}_c$-cowaist for a manifold $M$ (cf. Definition~\ref{def:both-cowaists}) via the curvature of vector bundles over $M$ subject to an index-theoretic condition for the $\hat{A}$-class.
In Proposition~\ref{prop:bound-stsys-from-cowaist}, we establish a systole-cowaist inequality for all $(2,c)$-essential manifolds of Definition~\ref{def:(2,c)-essential}.
In Proposition~\ref{prop:K-cowaist-bound}, we obtain a sharp scalar curvature inequality for $\textup{spin}^c$-manifolds which implies the sharp bound of Theorem~\ref{thm:optimal-sharp-bound} and its refinement, Theorem~\ref{thm:fano-refinement}, and describe its rigidity case precisely, inspired by work of Cecchini-Hirsch-Zeidler.
Finally, the uniform boundedness Theorem~\ref{thm:uniform-non-sharp} is proved using a general bound for the stable $2$-systole in terms of $\textup{spin}^c$-classes from Lemma~\ref{lemma:uniform-bound-lattice}.

\section{Preliminaries from birational geometry}\label{section:alg-geo-preliminaries}

First, we recall some algebraic and symplectic properties of K\"ahler manifolds.
Let $(X,g)$ be a compact K\"ahler manifold and let $\omega$ be the K\"ahler form of $g$.
The Ricci form $\rho_{\omega}$ is closed, represents the cohomology class $2 \pi c_1(X)$, and satisfies the Chern-Weil identity
\begin{equation}\label{eqn:chern-weil}
    R_{\omega} \omega^n = 2n \rho_{\omega} \wedge \omega^{n-1}, \qquad [\rho_{\omega}] = 2 \pi c_1(X),
\end{equation}
where $n = \dim_{\bC} X$ and $R_{\omega} = 2 g^{i \bar{j}} R_{i \bar{j}} = \textup{scal}(g)$ is the Riemannian scalar curvature, which is equal to twice the complex scalar curvature $g^{i \bar{j}} R_{i \bar{j}}$.
We denote by $\cK_X$ the K\"ahler cone of $X$, namely the open cone $\cK_X \subset H^{1,1}(X;\bR)$ consisting of cohomology classes $\alpha$ which contain a K\"ahler form:
\[
\cK_X := \{ \alpha \in H^{1,1}(X;\bR) : \alpha = [\omega] \; \text{for some K\"ahler form } \omega \}.
\]
Using $\omega^n = n! \, dV_g$ and~\eqref{eqn:chern-weil}, we compute the volume and find that the average scalar curvature is determined by the K\"ahler class, hence purely cohomological cf.~\cites{lebrun-simanca}.
Concretely, we find
\begin{equation}\label{eqn:average-scalar-curvature}
    \on{Vol}(X,\omega) = \frac{1}{n!} \int_X \omega^n = \frac{\alpha^n}{n!}, \qquad \bar{R}(\alpha) := \frac{\int_X R_{\omega} \,d \mu_{\omega}}{\on{Vol}(X,\omega)} = 4 \pi n \frac{c_1(X) \cdot \alpha^{n-1}}{\alpha^n}.
\end{equation}
\begin{lemma}\label{lemma:positive-hodge-pairing}
Let $X$ be a compact K\"ahler manifold of complex dimension $n$.
If $L \in \overline{\cK_X}$ is a nef class and $\alpha \in \cK_X$ is a K\"ahler class with $L \cdot \alpha^{n-1} =0$, then $L = 0 \in H^{1,1}(X;\bR)$ as a real $(1,1)$-class.
\end{lemma}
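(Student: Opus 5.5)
The proof will combine two classical ingredients, applied to $L$ and to the K\"ahler class $\alpha$: the Hodge index theorem (Hodge--Riemann bilinear relations) for real $(1,1)$-classes on a compact K\"ahler manifold, and the fact that a nef class has non-negative intersection with the appropriate powers of $\alpha$.

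First I will show $L^2\cdot\alpha^{n-2} \ge 0$. Since $L$ is nef, $L+\ve\alpha$ is a K\"ahler class for every $\ve>0$. For K\"ahler classes $\beta,\alpha$ the number $\beta^2\cdot\alpha^{n-2}$ is positive, because it is the integral of a wedge of positive forms. Letting $\ve\to 0$ in $(L+\ve\alpha)^2\cdot\alpha^{n-2} > 0$ gives $L^2\cdot\alpha^{n-2}\ge 0$. (When $n=1$ the hypothesis reads $\int_X L = 0$ and $H^{1,1}(X;\bR)\cong\bR$ via integration, so $L=0$ directly; I will assume $n\ge 2$ from now on.)

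Next I will apply the Hodge index theorem. Consider the quadratic form $Q(\beta)=\beta^2\cdot\alpha^{n-2}$ on $H^{1,1}(X;\bR)$. By the Hodge--Riemann bilinear relations, $Q$ is negative definite on the $\alpha$-primitive subspace $P_\alpha=\{\beta:\beta\cdot\alpha^{n-1}=0\}$. The hypothesis $L\cdot\alpha^{n-1}=0$ says exactly that $L\in P_\alpha$. Hence $Q(L)\le 0$, with equality only if $L=0$ as a cohomology class. Combining this with the first step gives $Q(L)=0$, and therefore $L=0$.

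The main point requiring care is the precise form of the Hodge--Riemann relations: for primitive real $(1,1)$-classes $\beta$ on a compact K\"ahler manifold one has $\int\beta^2\wedge\alpha^{n-2}\le 0$, with equality iff $\beta=0$. This holds at the level of harmonic representatives, using the Lefschetz decomposition and the pointwise sign of primitive $(1,1)$-forms. I will cite it as standard (e.g.\ from Voisin or Huybrechts) rather than reprove it. An alternative route avoids the index theorem: write $L=\lim_{\ve\to 0}[\omega_\ve]-\ve\alpha$ and show that a closed positive current $T\in L$ with $\int T\wedge\omega^{n-1}=0$ must vanish, since its trace measure is zero. This uses the fact that a nef class contains a closed positive current, which holds by weak compactness of the forms $\omega_\ve$, whose masses are bounded. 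That argument is more elementary, and I will use it as the fallback.
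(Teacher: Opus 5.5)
Your proof is correct and follows essentially the same route as the paper: nefness gives $L^2\cdot\alpha^{n-2}\ge 0$ (your limit argument via $(L+\ve\alpha)^2\cdot\alpha^{n-2}>0$ just makes this explicit), and primitivity of $L$ together with the Hodge--Riemann bilinear relations forces $L^2\cdot\alpha^{n-2}<0$ unless $L=0$. Your separate handling of $n=1$ and the positive-current fallback (a closed positive current with zero trace mass against $\omega^{n-1}$ vanishes) are both valid, but neither is needed beyond what the paper does.
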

\begin{proof}
Since $L$ is nef, all mixed intersections with K\"ahler classes are non-negative, so $L^2 \cdot \alpha^{n-2} \geq 0$.
Also, $L$ is primitive with respect to $\alpha$ due to $L \cdot \alpha^{n-1} =0$.
By the Hodge index theorem and the Hodge-Riemann bilinear relations~\cite{voisin}*{\S 6.3.2, Theorem 6.32}, the bilinear form $(D,E) \mapsto D \cdot E \cdot \alpha^{n-2}$ on $H^{1,1}(X;\bR)$ has signature $(1,h^{1,1}(X)-1)$.
In particular, it is negative-definite on the primitive real $(1,1)$-classes, so $L \neq 0$ forces $L^2 \cdot \alpha^{n-2} < 0$, a contradiction.
Thus, $L=0$.
\end{proof}
For a compact K\"ahler manifold $X$, Hodge decomposition gives $H^2(X;\bC) = \bigoplus_{p=0}^2 H^{p,2-p}(X)$, with the real and integral $(1,1)$-classes defined by
\[
H^{1,1}(X;\bR) := H^{1,1}(X) \cap H^2(X;\bR), \qquad H^{1,1}(X;\bZ) := H^{1,1}(X) \cap H^2(X;\bZ)/\textup{tors} \,.
\]
Using the isomorphism $H^2(X,\cO_X) \simeq H^{0,2}(X)$ together with the exponential short exact sequence
\[
0 \longrightarrow \underline{\bZ} \longrightarrow \cO_X \xrightarrow{ \exp ( 2 \pi i \cdot )} \cO_X^{\times} \longrightarrow 0,
\]
the Lefschetz theorem on $(1,1)$-classes shows that the first Chern class map $c_1 : H^1(X,\cO_X^{\times}) \to H^2(X;\bZ)$ has image precisely $\textup{Im}(c_1) = H^{1,1}(X) \cap H^2(X;\bZ)/\textup{tors}$, cf.~\cite{voisin}*{\S 7.1.3 and Theorem 11.30}.
Equivalently, every integral $(1,1)$-class is the first Chern class of a holomorphic line bundle.

\begin{definition}\label{def:neron-severi}
Consider the abelian group $N^1(X)_{\bZ} \subset H^{1,1}(X;\bZ)$ given by
    \[
N^1(X)_{\bZ} := \textup{Im} \bigl( c_1 : H^1(X,\cO_X^{\times}) \to H^2(X;\bZ) / \textup{tors} \bigr) = H^{1,1}(X) \cap H^2(X;\bZ) / \textup{tors} \, . 
\]
We define the \textbf{real N\'eron-Severi space} of real divisor classes modulo numerical equivalence as the subgroup $N^1(X)_{\bZ} \otimes_{\bZ} \bR \subset H^{1,1}(X;\bR)$ of real $(1,1)$-classes on $X$. 
The \textbf{effective cone} $\textup{Eff}^1(X) \subset N^1(X)_{\bR}$ is the convex cone generated by numerical classes of effective real divisors, i.e.,
\[
\textup{Eff}^1(X) := \bigl\{ {\textstyle \sum_i} a_i [D_i] : a_i \geq 0 , \; D_i \subset X \; \text{irreducible effective divisor} \} \subset N^1(X)_{\bR}. 
\]
\end{definition}
Next, we recall the notions of nef, pseudo-effective, and big classes.
\begin{definition}\label{def:nef-and-big}
A real $(1,1)$-class $\alpha \in H^{1,1}(X;\bR)$ is called:
\begin{enumerate}[(i)]
    \item \textbf{nef}, if $\alpha \in \overline{\cK_X}$ lies in the closure of the K\"ahler cone.
    \item \textbf{pseudo-effective}, if it contains a closed positive $(1,1)$-current $T$, namely $[T] = \alpha$.
    \item \textbf{big}, if it contains a K\"ahler current, i.e., if there exists a closed positive current $T \in \alpha$, a K\"ahler form $\omega_0$, and a constant $\ve > 0$ such that $T \geq \ve \omega_0$.
\end{enumerate}

\end{definition}
The nef property is equivalent to the statement that for every fixed K\"ahler form $\omega_0$ and every $\ve > 0$, there exists a smooth form $\theta_{\ve} \in \alpha$ such that $\theta_{\ve} \geq - \ve \omega_0$.
It is clear from these definitions that every nef class is pseudo-effective.
Crucially, the Demailly-P\u{a}un theorem~\cite{demailly-paun} shows that a nef class $\alpha \in \overline{\cK_X}$ is big if and only if $\alpha^n > 0$; see also~\cites{demailly , collins-tosatti} for important related properties.

If the manifold $X$ is projective and $\alpha = c_1(L)$ for a line bundle or real divisor class, the nef condition agrees with the algebraic definition $\alpha \cdot C \geq 0$ for every irreducible curve $C \subset X$, namely $\alpha$ is numerically effective.
Moreover, the pseudo-effective cone agrees with $\overline{\textup{Eff}}^1(X)$ in this case, and the duality theorems of Boucksom-Demailly-Peternell-P\u{a}un~\cite{BDPP}*{Theorem 0.2} and Nystr\"om~\cite{nystrom}*{Theorem A} identify $\overline{\textup{Eff}}^1(X)$ with the dual to the cone of movable curves.

\begin{definition}\label{def:uniruled-manifold}
    A compact K\"ahler manifold $X$ is called \textbf{uniruled} if there exists an irreducible compact complex space $Y$ and a dominant meromorphic map $F: \bC \bP^1 \times Y \dashrightarrow X$ that does not factor through the projection to $Y$.
Equivalently, $F|_{\bC \bP^1 \times \{ y\}} : \bC \bP^1 \dashrightarrow X$ is non-constant for general $y \in Y$, so through a general point of $X$ there passes a rational curve.
\end{definition}
Uniruled manifolds have Kodaira dimension $-\infty$ and are described by a key result of Ou~\cite{ou}.
\begin{theorem}[Ou,~\cite{ou}*{Theorem 1.1}]\label{thm:uniruled}
A compact K\"ahler manifold $X$ is uniruled if and only if the canonical line bundle $K_X$ is not pseudo-effective.
\end{theorem}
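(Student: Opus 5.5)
The equivalence splits into two directions of very different difficulty. For the easy implication, that a uniruled $X$ has $K_X$ not pseudo-effective, I would use a covering family of rational curves. By Definition~\ref{def:uniruled-manifold} the maps $\bC\bP^1 \to X$ sweep out a dense subset of $X$. Standard deformation theory (the rational-curve analogue of Mori's bend-and-break set-up) then shows that a general member $f:\bC\bP^1 \to X$ is \emph{free}, meaning $f^*T_X$ is nef with an ample summand $T_{\bC\bP^1}$, so $K_X \cdot f_*[\bC\bP^1] \leq -2$.

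Now suppose $K_X$ contains a closed positive $(1,1)$-current $T$, and write $T = \theta + dd^c\varphi$ locally. By Siu's decomposition, the locus where $\varphi \equiv -\infty$ or where the Lelong numbers of $T$ are positive is a countable union of proper analytic subsets. Since the curves cover $X$, a general member is not contained in this locus. Hence $f^*T$ is a well-defined positive current on $\bC\bP^1$, and
\[
0 \leq \int_{\bC\bP^1} f^*T = K_X\cdot f_*[\bC\bP^1] \leq -2,
\]
which is a contradiction.

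For the converse I would argue contrapositively, by a slope argument.

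\textbf{Step 1 (duality).} Show that if $K_X$ is not pseudo-effective, then there is a Kähler class $\alpha$ with $K_X\cdot\alpha^{n-1}<0$, or at least a movable class with this property. By Lamari/Hahn--Banach duality, the pseudo-effective cone is dual to the cone of classes of smooth closed positive $(n-1,n-1)$-forms. So non-pseudo-effectivity gives such a form $\Theta$ with $K_X\cdot[\Theta]<0$. The remaining task is to replace $[\Theta]$ by a class of the form $\alpha^{n-1}$, or by a limit of such classes.

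\textbf{Step 2 (destabilizing foliation).} Given $\alpha$ with $\mu_\alpha(T_X)=-K_X\cdot\alpha^{n-1}/n>0$, take the maximal destabilizing subsheaf $\cF\subset T_X$ of the Harder--Narasimhan filtration with respect to $\alpha^{n-1}$. Its minimal slope is positive, so the O'Neill tensor $\Lambda^2\cF\to T_X/\cF$ vanishes for slope reasons, and $\cF$ is a foliation.

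\textbf{Step 3 (algebraicity and rational connectedness).} Apply the Campana--Păun criterion, which is valid for movable classes on compact Kähler manifolds: a foliation whose minimal slope with respect to a movable class is positive has algebraic leaves, and the closures of its leaves are rationally connected. In particular a rational curve passes through a general point, so $X$ is uniruled.

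I expect Step 1 to be the main obstacle. The full transcendental BDPP duality, identifying the dual of the pseudo-effective cone with the movable cone, is not available for general compact Kähler manifolds. I would first try to avoid needing the whole duality and instead exploit the specific class $K_X$. The idea is to run Demailly's mass-concentration, or Xiao/Popovici-type Morse inequalities, on $\alpha_t=\omega+tK_X$ near the pseudo-effective threshold $t_0$. One then shows that the derivative of $\mathrm{vol}(\alpha_t)$ at $t_0$, which is proportional to $K_X\cdot\langle\alpha_{t_0}^{n-1}\rangle$, must be negative, producing a movable class pairing negatively with $K_X$. If this breaks down, the fallback is a dimension induction via the MRC/algebraic reduction, using Brunella's theorem in dimension three as the base case.
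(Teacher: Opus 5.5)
The paper does not prove this statement; it quotes it from Ou and only records its history. So the question is whether your sketch proves Ou's theorem. It does not. The easy direction is fine after a small repair, but the hard direction has a genuine gap at your Step~1, which is where the theorem has its content.

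Lamari's duality only gives you a strictly positive $\partial\bar\partial$-closed $(n-1,n-1)$-form $\Theta$ (equivalently $\omega_G^{n-1}$ for a Gauduchon metric $\omega_G$) with $K_X\cdot[\Theta]<0$. Replacing $[\Theta]$ by a movable class that still pairs negatively with $K_X$ is the transcendental BDPP duality for the class $K_X$; here a movable class means a limit of classes $\mu_*(\tilde\alpha^{n-1})$ on modifications. On projective manifolds this is exactly what the BDPP duality theorem supplies. Its absence is one of the main reasons the criterion stayed open beyond the projective case.

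Your proposed way around it does not close the gap. The formula $\tfrac{d}{dt}\mathrm{vol}(\alpha_t)=n\,\langle\alpha_t^{n-1}\rangle\cdot K_X$ is the differentiability of the volume of transcendental classes. This is tied to Demailly's conjectural transcendental Morse inequality, and it is known on projective manifolds (Witt Nystr\"om) but not on general compact K\"ahler manifolds. The weak transcendental Morse inequalities available there (Xiao, Popovici) give bigness criteria for differences of nef classes, not this derivative formula. Moreover, at the threshold $t_0$ the class is not big, so the argument would have to be run on big classes $t<t_0$ using concavity of $\mathrm{vol}^{1/n}$, and that again needs exactly the missing derivative.

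The fallback is not yet an argument. The MRC fibration is defined through rational curves, so for a non-uniruled $X$ (the case you must exclude) it is the identity. Neither it nor the algebraic reduction obviously passes non-pseudo-effectivity of $K_X$ down to lower-dimensional pieces. To close the gap you must either prove Step~1, or bypass it by running Steps~2--3 directly with the Gauduchon polarization $\omega_G$ (slopes and Harder--Narasimhan filtrations make sense for Gauduchon metrics). The bypass would require an algebraicity and rational-connectedness criterion for foliations of positive minimal slope with respect to such a polarization, and your proposal supplies neither.

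Two further points.

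\emph{Step~3.} You assert the Campana--P\u{a}un criterion for transcendental movable classes on non-projective K\"ahler manifolds, but their argument is written for projective varieties, so this generality needs a reference or a proof. In the paper's own use of the theorem, in the proof of Theorem~\ref{thm:scalar-curvature-implies-uniruled}, non-pseudo-effectivity is witnessed by the K\"ahler class $[\omega]$ itself. Step~1 is therefore vacuous there, and a K\"ahler-class version of Steps~2--3 would suffice if available.

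\emph{Easy direction.} Siu's decomposition controls the upper level sets of Lelong numbers, not the polar set $\{\varphi=-\infty\}$. That set is only pluripolar and need not lie in a countable union of proper analytic subsets. The repair: $f^*T$ is well defined, with mass $K_X\cdot f_*[\bC\bP^1]$, as soon as $f(\bC\bP^1)\not\subset\{\varphi=-\infty\}$. This holds for some free member of the family: pull $\varphi$ back to the total space of the dominant family, where it is not identically $-\infty$, and apply Fubini. Alternatively, use Demailly regularization $T_\varepsilon\geq-\varepsilon\omega$ with analytic singularities and let $\varepsilon\to0$. With this fix the easy direction is the standard argument.
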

Theorem~\ref{thm:uniruled} has a long history.
Yau~\cite{yau-surfaces} proved this result for surfaces, and Boucksom--Demailly--P\u{a}un--Peternell proved it for projective manifolds and conjectured the general result, cf.~\cite{BDPP}*{Conjecture 0.1, Theorem 0.2, and Corollary 0.3}.
Following further progress by Cao--H\"oring and Hacon--P\u{a}un~\cites{cao-horing , Hacon-Paun }, Ou~\cite{ou} established Theorem~\ref{thm:uniruled} in full generality.

The \textbf{Mori cone of curves} of a compact K\"ahler manifold $X$ is the convex cone $\textup{NE}(X)$ defined as $\{ \sum a_i [C_i] : a_i \in \bR_{\geq 0 }\}$, where the $C_i$ are irreducible, reduced, proper curves on $X$.
For projective manifolds, Kleiman's criterion identifies the algebraic ample cone with the interior of the dual of the closed Mori cone, and Mori's cone theorem describes the structure of $\overline{\textup{NE}}(X)$~\cites{kollar-mori}.
For arbitrary real K\"ahler classes on a general K\"ahler manifold, the appropriate dual object is the enlarged cone $\overline{\textup{NA}}(X) \subset H^{1,1}(X;\bR)^{\vee}$ generated by the classes of positive closed real $(n-1,n-1)$-currents, namely of bidimension $(1,1)$, cf.~\cite{horing-peternell}*{Definition 3.8}.
The analytic version of Kleiman's criterion characterizes the nef cone as the dual of the analytic cone, namely $\overline{\cK_X} = \overline{\textup{NA}}(X)^{\vee}$, cf.~\cite{demailly-paun}.

The K\"ahler cone theorem is the analytic analogue of Mori's cone theorem, expressing $\overline{\textup{NA}}(X)$ as
\begin{equation}\label{eqn:analytic-mori-cone}
\overline{\textup{NA}}(X) = \overline{\textup{NA}}(X)_{K_X \geq 0} + \sum_i \bR_{\geq 0} [\Gamma_i]
\end{equation}
where $\overline{\textup{NA}}(X)_{K_X \geq 0}$ denotes the closed sub-cone of classes that pair non-negatively with the canonical class $K_X$, and $\{ \Gamma_i \}$ is a countable collection of rational curves satisfying $0 < - K_X \cdot \Gamma_i \leq 2n$ and spanning the part of $\textup{NA}(X)$ that pairs negatively with $K_X$, see~\cites{demailly , Hacon-Paun}.
An irreducible rational curve $C \subset X$ is called \textbf{extremal} if its numerical class spans one of the $K_X$-negative extremal rays in the decomposition~\eqref{eqn:analytic-mori-cone} and $C$ has minimal anticanonical degree among rational curves on that ray.
This means that $[C] \in \bR_+ [\Gamma_i]$, for some $i$, and 
\begin{equation}\label{eqn:minimum-extremal}
- K_X \cdot C = \min \{ - K_X \cdot \Gamma : \Gamma \; \text{irreducible rational curve with} \; [\Gamma] \in \bR_+ [\Gamma_i] \}.
\end{equation}
The minimum in~\eqref{eqn:minimum-extremal} exists and is positive because it contains $\Gamma_i$, and $- K_X \cdot \Gamma \in \bN^*$ for irreducible rational curves on a $K_X$-negative ray, since $K_X$ is a line bundle.
In the projective case, Mori's bend-and-break and contraction theorems (see~\cites{kawamata , miyaoka-mori , kollar-mori , lazarsfeld }) imply the length estimate
\begin{equation}\label{eqn:length-estimate-KX}
    0 < - K_X \cdot C \leq n+1.
\end{equation}
For general K\"ahler manifolds, the properties~\eqref{eqn:analytic-mori-cone} and~\eqref{eqn:length-estimate-KX} were proved for $n \leq 3$ by H\"oring-Peternell~\cite{horing-peternell}.
Recently, Hacon--P\u{a}un obtained the cone decomposition~\eqref{eqn:analytic-mori-cone} for pairs $(X,B+\beta)$ with Kawamata log terminal (klt) singularities and $X$ a compact $\bQ$-factorial K\"ahler variety~\cite{Hacon-Paun}*{Theorem 0.5} conditionally on the K\"ahler uniruledness criterion, now proved as Theorem~\ref{thm:uniruled}; therefore, the result of Ou~\cite{ou} establishes the decomposition~\eqref{eqn:analytic-mori-cone} for smooth K\"ahler manifolds $X$, with $B = \beta = 0$.
Finally, recent work of Liu~\cite{liu-kahler}*{Proposition 3.2} uses the Moishezon space results of Fujiki~\cites{fujiki , kollar-moishezon } to extend the Ionescu–Wi\'sniewski inequality to this general setting, proving that $0 < - K_X \cdot C \leq n+1$ for any extremal rational curve on $X$ in the sense of~\eqref{eqn:minimum-extremal}.
Consequently,~\eqref{eqn:length-estimate-KX} holds for extremal rational curves on arbitrary compact K\"ahler manifolds.

Next, we record some properties of Fano manifolds, namely compact K\"ahler manifolds with $c_1(X)>0$.
Examples of Fano manifolds include smooth complex projective space $\bC \bP^n$ and complete intersections of total degree at most $n$.
Notably, Fano twofolds are del Pezzo surfaces, isomorphic to either $\bC \bP^1 \times \bC \bP^1$, or $\bC \bP^2$ blown up in at most eight points in general position, hence rational.
Mori and Mukai showed that any smooth Fano $3$-fold with $b_2 \geq 6$ is a product $S \times \bC \bP^1$ with $S$ a del Pezzo surface, and Casagrande proved that Fano $4$-folds with $b_2 \geq 10$ are products of del Pezzo surfaces~\cites{casagrande-1}.

\begin{lemma}\label{lemma:volume-of-fano}
    For every $n$, there exists a dimensional constant $C_n$ such that every smooth Fano complex $n$-fold $X$, meaning an $n$-dimensional  compact K\"ahler manifold with $c_1(X) > 0$, satisfies
    \[
    c_1(X)^n \leq C_n \qquad \text{and} \qquad b_i(X) \leq C_n.
    \]
    Moreover, $X$ is projective, its Dolbeault cohomology has $h^{p,0} = h^{0,p} = 0$ for all $1 \leq p \leq n-1$, and $\chi(X,\cO_X) = 1$.
    In particular, the second Betti number satisfies $b_2(X) = h^{1,1}(X)$.
\end{lemma}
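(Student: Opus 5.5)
The lemma combines several classical facts about Fano manifolds, and I plan to prove its assertions in order of increasing depth: projectivity, the Hodge vanishing, $\chi(X,\cO_X)=1$, and $b_2 = h^{1,1}$ first, then boundedness.

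For projectivity, note that $c_1(X) = c_1(K_X^{-1})$ is an integral class represented by a positive $(1,1)$-form. The Kodaira embedding theorem then shows that $K_X^{-1}$ is ample, so $X$ is projective.

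For the Hodge numbers with $1\le p\le n$, Kodaira--Nakano vanishing applied to $K_X^{-1}$ gives
\[
H^q(X,\cO_X) = H^q(X, K_X \otimes K_X^{-1}) = 0 \quad\text{for } q>0.
\]
So $h^{0,q}=0$ for all $q\geq 1$, and Hodge symmetry $h^{p,0}=h^{0,p}$ gives the vanishing of $h^{p,0}$. The range $1\le p\le n-1$ in the statement is covered; in fact $h^{0,n}=0$ as well. It follows that $\chi(X,\cO_X) = h^{0,0} = 1$, using that $X$ is connected. Finally, the Hodge decomposition $b_2 = h^{2,0}+h^{1,1}+h^{0,2}$ together with $h^{2,0}=h^{0,2}=0$ yields $b_2=h^{1,1}$, valid for $n\ge 2$. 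For $n=1$, $X=\bC\bP^1$ and the statement is trivial.

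The uniform bounds are the substantive part, and I would cite boundedness of smooth Fano $n$-folds rather than reprove it. This is Kollár--Miyaoka--Mori: via rational chain connectedness, $X$ is rationally chain connected by rational curves of bounded $-K_X$-degree. That gives $(-K_X)^n \le (n(n+1))^n$ for smooth Fano $n$-folds, so $c_1(X)^n \le C_n$. Moreover, smooth Fano $n$-folds form a bounded family: by Matsusaka's big theorem, or by the effective very-ampleness of $-mK_X$ with $m$ depending only on $n$, they all embed in a fixed $\bP^N$ with Hilbert polynomial taking finitely many values. Hence they lie in finitely many components of a Hilbert scheme. Over each component, Ehresmann's theorem on the smooth locus and Noetherian stratification show that only finitely many diffeomorphism types occur. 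Therefore the Betti numbers take finitely many values, giving $b_i(X)\le C_n$.

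The main obstacle is this boundedness step. It is a deep external input, and the only care needed is to check that the degree bound plus Matsusaka's theorem, via a uniform $m$ with $-mK_X$ very ample, yields a bounded family. Everything else is standard Hodge theory.
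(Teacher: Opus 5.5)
Your proof is correct and follows essentially the same route as the paper: you cite boundedness of smooth Fano $n$-folds (Koll\'ar--Miyaoka--Mori), apply Ehresmann on finitely many smooth strata to bound the Betti numbers, and use Kodaira vanishing with $K_X^{-1}$ ample to get $h^{0,q}=0$ and $\chi(X,\cO_X)=1$. The only difference is the order of the boundedness step: the paper obtains $c_1(X)^n\le C_n$ from boundedness via local constancy of Chern numbers, whereas you cite the explicit bound $(-K_X)^n\le (n(n+1))^n$ and then derive boundedness from it via uniform very ampleness.
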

\begin{proof}
In every given dimension $n$, the space of smooth Fano $n$-folds is bounded, in the sense that they occur in only finitely many algebraic families; this fact was proved in a series of works by Campana, Nadel, and Koll\'ar-Miyaoka-Mori, with later contributions by many other authors~\cites{ campana,  nadel-boundedness , classification-of-flips , kollar-miyaoka-mori}, whereby every Fano $n$-fold occurs as a fiber $\cX_s$ of a projective morphism $\pi : \cX \to S$ with $S$ of finite type.
In every smooth proper family $\pi: \cX \to S$, the number $\int_{\cX_s} c_1(T_{\cX_s})^n$ is a Chern number, hence locally constant on $S$.
Thus, there are only finitely many families, and after stratifying the bases there are only finitely many connected components to consider.
The set of possible values of $c_1(X)^n = (- K_X)^n$ is hence finite, so it attains a finite maximum $C_n$, and $c_1(X)^n \leq C_n$ for every $X$.

To bound the Betti numbers, after replacing $S$ by finitely many locally closed strata, we may assume that $\pi: \cX \to \cS$ is smooth and proper over each stratum, so Ehresmann's fibration theorem implies that $\pi$ is a $C^{\infty}$-locally trivial fiber bundle on every connected component of such a stratum.
Any two fibers over the same connected component are diffeomorphic, hence homotopy equivalent, so they have the same Betti numbers.
Since $S$ has finite type, it has only finitely many irreducible components, and after the above finite stratification, only finitely many connected strata occur. 
Consequently, the Betti numbers of smooth Fano $n$-folds take only finitely many values, and $b_i(X) \leq C_n$ for every $i$.

Moreover, Fano manifolds are rationally connected by~\cites{campana , kollar-miyaoka-mori}, hence $H^0(X, \Omega^p_X) = 0$ for all $p > 0$; equivalently, $h^{p,0}(X) = 0$ for all $p > 0$, thus also $h^{0,p}(X) = 0$ by Hodge symmetry.
The latter property also follows from $H^q(X,\cO_X) = 0$ for $q>0$ by applying the Kodaira-Akizuki-Nakano vanishing theorem $H^q(X, K_X \otimes L) = 0$ with $L = - K_X$ ample.
In particular, $b_2(X) = h^{1,1}(X)$.
Since $H^0(X,\cO_X) = \bC$ on a compact manifold, the Euler characteristic is $\chi(X,\cO_X) = 1$ as claimed.
\end{proof}
For Fano manifolds with second Betti number $b_2(X) = 1$, we consider the following index.
\begin{definition}\label{def:fano-index}
For $X^n$ a smooth projective Fano manifold with $h^{1,1}(X) = 1$, the \textbf{Fano index} $i_X \in \bN^*$ is defined as the largest positive integer dividing the anticanonical class $- K_X$ in $H^{1,1}(X;\bZ)$.
Namely, if $H$ is the primitive ample generator of $H^{1,1}(X;\bZ)$, we have $- K_X = i_X H$.
\end{definition}
The classification of Fano manifolds with $b_2(X) = 1$ by their Fano index was developed by Kobayashi-Ochiai, Fujita, Iskovskikh, and Mukai, cf.~\cites{fujita-book , mukai-1989 , iskovskikh  }, whose results we collect as follows.
\begin{theorem}\label{thm:fano-index-classification}
    For $n \geq 3$, let $X^n$ be a Fano $n$-fold with $b_2(X) = 1$ and Fano index $i_X$.
    We have:
    \begin{enumerate}[(i)]
        \item If $i_X = n+1$, then $X \simeq \bC \bP^n$ is the complex projective space.
        \item If $i_X = n$, then $X \simeq Q^n$ is the complex quadric.
        \item If $i_X = n-1$, then $X$ is a del Pezzo manifold, namely one of the following:
        \begin{equation}\label{eqn:del-pezzo-list}
        X_6 \subset \bP (1^n, 2, 3), \qquad X_4 \subset \bP(1^{n+1}, 2), \qquad X_3 \subset \bC \bP^{n+1}, \qquad X_{2,2} \subset \bC \bP^{n+2}, 
        \end{equation}
        or the Grassmannian section $\bG(2, \bC^5) \cap \bC \bP^{n+3} \subset \bC \bP^9$ for $3 \leq n \leq 6$.

        \item If $i_X = n-2$, then $X$ is a Mukai manifold, namely one of the manifolds
        \begin{equation}\label{eqn:mukai-list}
        X_6 \subset \bP(1^{n+1}, 3), \qquad X_4 \subset \bC \bP^{n+1} , \qquad X_{2,3} \subset \bC \bP^{n+2}, \qquad X_{2,2,2} \subset \bC \bP^{n+3},
        \end{equation}
        or one of six exceptional families arising in dimension $n \leq 10$, cf.~\cite{bayer-kuznetsov}*{Table 1}.
    \end{enumerate}
    Here, $X_{d_1, \dots, d_r} \subset \bC \bP^N$ is the $(N-r)$-dimensional smooth complete intersection cut out by equations of degrees $d_1, \dots, d_r$, and $X_d \subset \bP(a_0, \dots, a_N)$ is a degree-$d$ hypersurface in the weighted projective space $\textup{Proj} \, \bC [x_0, \dots, x_N]$ with $\deg x_i = a_i$.
    For $n=3$, this list classifies all Fano threefolds with $b_2(X)=1$.

    Moreover, a general Fano $n$-fold among $\{ X_3, X_4, X_6 \}$ admits a K\"ahler-Einstein metric, and every $X_{2,2}$ admits such a metric.
    Also, $\bG(2,\bC^5) \cap \bC \bP^{n+3}$ admits such a metric precisely for $n \in \{ 3,6 \}$.
\end{theorem}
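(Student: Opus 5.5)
This theorem is a compilation of known classification results rather than something to be proved from scratch, so the plan is to assemble it from the literature in the order of the statement, taking the Fano index as the organizing invariant.

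\emph{Step 1: index bounds and the top two cases.} First recall that $i_X \leq n+1$. Granting the length estimate~\eqref{eqn:length-estimate-KX}, for an extremal rational curve $C$ we have $i_X (H\cdot C) = -K_X\cdot C \leq n+1$ with $H\cdot C \geq 1$, which gives $i_X \leq n+1$. Cases (i) and (ii) are then the Kobayashi--Ochiai theorem. I would sketch it as follows: compute $\chi(X, tH)$, a polynomial in $t$ of degree $n$. Kodaira vanishing gives $\chi(X,tH)=0$ for $-i_X<t<0$, and $\chi(X,\cO_X)=1$ by Lemma~\ref{lemma:volume-of-fano}. This forces $\chi(X,tH)=\binom{t+n}{n}$ when $i_X=n+1$, so $H^n=1$ and $h^0(H)=n+1$. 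Hence $|H|$ gives an isomorphism onto $\bC\bP^n$. When $i_X=n$, the same computation gives $H^n=2$ and $h^0(H)=n+2$, and $|H|$ embeds $X$ as a quadric in $\bC\bP^{n+1}$.

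\emph{Step 2: del Pezzo and Mukai manifolds.} For $i_X=n-1$ I would cite Fujita's classification of del Pezzo manifolds via the $\Delta$-genus, organized by the degree $d=H^n\in\{1,\dots,5\}$. The case $d=1$ gives $X_6\subset\bP(1^n,2,3)$, $d=2$ gives $X_4$, $d=3$ the cubic, $d=4$ the intersection $X_{2,2}$, and $d=5$ the linear sections of $\bG(2,5)$, which exist only for $n\leq 6$. For $i_X=n-2$ I would cite Mukai's classification, which rests on the existence of a smooth ladder (Mella) and the Iskovskikh classification for threefolds, listed as in~\cite{bayer-kuznetsov}. For $n=3$ the four index values $4,3,2,1$ exhaust all possibilities, and $i_X=1$ is covered by Iskovskikh's list, so the statement that the list is complete for $n=3$ should be read with that citation.

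\emph{Step 3: K\"ahler--Einstein metrics.} For hypersurfaces of Fano index $1,2$ in (weighted) projective space, a general member carries a K\"ahler--Einstein metric by Tian's $\alpha$-invariant bounds and their extensions, or by K-stability via the Chen--Donaldson--Sun theorem. I would also note that smooth Fano hypersurfaces $X_d\subset\bC\bP^{n+1}$ with $d\geq3$ are now known to be K-stable. Every smooth $X_{2,2}$ is K-polystable, as shown by Arezzo--Ghigi--Pirola or by the explicit diagonal form. For the Grassmannian sections, $V_5$ (with $n=3$) is K\"ahler--Einstein by Donaldson and Cheltsov--Shramov, and $\bG(2,5)$ itself is homogeneous. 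The remaining sections with $n=4,5$ have non-reductive automorphism groups and so are excluded by Matsushima's obstruction.

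\emph{Main obstacle.} The real work lies in Fujita's and Mukai's classifications, including Mella's ladder theorem, and in the K-stability inputs. These cannot be reproved here and must be taken from the literature. The only point needing care is matching precise conventions, such as ``general'' versus ``every'' and the dimension ranges.
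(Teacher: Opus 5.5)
Your route is the paper's. The paper also takes the classification (Kobayashi--Ochiai, Fujita, Iskovskikh, Mukai) as a citation and argues only the K\"ahler--Einstein clause. It uses Chen--Donaldson--Sun with a K-stability criterion for the (weighted) hypersurfaces, Arezzo--Ghigi--Pirola for $X_{2,2}$, homogeneity of $\bG(2,\bC^5)$, K-polystability for $n=3$, and Fujita's K-instability for $n\in\{4,5\}$. Your use of Matsushima's reductivity obstruction there is an equally valid substitute.

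Your Riemann--Roch sketch for (i)--(ii) is a harmless addition, with two caveats. For $i_X=n$ you also need Serre duality to pin down the last linear factor. And passing from $H^n=1$, $h^0(H)=n+1$ (or $H^n=2$, $h^0(H)=n+2$) to an isomorphism is exactly the base-point-freeness content of Kobayashi--Ochiai.

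The step that would fail as written is the appeal to Tian's $\alpha$-invariant in Step 3. If $-K_X=i_XH$ and $H_0\in|H|$, then
\[
\alpha(X)\le \textup{lct}(X,i_XH_0)=\textup{lct}(X,H_0)/i_X\le 1/i_X,
\]
so Tian's criterion $\alpha(X)>n/(n+1)$ can only apply when $i_X=1$. The hypersurfaces $X_3,X_4,X_6$ in the statement have index $n-1$ or $n-2$; your label ``index $1,2$'' is right only for $n=3$. So the plain $\alpha$-invariant says nothing about any del Pezzo manifold, nor about Mukai manifolds with $n\ge4$.

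You need a genuine K-stability input for these (weighted) hypersurfaces instead. Options are a criterion of the kind the paper cites, $\delta$-invariant estimates, or $\alpha_G$ for a highly symmetric member combined with openness of K-stability to reach the general member.

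Likewise, do not rest anything on the claim that all smooth Fano hypersurfaces of degree $d\ge3$ are known to be K-stable. As far as I know this is established only in special ranges: index one, index two, cubic threefolds and fourfolds. In any case it does not cover the weighted hypersurfaces $X_4\subset\bP(1^{n+1},2)$, $X_6\subset\bP(1^n,2,3)$ and $X_6\subset\bP(1^{n+1},3)$.
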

\begin{proof}
We only comment on the K\"ahler-Einstein properties of the del Pezzo and Mukai $n$-folds described above.
By the Chen-Donaldson-Sun proof of the Yau-Tian-Donaldson theorem, the existence of such a metric is equivalent to $K$-stability.
A general degree-$d$ hypersurface $X_d \subset \bC \bP^{n+1}$ has
\begin{equation}\label{eqn:adjunction-for-h}
    K_X =( K_{\bC \bP^{n+1}} + X)|_X = ( - (n+2) + d) H, \qquad H = \cO_{\bC \bP^{n+1}}(1)|_X
    \end{equation}
by adjunction, as in Lemma~\ref{lemma:systoles-of-fano}.
The hard and weak Lefschetz theorems and the Lefschetz hyperplane theorem~\cite{voisin}*{Ch. 3 and Theorem 6.25} show that $H^2(X;\bZ) \cong H^2(\bC \bP^{n+1};\bZ) \cong \bZ$ is generated by the hyperplane class $h := c_1 ( \cO_{\bC \bP^{n+1}}(1)|_X)$.
    In particular, $b_2(X) = 1$ and $h$ is a primitive integral class, so~\eqref{eqn:adjunction-for-h} implies $c_1(X)= (n+2-d) h$ and hence $X_d$ has index $i_X = n+2-d$.
    This produces the cubic and the quartic for $i_X = n-1$ and $i_X = n$, respectively; the corresponding properties for the weighted projective surface $X_6, X_4, X_{2,2}, X_{2,3}, X_{2,2,2}$ are obtained analogously.

    For $X_3, X_4, X_6$ in the weighted projective spaces of~\eqref{eqn:del-pezzo-list} and~\eqref{eqn:mukai-list}, the divisibility condition of~\cite{k-stable-hypersurfaces} holds, so they are $K$-stable.
    Moreover, every $X_{2,2}$ is $K$-stable by~\cite{arezzo-pirola}*{Corollary 3.1}.
    The Grassmannian sections $\bG(2,\bC^5) \cap \bC \bP^{n+3}$ are unstable for $n \in \{4,5\}$, by Fujita, $K$-polystable for $n=3$, and $\bG(2,\bC^5) \cap \bC^9 = \bG(2,\bC^5)$  is the homogeneous Grassmannian, hence K\"ahler-Einstein.
    We note that the corresponding characterization for the Mukai list~\eqref{eqn:mukai-list} is open except in low dimensions.
\end{proof}
In Theorems~\ref{thm:2-systole-bound} and~\ref{thm:fano-refinement}, we examine the smooth quadric $Q^n \subset \bC \bP^{n+1}$, for example the complex $n$-fold given by $Q^n := \{ [z] \in \bC \bP^{n+1} : \la z, z \rg = 0 \}$.
This satisfies the diffeomorphism
\begin{equation}\label{eqn:oriented-grassmannian-quadric}
    \widetilde{\bG} (2 , n+2) \cong Q^n \cong \textup{SO}(n+2) / (\textup{SO}(n) \times \textup{SO}(2) ) \, .
\end{equation}
where $\widetilde{\bG}(k,n)$ is the Grassmannian of oriented $k$-planes in $\bR^n$.
We observe the following properties.
\begin{lemma}\label{lemma:systoles-of-fano}
    Consider a Fano $n$-fold $X$ with $b_2(X) = 1$ and Fano index $i_X$.
    Let $H$ be the primitive ample generator of $H^{1,1}(X;\bZ)$, and let $\omega_0$ be a K\"ahler-Einstein metric on $X$ with $[\omega_0] = \pi H$.
    Then,
    \[
    R_{\omega_0} = 4 n i_X, \qquad \pi \leq \textup{stsys}_2(X,\omega_0) \leq \textup{sys}_2(X,\omega_0) \leq \min \{ \textup{sys}_{\pi_2}(X,\omega_0), \textup{sys}^{\textup{hol}}_2(X,\omega_0) \},
    \]
    and all $2$-systoles are equal to $\pi$ if and only if $X$ contains a rational curve $C \simeq \bC \bP^1$ with $H \cdot C=1$.

    In particular, all $2$-systoles are equal to $\pi$ if $n \geq 3$ and $X$ is $\bC \bP^n$, a complex quadric $Q^n$, a K\"ahler-Einstein del Pezzo $n$-fold, or a K\"ahler-Einstein Mukai $n$-fold.
    If $n \geq 6$, the same property holds for Fano manifolds of index $i_X = n-3$.
    On $Q^n$, the metric $\omega_0 = i^* \omega_{\textup{FS}}$ satisfies these properties, where $\omega_{\textup{FS}}$ is the Fubini-Study metric on $\bC \bP^n$ and $i: Q^n \hookrightarrow \bC \bP^{n+1}$ is the standard embedding.
\end{lemma}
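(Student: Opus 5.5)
The plan has three parts: compute the scalar curvature from the Einstein condition, obtain the lower bound $\pi$ from a calibration-type argument, and get the upper bounds and the equality criterion from a line through each curve.

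\emph{Scalar curvature.} Since $\omega_0$ is Kähler--Einstein with $c_1(X) = i_X H > 0$, we have $\rho_{\omega_0} = \lambda \omega_0$. Taking cohomology classes gives $2\pi i_X H = \lambda \pi H$, so $\lambda = 2 i_X$. The Chern--Weil identity~\eqref{eqn:chern-weil} then gives $R_{\omega_0}\omega_0^n = 2n\lambda\,\omega_0^n$, hence $R_{\omega_0} = 4 n i_X$. As a check, $\bC\bP^n$ with $i_X = n+1$ recovers $R_{\textup{FS}} = 4n(n+1)$.

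\emph{Lower bound $\pi \leq \textup{stsys}_2$.} I would use Federer's duality~\eqref{eqn:stable-chain-norm} together with Wirtinger's inequality. The Kähler form satisfies $\|\omega_0\|_* = 1$, since it calibrates complex lines, so $\|[\omega_0]\|_{\textup{cm}} \leq 1$. Now let $a \in H_2(X;\bZ)$ have $a_\bR \neq 0$. Because $b_2 = 1$ and $H^2(X;\bZ)/\textup{tors} = \bZ H$ (using $h^{2,0}=0$ from Lemma~\ref{lemma:volume-of-fano}), the pairing $\langle H, a\rangle$ is a nonzero integer. Hence
\[
\|a\|_{\textup{st}} \geq |\langle [\omega_0], a\rangle| = \pi\,|\langle H, a\rangle| \geq \pi .
\]
The chain $\textup{stsys}_2 \leq \textup{sys}_2 \leq \textup{sys}_2^{\textup{hol}}$ was already noted after Definition~\ref{def:systoles}. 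For $\textup{sys}_2 \leq \textup{sys}_{\pi_2}$, I would use the Hurewicz map. An essential sphere $f$ has $f_*[\bS^2] \in H_2(X;\bZ)$, and $X$ is simply connected (Fano, rationally connected), so Hurewicz gives $\pi_2(X) \cong H_2(X;\bZ)$. Then $[f] \neq 0$ implies $f_*[\bS^2] \neq 0$ in $H_2$. Torsion is the only issue here. I would handle it either by noting that $H_2(X;\bZ)$ is torsion-free for these $X$, or more robustly by applying the same calibration bound directly to spheres. If $f_*[\bS^2]$ is torsion, then $\langle \omega_0, f_*[\bS^2] \rangle = 0$, which only gives area $\geq 0$. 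This torsion case is the main obstacle; I expect to resolve it via simple connectivity and $H_2(X;\bZ) \cong \pi_2(X)$ torsion-free, using $H_1 = 0$ and universal coefficients to get $H^2$ torsion-free.

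\emph{Equality criterion.} If $C \cong \bC\bP^1$ with $H\cdot C = 1$, then $C$ is holomorphic, so $\textup{Area}(C) = \int_C \omega_0 = \pi$. The class $[C]$ is nonzero and $C$ is an essential sphere, so $\textup{sys}_{\pi_2}$ and $\textup{sys}_2^{\textup{hol}}$ are both $\leq \pi$, and the chain forces all systoles to equal $\pi$. Conversely, suppose all systoles equal $\pi$. Then $\textup{sys}_2^{\textup{hol}} = \pi$. Intersection numbers $H\cdot C$ are positive integers, so the infimum is attained by an irreducible curve $C$ with $H\cdot C = 1$. Such a curve is a line, and the remaining point is that it is smooth rational. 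I would argue this via the embedding given by a high multiple of $H$ or by degree considerations; for the listed cases the lines are visibly $\bC\bP^1$.

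\emph{Specific examples.} For $\bC\bP^n$ and $Q^n$ (with $n\geq 3$), lines exist, and $\omega_0 = i^*\omega_{\textup{FS}}$ is Kähler--Einstein since $Q^n$ is Hermitian symmetric. For del Pezzo and Mukai $n$-folds with $n\geq 3$, and for index $n-3$ with $n \geq 6$, I would use the standard fact that a Fano manifold with $b_2 = 1$ and index $i_X \geq (n+1)/2$ is covered by lines, i.e.\ rational curves of $H$-degree one. Equivalently, one can use the existence of minimal rational curves of $-K_X$-degree $i_X$ whenever the index is large relative to $n$, which forces $H\cdot C = 1$.
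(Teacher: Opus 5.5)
Your computation $R_{\omega_0}=4ni_X$, the Wirtinger/Federer-duality lower bound $\pi\le\textup{stsys}_2$, the Hurewicz comparison, and the ``if'' direction via a holomorphic line of area $\pi$ are the same as in the paper. The genuine gap is in the examples. Your criterion ``$i_X\ge(n+1)/2$ implies $X$ is covered by lines'' reaches Mukai $n$-folds only for $n\ge5$ and index-$(n-3)$ manifolds only for $n\ge7$. (Even at $i_X=(n+1)/2$ it needs the refinement $-K_X\cdot C\le n$ for minimal rational curves on $X\not\simeq\bC\bP^n$, since bend-and-break alone gives only $n+1$, which fails already for del Pezzo threefolds.) So it says nothing about Mukai threefolds and fourfolds or index-$3$ sixfolds, all of which the statement claims.

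The missing input is the one the paper uses. If $X$ contained no curve with $H\cdot C=1$, then $-K_X\cdot C=i_X\,(H\cdot C)\ge 2i_X$ for every rational curve. This is at least $n$ as soon as $2i_X\ge n$, and then the Dedieu--H\"oring characterization forces $X\simeq\bC\bP^n$ or $Q^n$, a contradiction. For Mukai threefolds ($i_X=1$, so $H=-K_X$) no anticanonical degree bound can produce a curve with $-K_X\cdot C=1$. There you need Shokurov's theorem that prime Fano threefolds contain lines.

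Two further steps also fail as written. First, the converse of the ``iff'': a curve realizing $\textup{sys}^{\textup{hol}}_2=\pi$ need not be rational when $H$ is not very ample. On a degree-one del Pezzo $n$-fold $X_6\subset\bP(1^n,2,3)$, a complete intersection $C$ of $n-1$ general members of $|H|$ has $H\cdot C=H^n=1$ and $K_C=(K_X+(n-1)H)|_C=0$. So $C$ is elliptic, and re-embedding by $|mH|$ only makes it an elliptic curve of degree $m$.

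The converse has to come from $\textup{sys}_{\pi_2}=\pi$ instead. By Sacks--Uhlenbeck, the infimum of area in a class $A$ with $\la H,A\rg=1$ is realized by finitely many minimal spheres whose classes add up to $A$. Each has area at least its symplectic area, which lies in $\pi\bZ$, and these areas add up to $\pi$. Hence exactly one sphere is non-constant, and it is holomorphic of area $\pi$, giving a (possibly singular) rational curve with $H\cdot C=1$. The paper itself only writes out the ``if'' direction.

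Second, your torsion fix is invalid. $H_1=0$ makes $H^2(X;\bZ)$ torsion-free, but torsion in $H_2(X;\bZ)\cong\pi_2(X)$ is detected by $H^3(X;\bZ)$. So simple connectivity alone does not exclude essential spheres that are torsion in homology. The paper's proof passes over the same point when it asserts that every non-trivial sphere is homologically non-zero.
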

\begin{proof}
    The metric $\omega_0$ is K\"ahler-Einstein, so the Chern-Weil identity~\ref{eqn:chern-weil} combined with $[\omega_0] = \pi H$ and $- K_X = i_X H$ show that $R_{\omega_0} = 4 n i_X$.
    Since $X$ is Fano, it is rationally connected and hence simply connected as in Lemma~\ref{lemma:volume-of-fano}; thus, the Hurewicz map $\pi_2(X) \to H_2(X;\bZ)$ is an isomorphism.
    Hence, every non-zero $2$-homology class is spherical, and every non-trivial sphere is homologically non-zero, hence $\textup{sys}_2(X, \omega_0) \leq \textup{sys}_{\pi_2}(X, \omega_0)$.
    Also, $H^2(X;\bZ)$ is torsion-free with $H$ a primitive generator, so the pairing $H_2(X;\bZ)/ \textup{tors} \to \bZ$ given by $A \mapsto \la H, A \rg$ has image $\bZ$.
    Therefore, there exists an integral homology class $A$ with $\la H, A \rg = 1$, and the $\omega_0$-calibration together with Wirtinger's bound gives
    \[
    \| B \|_{\textup{st}} \geq \Bigl| \int_B \omega_0 \Bigr| = \pi \cdot |\la H, B \rg| \geq \pi
    \]
    for every non-zero integral class $B$.
    This proves the inequality.
    Conversely, a rational curve $C \simeq \bC \bP^1$ with $H \cdot C = 1$ defines a  map $f: \bS^2 \to X$ with $\textup{Area}_{\omega_0}(f) = \textup{Area}_{\omega_0}(C) =  \pi$, so equality holds.

    For $X = \bC \bP^n$, the systolic equality is realized by a projective line; for the complex quadric $Q^n$, we observe that the projectivization of the span of two independent isotropic orthogonal vectors defines a line $\ell$ contained in $Q^n = \{ [z] \in \bC \bP^{n+1} : z \cdot z = 0 \}$.
    For the del Pezzo and Mukai $n$-folds, we have $- K_X = i_X H$ for $i_X \in \{ n-1, n-2\}$.
    Since $H$ is ample, we have $H \cdot C \in \bN^*$, so if $X$ contained no line (i.e., a curve with $H \cdot C = 1$), we would get $H \cdot C \geq 2$ and $- K_X \cdot C \geq 2(n-2)$ for all curves $C$.
    If $n \geq 4$, this implies $- K_X \cdot C \geq n$ for all curves $C$, hence $X \simeq \bC \bP^n$ or $X \simeq Q^n$ by Dedieu-H\"oring~\cite{dedieu-horing}*{Theorem C}, a contradiction. 
    Finally, if $n=3$ and $i_X = 1$, $X$ is a prime Fano threefold, so it contains a line by Shokurov's theorem~\cites{mukai-1989 , iskovskikh }.
    We conclude that the $2$-systoles equal $\pi$ in all cases.
    For Fano $n$-folds with $i_X = n-3$, the same argument applies once $n \geq 6$.

    To see that the manifold $(Q^n, \omega_0)$ is K\"ahler-Einstein, let $H := c_1( \cO_{\bC \bP^{n+1}}(1))|_{Q^n} \in H^2(Q^n;\bZ)$ denote the hyperplane class.
    For $n=2$, $Q^2 \cong \bC \bP^1 \times \bC \bP^1$ is K\"ahler-Einstein.
    For $n \geq 3$, adjunction implies
\[
K_{Q^n} = \bigl(K_{\bC\bP^{n+1}}+[Q^n]\bigr)|_{Q^n} = \mathcal O_{\bC\bP^{n+1}}(-n-2+2)|_{Q^n} = \mathcal O_{Q^n}(-n).
\]
because $K_{\bC \bP^{n+1}} = \cO_{\bC \bP^n}(-n-2)$ is the canonical bundle of $\bC \bP^n$, computed via the Euler exact sequence.
Therefore, $c_1(Q^n) = nH$.
We also recall that $Q^n$ is a compact Hermitian symmetric space with representation~\eqref{eqn:oriented-grassmannian-quadric} and $\omega_0$ is the homogeneous K\"ahler metric induced from $\omega_{\textup{FS}}$, so the Ricci form is invariant under this transitive action.
For $n \geq 3$, the isotropy representation is irreducible, so the space of invariant real $(1,1)$-forms is one-dimensional and $\rho_{\omega_0} = \lambda \omega_0$ is K\"ahler-Einstein.
Finally, $R_{\omega_0} = 4 n i_X = 4n^2$ by our previous computation.
This completes the proof.
\end{proof}
Next, we recall that uniruled manifolds enjoy a natural dimension-reduction mechanism.
We will prove in Theorem~\ref{thm:scalar-curvature-implies-uniruled} that if a compact K\"ahler manifold admits a metric of positive total scalar curvature, then it is uniruled.
Thus, $X$ admits an MRC fibration, defined as follows, cf.~\cite{kollar-miyaoka-mori}.

\begin{definition}\label{def:mrc-quotient}
For a smooth projective uniruled manifold $X$, the \textbf{maximal rationally connected fibration}, or \textbf{MRC fibration}, is a dominant rational map $\phi: X \dashrightarrow Z$, unique up to birational equivalence, whose general fibers are rationally connected and whose base $Z$ is not uniruled.
\end{definition}

More generally, given a possibly non-projective uniruled compact K\"ahler manifold $X$, there exists a maximal almost holomorphic map $\phi: X \dashrightarrow Z$ onto a non-uniruled normal compact K\"ahler variety $Z$, namely a meromorphic map $\phi$ restricting to a fibration over subsets $X^{\circ} \subset X$ and $Z^{\circ} \subset Z$ with compact, rationally connected general fiber.
Thus, the fibration $\phi$ separates the rationally connected part and the non-uniruled part of $X$, is unique up to meromorphic equivalence, and is universal among almost holomorphic fibrations with rationally connected general fiber.
We refer the reader to~\cite{kollar-IW-inequality}*{Theorem 5.4},~\cite{campana-orbifolds}*{Appendix, Theorem 5.1}, and~\cite{claudon-horing}.
See also~\cites{campana-remarques , kollar-shafarevich } for the $\Gamma$-reduction and its properties.
Extending Definition~\ref{def:mrc-quotient}, we will again refer to this map $\phi$ as the \textbf{MRC fibration} of $X$.

Since the MRC fibration $\phi: X \dashrightarrow Z$ is a meromorphic map, Hironaka's resolution of indeterminacy allows us to replace $X$ and $Z$ by smooth compact Kähler bimeromorphic models $\mu : Y \to X$ and $Z' \to Z$, obtained by sequences of blowups along smooth centers.
After resolving the base and, if necessary, replacing $Y$ by a further smooth compact K\"ahler bimeromorphic model dominating the induced fiber product, we obtain a holomorphic model map $f : Y \to Z'$ that agrees with the MRC fibration over the locus where $\phi$ is almost holomorphic, cf.~\cite{kollar-mori}.

\begin{lemma}\label{lemma:covering-family}
    Given any irreducible covering family $\cV$ of rational curves on $X$, a general member $C \in \cV$ is vertical with respect to the MRC fibration $\phi$, meaning that it is contracted to a point.
\end{lemma}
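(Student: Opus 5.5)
Suppose, for contradiction, that a general member $C \in \cV$ is not contracted by $\phi$. The plan is to use this to force the base $Z$ of the MRC fibration to be uniruled, contradicting Definition~\ref{def:mrc-quotient} and its K\"ahler extension.

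First I pass to the holomorphic model $f: Y \to Z'$ with $\mu: Y \to X$ bimeromorphic. Since $\phi$ is almost holomorphic, there are Zariski-open sets $X^{\circ} \subset X$ and $Z^{\circ} \subset Z$ over which $\phi$ is a proper fibration, and $\mu$ is an isomorphism over $X^{\circ}$. Because $\cV$ is a covering family, a general member $C$ passes through a general point of $X$. Hence $C$ meets $X^{\circ}$, and its strict transform $\tilde{C} \subset Y$ is again a rational curve. The strict transforms form a covering family $\tilde{\cV}$ of rational curves on $Y$, parametrized by an irreducible compact space.

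Next I consider $f(\tilde{C}) \subset Z'$. If a general $C$ is not vertical, then $f(\tilde C)$ is a curve, and it is rational as the image of $\bC\bP^1$. As $C$ ranges over $\cV$, these curves sweep out $f(Y) = Z'$, since $\tilde{\cV}$ covers $Y$ and $f$ is surjective. This gives a dominant meromorphic map $\bC\bP^1 \times \cV \dashrightarrow Z'$ that does not factor through the projection to $\cV$, so $Z'$ is uniruled by Definition~\ref{def:uniruled-manifold}. Uniruledness is a bimeromorphic invariant, because rational curves through general points are preserved by maps that are isomorphisms on dense open sets. Therefore $Z$ is uniruled, contradicting the maximality property of the MRC fibration. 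So the general member of $\cV$ is contracted to a point.

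The step I expect to be the main obstacle is the bimeromorphic invariance together with the genericity needed to transport the family from $X$ to $Y$ and to $Z$. In particular, I must make sure that a general $C$ is not contained in the indeterminacy locus or the exceptional locus. This follows because the covering property puts a general $C$ through a general point of $X^{\circ}$. I must also check that the images $f(\tilde C)$ do cover $Z$ rather than a proper subvariety; this follows from the surjectivity of $f$ and the covering property. In the K\"ahler setting, I would cite the characterization in~\cite{claudon-horing} and~\cite{campana-orbifolds} that the base of the MRC fibration is not uniruled, which provides the needed contradiction.
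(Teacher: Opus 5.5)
Your proposal is correct and follows essentially the same argument as the paper: the images of non-vertical members form a covering family of rational curves on the base, contradicting that the MRC base is not uniruled. The only difference is technical. You route the argument through the holomorphic model $f\colon Y \to Z'$ and then descend uniruledness along the holomorphic bimeromorphic map $Z' \to Z$, whereas the paper restricts $\phi$ directly to $C_t$ and extends it over finitely many points; your detour avoids having to restrict a meromorphic map to a curve.
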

\begin{proof}
Let $X^{\circ} \subset X$ be the maximal open set on which $\phi$ is holomorphic and almost holomorphic.
Since $\phi$ is dominant, $\phi(X^{\circ})$ contains a dense open subset of $Z$.
Suppose, towards a contradiction, that a general member $C_t$ of the covering family $\cV = (C_t)_{t \in T}$ is not vertical. 
After replacing $T$ by a dense open subset, we may assume that $C_t$ meets $X^{\circ}$, that $C_t$ is not contained in the indeterminacy locus of $\phi$, and that $\phi(C_t)$ is not a point.
For such $t$, the image $\phi(C_t)$ is a rational curve in $Z$, because $C_t$ is rational and the non-constant meromorphic map $C_t \dashrightarrow Z$ extends across finitely many points because $C_t$ is compact and $Z$ is normal.

Since the family $(C_t)_{t \in T}$ covers $X$, the curves parametrized by any dense open subset of $T$ still sweep out a dense open subset of $X$. 
Moreover, $\phi : X \dashrightarrow Z$ is dominant, so the images $\phi(C_t)$ pass through a general point of $Z$.
Hence, $Z$ is covered by rational curves, contradicting the assumption that $Z$ is not uniruled.
Therefore, a general member $C_t$ of the family is vertical as desired.
\end{proof}

\section{Preliminaries from index theory}\label{section:index-theory-preliminaries}

Next, we study $\textup{spin}^c$ structures and their properties.
For a class $c \in H^2(M;\bZ)$, we denote by $c_{\bR}$ its image in $H^2(M;\bR)$.
Given a Hermitian determinant line bundle $L \to M$, the \textbf{first Chern class} $c_1(L) \in H^2(M;\bZ)$ has real image $c_1(L)_{\bR} := [ \frac{F_A}{2 \pi i}] \in H^2(M;\bR)$, for $F_A$ the curvature of any unitary connection $A$ on $L$.
The class $c_1(L)_{\bR}$ is well-defined, independently of the choice of connection.

\begin{definition}\label{def:spin-c-structure}
    A $\textup{spin}^c$ structure on $M$ is a Hermitian determinant line bundle $L \to M$ satisfying
    \[
    c_1(L) \equiv w_2(TM) \quad ( \on{mod} \; 2),
    \]
    where $w_2(TM) \in H^2(M;\bZ_2)$ denotes the second Stiefel-Whitney class of the manifold $M$, together with a complex spinor bundle $\cS$ equipped with Clifford multiplication.
    When $M$ has even dimension, the bundle $\cS$ admits a chiral decomposition $\cS = \cS^+ \oplus \cS^-$.
    When $M$ has odd dimension, the spinor bundle is ungraded, and the associated $\textup{spin}^c$ Dirac operator is self-adjoint.
\end{definition}
It is well known that symplectic manifolds are $\textup{spin}^c$; in particular, all K\"ahler manifolds satisfy this property.
Indeed, given a symplectic manifold $(M,\omega)$, let $J$ be an almost-complex structure compatible with $\omega$, so $(TM,J) \to M$ becomes a complex vector bundle of complex rank $n$.
For any such complex vector bundle $E$, the second Stiefel-Whitney class of the underlying real bundle satisfies $w_2(E_{\bR}) \equiv c_1(E) \; (\on{mod} \; 2)$, hence $c_1(TM,J) \equiv w_2(TM) \; ( \on{mod} \; 2)$ produces an integral lift of $w_2(TM)$; thus, $M$ is $\textup{spin}^c$.
More generally, this argument shows that almost-complex manifolds are $\textup{spin}^c$.
See~\cite{lawson-michelsohn}*{Appendix D} regarding $\textup{spin}^c$ structures and their properties.

Given a unitary connection $A$ on $L$, we denote by $\cD_A : \Gamma^{\infty}(M,\cS) \to \Gamma^{\infty}( M,\cS)$ the induced (full) $\textup{spin}^c$ Dirac operator.
When $M$ has even dimension, the splitting $\cS = \cS^+ \oplus \cS^-$ also produces chiral $\textup{spin}^c$ Dirac operators $\cD^{\pm}_A$ with $(\cD^+_A)^{\star} = \cD^-_A$, by
\[
\cD^+_A : \Gamma^{\infty}(M,\cS^+) \to \Gamma^{\infty}( M ,\cS^-), \qquad \cD^-_A : \Gamma^{\infty}(M,\cS^-) \to \Gamma^{\infty}(M, \cS^+).
\]
The study of spin and $\textup{spin}^c$ Dirac operators can be extended by twisting the spinor bundle with complex vector bundles.
Given a compact oriented smooth Riemannian manifold $M^{2m}$ of real dimension $2m$ and a Hermitian vector bundle $E \to M$ with metric connection, we denote by $R^E = ( \nabla^E)^2$ its curvature tensor and define its norm by
\begin{equation}\label{eqn:RE-operator-norm}
\| R^E\|_{\infty} := \sup_{x \in M} \sup\{ |R^E(v,w)|_{\textup{op}} : v,w \in T_x M , \; |v|=|w|=1  \}
\end{equation}
for $|-|_{\textup{op}}$ the operator norm of an endomorphism.

By Chern-Weil theory, complex line bundles $L \to M$ are classified by $H^2(M;\bZ)$, cf.~\cite{mccleary-spectral-sequences}*{\S 4.3}. 
\begin{lemma}\label{lemma:complex-line-bundle}
    For every integral class $\alpha \in H^2(M;\bZ)$ on a smooth manifold $M$, there exists a Hermitian complex line bundle $L \to M$ and, for every $\delta>0$, a unitary connection $\nabla_{\delta}$ on $L$ with
    \[
    c_1(L) = \alpha, \qquad \| R^{\nabla_{\delta}} \|_{\infty} \leq 2 \pi ( \| \alpha_{\bR} \|_{\textup{cm}} + \delta)
    \]
    where $\| \alpha_{\bR} \|_{\textup{cm}}$ denotes the comass norm.
\end{lemma}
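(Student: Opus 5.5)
First take the complex line bundle $L \to M$ with $c_1(L) = \alpha$, which exists by the classification of line bundles through $H^2(M;\bZ)$. Give it any Hermitian metric. For any unitary connection $\nabla_0$ on $L$ the curvature is an imaginary-valued closed $2$-form, $R^{\nabla_0} = -2\pi i\,\beta_0$, and Chern--Weil gives $[\beta_0] = \alpha_{\bR}$.

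Next, fix $\delta > 0$. By definition of the comass norm $\|\alpha_{\bR}\|_{\textup{cm}}$ as an infimum over closed representatives, there is a closed $2$-form $\beta$ with $[\beta] = \alpha_{\bR}$ and $\|\beta\|_* \leq \|\alpha_{\bR}\|_{\textup{cm}} + \delta$. Since $[\beta - \beta_0] = 0$, we can write $\beta - \beta_0 = d\eta$ for some real $1$-form $\eta$. Set $\nabla_\delta := \nabla_0 - 2\pi i\,\eta$. This is again unitary, because $-2\pi i\eta$ is skew-Hermitian, and its curvature is
\[
R^{\nabla_\delta} = R^{\nabla_0} - 2\pi i\, d\eta = -2\pi i\,\beta.
\]
The sign convention relating $R$ to $c_1$ only changes $\beta$ to $-\beta$, which does not affect the norm.

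It then remains to compare norms. On a line bundle, $R^{\nabla_\delta}(v,w)$ is multiplication by the scalar $-2\pi i\,\beta(v,w)$, so its operator norm is $2\pi|\beta(v,w)|$. For unit vectors $v,w$ we have $|v\wedge w| \leq 1$. If $v \wedge w \neq 0$, then $v\wedge w$ is a simple bivector of norm at most one, so $|\beta(v,w)| \leq |\beta|_*(x)$; if $v \wedge w = 0$ the value is zero. Taking suprema gives
\[
\|R^{\nabla_\delta}\|_\infty \leq 2\pi\,\|\beta\|_* \leq 2\pi\bigl(\|\alpha_{\bR}\|_{\textup{cm}} + \delta\bigr).
\]

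No step is a real obstacle. The one point to check is the bookkeeping of normalizations: the factor $2\pi$ and the factor $i$ in $c_1 = [F/2\pi i]$, and the fact that the simple-bivector comass dominates $|\beta(v,w)|$ for orthonormal (or merely unit) pairs $v,w$.
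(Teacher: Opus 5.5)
Your proof is correct and is essentially the paper's argument: obtain $L$ from the classification of line bundles by $H^2(M;\bZ)$, choose a closed representative $\beta$ of $\alpha_{\bR}$ with $\|\beta\|_*\le\|\alpha_{\bR}\|_{\textup{cm}}+\delta$, and shift an arbitrary unitary connection by an imaginary $1$-form whose differential is the exact difference $\beta-\beta_0$, so that the new curvature is $\mp 2\pi i\,\beta$. Your sign bookkeeping is more careful than the written proof, which has a sign slip in the correction $\nabla_0+2\pi i\beta_\delta$. Likewise, your explicit comparison $|\beta(v,w)|\le|v\wedge w|\,|\beta|_*\le|\beta|_*$ for unit $v,w$ fills in the norm bound that the paper passes to without comment.
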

\begin{proof}
    The existence of the line bundle $L \to M$ is a standard consequence of Chern-Weil theory, by using cohomology long exact sequence for the exponential short exact sequence
    \[
    0 \longrightarrow \underline{\bZ} \longrightarrow C^{\infty}_M \xrightarrow{\exp ( 2 \pi i \cdot )} C^{\infty*}_M \longrightarrow 0
    \]
    corresponding to the sheaf $C^{\infty}_M$ of smooth functions on $M$, which is fine, so $H^q(M,C^{\infty}_M) = 0$ for $q \geq 1$.
    Now, $H^1(M, C^{\infty *}_M)$ classifies complex line bundles on $M$, and $H^2(M, \bZ) \cong H^1(M, C^{\infty *}_M)$ by the cohomology long exact sequence of sheaves.
    Alternatively, this property follows from the fact that $\bC \bP^{\infty} \cong B \bS^1 \cong K(\bZ,2)$ is the classifying space for complex line bundles, cf.~\cite{mccleary-spectral-sequences}*{\S 4.3}.

    To establish the norm bound, let $\omega_{\delta}$ be a closed real $2$-form representing $\alpha_{\bR}$ with $\| \omega_{\delta} \|_{\textup{cm}} \leq \| \alpha_{\bR} \|_{\textup{cm}} + \delta$ and consider any unitary connection $\nabla_0$ on $L$.
The curvature $\frac{1}{2 \pi i} R^{\nabla_0}$ represents $c_1(L) = \alpha_{\bR}$, so $[ \frac{1}{2 \pi i} R^{\nabla_0}] = \alpha_{\bR} = [\omega_{\delta}]$ implies that $\omega_{\delta} - \frac{i}{2 \pi} R^{\nabla_0} = d \beta_{\delta}$ is exact, for some real $1$-form $\beta_{\delta}$.
We now define a new unitary connection $\nabla_{\delta} := \nabla_0 + 2 \pi i \beta_{\delta}$, so $R^{\nabla_{\delta}} = R^{\nabla_0} + 2 \pi i \, d \beta_{\delta}$ and $\tfrac{i}{2\pi} R^{\nabla_{\delta}} = \omega_{\delta}$.
Taking norms, we obtain $ \|R^{\nabla_{\delta}} \|_{\infty} \leq 2 \pi  \| \omega_{\delta} \|_{\textup{cm}} \leq 2 \pi ( \| \alpha_{\bR} \|_{\textup{cm}} + \delta)$, which proves the claim.
\end{proof}
For later use in Section~\ref{section:topology-constructions}, we record here a higher-rank version of this result.
\begin{lemma}\label{lemma:build-a-bundle}
    Let $M$ be a closed manifold and let $\alpha \in H^{2a}(M;\bZ)/\textup{tors}$.
    Then, there exists a non-zero integer $k$ and a rank-$a$ complex vector bundle $E \to M$ with total rational Chern class $c(E) = 1 + k \alpha$.
\end{lemma}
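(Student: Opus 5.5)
We will use rational homotopy theory, namely the fact that $BU(a)$ is rationally a product of Eilenberg--MacLane spaces, together with obstruction theory for the finite CW complex $M$. The rational cohomology of $BU(a)$ is the polynomial ring $\bQ[c_1,\dots,c_a]$. Rationally, therefore, $BU(a)$ is equivalent to $\prod_{i=1}^a K(\bQ,2i)$, and the map $\kappa = (c_1,\dots,c_a) : BU(a) \to \prod_i K(\bZ,2i)$ is a rational equivalence. Since $M$ is a closed manifold, we may assume it is a finite CW complex.

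The goal is a map $f : M \to BU(a)$ with $f^* c_i = 0$ for $i<a$ and $f^* c_a = k\alpha$ rationally; pulling back the universal bundle then gives $E$ with $c(E) = 1 + k\alpha$. Represent a lift of $\alpha$ to $H^{2a}(M;\bZ)$ by a map $g : M \to K(\bZ,2a)$. Consider the composite target map $g' = (0,\dots,0,g) : M \to \prod_i K(\bZ,2i)$. The homotopy fiber $F$ of $\kappa$ has finite homotopy groups in every degree, because $\kappa$ induces isomorphisms on rational homotopy, so the kernels and cokernels on homotopy groups are finitely generated torsion.

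The key step is the following standard consequence of obstruction theory: for a finite complex $M$ and a map $\kappa$ whose homotopy fiber has finite homotopy groups, some multiple of any map lifts. Concretely, we build the lift skeleton by skeleton. The obstructions lie in $H^{j+1}(M;\pi_j F)$, and these groups are finite. Composing $g'$ with self-maps of the target multiplies classes, and a multiplication by a suitable integer kills each successive obstruction. More precisely, the $H$-space structure on $K(\bZ,2a)$ gives maps $[N] : K(\bZ,2a) \to K(\bZ,2a)$. Replacing $g$ by $N\circ g$, which represents $N\alpha$, multiplies the primary obstruction by $N$ in a finite group, so for suitable $N$ it vanishes.

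Iterating over the finitely many skeleta of $M$ (here $\dim M$ is finite), we obtain an integer $k \neq 0$, a product of the successive $N$'s, such that $(0,\dots,0,k\alpha)$ lifts to $f : M \to BU(a)$.

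The main obstacle is making the claim that multiplying by $N$ kills higher obstructions rigorous, since these obstructions are not simply linear in $g$. An alternative that avoids this difficulty is to use the localization $BU(a) \to BU(a)_{\bQ} \simeq \prod K(\bQ,2i)$ together with Sullivan's theorem. That theorem says that for finite $M$, the image of $[M, BU(a)] \to [M, BU(a)_{\bQ}]$ contains, up to the action of multiplication by a nonzero integer on a rational class, every rational homotopy class. Applied to the map $(0,\dots,0,\alpha_{\bQ})$, it yields a genuine map $M \to BU(a)$ with rational Chern classes $(0,\dots,0,k\alpha)$, as required.

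If we wish to avoid heavy machinery, a more hands-on route is also available. For a single rational class $\alpha$, Chern characters of K-theory classes span $H^{\mathrm{even}}(M;\bQ)$, so some virtual bundle $V$ has $\mathrm{ch}(V) = m\alpha$ with $m \neq 0$. One then converts $V$ into Chern classes via Newton's identities and, after stabilizing, replaces it by an honest bundle. However, this route produces the extra Chern classes $c_i$ for $i<a$, which must then be cancelled. For that reason I would present the obstruction-theoretic argument as the main proof.
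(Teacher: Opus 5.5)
Your proposal follows essentially the paper's route: the paper fibers off $c_1,\dots,c_{a-1}$ to obtain a rational equivalence $c_a\colon F_a\to K(\bZ,2a)$, and then simply cites that, for finite $M$, the image of $[M,F_a]\to H^{2a}(M;\bZ)$ contains a non-zero multiple of every class modulo torsion; this is the same fact as your Sullivan-type fallback applied to $(0,\dots,0,\alpha)$ through $\kappa$. If you prefer to present the obstruction-theoretic version, do not run the induction on $M$, because after replacing $g$ by $N\circ g$ the earlier partial lift is discarded while the later obstructions depend on it; instead run it once on the universal example $\prod_i K(\bZ,2i)$, lifting a cellular representative of $[N]$ over the skeleta up to dimension $\dim M$, precomposing each partial lift with a cellular $[m]$, using naturality of obstruction classes and the fact that $[m]^*$ annihilates the reduced cohomology of $\prod_i K(\bZ,2i)$ with finite coefficients for suitable $m$, and finally composing the resulting lift with a cellular $g$.
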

\begin{proof}
We start by recalling that the rational classification of $\textup{U}(r)$-bundles obeys $H^{\bullet}(\textup{BU}(r);\bQ) = \bQ[c_1, c_2, \dots, c_r]$ with $|c_j| = 2j$ by~\cite{mccleary-spectral-sequences}*{Theorem 6.38}.
Thus, rank-$a$ complex vector bundles with $c_i(E) =0$ for $i \not\in \{ 0, a\}$ are classified by maps $M \to F_a$, where $F_a$ denotes the homotopy fiber
\[
F_a \to \textup{BU}(a) \xrightarrow{ (c_1, \dots, c_{a-1})} \prod_{j=1}^{a-1} K (\bZ, 2j).
\]
The remaining class $c_a$ gives a map $F_a \to K(\bZ, 2a)$, where $\textup{BU}(a)_{\bQ} \simeq \prod_{j=1}^a K(\bQ, 2j)$ in rational homotopy.
Eliminating the first $a-1$ Chern coordinates produces the rationalization $(F_a)_{\bQ} \simeq K(\bQ, 2a)$, so $c_a : F_a \to K(\bZ, 2a)$ is a rational equivalence.
Since $M$ is a finite CW complex, the image of $[M,F_a] \to H^{2a}(M;\bZ)$ has finite index modulo torsion, so it contains some non-zero multiple $N \alpha$, cf.~\cite{mccleary-spectral-sequences}*{Ch. 5}.
This produces a rank-$a$ vector bundle $E \to M$ with $c(E) = 1+N\alpha$.
\end{proof}

A crucial object of our index-theoretic considerations will be the notion of the $\omega$-\textbf{cowaist} of $(M,g)$, developed by B\"ar-Hanke~\cites{bar-hanke , k-cowaist-boundary} and employed by Cecchini-Zeidler~\cite{cecchini-zeidler}, building on the ideas of Gromov~\cite{gromov-200-pages}. 
Subsequently, Shi~\cite{odd-A-hat} developed an intrinsic notion of cowaist for odd-dimensional manifolds by representing odd $K$-theory classes $\rho \in K^{-1}(M) \cong [M,U]$ via maps $\rho: M \to U(\ell)$, for some $\ell$, up to stabilization $\rho \mapsto \rho \oplus I_N$.
Here, $U = \varinjlim_{\ell} U(\ell)$ is the stable unitary group.
The odd Chern character $\textup{ch}_{\textup{odd}}(\rho)$ of $\rho$ is defined as
\[
\textup{ch}_{\textup{odd}}(\rho)= \sum_{j \geq 1} \textup{ch}_j(\rho), \quad \textup{ch}_j(\rho) = \bigl( - \tfrac{i}{2 \pi} \bigr)^j \tfrac{(j-1)!}{(2j-1)!} \textup{tr} \bigl( ( \rho^{-1} d \rho)^{2j-1} \bigr), \quad [\textup{ch}_j(\rho)] \in H^{2j-1}(M;\bQ).
\]
We associate to $\rho$ the curvature-type two-form $R^{\rho} := \frac{1}{4} ( \rho^{-1} d \rho)^2$, with norm defined as in~\eqref{eqn:RE-operator-norm}.

For the subsequent definition, we recall that $[M] \in H_{\dim M} (M;\bZ) \cong \bZ$ is the fundamental class of $M$, $\la [M],\eta \rg := \int_M \eta$ is the Kronecker pairing with $\eta \in H^{\dim}(M)$, $\textup{ch}(E) = \textup{rk}(E) + \textup{ch}_1(E) + \cdots + \textup{ch}_m(E)$ is the \textbf{Chern character} of a complex vector bundle $E \to M$, where $m = \lfloor \frac{\dim M}{2} \rfloor$.
We denote by $c(E)$ the \textbf{total Chern class} of $E$, so $c(E), \textup{ch}(E) \in H^{2 \bullet}(M;\bQ)$.
\begin{definition}\label{def:both-cowaists}
    Consider a smooth mixed closed even form $\omega = 1 + \omega_1 + \cdots + \omega_m \in\Omega^{2 \bullet}(M)$ with $[\omega_j] \in H^{2j}(M;\bR)$.
For $M^{2m}$ an even-dimensional manifold, we call a complex vector bundle $E \to M$ $\omega$-\textbf{admissible} if $\la [M] , \omega \wedge [ \textup{ch}(E) - \textup{rk}(E) ] \rg \neq 0$.
We then define
\[
\omega\textup{-cw}_2(M,g) := \bigl( \inf \{ \| R^E \|_{\infty} : E\to M \; \text{is an $\omega$-admissible vector bundle} \} \bigr)^{-1} 
\]
to be the $\omega$-\textbf{cowaist} of $(M,g)$.
For $M^{2m+1}$ an odd-dimensional manifold, consider pairs $(E,\rho)$ where $E \to M$ is a complex vector bundle and $\rho: M \to U(\ell)$.
We call a pair $(E,\rho)$ $\omega$-\textbf{admissible} if $\la [M], \omega \wedge ( \textup{ch}(E) - \textup{rk}(E)) \wedge \textup{ch}_{\textup{odd}}(\rho) \rg \neq 0$.
The $\omega$-\textbf{cowaist} of $(M,g)$ is defined as
    \[
    \omega\textup{-cw}_2(M,g) := \bigl( \inf \{ \| R^E \|_{\infty} + \|R^{\rho} \|_{\infty} : ( E, \rho) \; \textup{is an }\omega-\textup{admissible pair} \} \bigr)^{-1}.
    \]
\end{definition}
In terms of the metric, the $\omega$-cowaist satisfies the area scaling $\omega\textup{-cw}_2(M, \lambda^2 g) = \lambda^2 \omega\textup{-cw}_2(M,g)$.
A standard transfer and pullback competitor argument shows that the quantity $\omega\textup{-cw}_2$ is invariant under passing to finite covers with lifted metric, meaning that $(\pi^* \omega)\textup{-cw}_2 (\tilde{M}, \tilde{g}) = \omega\textup{-cw}_2(M,g)$ for any finite cover $\pi: \tilde{M} \to M$ with $\tilde{g} = \pi^* g$ and $\tilde{\omega} = \pi^* \omega$; see, for example,~\cite{odd-A-hat}*{Proposition 3.14}.

Given a $\textup{spin}^c$ structure on $M$, namely a lift $c \in H^2(M;\bZ)$ with $c \equiv w_2(TM) \; ( \on{mod} \; 2)$, we consider the even mixed form $\hat{A}_c := e^{c_{\bR}/2} \wedge \hat{A}(TM)$, viewed in rational cohomology.
Here, $(\hat{A}_c)_0 = 1$ due to $\exp( \frac{c}{2} ) -  (1 + \frac{c}{2}) \in H^{4+\bullet}(M)$ and $\hat{A}(TM) \in H^{4 \bullet}(M)$.
The $(2,c)$-essential manifolds $\cF$ from Definition~\ref{def:(2,c)-essential} provide a very general and natural family for studying systolic questions because their stable $2$-systole is controlled by their $\hat{A}_c$-cowaist.
We first establish a bound from the geometry of numbers.

\begin{lemma}\label{lemma:uniform-bound-lattice}
    Let $( \Lambda, \| - \|)$ be a rank-$r$ lattice in a normed vector space.
    Let $\Lambda^*$ be the dual lattice $\{ \varphi \in ( \textup{span}_{\bR} \Lambda)^* : \varphi( \Lambda) \subset \bZ \}$ with dual norm $\| - \|_*$.
    Then, there exists a $\bZ$-basis $u_1, \dots, u_r$ of $\Lambda^*$ with
    \[
    \| u_i \|_* \leq r^2 \, \bigl( \min_{\lambda \in \Lambda \setminus \{ 0 \}} \| \lambda \| \bigr)^{-1} \qquad \text{for all } \; i =1,\dots, r.
    \]
    For any manifold $(M,g)$ with $r = b_2(M) \geq 1$, there exists a $\bZ$-basis $u_1, \dots, u_r$ of $H^2(M;\bZ)/\textup{tors}$ with 
    \begin{equation}\label{eqn:u-i-comass}
        \| u_i \|_{\textup{cm}} \leq r^2 \cdot \textup{stsys}_2(M,g)^{-1}, \qquad \text{for all } \; i = 1, \dots, r.
    \end{equation}
\end{lemma}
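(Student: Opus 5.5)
The plan is to reduce to a Euclidean statement by fixing a John ellipsoid, and then apply a transference estimate with a \emph{basis} (not just independent vectors) of the dual lattice. Set $V = \textup{span}_{\bR}\Lambda \cong \bR^r$, write $\lambda_1 := \min_{\lambda \in \Lambda \setminus \{0\}} \|\lambda\|$, and let $K = \{\|x\| \leq 1\}$. By John's theorem there is an inner product $|\cdot|$ on $V$ whose unit ball $E$ satisfies $E \subset K \subset \sqrt{r}\,E$, since $K$ is centrally symmetric. Equivalently,
\[
r^{-1/2}|x| \leq \|x\| \leq |x| \qquad \text{and} \qquad |\varphi|_* \leq \|\varphi\|_* \leq \sqrt{r}\,|\varphi|_*,
\]
where $|\cdot|_*$ is the dual Euclidean norm on $V^*$. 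In particular $\lambda_1^{E}(\Lambda) := \min_{\lambda \neq 0}|\lambda| \geq \lambda_1$.

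The key step is a Euclidean transference with bases. I would invoke the Lagarias--Lenstra--Schnorr estimate: a Korkine--Zolotarev-reduced basis $u_1, \dots, u_r$ of the Euclidean lattice $(\Lambda^*, |\cdot|_*)$ satisfies
\[
|u_i|_* \cdot \lambda_1^{E}(\Lambda) \leq C(r) \qquad \text{for all } i.
\]
If $C(r) \leq r^{3/2}$ is available, then
\[
\|u_i\|_* \leq \sqrt{r}\,|u_i|_* \leq \sqrt{r}\, r^{3/2}\, \lambda_1^{E}(\Lambda)^{-1} \leq r^2 \lambda_1^{-1},
\]
which is exactly the claimed bound. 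The point of passing to a KZ basis is to avoid the loss incurred when converting $r$ independent short dual vectors into a $\bZ$-basis.

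If the clean constant $r^{3/2}$ is not available in this form, I would instead argue inductively, and this is the step I expect to be the main obstacle. First take $u_1$ to be a shortest vector of $\Lambda^*$ in $|\cdot|_*$. Hermite's bound gives $|u_1|_* \leq \sqrt{\gamma_r}\,\det(\Lambda^*)^{1/r}$, and the elementary transference $\lambda_1(\Lambda^*)\lambda_1(\Lambda) \leq \gamma_r \leq r$ controls it. Then project $\Lambda^*$ orthogonally to $u_1$ and recurse on the projected lattice, which is dual to the sublattice $\Lambda \cap u_1^{\perp}$; that sublattice still has minimum at least $\lambda_1^{E}(\Lambda)$. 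Finally lift each projected basis vector by size reduction, which adds at most $\tfrac12|u_1|_*$ at each stage. Summing these contributions bounds each $|u_i|_*\lambda_1^{E}$ by a quantity of order $r^{3/2}$. The delicate part is the bookkeeping that keeps the accumulated size-reduction terms within $r^{3/2}$.

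For the manifold statement, take $\Lambda$ to be the image of $H_2(M;\bZ)$ in $H_2(M;\bR)$. This is a lattice of rank $r = b_2(M)$, and we equip it with the stable norm $\|\cdot\|_{\textup{st}}$. By Definition~\ref{def:systoles}, $\min_{\lambda \neq 0}\|\lambda\|_{\textup{st}} = \textup{stsys}_2(M,g)$. By the universal coefficient theorem, the dual lattice $\Lambda^*$ is $H^2(M;\bZ)/\textup{tors}$ under the Kronecker pairing. By Federer's duality~\eqref{eqn:stable-chain-norm}, the dual of the stable norm is the comass norm. Applying the first part then yields a $\bZ$-basis with $\|u_i\|_{\textup{cm}} \leq r^2\, \textup{stsys}_2(M,g)^{-1}$, which is~\eqref{eqn:u-i-comass}.
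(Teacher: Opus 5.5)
Your outline is sound, and your first route is the paper's route: John's ellipsoid, a Korkine--Zolotarev basis of $(\Lambda^*,|\cdot|_*)$, a transference bound, and a final factor $\sqrt r$ to return to $\|\cdot\|_*$. However, the constant you leave conditional needs a specific input that you do not name. The paper uses Banaszczyk's bound $\lambda_1(\Lambda,|\cdot|)\,\lambda_r(\Lambda^*,|\cdot|_*)\le r$. It combines this with the KZ estimate $|u_i|_*^2\le\frac{i+3}{4}\lambda_r(\Lambda^*)^2$, which holds because Gram--Schmidt lengths are at most $\lambda_r$ and size-reduction coefficients are at most $\tfrac12$. Together these give $|u_i|_*\,\lambda_1^E(\Lambda)\le r\sqrt{(r+3)/4}\le r^{3/2}$. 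A transference constant of order $r^2$, such as the Lagarias--Lenstra--Schnorr one, would not give $r^{3/2}$ for large $r$, so citing LLS alone does not close this route.

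Your fallback is a genuinely different and more elementary argument, and it works. Instead of bounding $\lambda_r(\Lambda^*)$, you bound each Gram--Schmidt vector $u_j^\dagger$ directly. It is a shortest vector of the projected lattice $\pi_j(\Lambda^*)$, whose dual is the sublattice $\Lambda\cap\ker u_1\cap\cdots\cap\ker u_{j-1}$. That sublattice has minimum at least $\lambda_1^E(\Lambda)$, so Hermite's inequality gives $|u_j^\dagger|_*\le\gamma_{r-j+1}\,\lambda_1^E(\Lambda)^{-1}\le r\,\lambda_1^E(\Lambda)^{-1}$. The bookkeeping you flag closes because the size-reduction corrections are orthogonal to each other and to $u_i^\dagger$:
\[
|u_i|_*^2=|u_i^\dagger|_*^2+\sum_{j<i}\mu_{ij}^2\,|u_j^\dagger|_*^2\le\tfrac{i+3}{4}\,r^2\,\lambda_1^E(\Lambda)^{-2}\le r^3\,\lambda_1^E(\Lambda)^{-2}.
\]
By contrast, adding the $\tfrac12|u_j^\dagger|_*$ terms with the triangle inequality only yields a bound of size $\tfrac{r+1}{2}\,\gamma_r\,\lambda_1^E(\Lambda)^{-1}$. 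That is of order $r^2$ and does not give $r^{3/2}$.

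Compared with the paper, your fallback uses only Minkowski/Hermite in place of Banaszczyk's Gaussian-measure transference theorem. It reproduces the constant $r^2$, and improves it for large $r$ if you use $\gamma_r\le 1+r/4$. The paper's route is shorter once the deep transference theorem is granted. Your treatment of the manifold case matches the paper: the stable norm on $H_2(M;\bZ)/\textup{tors}$, the dual lattice $H^2(M;\bZ)/\textup{tors}$, and Federer duality identifying the dual norm with the comass.
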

\begin{proof}
We define the $j$-th successive infimum quantity of a lattice by
\[
\lambda_j ( \Lambda, \| - \|) : = \inf \{ \lambda > 0 : \dim_{\bR} \textup{span} ( \Lambda \cap \{ \| v \| \leq \lambda \} ) \geq j \}.
\]
In particular, $\min_{v \in \Lambda \setminus \{ 0 \} } \|v \| = \lambda_1 ( \Lambda, \|- \|)$, and $\lambda_r ( \Lambda, \|-\|)$ is the smallest radius for which the norm ball contains $r$ linearly independent lattice vectors.
In our setting, these definitions give $\lambda_1 ( \Lambda, \| - \|_{\textup{st}}) = \textup{stsys}_2(X,g)$, and the dual quantity $\lambda_r ( \Lambda^*, \| - \|_{\textup{cm}})$ is the smallest $R$ such that $\Lambda^*$ contains $r$ linearly independent cohomology classes of comass at most $R$.

\smallskip \noindent \textbf{Step 1:} 
We follow the argument presented in~\cite{goodwillie-hebda-katz}*{\S 2}.
First, we prove that
\begin{equation}\label{eqn:lambda-r-lambda-1-bound}
\lambda_r ( \Lambda^*, \|- \|_*) \leq r^{\frac{3}{2}} \lambda_1( \Lambda, \|- \|)^{-1}.
\end{equation}
The unit ball $K := \{ v : \| v \| \leq 1 \}$ in $\Lambda$ is centrally symmetric and convex, whereby John's ellipsoid theorem produces an ellipsoid $E$ with $E \subset K \subset \sqrt{r} E$.
    Let $|-|_E$ be the Euclidean norm with unit ball $E$, so the containment $E \subset K \subset \sqrt{r} E$ implies
    \begin{equation}\label{eqn:v-E-bound}
    \tfrac{1}{\sqrt{{r}}} |v|_E \leq \| v \| \leq |v|_E, \; \implies \; |v|_E \geq \|v \| \geq \lambda_1 ( \Lambda, \|-\|) \; \implies \; \lambda_1( \Lambda, |-|_E) \geq \lambda_1 ( \Lambda, \|-\|).
    \end{equation}
    For the Euclidean norm $|-|_E$, Banaszcsyk's transference estimate~\cite{transference}*{Theorem 2.1} for successive minima of a lattice and its dual results in the simplified inequality
    \begin{equation}\label{eqn:lambda1-bound}
    \lambda_1 ( \Lambda, |-|_E) \cdot \lambda_r ( \Lambda^* , |-|_{E,*} ) \leq r, \qquad \implies \qquad \lambda_r ( \Lambda^* , |-|_{E,*}) \leq r \cdot \lambda_1( \Lambda, \| - \|)^{-1}
    \end{equation}
    for the dual lattice $\Lambda^*$ equipped with the Euclidean dual norm $|-|_{E,*}$.
    Moreover, 
    \[
    \lambda_r ( \Lambda^*, |-|_{E,*}) \leq \lambda_r ( \Lambda^* , \| - \|_*) \leq \sqrt{r} \lambda_r ( \Lambda^*, |-|_{E,*})
    \]
    in view of the above norm bounds.
    Combining this property with~\eqref{eqn:lambda1-bound} proves~\eqref{eqn:lambda-r-lambda-1-bound}.

    \smallskip \noindent \textbf{Step 2:}
    Next, we claim that any Euclidean rank-$r$ lattice with $\lambda_r(L) \leq \mu$ has a basis $b_1, \dots, b_r$ with $|b_i| \leq \sqrt{r} \mu$ for all $i$. 
    Following the reduction argument of~\cite{KZ-bases}*{Theorem 2.4}, we can choose a reduced basis where the Gram-Schmidt vector $b_i^*$ is a shortest non-zero vector in the projected lattice, and the Gram-Schmidt coefficients $G_{ij}$ satisfy $|G_{ij}| \leq \frac{1}{2}$ for $j < i$.
    Since $\lambda_r(L) \leq \mu$, there exist $r$ independent lattice vectors of length at most $\mu$, whose projections span every KZ quotient.
    Thus, the shortest non-zero vector in each projected lattice has length at most $\mu$, meaning that $|b_i^*| \leq \mu$ for all $i$, where $b_i = b_i^* + \sum_{j<i} G_{ij} b^*_j$.
    By construction, the vectors $b^*_j$ are orthogonal, hence
    \[
    |b_i|^2 = |b_i^*|^2 + \sum_{j<i} G_{ij}^2 |b^*_j|^2 \leq \mu^2 + \tfrac{i-1}{4} \mu^2 \leq \tfrac{r+3}{4} \mu^2, \qquad \text{for all } \; i, \qquad \implies \quad |b_i| \leq \sqrt{r} \mu.
    \]
    Applying this result to the Euclidean lattice $(\Lambda^*, |-|_{E^*})$, we obtain a basis $u_i$ with 
    \[
    |u_i|_{E,*} \leq \sqrt{r} \lambda_r ( \Lambda^*, |-|_{E,*}) \leq r^{\frac{3}{2}} \lambda_1( \Lambda, \|- \|)^{-1}
    \]
    due to~\eqref{eqn:lambda1-bound}.
    Using~\eqref{eqn:v-E-bound}, we conclude that $\| u_i \|_* \leq \sqrt{r} |u_i|_{E,*} \leq r^2 \lambda_1( \Lambda, \|-\|)^{-1}$ as desired.
    We now apply this result to the lattice $\Lambda := H_2(M;\bZ)/\textup{tors}$ equipped with the stable norm $\| h\|_{\textup{st}}$, so
\[
\lambda_1 ( \Lambda, \| - \|) = \min_{0 \neq h \in H_2(M;\bZ)/\textup{tors}} \| h \|_{\textup{st}} = \textup{stsys}_2(M,g).
\]
By universal coefficients, the dual lattice is $\Lambda^* = \textup{Hom}( H_2(M;\bZ)/\textup{tors}, \bZ) \cong H^2(M;\bZ)/ \textup{tors}$.
The stable norm-comass duality relation~\eqref{eqn:stable-chain-norm} shows that $\Lambda^*$ carries the dual norm $\| \alpha \|_* = \sup_{h \neq 0 } \frac{|\la \alpha, h \rg|}{\| h \|_{\textup{st}}} = \| \alpha_{\bR} \|_{\textup{cm}}$.
Applying the above result with $r = b_2(M)$, we obtain a $\bZ$-basis $u_1, \dots, u_r$ of $H^2(M;\bZ)/\textup{tors}$ whose elements satisfy $\| u_i \|_* \leq r^2 \bigl( \min_{0 \neq h \in \Lambda} \|h \| \bigr)^{-1}$.
This completes the proof.
\end{proof}

\begin{proposition}\label{prop:bound-stsys-from-cowaist}
    For any $n, r \in \bN^*$, there exists a constant $C_{n,r}$ such that the following holds.
    Let $M^n \in \cF$ be a $(2,c)$-area-essential manifold, so $M^n$ satisfies the condition~\eqref{eqn:alpha-q-A-condition-index}, and suppose that $q \geq 1$ in the notation therein.
    If $b_2(M) \leq r$, then any Riemannian metric $g$ on $M$ satisfies
    \[
    \textup{stsys}_2(M,g) \leq C_{n,r} \cdot \hat{A}_{c_0}\textup{-cw}_2(M,g), \qquad \hat{A}_{c_0} := e^{c_0/2} \hat{A}(TM).
    \]
    Here, $\hat{A}_{c_0}\textup{-cw}_2(M,g)$ denotes the $\hat{A}_c$-cowaist of $(M,g)$ with respect to a fixed $2$-class $c_0 \equiv c \; ( \on{mod} \; 2)$.
\end{proposition}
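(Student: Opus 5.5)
Scale first: both sides scale like area, so we may normalize $\textup{stsys}_2(M,g)=1$ and aim for a uniform lower bound on the cowaist. Equivalently, we exhibit an $\hat{A}_{c_0}$-admissible bundle (or pair, in odd dimensions) whose curvature norm is at most $C'_{n,r}$. By Lemma~\ref{lemma:uniform-bound-lattice} there is a $\bZ$-basis $u_1,\dots,u_{r'}$ of $H^2(M;\bZ)/\textup{tors}$, with $r'=b_2(M)\le r$, satisfying $\|u_i\|_{\textup{cm}}\le r^2$. By Lemma~\ref{lemma:complex-line-bundle}, each $u_i$ is realized as $c_1(L_i)$ for a Hermitian line bundle $L_i$ whose connection has $\|R^{L_i}\|_\infty\le 2\pi(r^2+1)$.

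Candidate bundles are then tensor products $E_{\mathbf m}=L_1^{m_1}\otimes\cdots\otimes L_{r'}^{m_{r'}}$ with bounded integers $|m_i|\le K_{n,r}$, together with direct sums of these. Their curvature is bounded by $2\pi(r^2+1)\sum|m_i|$, which is uniform. The remaining task is to show that some bounded combination is admissible, i.e.
\[
\bigl\langle [M],\hat{A}_{c_0}\smile(\textup{ch}(E_{\mathbf m})-1)\bigr\rangle\neq 0,
\]
possibly after a finite cover and with an extra factor.

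To find it, expand $\textup{ch}(E_{\mathbf m})-1=e^{x}-1$ with $x=\sum m_iu_i$. The pairing becomes a polynomial $P(\mathbf m)$ in $\mathbf m$ of degree $\le n/2$, with zero constant term. Hypothesis~\eqref{eqn:alpha-q-A-condition-index} is where $q\ge1$ enters. The classes $\alpha_1,\dots,\alpha_q$ are rational combinations of the $u_i$, and the product $\alpha_1\cdots\alpha_q\smile[\hat{A}_{c}]_j$ caps $[M]$ to a nonzero area-enlargeable class $A$. The plan is to treat the enlargeable factor as in Hanke--Schick / Brunnbauer--Hanke. For each $\varepsilon$, pass to a finite cover $\tilde M$ carrying a map $f:\tilde M\to \bS^{k}$ of $2$-dilation $\le\varepsilon$ with $f_*(\pi^!A)\ne0$. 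Pull back a bundle $F$ on $\bS^k$ with nonzero top Chern character, so that $f^*F$ has curvature $\le C\varepsilon$; in odd dimensions use the Bott generator $\rho$ of $\bS^k\to U(\ell)$, or the $\xi$ direction. Then tensor $f^*F$ with the pulled-back $E_{\mathbf m}$. The pairing becomes
\[
\bigl\langle \pi^![M],\ \hat{A}_{c_0}\, e^{x}\,\textup{ch}(f^*F)\bigr\rangle = \bigl\langle f_*\bigl((\hat A_{c_0}e^x)\frown\pi^![M]\bigr)_{k}, [\textup{ch}F]_k\bigr\rangle,
\]
and picking out the degree-$(2q+j)$ part shows that this is a polynomial in $\mathbf m$. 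Its coefficient on the monomial corresponding to $\alpha_1\cdots\alpha_q$ is, up to nonzero factorial constants, $\langle f_*\pi^!A,[\textup{ch}F]\rangle\neq0$. The $c$ versus $c_0$ discrepancy is an even class $c-c_0=2d$, and $e^{d}$ is absorbed by one more bounded line-bundle twist. Finally, invariance of the cowaist under finite covers transfers the bound back to $M$.

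The main obstacle is the one the non-vanishing polynomial argument is meant to overcome. The $\alpha_i$ are rational, not integral, combinations of the $u_i$, with unbounded coefficients, so we cannot simply take $E=\bigotimes L_{\alpha_i}$. The remedy: $P(\mathbf m)$ is a nonzero polynomial in $r'$ variables of degree $\le n$, because the coefficient of the $\alpha_1\cdots\alpha_q$ term, expressed in the $u$-basis, is nonzero. A nonzero polynomial of bounded degree cannot vanish on the whole grid $\{0,\dots,n\}^{r'}$ (combinatorial Nullstellensatz). Hence some $\mathbf m$ with $|m_i|\le n$ is admissible, and its curvature is at most $2\pi(r^2+1)nr+C\varepsilon$. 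Letting $\varepsilon\to0$ gives $\hat{A}_{c_0}\textup{-cw}_2\ge (2\pi(r^2+2)nr)^{-1}$, hence the stated bound with $C_{n,r}$ depending only on $n,r$. Care is needed to ensure that the $\textup{ch}(F)$ factor isolates exactly the degree matching $A$, which follows since $f^*$ of positive-degree classes of $\bS^k$ vanish except in degree $k$.
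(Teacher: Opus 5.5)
Your architecture matches the paper's: a short $\bZ$-basis from Lemma~\ref{lemma:uniform-bound-lattice}, line bundles of bounded curvature, a Bott bundle (or odd generator $\rho$) pulled back along an area-contracting map on a finite cover, non-vanishing of a polynomial on a bounded integer grid, and covering invariance of the cowaist. Your single grid search over $\mathbf{m}$ is a fine substitute for the paper's two-step choice of $v=\sum_i \bar{x}_i u_i$ followed by $t$ via $(e^{tv}-1)^Q$.

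\textbf{The gap.} The step handling $c$ versus $c_0$ fails. Twisting by a line bundle with $c_1=d$, where $c-c_0=2d$, is \emph{not} a bounded twist. Any unitary connection on it has $\|R\|_\infty\geq 2\pi\|d_{\bR}\|_{\textup{cm}}$, and $\|d_{\bR}\|_{\textup{cm}}\cdot\textup{stsys}_2(M,g)$ is not controlled by $n,r$. Already for $b_2(M)=1$ you have $d=ku_1$ with $k\in\bZ$ arbitrary, so $\|d_{\bR}\|_{\textup{cm}}=|k|\,\textup{stsys}_2(M,g)^{-1}$. For $b_2\geq 2$, the coordinates of $d$ in the metric-dependent basis $u_i$ are unbounded as $g$ varies. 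Uniformity in $c_0$ is exactly what is needed later: the proof of Theorem~\ref{thm:uniform-non-sharp} replaces $c_0$ by the small class $\sum_i\epsilon_i u_i$, which can differ from the $c$ of~\eqref{eqn:alpha-q-A-condition-index} by an arbitrarily large even class.

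\textbf{The repair} (what the paper does) is to expand rather than twist. Write $[e^{c/2}\hat{A}(TM)]_j=\sum_{\ell}\tfrac{1}{\ell!}d^{\ell}[\hat{A}_{c_0}]_{j-2\ell}$. Then some $\ell$ gives $\langle f^*\theta\wedge\pi^*(\alpha_1\cdots\alpha_q\, d^{\ell}\,[\hat{A}_{c_0}]_{j-2\ell}),[\tilde{M}]\rangle\neq 0$. Now run your $u$-basis coefficient argument with the $Q=q+\ell$ rational classes $\alpha_1,\dots,\alpha_q,d,\dots,d$. Their unbounded coordinates only need to produce a nonzero coefficient and never enter the curvature.

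\textbf{A second, smaller slip.} Your displayed identity drops the rank of $F$. Since $\textup{ch}(F)=\textup{rk}F+\lambda\theta$ on $\bS^k$, the admissibility pairing of $f^*F\otimes E_{\mathbf{m}}$ on $\tilde{M}$ is $\textup{rk}F\,\langle\hat{A}_{c_0}(e^{x}-1),[\tilde{M}]\rangle+\lambda\langle f^*\theta\wedge\hat{A}_{c_0}e^{x},[\tilde{M}]\rangle$. The first polynomial in $\mathbf{m}$ could cancel the one you control. Compare instead $f^*F\otimes E_{\mathbf{m}}$ with $\underline{\bC}^{\textup{rk}F}\otimes E_{\mathbf{m}}$. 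These have equal rank and Chern character difference $\lambda f^*\theta\, e^{x}$, so one of them is admissible with the same curvature bound; this is the role of the paper's $E^{\pm}_t$. In odd dimensions the issue does not arise, since $\textup{ch}_{\textup{odd}}(\rho)$ has no degree-zero part.

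With these two repairs your argument goes through.
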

\begin{proof}
By the assumption, there exist classes $\alpha_1, \dots, \alpha_q \in H^2(M;\bQ)$ and an area-enlargeable class $Z \in H_k(M;\bQ)$ with $(\alpha_1 \cdots \alpha_q \smile [ e^{c/2} \hat{A}(TM) ]_{2j} ) \frown [M] = Z$.
By Lemma~\ref{lemma:uniform-bound-lattice}, there is a $\bZ$-basis $u_1, \dots, u_r$ of $H^2(M;\bZ)/\textup{tors}$ with $\| u_i \|_{\textup{cm}} \leq r^2 \cdot \textup{stsys}_2(M,g)^{-1}$ as in~\eqref{eqn:u-i-comass}.
Moreover, since $Z$ is $\Lambda^2$-enlargeable, for every $\ve > 0$ we can find a finite cover $\pi: \tilde{M} \to M$ and a smooth map $f: \tilde{M} \to \bS^k$ with $\| \Lambda^2 df \|_{\pi^* g} \leq \ve$ and $f_* ( \pi^! Z) \neq 0$.
Consequently, the generator $\theta \in H^k(\bS^k;\bQ)$ satisfies
\[
0 \neq \la f^* \theta , \pi^! Z \rg = \Bigl\la f^* \theta \wedge \pi^* \bigl(\alpha_1 \cdots \alpha_q \wedge [ e^{c/2} \hat{A}(TM)]_{2j} \bigr) , [\tilde{M}] \Bigr\rg.
\]
Note that $k = n-2(q+j)$ has the same parity as $\dim M$.
Let us write $c = c_0 + 2 \beta$ in $H^2(M;\bZ)/ \textup{tors}$, for $\beta \in H^2(M;\bQ)$, so $e^{c/2} \hat{A}(TM) = e^{\beta} \hat{A}_{c_0}$ and hence $[e^{c/2} \hat{A}(TM)]_{2j} = \sum_{\ell=0}^j \frac{1}{\ell!} \beta^{\ell} [ \hat{A}_{c_0}]_{2(j-\ell)}$.
Expanding terms in $\la f^* \theta, \pi^! Z \rg \neq 0$, we therefore obtain some $\ell \in \{ 0, \dots, j \}$ such that
\[
\la f^* \theta \wedge \pi^* (\alpha_1 \cdots \alpha_q \beta^{\ell} [ \hat{A}_{c_0}]_{2(j-\ell)}) , [\tilde{M}] \rg \neq 0.
\]
Let $Q := q+\ell$ and $J = j-\ell$, so $J+Q=j+q$ and there exist $Q$ rational $2$-classes $\gamma_i \in H^2(M;\bQ)$ such that $\la f^* \theta \wedge \pi^* (\gamma_1 \cdots \gamma_Q \smile [\hat{A}_{c_0}]_{2J}), [\tilde{M}] \rg \neq 0$.
Consider the homogeneous polynomial
\[
P (x_1, \dots, x_r) := \Bigl\la f^* \theta \wedge \pi^* \left( \bigl(\textstyle{ \sum_{i=1}^r x_i u_i} \bigr)^Q \smile [ \hat{A}_{c_0}]_{2J}\right) , [\tilde{M}] \Bigr\rg, \qquad P(x_1, \dots, x_r) \in \bQ[x_1, \dots, x_r].
\]
Expanding the $2$-forms $\gamma_i = b_i{}^{\mu} u_{\mu}$ in the $u_i$-basis and using their defining property, we deduce that this polynomial is not identically zero.
Moreover, $P$ has total homogeneous degree $Q$, so there exists an $r$-tuple $(\bar{x}_1, \dots, \bar{x}_r) \in \{ 0, 1, \dots, Q \}^r$ in this grid for which $P(\bar{x}_1, \dots, \bar{x}_r) \neq 0$.
We define the class $v := \sum_{i=1}^r \bar{x}_i u_i \in H^2(M;\bZ) / \textup{tors}$, so by construction, $v$ is integral.
Moreover, $Q = q+\ell \leq q+j \leq \frac{n}{2}$, so
\[
\la f^* \theta \wedge \pi^*( v^Q \smile [\hat{A}_{c_0}]_{2J})  , [\tilde{M}] \rg \neq 0, \qquad \| v \|_{\textup{cm}} \leq Q \sum_i \| u_i \|_{\textup{cm}} \leq \tfrac{1}{2} n r^2 \textup{stsys}_2(M,g)^{-1}
\]
due to~\eqref{eqn:u-i-comass} for $\| u_i \|_{\textup{cm}}$.
Lemma~\ref{lemma:complex-line-bundle} for $\delta = \| v \|_{\textup{cm}}$ produces a complex line bundle $L \to M$ with
\begin{equation}\label{eqn:c1(L)-property-v}
c_1(L) = v \qquad \text{and} \qquad \| R^L \|_{\infty} \leq 4 \pi \|v \|_{\textup{cm}} \leq 2 \pi n r^2 \cdot \textup{stsys}_2(M,g)^{-1}.
\end{equation}
We identify $L, v$ with their pullbacks $\pi^* L, \pi^* c_1(L)$ onto $\tilde{M}$, which satisfy $\| R^{\pi^* L} \|_{\infty} = \| R^L \|_{\infty}$.
Let $d = \lfloor \frac{k}{2} \rfloor$, so $k \in \{ 2d,2d+1 \}$ for $n \in \{ 2(Q+J+d), 2(Q+J+d)+1\} $.
The assumption that $M$ satisfies~\eqref{eqn:alpha-q-A-condition-index} implies that $Q = q+\ell \geq 1$.
For $t \in \bN^*$, we define vector bundles over $\tilde{M}$ by
\[
F^+_t := \bigoplus_{0 \leq a\leq Q, \; Q \equiv a \; ( \on{mod} \; 2)} \binom{Q}{a} L^{\otimes ta}, \qquad \qquad F^-_t := \bigoplus_{0 \leq a\leq Q, \; Q \equiv a+1 \; ( \on{mod} \; 2)} \binom{Q}{a} L^{\otimes ta}
\]
By construction, these bundles satisfy
\begin{equation}\label{eqn:difference-of-bundles}
\textup{rk}( F^+_t) = \textup{rk}(F^-_t), \qquad \textup{ch}(F^+_t) - \textup{ch}(F^-_t) = \sum_{a=0}^Q (-1)^{Q-a} \binom{Q}{a} e^{atv} = (e^{tv}-1)^Q.
\end{equation}
For $|t| \leq 2n$, the curvature is uniformly bounded by
\[
\| R^{F^{\pm}_t} \|_{\infty} \leq C_n \| R^L \|_{\infty} \leq C_{n,r} \cdot \textup{stsys}_2(M,g)^{-1}
\]
using the bound~\eqref{eqn:c1(L)-property-v} and $Q \leq \frac{n}{2}$.
We now consider two cases for the parity of $n,k$.

\smallskip \noindent \textbf{Even dimension.}
For $n = 2(Q+J+d)$ with $d>0$, we choose a Bott $K$-class $\beta \in \tilde{K}^0( \bS^{2d})$ and let $B \to \bS^{2d}$ be a complex vector bundle representing $\beta$, with $c_d(B) = \lambda \theta$ in top degree, for $\lambda \neq 0$; for $d=0$, we omit the Bott factor in the following discussion and interpret $f^* \theta$ as the degree-zero generator in what follows.
Since $\bS^{2d}$ has no intermediate even cohomology, all lower Chern classes vanish, hence $\textup{ch}(B) - \textup{rk}(B) = \lambda \theta$ in $H^{2d}(\bS^{2d};\bQ)$.
We let $b := \textup{rk}(B)$ and define vector bundles over $\tilde{M}$ by
\[
E^+_t := (f^* B \otimes F^+_t) \oplus ( \underline{\bC}^b \otimes F^-_t), \qquad E^-_t := (f^* B \otimes F^-_t) \oplus ( \underline{\bC}^b \otimes F^+_t).
\]
Using the computation~\eqref{eqn:difference-of-bundles}, we see that $\textup{rk}(E^+_t) = \textup{rk}(E^-_t)$ and
\[
\textup{ch}(E^+_t) - \textup{ch}(E^-_t) = \bigl ( \textup{ch}(f^*B) - b \bigr) \cdot \bigl ( \textup{ch}(F^+_t) - \textup{ch}(F^-_t) \bigr) = \lambda f^* \theta ( e^{t \pi^* v} - 1)^Q.
\]
We now define a polynomial $\Delta(t)$ by
\[
\Delta(t) := \la [\tilde{M}] , \pi^* \hat{A}_{c_0} \wedge ( \textup{ch}(E^+_t) - \textup{ch}(E^-_t)) \rg = \lambda \la [\tilde{M}], f^* \theta \wedge ( e^{t \pi^* v} - 1)^Q \wedge \pi^* \hat{A}_{c_0} \rg
\]
whose degree-$Q$ term is $\lambda \la [\tilde{M}] , f^* \theta \wedge \pi^* (v^Q \wedge [\hat{A}_{c_0}]_{2J}) \rg \neq 0$, by construction.
Therefore, $\Delta(t)$ is not identically zero, and $\deg \Delta \leq Q+J \leq \frac{n}{2}$, so there exists some $t_0 \in \{ 1, \dots, n \}$ with $\Delta(t_0) \neq 0$.
Moreover, since $\textup{rk}(E^+_t) = \textup{rk}(E^-_t)$, we deduce that at least one of $E^{\pm}_{t_0}$ is $\pi^* \hat{A}_{c_0}$-admissible, since
\[
\Delta(t) = \la [\tilde{M}] , \pi^* \hat{A}_{c_0} \wedge (\textup{ch}(E^+_t) -\textup{rk}(E^+_t)) \rg - \la [\tilde{M}] , \pi^* \hat{A}_{c_0} \wedge (\textup{ch}(E^-_t) -\textup{rk}(E^-_t)) \rg.
\]Moreover, the curvature of $E^{\pm}_{t_0}$ is controlled using area-enlargeability and~\eqref{eqn:c1(L)-property-v}, as
\[
\| R^{f^* B}\|_{\infty} \leq C_B \| \Lambda^2 df \|_{\pi^* g} \leq C_B \ve, \qquad \| R^{F^{\pm}_t} \|_{\infty} \leq C_n \| R^L \|_{\infty} \leq C_{n,r} \cdot \textup{stsys}_2(M,g)^{-1}.
\]
Finally, we send $\ve \downarrow 0$ and descend through the finite cover $\tilde{M} \to M$ using the transfer map $\pi^!$ as in the proof of the covering invariance of the $K$-cowaist described following Definition~\ref{def:both-cowaists}.
We therefore obtain $\pi^* \hat{A}_{c_0}$-admissible bundles $E \to \tilde{M}$ with $\| R^E \|_{\infty} \leq C_{n,r} \cdot \textup{stsys}_2(M,g)^{-1}$, proving the claimed bound.

\smallskip \noindent \textbf{Odd dimension.}
For $n = 2(Q+J+d)+1$, the generator of the odd $K$-theory $K^{-1}(\bS^{2d+1})$ can be represented by a smooth map $\rho_0 : \bS^{2d+1} \to U(\ell)$, normalized so that its top odd Chern character satisfies $[ \textup{ch}_{\textup{odd}}(\rho_0) ] = \lambda \theta \in H^{2d+1}(\bS^{2d+1};\bQ)$ for $\lambda \neq 0$.
Then, the map $\rho := \rho_0 \circ f : \tilde{M} \to U(\ell)$ has $[ \textup{ch}_{\textup{odd}}(\rho) ] = \lambda (f^* \theta)$.
Using the vector bundles $F^{\pm}_t$ defined above and applying~\eqref{eqn:difference-of-bundles}, we define
\[
\tilde{\Delta}(t) := \la [\tilde{M}] , \pi^* \hat{A}_{c_0} \wedge [ \textup{ch}(F^+_t) - \textup{ch}(F^-_t) ] \wedge \textup{ch}_{\textup{odd}}(\rho) \rg = \lambda \la [\tilde{M}] , f^* \theta \wedge ( e^{t \pi^* v}-1)^Q \wedge \pi^* \hat{A}_{c_0} \rg ,
\]
whose degree-$Q$ coefficient is again $\lambda \la [ \tilde{M}] , f^* \theta \wedge \pi^* (v^Q \smile [\hat{A}_{c_0}]_{2J}) \rg \neq 0$ by construction.
The previous argument applies verbatim to produce a $t_0$ with $\tilde{\Delta}(t_0) \neq 0$, hence at least one of the pairs $(F^{\pm}_{t_0}, \rho)$ is $\hat{A}_{c_0}$-admissible as in Definition~\ref{def:both-cowaists}.
To bound the odd curvature $R^{\rho}$ of the map $\rho$, we expand the connection $\nabla_t$ and use $|t(1-t)| \leq \frac{1}{4}$ for $t \in [0,1]$ to bound
\begin{align*}
\nabla_t &= d + t (\rho^{-1} d \rho) \implies R_t = t(t-1) ( \rho^{-1} d \rho ) \wedge ( \rho^{-1} d \rho), \qquad \| R^{\rho} \|_{\infty} \leq \| ( \rho^{-1} d \rho)^{\wedge 2} \|_{\infty}, \\
\| R^{F^{\pm}_t} \|_{\infty} &\leq C_{n,r} \cdot \textup{stsys}_2(M,g)^{-1}, \qquad \| R^{\rho} \|_{\infty} \leq \| ( \rho^{-1}d\rho) \wedge ( \rho^{-1} d \rho) \|_{\infty} \leq C_{\rho_0} \| \Lambda^2 df \| \leq C_{\rho_0} \ve.
\end{align*}
Combining these computations and arguing as above completes the proof.
\end{proof}

Finally, we prove that the $\hat{A}_c$-cowaist controls the scalar curvature by the following result, which extends the observation of B\"ar-Hanke in~\cite{k-cowaist-boundary}*{Example 11}, for $\dim M$ even, and of~\cite{odd-A-hat}*{Theorem 4.1 and Remark 4.2} and~\cite{sven-2-systole}*{Theorem 3.4} for $\dim M$ odd.
\begin{proposition}\label{prop:K-cowaist-bound}
    Let $M$ be a $\textup{spin}^c$ manifold with characteristic class $c \in H^2(M;\bZ)$.
    Let $c_{\bR} \in H^2(M;\bR)$ denote the image of $c$ in $H^2(M;\bR)$, whose comass norm we denote by $\| c_{\bR} \|_{\textup{cm}}$.
    Then, every Riemannian metric $g$ on $M$ satisfies
    \begin{equation}\label{eqn:spinc-systole-bound}
    \inf_M R_g \leq 4 \, \lfloor \tfrac{\dim M}{2} \rfloor  \, \pi \, \| c_{\bR} \|_{\textup{cm}} + 2  \dim M ( \dim M-1) \cdot \hat{A}_c\textup{-cw}_2(M,g)^{-1}
    \end{equation}
    where $\hat{A}_c\textup{-cw}_2$ denotes the cowaist of Definition~\ref{def:both-cowaists}.
    
    In particular, if there exists some $j \in \bN_0$ such that the class $Z := [e^{c/2} \hat{A}(TM)]_{2j} \frown [M]$ is $\Lambda^2$-enlargeable, then every Riemannian metric $g$ on $M$ satisfies
    \begin{equation}\label{eqn:spinc-systole-bound-improved}
    \inf_M R_g \leq 4 \, \lfloor \tfrac{\dim M}{2} \rfloor  \, \pi \, \| c_{\bR} \|_{\textup{cm}}.
    \end{equation}
\end{proposition}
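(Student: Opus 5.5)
We argue by contradiction using the twisted $\textup{spin}^c$ Dirac operator and the Lichnerowicz--Weitzenb\"ock formula. Suppose
\[
\inf_M R_g > 4 \lfloor \tfrac{\dim M}{2} \rfloor \pi \| c_{\bR} \|_{\textup{cm}} + 2 \dim M(\dim M -1)\, \hat{A}_c\textup{-cw}_2(M,g)^{-1}.
\]
Then we may pick $\delta>0$ and, by Definition~\ref{def:both-cowaists}, an $\hat{A}_c$-admissible bundle $E$ (or pair $(E,\rho)$ in odd dimension) such that
\[
\inf_M R_g > 4 \lfloor \tfrac{\dim M}{2} \rfloor \pi (\| c_{\bR} \|_{\textup{cm}}+\delta) + 2\dim M(\dim M-1)\, \|R^E\|_{\infty}
\]
(respectively with $\|R^E\|_\infty+\|R^\rho\|_\infty$). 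By Lemma~\ref{lemma:complex-line-bundle}, applied to the determinant line bundle $L$ with $c_1(L)=c$, we choose a unitary connection $A$ whose curvature satisfies $\|\tfrac{i}{2\pi}F_A\|_* \le \|c_{\bR}\|_{\textup{cm}}+\delta$.

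For the twisted operator $\cD_{A,E}$ on $\cS\otimes E$, the Weitzenb\"ock formula reads
\[
\cD_{A,E}^2 = \nabla^*\nabla + \tfrac{R_g}{4} + \tfrac12 \mathfrak{c}(F_A) + \mathcal{R}^E,
\]
where $\mathfrak{c}$ denotes Clifford multiplication. We then estimate the two curvature endomorphisms pointwise.

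\textbf{Line bundle term.} At each point, put the $2$-form $\tfrac{i}{2\pi}F_A$ in normal form $\sum_k \lambda_k e^{2k-1}\wedge e^{2k}$. This has at most $m=\lfloor \dim M/2\rfloor$ blocks, and each $|\lambda_k|$ is bounded by the comass. Each Clifford term $e_{2k-1}e_{2k}$ has eigenvalues $\pm i$, so $|\tfrac12\mathfrak{c}(F_A)|_{\textup{op}}\le \pi\, m (\|c_{\bR}\|_{\textup{cm}}+\delta)$. This is the $4m\pi\|c_{\bR}\|$ term after multiplying by $4$.

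\textbf{Twisting term.} The standard bound $|\mathcal{R}^E|_{\textup{op}}\le \tfrac{\dim M(\dim M-1)}{2}\|R^E\|_\infty$ comes from summing $\tfrac12\sum_{i\ne j}|e_ie_j\otimes R^E(e_i,e_j)|$. This is the constant appearing in the statement.

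Hence $\cD_{A,E}^2>0$ and the kernel vanishes. In even dimension, the Atiyah--Singer index theorem gives
\[
\textup{ind}\,\cD^+_{A,E}=\la [M], e^{c/2}\hat{A}(TM)\,\textup{ch}(E)\rg .
\]
Applying the same argument to the trivial bundle $\underline{\bC}^{\textup{rk}E}$ shows its index also vanishes. Subtracting, $\la[M],\hat{A}_c\wedge(\textup{ch}(E)-\textup{rk}E)\rg=0$, contradicting admissibility.

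In odd dimension we use the spectral flow of the path $\cD_{A,E}\otimes(d+t\rho^{-1}d\rho)$, $t\in[0,1]$, on $\cS\otimes E\otimes\underline{\bC}^\ell$. By the index formula of \cite{odd-A-hat} (Getzler), this spectral flow equals $\la[M],\hat{A}_c\,\textup{ch}(E)\,\textup{ch}_{\textup{odd}}(\rho)\rg$ up to a normalizing constant. The curvature of the path connection is $t(t-1)(\rho^{-1}d\rho)^2$, which the paper's normalization bounds by $\|R^\rho\|_\infty$, so each operator in the path stays invertible and the spectral flow vanishes. Subtracting the untwisted contribution again contradicts admissibility. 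I expect this odd case, in particular matching the curvature normalization $R^\rho=\tfrac14(\rho^{-1}d\rho)^2$ with the Weitzenb\"ock constant, to be the main technical obstacle.

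\textbf{The improved bound \eqref{eqn:spinc-systole-bound-improved}.} It suffices to show $\hat{A}_c\textup{-cw}_2(M,g)=\infty$ when $Z$ is $\Lambda^2$-enlargeable. We repeat the construction in the proof of Proposition~\ref{prop:bound-stsys-from-cowaist} with $q=0$ and no line bundle factors. For $\ve>0$, take a finite cover $\pi:\tilde M\to M$ and $f:\tilde M\to\bS^k$ with $\|\Lambda^2df\|\le\ve$. Pull back the Bott bundle $B$ in even dimension, or $\rho_0$ in odd dimension, along $f$. Then
\[
\la[\tilde M],\pi^*\hat{A}_c\wedge(\textup{ch}f^*B-b)\rg=\lambda\la f^*\theta,\pi^!Z\rg\neq0,
\]
using that only the degree-$2j$ part of $\hat{A}_c$ pairs with $f^*\theta$. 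The twisting bundle has curvature at most $C\ve$. By the covering invariance of the cowaist, $\hat{A}_c\textup{-cw}_2(M,g)\ge (C\ve)^{-1}$, and letting $\ve\to0$ gives $\hat{A}_c\textup{-cw}_2(M,g)=\infty$. Substituting into \eqref{eqn:spinc-systole-bound} yields \eqref{eqn:spinc-systole-bound-improved}.
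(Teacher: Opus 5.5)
Your proof of \eqref{eqn:spinc-systole-bound} is the paper's argument: the same Weitzenb\"ock estimates $m\pi\|c_{\bR}\|_{\textup{cm}}$ and $\binom{\dim M}{2}\|R^E\|_\infty$, the comparison with the trivial bundle of rank $\textup{rk}(E)$, and the spectral-flow path with $\|R^\rho_t\|_\infty\le\|R^\rho\|_\infty$.

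The gap is in the improved bound. Your reduction ``it suffices to show $\hat{A}_c\textup{-cw}_2(M,g)=\infty$'' is false in general, because admissibility in Definition~\ref{def:both-cowaists} requires a factor $\textup{ch}(E)-\textup{rk}(E)$. In odd dimension the map $f:\tilde M\to\bS^k$ only gives you $\rho=\rho_0\circ f$. Paired with a trivial $E$ this is never admissible. Since $\bS^k$ has no cohomology in intermediate degrees, pullbacks along $f$ cannot supply a second positive-degree factor. The same failure occurs in even dimension when $k=0$ (i.e.\ $2j=\dim M$), where there is no twisting bundle at all.

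Here is a concrete counterexample. Take $M=\bS^2\times\bS^1$ with the round unit metric times $dt^2$, $c=2x$ with $x$ pulled back from a generator of $H^2(\bS^2;\bZ)$, and $j=1$.
\begin{itemize}
\item Then $e^{c/2}\hat{A}(TM)=1+x$ and $Z=x\frown[M]=[\textup{pt}\times\bS^1]\neq 0$ lies in $H_1$, so it is area-enlargeable.
\item Here \eqref{eqn:spinc-systole-bound-improved} amounts to the sharp bound $\textup{stsys}_2(M,g)\cdot\inf_M R_g\le 8\pi$.
\item By degree counting, a pair $(E,\rho)$ is admissible iff $\la c_1(E)\smile\textup{ch}_1(\rho),[M]\rg\neq0$. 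This forces $c_1(E)$ to be non-zero on a slice $\bS^2\times\{t\}$.
\item The twisted Dirac operator on that round slice then has non-zero index, so $\|R^E\|_\infty\ge\tfrac14 R_{\bS^2}=\tfrac12$.
\end{itemize}
Hence the cowaist is at most $2$, and your route yields nothing in precisely the case the statement is designed for.

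The repair is what the paper does: bypass the cowaist and run the Weitzenb\"ock argument directly on $(\tilde M,\pi^*g)$. There $\inf R_{\pi^*g}=\inf R_g$, and $\|\pi^*c_{\bR}\|_{\textup{cm}}\le\|c_{\bR}\|_{\textup{cm}}$ (pull back the near-optimal representative $\eta_\delta$).
\begin{itemize}
\item In even dimension, $\textup{ind}(\cD^+_{A,f^*B})-b\,\textup{ind}(\cD^+_{A})=\lambda\la f^*\theta,\pi^!Z\rg\neq0$. So either $\cD_{A,f^*B}$ or the untwisted, curvature-free $\cD_A$ has a kernel. For $k=0$, directly $\textup{ind}(\cD^+_A)=\la[e^{c/2}\hat{A}(TM)]_{\dim M},[M]\rg\neq0$.
\item In odd dimension, the single path $\cD_{A,\rho}(t)$, with no $E$ and no subtraction, has spectral flow $\pm\lambda\la f^*\theta\wedge\pi^*[e^{c/2}\hat{A}(TM)]_{2j},[\tilde M]\rg\neq0$.
\end{itemize}
Either way you get $\inf_M R_g\le 4m\pi(\|c_{\bR}\|_{\textup{cm}}+\delta)+2\dim M(\dim M-1)\,C\ve$, and then let $\delta,\ve\downarrow0$. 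Your cowaist-plus-transfer argument is valid only in even dimension with $k\ge 2$.
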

\begin{proof}
    Let $m := \lfloor \frac{\dim M}{2} \rfloor$ in what follows, so $\dim M \in \{ 2m , 2m+1 \}$.
    For any $\delta > 0$ we can find a smooth closed representative $\eta_{\delta}$ of the class $c_{\bR}$ with $\| \eta_{\delta} \|_* \leq \| c_{\bR} \|_{\textup{cm}} + \delta$, and a unitary connection $A$ on $L$ with $F_{A} = 2 \pi i \eta_{\delta}$.
    Writing $\eta_{\delta}$ in normal form $\eta_{\delta} = \sum_{j=1}^m \lambda_j e_j \wedge e_{m+j}$, we use the trace inequality $\sum_{j=1}^m |\lambda_j| \leq m \|\eta_{\delta}\|_*$ to bound $|i c(\eta_{\delta})|_{\textup{op}} \leq \sup_M \sum_{j=1}^m |\lambda_j| \leq m (\| c_{\bR} \|_{\textup{cm}} + \delta)$.
    In particular, 
    \begin{equation}\label{eqn:determinant-line-bundle-term}
        F_{A} = 2 \pi i \eta_{\delta}, \qquad \tfrac{1}{2} c (F_{A}) = \pi i c(\eta_{\delta}), \qquad \pi \la i c(\eta_{\delta}) \psi, \psi \rg \geq - m \pi \| \eta_{\delta} \|_* |\psi|^2
    \end{equation}
    for any spinor $\psi \in \Gamma^{\infty}(M,\cS)$.
    If no $\hat{A}_c$-admissible bundle $E \to M$ or pair $(E, \rho)$, then $\hat{A}_c\textup{-cw}_2(M,g) = 0$ and the claim is vacuous.
    Otherwise, let $E \to M$ be an $\hat{A}_c$-admissible bundle or the bundle component of an $\hat{A}_c$-admissible pair $(E,\rho)$, let $r := \textup{rk}(E)$, take $E_0 \to M$ to be the trivial flat rank-$r$ bundle $E_0 \cong \bC^r$, and write $V \in \{ E, E_0\}$.
    Finally, if some class $Z := [e^{c/2} \hat{A}(TM)]_{2j} \frown [M] \in H_k(M;\bQ)$ is $\Lambda^2$-enlargeable, we transfer the setup of Proposition~\ref{prop:bound-stsys-from-cowaist}, namely a finite cover $\pi: \tilde{M} \to \bS^k$ with
    \begin{equation}\label{eqn:area-enlargeable-Z}
    \| \Lambda^2 df \|_{\pi^* g} \leq \ve, \qquad \la f^* \theta \wedge \pi^*[ e^{c/2} \hat{A}(TM)]_{2j} , [\tilde{M}] \rg  \rg \neq 0,
    \end{equation}
    for which $\| \pi^* c_{\bR} \|_{\textup{cm}} = \| c_{\bR} \|_{\textup{cm}}$ under pullback.
    We consider two cases for $\dim M \in \{ 2m,2m+1\}$.

    \smallskip \noindent \textbf{Even-dimensional $M$.}
    Let $\cD^+_{A,V} : \Gamma^{\infty}( M, \cS^+ \otimes V) \to \Gamma^{\infty}( M, \cS^- \otimes V)$ be the twisted chiral $\textup{spin}^c$ Dirac operator, so the twisted $\textup{spin}^c$ Lichnerowicz-Weitzenb\"ock formula assumes the form
    \begin{equation}\label{eqn:Lichnerowicz-formula}
    \cD^2_{A,V} = \nabla^{\star}_{A,V} \nabla_{A,V} + \tfrac{1}{4} R_g  + \tfrac{1}{2} c(F_A) + \cR^V, \qquad \cR^V := \tfrac{1}{2} { \textstyle \sum_{i,j} c(e_i) c(e_j) \otimes R^V(e_i,e_j) }
    \end{equation}
in terms of a local orthonormal frame $e_1, \dots, e_{2n}$.
Since $c(e_i) c(e_j)$ is unitary for $i \neq j$, we have
\begin{equation}\label{eqn:R-E-operator-norm-bound}
\| \cR^V \|_{\textup{op}} \leq \sum_{i<j} \| R^V(e_i, e_j) \|_{\textup{op}} \leq m(2m-1) \| R^V \|_{L^{\infty}}.
\end{equation}
    The Atiyah-Singer index theorem yields the relation
    \[
    \textup{ind} ( \cD^+_{A,E}) - \textup{ind} ( \cD^+_{A,E_0}) = \la [M] , e^{c/2} \hat{A}(TM) \wedge [\textup{ch}(E) - \textup{rk}(E)] \rg \neq 0
    \]
    because $E$ is $\hat{A}_c$-admissible.
    Thus, at least one of the operators $\cD^+_{A,E}$ or $\cD^+_{A,E_0}$ has non-zero index, so either $\ker \cD^+_{A,V} \neq 0$ or $\ker \cD^-_{A,V}  = \ker (\cD^+_{A,V})^{\star} \neq 0$.
    Since both are contained in $\ker \cD_{A,V}$, we obtain a non-zero harmonic spinor $0 \neq \psi \in \ker \cD_{A,V}$.
    Integrating~\eqref{eqn:Lichnerowicz-formula} against $\psi$ and using~\eqref{eqn:determinant-line-bundle-term} gives
\begin{align*}
0 &= \int_M \bigl( |\nabla_{A,V} \psi|^2 + \tfrac{1}{4} R_g |\psi|^2 + \tfrac{1}{2} \la c(F_A) \psi, \psi \rg + \la \cR^V \psi , \psi \rg \bigr) \\
&\geq \int_M \bigl( \tfrac{1}{4} \inf_M R_g - m \pi \| \eta_{\delta} \|_* - m(2m-1) \| R^V \|_{\infty} \bigr) |\psi|^2. 
\end{align*}
Since $E_0$ is flat, we have $\| R^V \|_{\infty} \leq \| R^E \|_{\infty}$ for $V \in \{ E, E_0 \}$, so the last inequality shows that 
\begin{equation}\label{eqn:final-inequality-to-conclude}
\inf_M R_g \leq 4m \pi \| \eta_{\delta} \|_* + 4m(2m-1) \| R^E \|_{\infty} \leq 4 m \pi ( \| c_{\bR} \|_{\textup{cm}} + \delta) + 2 \cdot (2m) \cdot (2m-1) \| R^E \|_{\infty}.
\end{equation}
Taking the infimum over $\hat{A}_c$-admissible bundles and sending $\delta \downarrow 0$ proves the claim.

In the $\Lambda^2$-enlargeable setting~\eqref{eqn:area-enlargeable-Z}, we choose a Bott bundle $B_0 \to \bS^k$ as in Proposition~\ref{prop:bound-stsys-from-cowaist}, with $\textup{ch}(B_0) - \textup{rk}(B_0) = \lambda \theta$, so at least one of the twisted Dirac operators $\{ D_{f^* B_0}, D \}$ has non-zero index,
\[
\textup{ind} (D_{f^* B_0}) - \textup{rk}(B_0) \on{ind} D = \lambda \, \la f^* \theta \wedge \pi^* [e^{c/2} \hat{A}(TM)]_{2j}, [\tilde{M}] \rg \neq 0.
\]
The bundle $f^* B_0$ has curvature $\|R^{f^* B_0} \|_{\infty} \leq C_{B_0} \| \Lambda^2 df \|_{\infty} \leq C_{B_0} \ve$, so the same arguments produce~\eqref{eqn:final-inequality-to-conclude} in the form $\inf_M R_g \leq 4 m \pi \| \eta_{\delta} \|_* + 4m (2m-1) C_{B_0} \ve$.
The bound~\eqref{eqn:spinc-systole-bound-improved} follows from taking $\delta, \ve \downarrow 0$.

    \smallskip \noindent \textbf{Odd-dimensional $M$.}
The argument proceeds similarly to~\cite{odd-A-hat}*{Theorem 4.1} after replacing $\hat{A}(TM)$ by $e^{c_{\bR}/2} \hat{A}(TM)$, and to~\cite{sven-2-systole}*{Theorem 3.4} upon replacing the map $u : M \to \bS^1$ by a general map $\rho: M \to U(\ell)$ in an $\hat{A}_c$-admissible pair $(E,\rho)$, for $E \to M$ a rank-$r$ vector bundle.

We consider the $\fu(\ell)$-valued $1$-form $\rho^{-1} d \rho$ on $M$ and define the path $\nabla^{\rho}_t := d + t (\rho^{-1} d \rho)$ of unitary connections on the trivial bundle $\bC^{\ell}$, for $t \in [0,1]$.
We keep the determinant connection $A$ on the $\textup{spin}^c$ determinant line $L$ fixed throughout.
Define the connection $\nabla^{V, \rho}_t := \nabla^V \otimes 1 + 1 \otimes \nabla^{\rho}_t$ on $V \otimes \bC^{\ell}$ and let $\cD_{A,V,\rho}(t)$ be the $\textup{spin}^c$ Dirac operator with determinant connection $A$, twisted by $(V \otimes \bC^{\ell}, \nabla_t^{V,\rho})$.
The path $\nabla^{\rho}_t$ of unitary connections on $\bC^{\ell}$ has $\nabla^{\rho}_1 = d + \rho^{-1} d \rho$, which is gauge-equivalent to $d$ by the gauge transformation $\rho$; thus, the endpoint operators $\cD_{A,V,\rho}(0)$ and $\cD_{A,V,\rho}(1)$ are unitarily equivalent.
Consequently, the endpoint correction terms cancel in the $\textup{spin}^c$ spectral-flow formula, obtained from the APS index theorem~\cite{getzler}*{Theorem 2.8} in the same form as in~\cite{sven-2-systole}*{Theorem 3.4} and~\cite{bar-ziemke}*{Corollary 1}, with the spin index density $\hat{A}(TM)$ replaced by the $\textup{spin}^c$ density $e^{c_{\mathbb R}/2}\widehat A(TM)$.
The odd Chern character of $( V \otimes \bC^{\ell}, \nabla^{V, \rho}_t )$ factors as $\textup{ch}_{\textup{odd}}( \nabla^{V, \rho}_t) = \textup{ch}(V) \smile \textup{ch}_{\textup{odd}}(\rho)$ in cohomology, so the $\textup{spin}^c$ index theorem for spectral flow gives
\[
\on{sf}(D_{A, E, \rho}(t)) - \on{sf} ( D_{A, E_0, \rho} (t)) = \pm  \la [M], \hat{A}_c \wedge [ \on{ch}(E) - \on{rk}(E) ] \wedge \textup{ch}_{\textup{odd}}(\rho) \rg \neq 0
\]
because the pair $(E, \rho)$ is $\hat{A}_c$-admissible.
Therefore, for some $V \in \{ E, E_0 \}$ we have $\on{sf} ( D_{A, V, \rho}) \neq 0$.

As in~\eqref{eqn:Lichnerowicz-formula}, for $V \in \{ E, E_0 \}$, the operator $\cD_{A, V, \rho}(t)$ satisfies an analogous $(V,\rho)$-twisted $\textup{spin}^c$ Lichnerowicz-Weitzenb\"ock formula along the path $t \in [0,1]$,
\begin{equation}\label{eqn:D-A-E}
    \cD_{A, V, \rho}(t)^2 = \nabla^{\star}_{A,V,\rho,t} \nabla_{A,V,\rho,t} + \tfrac{1}{4} R_g + \tfrac{1}{2} c(F_A) + \cR^V + \cR^{\rho}_t
\end{equation}
for curvature operators $\cR^E, \cR^{\rho}_t$ defined as in~\eqref{eqn:Lichnerowicz-formula}, with operator norm bounded by the $L^{\infty}$ norms of $R^E, R^{\rho}_t$ as in~\eqref{eqn:R-E-operator-norm-bound}.
For each $t \in [0,1]$, the curvature of $\nabla^{\rho}_t = d + t ( \rho^{-1} d \rho)$ is
\[
R^{\rho}_t = t \, d ( \rho^{-1} d \rho) + t^2 ( \rho^{-1} d \rho)^2 = - t(1-t) ( \rho^{-1} d \rho)^2 = - 4 t(1-t) R^{\rho}
\]
by the Maurer-Cartan equation $d ( \rho^{-1} d \rho) + ( \rho^{-1} d \rho)^2 = 0$.
Since $t(1-t) \leq \frac{1}{4}$ for $t \in [0,1]$, we find $\| R^{\rho}_t \|_{\infty} \leq \| R^{\rho} \|_{\infty}$.
Also, $ \|R^V \|_{\infty} \leq \| R^E \|_{\infty}$.
Estimating the terms of~\eqref{eqn:D-A-E} as in~\eqref{eqn:determinant-line-bundle-term}-\eqref{eqn:R-E-operator-norm-bound}, we find
\[
\cD_{A, V, \rho}(t)^2 \geq \nabla_{A,V,\rho,t}^{\star} \nabla_{A,V,\rho,t} + C, \qquad C := \tfrac{1}{4} \inf_M R_g - m \pi \| \eta_{\delta} \|_* - m(2m+1) ( \| R^E \|_{\infty} + \| R^{\rho} \|_{\infty})
\]
as operators.
If $C > 0$, then $\ker \cD_{A, V, \rho}(t) = \varnothing$ for all $t$, so every operator in the path is invertible and the spectral flow $\textup{sf} ( D_{A, V, \rho})$ is zero.
Therefore, $\textup{sf} ( D_{A, V, \rho}(t)) \neq 0$ implies that $C \leq 0$, hence
\[
\inf_M R_g \leq 4 m \pi ( \| c_{\bR} \|_{\textup{cm}} + \delta) + 2 \cdot (2m) \cdot (2m+1) ( \| R^E \|_{\infty} + \|R^{\rho} \|_{\infty})
\]
by analogy with~\eqref{eqn:final-inequality-to-conclude}.
Finally, we can proceed as above to conclude the desired bound~\eqref{eqn:spinc-systole-bound}.

In the $\Lambda^2$-enlargeable setting~\eqref{eqn:area-enlargeable-Z}, then $k$ is odd and as in Proposition~\ref{prop:bound-stsys-from-cowaist}, we choose a representative $\rho_0: \bS^k \to U(N)$ representing the odd class, normalized so that $[\textup{ch}_{\textup{odd}}(\rho_0)] = \lambda \theta$, and set $\rho := \rho_0 \circ f$.
Then, the $\textup{spin}^c$ spectral flow expression from above gives
\[
\textup{sf} (D_{A,\rho}(t)) = \pm \lambda \la f^* \theta \wedge \pi^* [ e^{c/2} \hat{A}(TM)]_{2j}, [\tilde{M}] \rg \neq 0
\]
where again $\| R^{\rho} \|_{\infty} \leq C_{\rho_0} \ve$.
From here, the conclusion is again identical to the above arguments.
\end{proof}

\subsection{Topological constructions}\label{section:topology-constructions}

Theorems~\ref{thm:optimal-sharp-bound} and~\ref{thm:uniform-non-sharp} concern the class of $(2,c)$-essential manifolds $M$, which include $\textup{spin}^c$ manifolds with non-zero Dirac index.
We show here that these conditions produce large families and recover many standard as well as topologically complicated examples.
Notably, $(2,c)$-essential manifolds as presented in Definition~\ref{def:(2,c)-essential} form a class that is preserved under many natural geometric operations.
Using Definition~\ref{def:area-enlargeable}, we first recall that
\begin{lemma}\label{lemma:pullback-lemma}
    Consider a class $B \in H_k(Z;\bQ)$ and a smooth map $f: Z \to M$.
    If the class $f_* B$ is area-enlargeable and non-zero, then $B$ is area-enlargeable.
\end{lemma}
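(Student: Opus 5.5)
The plan is to verify the defining property of area-enlargeability for $B$ directly, by pulling everything back from $M$ along $f$. Fix a Riemannian metric $h$ on $Z$ (presumably closed) and $\ve>0$. Since we must produce covers of $Z$, we start with an arbitrary metric $g$ on $M$. Compactness of $Z$ gives $\|df\|_{h\to g}\le \Lambda$ for some $\Lambda<\infty$, so $\|\Lambda^2 df\|\le \Lambda^2$ pointwise. Applying Definition~\ref{def:area-enlargeable} to $f_*B$ on $(M,g)$ with $\ve' := \ve/\Lambda^2$ gives a finite cover $\pi:\tilde M\to M$ and a smooth map $F:\tilde M\to \bS^k$ with $\|\Lambda^2 dF\|_{\pi^*g}\le \ve'$ and $F_*(\pi^!f_*B)\neq 0$.

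Next we form the pullback cover $\tilde Z := Z\times_M \tilde M \to Z$. It is a finite cover $p:\tilde Z\to Z$ with a lift $\tilde f:\tilde Z\to\tilde M$ satisfying $\pi\circ\tilde f = f\circ p$. The candidate map is $G := F\circ\tilde f:\tilde Z\to\bS^k$. Since $\tilde f$ is a local lift of $f$ through local isometries, the $2$-dilations multiply, giving
\[
\|\Lambda^2 dG\|_{p^*h}\le \|\Lambda^2 dF\|_{\pi^*g}\cdot\|\Lambda^2 d\tilde f\|\le \ve'\Lambda^2=\ve.
\]
If $\tilde Z$ is disconnected, we restrict to the union of components, or keep it as is and note that the definition only needs a finite cover.

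The remaining, and main, step is the homological identity $\tilde f_*(p^! B) = \pi^!(f_*B)$. This is the naturality of transfer for pullback squares of finite coverings. We would verify it at the chain level: the transfer sends a singular simplex to the sum of its lifts, and lifts of $f\circ\sigma$ to $\tilde M$ correspond bijectively to lifts of $\sigma$ to $\tilde Z$ by the universal property of the fiber product. Granting this, $G_*(p^!B) = F_*\tilde f_*(p^!B) = F_*(\pi^! f_*B)\neq 0$.

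This shows that $B$ is area-enlargeable. For $k\in\{0,1\}$ the claim is immediate, since $f_*B\neq 0$ forces $B\neq 0$. The one subtle point is that the metric $h$ on $Z$ is arbitrary while $g$ on $M$ is free; choosing $g$ first and absorbing the Lipschitz constant of $f$ into $\ve'$ handles this.
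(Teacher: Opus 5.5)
Your proof is correct and follows the paper's route: you pull back the finite cover and sphere map supplied by the area-enlargeability of $f_*B$ along $f$, and you handle the metric by absorbing the finite $2$-dilation of $f$ into $\varepsilon$, where the paper equivalently chooses the metric on $M$ so that $f$ has small $2$-dilation. Your chain-level fiber-product verification of $\tilde f_*(p^!B)=\pi^!(f_*B)$ fills in the step the paper leaves implicit; to avoid dividing by zero, take $\Lambda\geq 1$, or note that $\Lambda>0$ because $f_*B\neq 0$.
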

This property is standard, see~\cite{large-and-small}*{Proposition 3.4}.
Lemma~\ref{lemma:pullback-lemma} is clear by choosing a metric on $M$ to make $f$ have arbitrarily small $2$-dilation, for any given metric on $Z$.
The claim is obtained by pulling back the finite covers and sphere maps provided by the area-enlargeability of $f_* B$.

\begin{lemma}\label{lemma:polarization}
    Let $M$ be a $2$-essential or $(2,1)$-essential manifold with $m = \lfloor \frac{\dim M}{2} \rfloor$.
    Then, $M$ admits a rational $2$-class $u$ such that $0 \neq u^m \in H^{2m}(M;\bQ)$ and a rational class $\eta$ such that $\la \eta u^m, [M] \rg \neq 0$.
\end{lemma}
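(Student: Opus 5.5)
The plan is to polarize the multilinear product into a power of a single class by a standard polynomial nonvanishing argument. By Definition~\ref{def:2-essential} and the rationality remark after Definition~\ref{def:(2,c)-essential}, there are rational classes $u_1,\dots,u_m \in H^2(M;\bQ)$ with $u_1 \cdots u_m \neq 0$ in $H^{2m}(M;\bQ)$. In the odd case there is also a rational $\xi \in H^1(M;\bQ)$ with $\xi u_1\cdots u_m \neq 0$. We set $\eta = 1$ when $\dim M = 2m$ and $\eta = \xi$ when $\dim M = 2m+1$. Then Poincar\'e duality, or directly the nonvanishing of the top-degree class, gives $\la \eta u_1\cdots u_m,[M]\rg \neq 0$.

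Consider the homogeneous polynomial
\[
P(x_1,\dots,x_m) := \bigl\la \eta \smile (x_1u_1+\cdots+x_mu_m)^m , [M] \bigr\rg \in \bQ[x_1,\dots,x_m].
\]
Since $H^{2\bullet}(M;\bQ)$ is commutative, expanding the power shows that the coefficient of the monomial $x_1x_2\cdots x_m$ equals $m!\,\la \eta u_1\cdots u_m,[M]\rg \neq 0$. Hence $P$ is not identically zero. A nonzero polynomial of degree $m$ cannot vanish on the whole grid $\{0,1,\dots,m\}^m$; this follows by the combinatorial Nullstellensatz, or by induction on the number of variables using that a nonzero one-variable polynomial of degree at most $m$ has at most $m$ roots. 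So we may pick $\bar x$ in this grid with $P(\bar x)\neq 0$.

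Set $u := \sum_i \bar x_i u_i$, which is rational, and even integral modulo torsion if the $u_i$ are. Then $\la \eta u^m,[M]\rg = P(\bar x) \neq 0$, which in particular forces $u^m \neq 0$ in $H^{2m}(M;\bQ)$. In the even case we may take $\eta = 1$. If a nontrivial degree-zero-complementary class is preferred, note that the nondegenerate Poincar\'e pairing produces a rational $\eta \in H^{\dim M - 2m}(M;\bQ)$ with $\la \eta u^m,[M]\rg \ne 0$ directly from $u^m \ne 0$.

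There is no real obstacle here. The only point needing care is the bookkeeping that the multilinear coefficient is nonzero, namely that it is $m!$ times the given pairing and that commutativity of even-degree classes is used. This is the same device as the grid argument in the proof of Proposition~\ref{prop:bound-stsys-from-cowaist}.
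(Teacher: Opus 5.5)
Your proposal is correct and follows essentially the same route as the paper. Both arguments expand $\la \eta (\sum_i t_i u_i)^m, [M]\rg$ as a homogeneous polynomial, note that the coefficient of $t_1\cdots t_m$ is $m!\,\la \eta u_1\cdots u_m,[M]\rg \neq 0$, and evaluate at a rational point where the polynomial is nonzero. Your explicit choice of that point in the grid $\{0,\dots,m\}^m$, which makes $u$ integral modulo torsion, is only a harmless sharpening of the paper's unspecified rational $m$-tuple.
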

\begin{proof}
    By the remark following Definition~\ref{def:2-essential}, we can find rational classes $u_1,\dots, u_m$ and $\eta$ with $\eta u_1 \cdots u_m \neq 0$, for $\eta \in H^1(M;\bQ)$ if $\dim M = 2m+1$ and $\eta=1$ otherwise.
    The polynomial
    \[
P(t_1, \dots, t_m) := \la \eta (t_1 u_1 + \cdots + t_m u_m)^m, [M] \rg = \sum_{i_1,\dots,i_m} t_{i_1} \cdots t_{i_m} \la \eta u_{i_1} \cdots u_{i_m}, [M] \rg
\]
is homogeneous of total degree $m$, and is not identically zero because the coefficient of $t_1 t_2 \cdots t_m$ is $m! \la \eta u_1 \cdots u_m, [M] \rg \neq 0$ by assumption.
Thus, there exists an $m$-tuple $(t_1,\dots, t_m) \in \bQ^{\oplus m}$ on which $P$ is non-zero, and the corresponding $2$-class class $u := \sum_{i=1}^m t_i u_i$ satisfies $\eta u^m \neq 0$ as desired.
\end{proof}
\begin{remark}\label{rmk:(2,1)-essential}
    By analogy with~\cite{goodwillie-hebda-katz}*{Theorem 3.3}, $(2,1)$-essential manifolds satisfy an improved systolic inequality in the spirit of Gromov~\cite{gromov-metric-structures}*{Theorem 4.36}.
    Concretely, for $m = \lfloor \frac{\dim M}{2} \rfloor$,
    \[
    \textup{stsys}_1(M,g) \cdot \textup{stsys}_2(M,g)^m \leq m! \, \Gamma_{b_1(M)} \, \Gamma_{b_2(M)}^m \cdot \textup{Vol}(M,g), \qquad \Gamma_b = O (b \log b).
    \]
    The constants $\Gamma_b$ are determined by transference bounds for rank-$b$ lattices as in Lemma~\ref{lemma:uniform-bound-lattice} and~\cite{transference}.
\end{remark}

We will also examine the class $\cF_n \subset \cF$ of manifolds $N$ satisfying the condition of Theorem~~\hyperref[thm:optimal-sharp-bound]{\ref{thm:optimal-sharp-bound}$\, (b)$}.
In terms of Definition~\ref{def:(2,c)-essential}, these correspond to the case $q=0$ and $j = 2 \lfloor \frac{n}{2} \rfloor$, so
\begin{equation}\label{eqn:Fn-definition}
    \cF_n := \{ N \in \cF \; \textup{spin}^c, \; \la [N], \xi \smile e^{c/2} \hat{A}(TN) \rg \neq 0 \; \text{for some } \; \xi \in H^{n- 2 \lfloor \frac{n}{2} \rfloor}(N;\bR) \}. \tag{$\cF_n$}
\end{equation}
The characteristic class $e^{c/2} \hat{A}(TN)$ is viewed in rational cohomology, with only the top-degree component of $e^{c/2} \hat{A}(TN)$ or $\xi \smile e^{c/2} \hat{A}(TN)$ contributing to the pairing with the fundamental class $[N]$.
Recall that connected manifolds have torsion-free $H^1(N;\bZ)$ by the universal coefficient theorem, whereby
\begin{equation}\label{eqn:universal-coefficient}
H^1(N;\bZ) \cong \textup{Hom}(H_1(N;\bZ) , \bZ).
\end{equation}
Hence, any $0 \neq \xi \in H^1(N;\bZ)$ defines a non-zero element of $H^1(N;\bR)$, so $b_1(N) \geq 1$ if $\dim N$ is odd.

For almost-complex manifolds $X^{2n}$, the index condition can be interpreted in terms of the Hilbert polynomial of a line bundle as follows.
The complex vector bundle $TX$ determines a canonical $\textup{spin}^c$ structure whose determinant line has first Chern class $c_1(TX)$, so the Atiyah-Singer index theorem shows that the associated $\textup{spin}^c$ Dirac operator has index equal to the Todd genus,
\begin{equation}\label{eqn:todd-genus-equality}
\textup{ind}(D_X) = \la [X] , \textup{Td}(TX) \rg, \qquad \textup{Td}(TX) := e^{c_1(TX)/2} \hat{A}(TX),
\end{equation}
where $\textup{Td}(TX)$ denotes the Todd class.
If $L \to X$ is a complex line bundle with $c_1(L) = x$, twisting the canonical $\textup{spin}^c$ structure by $L$ changes the determinant class to $c_1(TX) + 2x$, and hence
\begin{equation}\label{eqn:indx-ex-Td(X)}
    \textup{ind}(D_X \otimes L^{\otimes k}) = \la [X], \textup{ch}(L^{\otimes k}) e^{c_1(TX)/2} \hat{A}(TX) \rg = \la [X], e^{kx} \textup{Td}(TX) \rg.
\end{equation}
For complex manifolds, this index agrees with the holomorphic Euler characteristic $\chi(X,L^{\otimes k})$, which defines the Hilbert polynomial $P_X(k) = \chi(X, L^k)$.
More generally, the study of the general index polynomial $P_c(k) := \textup{ind}(D_c \otimes L^{\otimes k})$ will be crucial in Proposition~\ref{lemma:index-admissible manifolds} and in Section~\ref{section:sharp-bound-riemannian}, see Proposition~\ref{prop:spin-c-polynomial-preparation}.

\begin{proposition}\label{prop:(2,c)-essential-preserved}
    Let $\cF$ denote the class of $\textup{spin}^c$ $(2,c)$-essential manifolds in Definition~\ref{def:(2,c)-essential}, and let $\cF_0 \subset \cF$ be the sub-class of elements satisfying~\eqref{eqn:alpha-q-A-condition-index} with $j=0$, namely
    \[
    (\alpha_1 \cdots \alpha_q) \frown [M] = A \qquad \text{for some } \; A \in H_{n-2q}(M) \; \text{enlargeable}, \; \alpha_j \in H^2(M;\bQ).
    \]
    The class $\cF$ satisfies the following properties:
    \begin{enumerate}[(i)]
        \item $\cF$ is preserved under orientation reversal, finite covers, and finite quotients preserving the $\textup{spin}^c$ structure.
        If a map $f: M \to N$ has non-zero degree and $N \in \cF$, then $M \in \cF$ provided that $N \in \cF_0$ or $TM \oplus \underline{\bR}^a \cong f^* TN \oplus \underline{\bR}^b$.
        For $M \in \cF\setminus \cF_n$ and $N$ any $\textup{spin}^c$ manifold, $M \# N \in \cF$.
        \item Let $N \in \cF$ be strongly $(2,c)$-essential.
        Then, any $M \in \cF$ has $M \times N \in \cF$.
        In particular, this applies whenever $N$ is $2$-essential, $(2,1)$-essential, or enlargeable.
        \item Let $\pi : M^{n+2d} \to N^n$ be a smooth map of $\text{spin}^c$ manifolds with characteristic classes $c_M, c_N$ whose pushforward satisfies $\pi_!( e^{c_M/2} \hat{A}(TM) ) = P(\gamma_1,\dots,\gamma_r) e^{c_N/2} \hat{A}(TN)$, where $P$ is a polynomial in $\gamma_i \in H^2(N;\bQ)$ with $P(0) \neq 0$.
        Then, $N \in \cF$ implies $M \in \cF$.
        \item Let $\pi : Y \to X$ be a proper holomorphic map between compact complex manifolds with $R^i \pi_* \cO_Y = 0$ and $X \in \cF$.
        If $\pi_* \cO_Y = \cO_X$, $\pi$ is a cyclic branched cover, or $X \in \cF_0$, then $Y \in \cF$.
        In particular, for any compact (almost-)complex manifold $X \in \cF$, the blowup $\tilde{X} := \textup{Bl}_Z X$ along a smooth (almost-)complex submanifold $Z \subset X$ has $\tilde{X} \in \cF$.
        Moreover, if $\chi(X,\cO_X) \neq 0$ then $X \in \cF_n$.
        \item Let $F \hookrightarrow Y \xrightarrow{\pi} M$ be a smooth fiber bundle with compact complex fiber $F$ satisfying $H^q(F, \cO_F) = 0$ for $q>0$ and structure group in $\textup{Aut}(F)$. 
        If $M \in \cF$, then $Y \in \cF$.
        This applies to all partial or full flag bundles, Grassmannian bundles, and projective bundles of complex bundles $E \to M$.
        \item Let $X^{2(m+d)}$ be $\textup{spin}^c$ and let $Z^{2m} \subset X^{2(m+d)}$ be a smooth complete intersection with the induced $\textup{spin}^c$ structure, namely the zero-locus of a section of a complex bundle $L_1 \oplus \cdots \oplus L_d$.
        If $Z$ satisfies~\eqref{eqn:alpha-q-A-condition-index} with $2q+j=2m$ and classes induced from $X$, then $X \in \cF$.
        If $X$ is complex and $m \geq 3$, the classes on $Z$ are always induced from $X$.
    \end{enumerate}
\end{proposition}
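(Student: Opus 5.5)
The proof runs item by item. The one mechanism used throughout is this: condition~\eqref{eqn:alpha-q-A-condition-index} for $M$ is the statement that a certain cap product $A=(\alpha_1\cdots\alpha_q\smile[e^{c/2}\hat A(TM)]_j)\frown[M]$ is non-zero and area-enlargeable. Each operation is shown to transport such a class to a class of the same type, and Lemma~\ref{lemma:pullback-lemma} moves area-enlargeability back along maps. Whenever a sum of terms appears, we use the polynomial trick of Lemma~\ref{lemma:polarization} and Proposition~\ref{prop:bound-stsys-from-cowaist} to extract a single non-vanishing monomial in rational $2$-classes.

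\textbf{Item (i).} Orientation reversal only changes the sign of $A$. For a finite cover $p:\tilde M\to M$ we use the pulled-back structure. The identity $\pi^!\bigl((p^*\text{-classes})\frown[M]\bigr)=p^*(\cdots)\frown[\tilde M]$ holds, and transfers of area-enlargeable classes are area-enlargeable: compose with further covers, and note $p_*p^!A=\deg(p)\,A\neq0$. For finite quotients we push forward and use $p_*p^!=\deg$. For a non-zero degree map $f:M\to N$ we set $B=f^*(\alpha_1\cdots\alpha_q\,\hat A_c^N)\frown[M]$, so that $f_*B=\deg(f)\,A_N\neq0$, and Lemma~\ref{lemma:pullback-lemma} gives area-enlargeability of $B$. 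If $N\in\cF_0$, only the $\alpha_i$ are involved and we are done. Under the stable tangential condition $f^*\hat A(TN)=\hat A(TM)$, and we take $c_M=f^*c_N$. For the connected sum we use the collapse map $M\#N\to M$, which has degree one. The hypothesis $M\notin\cF_n$ forces $q+j$ to have positive $q$-part or $A$ of positive degree, so the relevant characteristic classes avoid the top cell where the $\hat A$-class of $N$ would interfere. Hence the pulled-back classes satisfy~\eqref{eqn:alpha-q-A-condition-index} on $M\#N$.

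\textbf{Item (ii).} Products are handled by the Künneth formula. We have $\hat A_{c_M+c_N}(T(M\times N))=\hat A_{c_M}\times\hat A_{c_N}$, and the product of degree components gives $A_M\times A_N$ up to cross terms of other bidegrees. We select the bidegree pair $(j_M,j_N)$ that is non-zero, using the polynomial trick. The product of an area-enlargeable class and a Lipschitz-enlargeable class is area-enlargeable: take product covers and smash the maps to get $\bS^{k}\times\bS^{l}\to\bS^{k+l}$, whose $2$-dilation is bounded by $\|\Lambda^2df\|+\mathrm{Lip}(h)^2+\mathrm{Lip}(h)\,\mathrm{Lip}(f)$-type terms. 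This is exactly why strong essentialness of $N$ is needed. The $2$-essential and $(2,1)$-essential cases follow from $j=0$ with $A$ a point or a circle class.

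\textbf{Items (iii) and (iv).} For (iii) we use the projection formula $\pi_!(\pi^*\alpha\smile e^{c_M/2}\hat A(TM))=\alpha\, P(\gamma)\,\hat A_{c_N}$. We pull the classes $\alpha_i$ back and expand $P$. Since $P(0)\neq0$, the lowest-degree term gives a non-zero class pushing forward to $P(0)A_N$, and the higher terms are absorbed into extra factors $\gamma_i$ via the polynomial trick. Lemma~\ref{lemma:pullback-lemma} then applies to $\pi_*$. Item (iv) reduces to (iii) through Grothendieck–Riemann–Roch: $R^i\pi_*\cO_Y=0$ and $\pi_*\cO_Y=\cO_X$ give $\pi_!\mathrm{Td}(Y)=\mathrm{Td}(X)$. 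For cyclic covers, $\pi_*\cO_Y$ splits into line bundles, giving a polynomial with $P(0)=\deg$. Blowups satisfy $R\pi_*\cO=\cO$, and in the almost-complex case we use the $\mathrm{spin}^c$ Todd computation. Finally, $\chi(X,\cO_X)=\langle[X],\mathrm{Td}\rangle$ gives $X\in\cF_n$ by~\eqref{eqn:todd-genus-equality}.

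\textbf{Items (v) and (vi).} Item (v) is fiber integration again. Holomorphic structure group and $\chi(F,\cO_F)=1$ give $\pi_!\mathrm{Td}_{\mathrm{vert}}=1$ by the families index theorem, and (iii) applies. For (vi) we have $[Z]=\prod c_1(L_i)\frown[X]$, and adjunction gives $\hat A_{c_Z}=\iota^*\bigl(\hat A_{c_X}\prod \tfrac{e^{-\ell_i/2}\ell_i}{2\sinh(\ell_i/2)}\cdots\bigr)$. Top-degree pairing on $Z$ becomes a pairing on $X$ with the extra $2$-classes $\ell_i$; we expand and apply the polynomial trick. The last claim follows from Lefschetz hyperplane for $m\ge3$.

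The main obstacle is item (i) for connected sums and non-tangential maps, where characteristic classes do not pull back cleanly. I would first try to handle it by restricting to the degree-one collapse map and checking that $\hat A$ of $M\#N$ agrees with the pullback away from the top degree.
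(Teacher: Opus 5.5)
This is essentially the paper's proof, item by item: Lemma~\ref{lemma:pullback-lemma} applied to non-zero-degree maps and to the collapse map $p\colon M\#N\to M$ in (i); product covers composed with a smash map, with $h$ chosen after $f$ to control the $\|df\|\,\|dh\|$ term, in (ii); the projection formula together with $P(0)\neq 0$ in (iii); Grothendieck--Riemann--Roch and the families index reducing (iv) and (v) to (iii); and the adjunction/Gysin expansion followed by extraction of a non-vanishing monomial in (vi). Two details need correcting when you write it out. First, in (iii) the naive lift $(\pi^*\alpha_1\cdots\pi^*\alpha_q\smile[e^{c_M/2}\hat{A}(TM)]_{j+2d})\frown[M]$ pushes forward to $(\alpha_1\cdots\alpha_q\smile[P(\gamma)\,e^{c_N/2}\hat{A}(TN)]_j)\frown[N]$, which is $P(0)A_N$ plus terms carrying extra $\gamma$-factors, not $P(0)A_N$ itself; so you need the paper's inverse $Q(\gamma)=P(\gamma)^{-1}$, or equivalently a triangular recursion on the $f^*\theta$-pairings, before selecting a monomial. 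Second, for the connected sum, $e^{c/2}\hat{A}(T(M\#N))$ is not the pullback of $e^{c_M/2}\hat{A}(TM)$ below top degree: it has an extra component coming from $N$. This component is harmless only because it is killed by cupping with $p^*\alpha_i$ when $q\geq 1$, or by $p_*$ when $q=0$, where $M\notin\cF_n$ forces $\deg A>0$.
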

\begin{proof}
We fix notation for $M^m, N^n \in \cF$, so there are classes $\alpha_1, \dots, \alpha_q \in H^2(M;\bQ)$, $c_M \in H^2(M;\bZ)$, $\beta_1, \dots, \beta_r \in H^2(N;\bQ)$, $c_N \in H^2(N;\bZ)$, and area-enlargeable classes $A\in H_a(M), B \in H_b(N)$ satisfying
\[
(\alpha_1 \cdots \alpha_q \smile [e^{c_M/2} \hat{A}(TM)]_j ) \frown [M] = A, \qquad (\beta_1 \cdots \beta_r \smile [e^{c_N/2} \hat{A}(TN)]_k ) \frown [N] = B,
\]
so for $\ve > 0$ there exists a cover $\pi: \tilde{M} \to M$ and a map $f: \tilde{M} \to \bS^a$ with $\| \Lambda^2 df \| \leq \ve$ and $f_* ( \pi^! A) \neq 0$.

\smallskip \noindent \textbf{Property $(i)$.}
If $N \in \cF_0$, then pulling back classes on $N$ by $f$ lets us define $A := (f^* \beta_1 \cdots f^* \beta_r) \frown [M]$ with $f_* A = ( \deg f) B$.
Since $f_* A$ is a non-zero area-enlargeable class, $A$ is also area-enlargeable by Lemma~\ref{lemma:pullback-lemma}, and $M \in \cF_0$.
If $TM \oplus \underline{\bR}^a \cong f^*TN \oplus \underline{\bR}^b$ with $c_M = f^* c_N$, then $w_2(TM) = f^* w_2(TN)$, $f$ preserves the $\textup{spin}^c$ structures, and $e^{c_M/2} \hat{A}(TM) = f^* ( e^{c_N/2} \hat{A}(TN))$.
By the same argument, $M \in \cF$.

For connected sums, $M \# N$ is $\textup{spin}^c$ when $M,N$ are, as seen by gluing the $\textup{spin}^c$ structures over $M \setminus B^n$ and $N \setminus B^n$ along the common boundary $\bS^{n-1}$.
For $M \in \cF \setminus \cF_n$, let $p: M \# N \to M$ be the collapsing map of $N$, so the terminal class of $M \# N$ pushes forward to the terminal class $A$ on $M$, which is area-enlargeable.
By Lemma~\ref{lemma:pullback-lemma}, this implies that $M \#N \in \cF$.

\smallskip \noindent \textbf{Property $(ii)$.}
We equip $M \times N$ with the product structure with characteristic class $c = \textup{pr}^*_M c_M + \textup{pr}^*_N c_N$, so $e^{c/2} \hat{A}(T(M \times N)) = \textup{pr}^*_M ( e^{c_M/2} \hat{A}(TM)) \smile \textup{pr}^*_N ( e^{c_N/2} \hat{A}(TN))$, and define the class
\[
A_{M \times N} := \bigl( \textup{pr}^*_M( \alpha_1 \cdots \alpha_q) \smile \textup{pr}^*_N (\beta_1 \cdots \beta_r) \smile [e^{c/2} \hat{A}(T(M\times N)]_{j+k} \bigr) \frown [M\times N]
\]
which occurs in dimension $(m+n) - 2(q+r) - (j+k) = a+b$.
The class $A$ is $\Lambda^2$ while $B$ is Lipschitz-enlargeable, by strong $(2,c)$-essentiality.
Given maps $f: \tilde{M} \to \bS^a$ and $h : \tilde{N} \to \bS^b$ with $\| \Lambda^2 df \|, \| dh \| \ll 1$, we consider any smooth map $\mu: \bS^a \times \bS^b \to \bS^{a+b}$ with $\mu_*( [\bS^a ]\times [\bS^b]) \neq 0$, for example, the smash product $\bS^a \wedge \bS^b \approx \bS^{a+b}$.
We take the product cover $\tilde{M} \times \tilde{N} \to M \times N$ and compose $\tilde{M} \times \tilde{N} \xrightarrow{ f \times h} S^a \times S^b \xrightarrow{\mu} S^{a+b}$.
Let $F := \mu \circ (f \times h)$ be this composite map, which pulls back the top class $\theta$ of the spheres to $F^* \theta_{a+b} = f^* \theta_a \smile h^* \theta_b$, up to a non-zero scalar.
Therefore,
\[
\la F^* \theta_{a+b}, ( \textup{pr}_M \times \textup{pr}_N)^! A_{M \times N} \rg = \pm \la f^* \theta_a , \textup{pr}^!_M A \rg \cdot \la h^* \theta_b , \textup{pr}^!_N B \rg \neq 0. 
\]
The $2$-dilation of $F$ is controlled along $2$-planes $\Lambda^2 T \tilde{M} , T\tilde{M} \wedge T \tilde{N} ,\Lambda^2 T \tilde{N}$ as
\begin{equation}\label{eqn:control-lipschitz-norm}
\| \Lambda^2 d ( \mu \circ ( f \times h)) \| \leq C_{\mu} \bigl( \| \Lambda^2 df \| + \| df \| \cdot \| dh \| + \| \Lambda^2 dh \| \bigr) \leq C_{\mu} ( \ve_f + L \ve_h + \ve_h) 
\end{equation}
by the $\Lambda^2$-enlargeability of $A$, where $\| \Lambda^2 df \| \leq \ve_f$ and $\| dh \| \leq \ve_h$, while $L := \| df \| < + \infty$ because $\tilde{M}$ is compact.
After choosing $f: \tilde{M} \to \bS^a$ with $\| \Lambda^2 df \| \ll 1$ and $L := \| df \|$, we can choose $h$ such that $\ve_h \ll L^{-1}$, so $F := \mu \circ ( f \times h) $ has $\| \Lambda^2 df \| \ll 1$ in~\eqref{eqn:control-lipschitz-norm}.
Thus, $A_{M \times N}$ is $\Lambda^2$-enlargeable and $M \times N \in \cF$.

\smallskip \noindent \textbf{Property $(iii)$.}
Since $P(0) \neq 0$ and the $\gamma_i$ generate a finite-dimensional $\bQ$-algebra, $P(\gamma)$ is invertible and we can choose a $Q(\gamma)$ with $Q(\gamma) P(\gamma) = 1$, hence $Q(\gamma) \pi_!( e^{c_M/2} \hat{A}(TM)) = e^{c_N/2} \hat{A}(TN)$ and
\[
B = (\beta_1 \cdots \beta_r \smile [e^{c_N/2} \hat{A}(TN)]_k ) \frown [N] = (\beta_1 \cdots \beta_r \smile Q(\gamma) P(\gamma) \smile [e^{c_N/2} \hat{A}(TN)]_k) \frown [N]
\]
By the pushforward identity, this is a finite rational linear combination of pushforwards
\[
( \pi^* \beta_1 \cdots \pi^* \beta_r \smile \pi^* \gamma_{i_1} \cdots \pi^* \gamma_{i_s} \smile [ e^{c_N/2} \hat{A}(TN) ]_{k+2d-2s} ) \frown [Y]. 
\]
Since $B$ is enlargeable, taking the corresponding map $f: \tilde{N} \to \bS^b$ and pulling back $f^* \theta$ for $\theta \in H^a(\bS^a;\bZ)$ produces a non-zero pairing with the above linear combination, so at least one summand has non-zero pairing with $f^* \theta$.
By the pushforward Lemma~\ref{lemma:pullback-lemma}, this summand is area-enlargeable, hence $M \in \cF$.

\smallskip \noindent \textbf{Property $(iv)$.}
Given $R^i \pi_* \cO_Y = 0$ for $i>0$, Grothendieck-Riemann-Roch gives $\pi_* \textup{Td} (TY) = \textup{ch}( \pi_* \cO_Y) \textup{Td}(TX)$, where $\textup{Td}(TX) = e^{c_X/2} \hat{A}(TX)$ for complex manifolds from~\eqref{eqn:todd-genus-equality}.
If $\pi_* \cO_Y = \cO_X$, then part $(iii)$ applies to give $Y \in \cF$.
If $X \in \cF_0$, then $[ \textup{ch}(E) e^{c_X/2} \hat{A}(TX)]_0 = \textup{rk}(E)$ is pushed forward to $\textup{rk}(E) A$ in the argument of part $(iii)$, which is area-enlargeable, hence $Y \in \cF$.
If $Y \xrightarrow{\pi} X$ is a cyclic branched cover determined by a line bundle, then $\pi_* \cO_Y \simeq \bigoplus_{a=0}^{m-1} L^a$ and $\textup{ch}( \pi_* \cO_Y) = \sum_{a=0}^{m-1} e^{- a c_1(L)}$.
This character lies in the subalgebra generated by the single degree-two class $c_1(L)$ with non-zero constant term, hence again $Y \in \cF$ by using $(iii)$.
For blowups $\tilde{X} = \textup{Bl}_Z X$ of (almost-)complex manifolds, we have $w_2(TX) \equiv c_1(TX)$ and $\pi_* \cO_{\tilde{X}} = \cO_X$ so part $(iii)$ applies again.
In particular, if $\chi(X, \cO_X) \neq 0$, or $\la [X], \textup{Td}(X) \rg \neq 0$ in the almost-complex case, then $X \in \cF_n$ due to~\eqref{eqn:indx-ex-Td(X)}.

\smallskip \noindent \textbf{Property $(v)$.}
Since the structure group acts by biholomorphisms, the vertical tangent bundle $T_{\pi} Y:= \ker (d \pi)$ is a complex vector bundle, so $c_Y := \pi^* c_M + c_1( T_{\pi} Y) \in H^2(Y;\bZ)$ has $c_Y \equiv \pi^* w_2(TM) + w_2(T_{\pi}Y) \equiv w_2(TY) \; ( \on{mod} \; 2)$ due to $TY \cong T_{\pi} Y \oplus \pi^* TM$.
Thus, $Y$ is $\textup{spin}^c$, and
\[
e^{c/2} \hat{A}(TY) = e^{(\pi^* c_M + c_1(T_{\pi}Y))/2} \hat{A}(\pi^* TM) \hat{A}(T_{\pi}Y) = \pi^* ( e^{c_M/2} \hat{A}(TM)) \smile e^{c_1(T_{\pi}Y)/2} \hat{A}(T_{\pi}Y).
\]
Since $T_{\pi} Y$ is a complex vector bundle, we have $e^{c_1(T_{\pi}Y)/2} \hat{A}(T_{\pi}Y) = \textup{Td}(T_{\pi}Y)$ as in~\eqref{eqn:indx-ex-Td(X)} above.
We claim that $\pi_! \textup{Td}(T_{\pi}Y) = 1$ under the assumptions on $F$.
The $\textup{spin}^c$ Dirac operator $D_{Y/M} = \bar{\partial}_{Y/M}+ \bar{\partial}^*_{Y/M}$ acts fiberwise on $\Omega^{0,\bullet}(Y_x)$ for each fiber $Y_x = \pi^{-1}(x) \simeq F$, with fiberwise index 
\[
\textup{ind}(D_{Y_x}) = { \textstyle \sum_q (-1)^q H^{0,q}(Y_x) = \sum_q (-1)^q H^q(F, \cO_F) = \chi(F, \cO_F) = 1, } 
\]
since $H^{0,q}(Y_x) \cong H^q(Y_x, \cO_{Y_x}) \cong H^q(F,\cO_F) = 0$ for $q>0$ and $H^0(Y_x, \cO_{Y_x}) \cong \bC$.
Thus, the family index is represented by a line bundle $\cH \to M$ with fiber $\cH(Y_x) \cong H^0(Y_x, \cO_{Y_x})$, which is canonically generated by the constant function $1$ since every holomorphic function on the compact manifold $Y_x$ is constant.
Thus, the bundle $\cH \to B$ has a nowhere-vanishing global section, hence
\[
\cH \cong B \times \bC, \qquad \textup{Ind}(D_{Y/B}) = [\cH] = [B \times \bC] = [\underline{\bC}] \in K^0(B),
\]
so $\textup{ch}( \textup{Ind}(D_{Y/B})) = \pi_! \textup{Td}(T_{\pi}Y) = 1$.
Hence, $\pi_! ( e^{c/2} \hat{A}(TY)) = e^{c_M/2} \hat{A}(TM)$, and part $(iii)$ gives $Y \in \cF$.
In particular, this result applies to flag bundles $\textup{Fl}_{a_1,\dots, a_k}(E) \to M$ with $0 < a_1 < \cdots < a_k < r$, Grassmannian bundles $\bG(k,E) \to M$, and projective bundles $\bP(E) \to M$.

\smallskip \noindent \textbf{Property $(vi)$.}
Since $Z$ is the transverse zero-locus of a section of $E := \bigoplus_{i=1}^d L_i$, its normal bundle is $\nu_{Z/X} \cong i^* E$, and $TX|_Z \cong TZ \oplus i^* E_{\bR}$ implies that $\hat{A}(TZ) = i^* \hat{A}(TX) \hat{A}(i^* E_{\bR})^{-1}$.
Let $x_j = c_1(L_j)$, so
\[
e^{c_Z/2} = i^* e^{c_X/2} e^{- i^* c_1(E)/2} \implies e^{c_Z/2} \hat{A}(TZ) = i^* \bigl( e^{c_X/2} \hat{A}(TX) \bigr) \cdot { \textstyle \prod_{a=1}^d} (1 - e^{- i^* a_j/2}).
\]
The identity $\hat{A}(L_{\bR}) = \frac{x/2}{\sinh(x/2)}$ for a line bundle with $c_1(L) = x$ implies $x \cdot \frac{e^{-x/2}}{\hat{A}(L_{\bR})} = 1 - e^{-x}$.
By he Gysin property $i_!(i^* \gamma) = \gamma \smile e(E)$, for $e$ the Euler class of the bundle and $e(E) = \prod_{a=1}^d c_1(L_a)$,
\[
i_! ( e^{c_Z/2} \hat{A}(TZ) ) = e^{c_X/2} \hat{A}(TX) \smile { \textstyle \prod_{a=1}^d }(1 - e^{-x_a}) = e^{c_X/2} \hat{A}(TX) \smile \textup{ch} \Bigl( { \textstyle \sum_{p=0}^d} (-1)^p \Lambda^p E^* \Bigr).
\]
By assumption there are classes $\alpha_i \in H^2(X;\bQ)$ so that $( i^* \alpha_1 \cdots i^* \alpha_q) \frown [ e^{c_Z/2} \hat{A}(TZ) ]_j \neq 0$ is a non-zero top class in $Z$.
Using the above relation and expanding $\prod_{a=1}^d (1 - e^{-x_a})$ as a finite power series modulo degrees $> 2d+j$, where $|x_a|=2$, we obtain a rational linear combination of monomials
\begin{align*}
0 &\neq \bigl\la \alpha_1 \cdots \alpha_q \alpha_1 \cdots \alpha_q \smile \bigl[ e^{c_X/2} \hat{A}(TX) \smile {\textstyle \prod_a (1 - e^{-x_a})}]_{j+2d}, [X] \bigr\rg \\
&= { \textstyle \sum_{b_1 \cdots b_d} \la \alpha_1 \cdots \alpha_q \smile x_1^{b_1} \cdots x_d^{b_d} \smile [e^{c_X/2} \hat{A}(TX)]_{\ell} , [X] } .
\end{align*}
Thus, we can find some multi-index $b = (b_1, \dots, b_d)$ and some even $\ell$ with
\[
\la \alpha_1 \cdots \alpha_q \smile x_1^{b_1} \cdots x_d^{b_d} \smile [e^{c_X/2} \hat{A}(TX) ]_{\ell} , [X] \rg \neq 0.
\]
Since the $x_i$ are also degree-$2$ rational classes, $X$ satisfies the condition~\eqref{eqn:alpha-q-A-condition-index}, hence $X \in \cF$.
If $X$ is complex and $m \geq 3$, the Lefschetz hyperplane theorem~\cite{lazarsfeld}*{Ch. 3} show that $i^*: H^2(X;\bQ) \to H^2(Z;\bQ)$ is an isomorphism, so the $2$-classes on $Z$ are always induced from ones on $X$.
\end{proof}

We can produce admissible manifolds in the sub-class $\cF_n \subset \cF$ via the following procedures.

\begin{lemma}\label{lemma:index-admissible manifolds}
The class $\cF_n \subset \cF$ of $(2,c)$-essential manifolds from~\eqref{eqn:Fn-definition} has the following properties:
    \begin{enumerate}[(i)]
    \item $\cF_n$ is preserved by all the constructions $(i)$-$(vi)$ of Proposition~\ref{prop:(2,c)-essential-preserved}.
    \item For $\textup{spin}^c$ manifolds $N_1, N_2$, we have $N_1 \# N_2 \in \cF_n$ whenever $N_1 \in \cF_n$ and $N_2 \not\in \cF_n$ if $\dim N_i$ is even, or $N_2$ is arbitrary if $\dim N_i$ is odd.
        In particular, for $\textup{spin}^c$ manifolds $N_0^{2m}, N_1^{2m+1}$ we have $N \# (\bS^1 \times \bS^{2m}) \in \cF_n$, and $N \# r \bC \bP^m \in \cF_n$ for all $r \neq r_0$.
        \item $2$-essential and $(2,1)$-essential manifolds, in the sense of Definition~\ref{def:2-essential}, are in $\cF_n$ if they are $\textup{spin}^c$.
        \item Every surface $N^2 \in \cF_n$.
        Any $3$- and $4$-manifolds $N \in\cF_n$ if and only if $b_2(N) > 0$.
        Also, $N^3 \not\in \cF$ if and only if it is a connected sum of spherical space forms.
        \item Let $M$ be a $\textup{spin}^c$ manifold and $N \subset M$ be a submanifold of codimension $\leq 2$ with trivial normal bundle and $N \in \cF_0$ for the structure induced from $M$.
        Then, $M \in \cF_0$.
        \item If $M \in \cF_0$ and $\varphi \in \textup{Diff}(M)$ preserves this structure, the mapping torus $M_{\varphi} = \frac{M \times [0,1]}{(x,1) \sim (\varphi(x),0)} \in \cF_0$.
    \end{enumerate}
\end{lemma}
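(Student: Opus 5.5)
The lemma collects six separate closure and membership statements for $\cF_n$. I would prove them one at a time. The unifying tool is the characterization in~\eqref{eqn:Fn-definition}: $N \in \cF_n$ if and only if the top-degree pairing $\la [N], \xi \smile e^{c/2}\hat{A}(TN)\rg$ is non-zero, where $\xi = 1$ in even dimension and $\xi \in H^1(N)$ in odd dimension. By the Atiyah--Singer theorem~\eqref{eqn:todd-genus-equality}, in even dimension this pairing is the index of the $\textup{spin}^c$ Dirac operator. So every part comes down to computing a characteristic number: its multiplicativity under products and pushforwards, its additivity under connected sums, and its variation under twisting the $\textup{spin}^c$ structure.

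For $(i)$, I would rerun each argument of Proposition~\ref{prop:(2,c)-essential-preserved}, now with $q=0$ and $A$ a multiple of the point class, so $A$ is automatically area-enlargeable. Finite covers multiply the number by the degree. Products multiply the two numbers. For fiber bundles and blowups, $\pi_!(e^{c_Y/2}\hat{A}(TY)) = P(\gamma)\, e^{c_X/2}\hat{A}(TX)$ with $P(0) \neq 0$. Multiplying by the inverse $Q(\gamma)$ and expanding as in part $(iii)$ produces some monomial $\gamma^{s}$ with non-zero pairing. The catch is that this lands the manifold in $\cF$ with $q = s$, not necessarily $q=0$. Parts $(iv)$ and $(v)$ have $P \equiv 1$ exactly, which gives membership in $\cF_n$ directly. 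For the general case of $(iii)$, I would absorb $\gamma$ into a twist of the $\textup{spin}^c$ class, $c \mapsto c + 2k\gamma$. I would then use the polynomial-in-$k$ argument from Proposition~\ref{prop:bound-stsys-from-cowaist}: a non-zero polynomial is non-zero at some integer $k$. This yields a $\textup{spin}^c$ structure with non-zero index.

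For $(ii)$, I would use additivity: the $\hat{A}_c$-genus of a connected sum equals the sum of the genera, because the index density is local and the gluing is along $\bS^{n-1}$. In even dimension, $N_2 \notin \cF_n$ means its genus is zero, so the genus of $N_1 \# N_2$ equals that of $N_1$ and is non-zero. In odd dimension, I would pull $\xi$ back along the collapse map $N_1 \# N_2 \to N_1$; the pairing then localizes on the $N_1$ side. The example $N \# r\,\bC\bP^m$ follows because the Todd genus of $\bC\bP^m$ is $1$. The total is therefore an affine function of $r$, which vanishes for at most one value $r_0$.

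For $(iii)$, Lemma~\ref{lemma:polarization} gives a $u$ with $\la \eta u^m,[M]\rg \neq 0$. I would twist the $\textup{spin}^c$ class to $c + 2ku$ for integral $u$ (after clearing denominators). The pairing $\la \eta\, e^{ku} e^{c/2}\hat{A},[M]\rg$ is then a polynomial in $k$ whose leading coefficient is $\la \eta u^m,[M]\rg/m!$, so some integer $k$ makes it non-zero. The same twisting approach handles $(iv)$. For surfaces, twisting by $k$ points shifts the index by $k$. For $4$-manifolds with $b_2>0$, I would pick $u$ with $u^2 \neq 0$ if one exists. Otherwise I would use a class $u$ with $u \cdot c \neq 0$, giving an index that is linear in $k$. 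If neither exists, the parity condition on $c$ still lets me shift by $2u$. For $3$-manifolds, I would use $b_1 = b_2 > 0$ from Poincaré duality and pair $\xi$ with $u$. The statement about spherical space forms for $N^3 \notin \cF$ is a separate claim. It needs the prime decomposition together with enlargeability of aspherical summands and of the $\bS^2\times\bS^1$ summands. I expect this to be the main obstacle, and I would rely on known results (Gromov--Lawson, geometrization) rather than reprove them.

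For $(v)$ and $(vi)$: for $(v)$, restricting the $\textup{spin}^c$ data to $N$ and taking the Poincaré dual $u$ of $N$ (or the product of the duals of the normal directions) gives $u \frown [M] = [N]$ up to sign. The enlargeable class on $N$ then pushes forward, and Lemma~\ref{lemma:pullback-lemma} applies with $\alpha$ replaced by the dual class. For $(vi)$, the mapping torus fibers over $\bS^1$ with fiber $M$. I would take $\xi = d\theta$ pulled back from $\bS^1$. Enlargeability of the resulting class follows by combining the fiberwise maps with the map to $\bS^1$, lifted to long cyclic covers as in the product argument of Proposition~\ref{prop:(2,c)-essential-preserved}$(ii)$. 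Here the $\varphi$-invariance of the structure is what lets the classes $\alpha_i$ extend over the mapping torus.
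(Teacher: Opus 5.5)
Your treatment of (i)--(iv) follows the paper's route: additivity of the twisted index under connected sum (with $\xi$ extended by zero over $N_2$ in odd dimension), and the twist $c\mapsto c+2ku$, which turns the index into a polynomial in $k$ with leading coefficient $\la \eta u^m,[M]\rg/m!$. For the $3$-manifold statement you, like the paper, appeal to geometrization.

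In (i) you are more careful than the paper, which calls this part automatic. The factor $P(\gamma)$ does have to be removed. Your argument handles it: some monomial $\gamma^s$ pairs non-trivially because $Q(\gamma)P(\gamma)=1$, so the multivariable twist $c\mapsto c+2\sum_i k_i\pi^*\gamma_i$ (after clearing denominators) gives a non-zero polynomial.

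Two small corrections in this part:
\begin{itemize}
\item ``Products multiply the numbers'' fails when both factors are odd-dimensional. There the product index vanishes for degree reasons, and you must twist by $2k\,\xi_M\smile\xi_N$.
\item In (iv) your fallback cases are unnecessary. The rational intersection form is non-degenerate when $b_2>0$, so some $u$ has $u^2\neq 0$. Your last fallback would in any case give an index independent of $k$.
\end{itemize}

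The genuine gap is in (v) and (vi). You read $\cF_0$ as the $j=0$, area-enlargeable class and try to transport the enlargeable class of $N$ to $M$ via $u\frown[M]=i_*[N]$. But Lemma~\ref{lemma:pullback-lemma} runs the other way: it deduces enlargeability of $B$ from that of $f_*B$. Pushing forward along an inclusion can destroy both enlargeability and non-triviality. Under your reading, (v) is actually false. The unknotted torus $T^2\subset\bS^4$ has trivial normal bundle and $[T^2]$ is enlargeable, so $T^2$ is in the $j=0$ class. Yet $i_*[T^2]=0$ and $\bS^4\notin\cF$. The same problem sinks (vi). There, ``combining fiberwise maps'' would also need the enlarging covers and maps of the fiber to be compatible with the monodromy $\varphi$, and nothing provides that.

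What the paper's proof establishes is the index version. The hypothesis actually used is
\[
I(N,c_N)=\la [N],\xi_N\smile e^{c_N/2}\hat A(TN)\rg\neq0 .
\]
Set $\eta=\textup{PD}[N]$. Triviality of the normal bundle gives $i^*\bigl(e^{c/2}\hat A(TM)\bigr)=e^{c_N/2}\hat A(TN)$ and $\eta^2=i_!(e(\nu_N))=0$. In codimension two, the twist $c(k)=c+2k\eta$ then yields $I(M,c(k))=I(M,c)+k\,I(N,c_N)$, a non-constant affine function of $k$. In codimension one, use $\eta$ itself as the degree-one class, or twist by $2k\,\xi\smile\eta$. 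Part (vi) follows by applying this to the fiber $M\subset M_\varphi$. In fact your own choice $\xi=d\theta$ already gives $\la[M_\varphi],d\theta\smile e^{c/2}\hat A(TM_\varphi)\rg=\pm I(M,c_M)$ for an even-dimensional fiber, with no enlargeability argument needed.
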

\begin{proof}
The first two properties are automatic.
For connected sums, consider classes $c_i \in H^2(N_i;\bZ)$ and choose a class $c$ on $N_1 \# N_2$ restricting to $c_i$ on each $N_i$.
Then, the index $I(N,c) := \la [N], \xi \smile e^{c/2} \hat{A}(TN) \rg$ is additive over connected sums, with $I( N_1 \# N_2 , c) = I(N_1,c_1) + I(N_2, c_2) \neq 0$ if $N_1 \in\cF_n$ and $N_2 \not\in \cF_n$; this addresses the even-dimensional case.
When $\dim N_i$ is odd, let $\xi_1 \in H^1(N_1;\bZ)$ and choose a class $\xi$ restricting to $\xi_1$ on $N_1$ and $0$ on $N_2$.
Then, $I(N_2,c_2) =0$ so the same argument applies.
In particular, $\bC \bP^m$ equipped with the canonical $\textup{spin}^c$-structure has $I(\bC \bP^m, c_0) = 1$ because $\chi(X ,\cO_X) = 1$ for all Fano manifolds by Lemma~\ref{lemma:volume-of-fano}, so the induced $\textup{spin}^c$-structure on the connected sum has $N \# r\bC \bP^m$ $I( N \# r \bC \bP^m, c) = I(N,c_N) + r$ for $r \neq r_0$.
Likewise, $\bS^1 \times \bS^{2m} \in \cF_n$ so $N \# (\bS^1 \times \bS^{2m}) \in \cF_n$.

\smallskip \noindent \textbf{Property $(iii)$.}
Let $m := \lfloor \frac{\dim N}{2} \rfloor$, so Lemma~\ref{lemma:polarization} provides a $u \in H^2(N;\bQ)$ and a rational class $\xi$ with $\xi u^m \neq 0$.
For $k \in \bN^*$, $c(k) := c + 2ku$ of $w_2(TN)$ is a $\textup{spin}^c$ structure on $N$, and we define
\[
P(k) := \la [N], \xi \smile e^{c(k)/2} \hat{A}(TN) \rg = \la [N], \xi \smile e^{c/2} e^{ku} \hat{A}(TN) \rg.
\]
This is a degree-$n$ polynomial in $k$ with leading term $\frac{1}{n!} \la [N], \xi x^n \rg$, so it is not identically zero.
Thus, we can find some $k_0$ with $P(k_0) \neq 0$, and the resulting $\textup{spin}^c$-structure $c(k_0)$ makes $N \in \cF_0$.

\smallskip \noindent \textbf{Property $(iv)$.}
We apply $(ii)$: all closed oriented surfaces and $3$-manifolds are spin, all surfaces are $2$-essential, and $3$-manifolds are $(2,1)$-essential if and only if $b_1(N)>0$, by Poincar\'e duality.
If some $N^3 \not\in \cF$, then $b_1(N)=0$ and $[N]$ is not enlargeable; by the prime decomposition via geometrization, $N$ is a connected sum of spherical space forms.
For $4$-manifolds $N^4$, Teichner-Vogt showed that every $N^4$ is $\textup{spin}^c$.
The intersection pairing $(x,y) \mapsto \la [N], x \smile y \rg$ on $H^2(N;\bZ)$ is non-trivial if and only if $b_2(N)>0$, so there exist $u_1,u_2 \in H^2(N;\bZ)$ with $0 \neq u_1 u_2 \in H^4(N;\bZ)$.
This proves the claim.

\smallskip \noindent \textbf{Property $(v)$.}
Let $c \in H^2(M;\bZ)$ have $c \equiv w_2(TM) \; ( \on{mod} \; 2)$ and $c_N = i^* c_N$.
If $N$ has odd dimension, there also exists a class $\xi_N \in H^1(N;\bZ)$ with $\xi_N = i^* \xi$ for $\xi \in H^1(M;\bZ)$; for even dimension, set $\xi_N = \xi = 1$.
Let $r = \textup{codim}_M N \in \{ 1,2\}$ and $\eta := \textup{PD}[N] \in H^r(M;\bZ)$ be the Poincar\'e dual class.
Since the bundle $\nu_{N/M}$ is trivial, $i^* TM \cong TN \oplus \bR^r$ implies $i^* w_2(TM) = w_2(TN)$ and $i^* \hat{A}(TM) = \hat{A}(TN)$.
For $r=1$, we take $c(k) = c+ 2 k \xi \smile \eta$; for $r=2$, let $c(k) = c + 2k \eta$, so the triviality of $\nu_{N/M}$ gives $\eta^2=0$ rationally and hence $e^{c(k)/2} = e^{c/2}(1+k\eta)$.
Therefore,
\[
P(k) := \la [M], \xi \smile e^{c(k)/2} \hat{A}(TM) \rg = \la [M], \xi \smile e^{c/2} \hat{A}(TM) \rg + k \la [M], \eta \smile \xi e^{c/2} \hat{A}(TM) \rg.
\]
The function $P(k)$ is linear and $k$ and has non-zero slope, since by Poincar\'e duality 
\[
\la [M], \eta \smile \xi e^{c/2} \hat{A}(TM) \rg = \la [N], i^* \xi \smile e^{i^* c/2} \hat{A}(TN) \rg = \la [N], \xi_N \smile e^{c_N/2} \hat{A}(TN) \rg = I(N,c) \neq 0.
\]
Thus, we can find some $k_0$ with $P(k_0) \neq 0$, for which the data $(c(k_0), \eta)$ make $M \in \cF_0$.

\smallskip \noindent \textbf{Property $(vi)$.}
Since $\varphi$ preserves the classes $(c_M, \xi_M)$ on $M$, there exist $c \in H^2(M_{\varphi};\bZ)$ and $\xi \in H^1(M_{\varphi};\bZ)$ restricting to $(c,\xi)$, with $c \equiv w_2(T M_{\varphi}) \; (\on{mod} \; 2)$.
Thus, $M \subset M_{\varphi}$ is a co-oriented submanifold with $M \in \cF_0$ and part $(vi)$ applies, hence $M_{\varphi} \in \cF_0$.
\end{proof}
Finally, the $\textup{spin}^c$ assumption on $X$ used for the systolic inequality of Theorem~\ref{thm:2-systole-bound} proves a sharp bound for all symplectic manifolds with $b_2(X) = 1$ and describes a large, flexible class $\cC$ with many non-K\"ahler and non-symplectic manifolds.
We obtain several families of examples as follows; notably, items $(v)$ and $(vi)$ prove that general manifolds can be modified to produce elements of $\cC$. 

\begin{proposition}\label{prop:many-examples}
    Let $\cC$ denote the class of $\textup{spin}^c$ manifolds $M$ with $b_2(M) = 1$ that admit a $2$-class $u$ with $0 \neq u^n \in H^{2n}(M;\bR)$, for $n = \lfloor \frac{\dim M}{2} \rfloor$, and a $1$-class $\xi$ with $\xi u^n \neq 0$ if $\dim M$ is odd.
    This class is preserved by the following operations:
    \begin{enumerate}[(i)]
        \item If $M \in \cC$ and there exists a map $f: X \to M$ of non-zero degree with $b_2(X)=1$ and $X$ is $\textup{spin}^c$, then $X \in \cC$. 
        In particular, $\cC$ is preserved by: finite branched covers that do not increase $b_2(-)$; quotients of groups acting trivially on $H^2(M;\bQ)$; connected sums with $\textup{spin}^c$ manifolds of $b_2 = 0$; and $\textup{spin}^c$-compatible surgeries in dimensions $3 \leq k \leq d-4$.
        \item Consider any $M^{2m} \in \cC$ and let $\varphi: M \to M$ be a diffeomorphism that preserves the orientation, $H^2(M;\bQ)$, and a $\textup{spin}^c$ structure on $M$.
        If $\varphi$ does not fix any non-zero class in $H^1(M;\bQ)$, then the mapping torus $M_{\varphi} = \frac{M \times [0,1]}{ (x,1) \sim (\varphi(x), 0) }$ of dimension $2m+1$ is in $\cC$.
        \setcounter{class-C}{\value{enumi}} 
\end{enumerate}
    Moreover, the following constructions produce manifolds in $\cC$.
    In cases $(v) , (vi)$, $B$ is assumed $\textup{spin}^c$. 
    \begin{enumerate}[(i)]
        \setcounter{enumi}{\value{class-C}}
        \item If $X$ is an (almost-)complex manifold with $b_2(X) = 0$, then $\tilde{X} := \textup{Bl}_p X \in \cC$.
        \item Let $(X^{2N},\omega)$ be a compact symplectic manifold with $b_2(X) = 1$, so $X \in \cC$.
        For $[\omega] \in H^2(X;\bZ)$ and every $k \in \bN^*$ sufficiently large, there exists a smooth symplectic hypersurface $Y_k \subset X$ Poincar\'e-dual to $k[\omega]$. 
        For $r \leq N-3$, the transverse complete intersection $Y := Y_{k_1} \cap \cdots \cap Y_{k_r}$ is in $\cC$.
        In particular, this applies to smooth projective $N$-folds $(X^{2N},\omega)$.
        \item Suppose that $b_2(B) = 0$, and if $\dim B$ is odd, also $b_1(B) > 0$.
        If there exist classes $\alpha_j \in H^{2 a_j}(B)$ with $0 \neq \alpha_1 \cdots \alpha_d \in H^{2b}(B)$, where $b = \lfloor \frac{\dim B}{2} \rfloor$, there exists a complex vector bundle $E \to B$ of rank $r = \sum_i a_i$ whose projectivization $\bP(E) \in \cC$.
        More generally, for every $r \geq b$ and every $k \neq \frac{r}{2}$, there exists a rank-$r$ bundle with Grassmannian $\bG(k,E) \in \cC$.
        \item If $B^{2b}$ is $2$-essential and $b \geq 2$, there exists a complex line bundle $L \to B$ and a filling $W$ of $S(L)$ with $M := D(L) \cup_{S(L)} W \in \cC$.
        For $b \geq 3$, we can modify $B$ by $1$- and $2$-surgeries into a $\tilde{B} \in \cC$.
    \end{enumerate}
\end{proposition}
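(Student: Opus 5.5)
The plan is to verify the defining properties of $\cC$ item by item: $\textup{spin}^c$, $b_2=1$, and a class $u$ with $u^n\neq 0$ (plus $\xi$ with $\xi u^n\neq0$ in odd dimension). The recurring tool is a degree argument. If $f: X\to M$ has non-zero degree, then $f^*$ is injective on rational cohomology, because $\la f^*a\smile f^*b,[X]\rg=\deg f\,\la a\smile b,[M]\rg$ together with Poincar\'e duality on $M$ forces $f^*a\neq0$ whenever $a\neq0$. So for item (i), setting $u_X:=f^*u$ and $\xi_X:=f^*\xi$ gives $\la\xi_X u_X^n,[X]\rg=\deg f\cdot\la\xi u^n,[M]\rg\neq0$, and $b_2(X)=1$ and $\textup{spin}^c$ are hypotheses. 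The listed special cases then reduce to producing the degree map:
\begin{itemize}
\item branched covers map with their degree;
\item quotients by groups acting trivially on $H^2(M;\bQ)$ satisfy $H^*(M/G;\bQ)=H^*(M;\bQ)^G$, so $u$ descends and $b_2$ stays $1$;
\item for connected sums, use the collapse map of degree one, and $b_2$ adds;
\item for surgeries, in the range $3\le k\le d-4$ the cohomology in degrees $\le 2$ is unchanged, and one compares the top class through the cobordism or through a degree-one map when it exists.
\end{itemize}

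For item (ii), use the Wang sequence of $M_\varphi\to \bS^1$. Since $\varphi^*$ has no fixed vector on $H^1(M;\bQ)$, we get $H^1(M_\varphi;\bQ)=\bQ\,\xi$ with $\xi$ pulled back from $\bS^1$, and $H^2(M_\varphi;\bQ)\cong \ker(\varphi^*-1|_{H^2})=H^2(M;\bQ)$, so $b_2=1$. The class $u$ extends to some $\tilde u$, and $\la \xi\tilde u^m,[M_\varphi]\rg=\la u^m,[M]\rg\neq 0$ because $\xi$ is Poincar\'e dual to a fiber. The $\textup{spin}^c$ structure glues because $\varphi$ preserves it.

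For item (iii), blowing up a point on a complex or almost-complex $X$ with $b_2=0$ gives $H^2=\bQ E$, and $E^n=(-1)^{n-1}\neq0$. The result is almost-complex, hence $\textup{spin}^c$ as shown after Definition~\ref{def:spin-c-structure}.

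For item (iv), Donaldson's theorem supplies the symplectic hypersurfaces $Y_k$. A Lefschetz hyperplane theorem for Donaldson submanifolds (Auroux/Donaldson) gives $H^2(X)\cong H^2(Y)$ when $\dim_{\bC}Y\ge3$, which is exactly $r\le N-3$. Then $[\omega|_Y]^{N-r}>0$, and $Y$ is symplectic, hence $\textup{spin}^c$.

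For item (v), Lemma~\ref{lemma:build-a-bundle} applied to the product $\alpha_1\cdots\alpha_d$ (or to the $\alpha_i$ individually, taking a direct sum) gives a bundle $E$ whose Chern roots produce a non-zero top class. By Leray--Hirsch, $H^2(\bP(E))=H^2(B)\oplus\bQ h=\bQ h$. The Grothendieck relation $\sum c_i(E)h^{r-i}=0$ pushes $h^{r-1+b}$ forward to the Segre class $s_b(E)$, and the task is to choose the $\alpha_i$ or the multiple $k$ so that this is non-zero (or so that $\xi s_b\neq 0$ in odd dimension). This is where rescaling by the integer from Lemma~\ref{lemma:build-a-bundle} is used. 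The Grassmannian case is analogous with the Pl\"ucker class and $b_2(\bG(k,\bC^r))=1$. The total spaces are $\textup{spin}^c$ by Proposition~\ref{prop:(2,c)-essential-preserved}(v).

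Item (vi) I expect to be the main obstacle, because it requires an explicit filling and surgery that kill $b_2$ except for one class while preserving a non-degenerate power. I would take $L$ with $c_1(L)=u$ from Lemma~\ref{lemma:polarization}, so that $D(L)$ has $H^2\cong H^2(B)$ with Thom class $\tau$ satisfying $\tau^{b+1}$ computable from $u^b\neq0$. I would then glue a filling $W$ of $S(L)$ chosen to kill the classes of $H^2(B)$ orthogonal to $u$ (for example by attaching handles along the $2$-cycles), and check through Mayer--Vietoris that $b_2=1$ and the surviving class has non-zero top power. The surgery statement for $b\ge3$ does the same thing directly on $B$: $1$- and $2$-surgeries kill the remaining $H^2$ classes while keeping $u$ with $u^b\neq0$. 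Verifying that this can be done compatibly with the $\textup{spin}^c$ structure is the delicate point. I would attempt it by performing the surgeries on spheres representing classes in $\ker u$, arranging by general position that they avoid a representative of $\textup{PD}(u^{b-1})$.
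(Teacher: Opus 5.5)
Items (i)--(iv) and the projective-bundle half of (v) are correct and follow the paper. Applying Lemma~\ref{lemma:build-a-bundle} directly to $\alpha=\alpha_1\cdots\alpha_d\in H^{2b}(B)$ is even cleaner than the paper's direct sum: $\alpha^2=0$ gives $s_b(E)=\pm N\alpha\neq0$ at once. The genuine gaps are in the Grassmannian half of (v) and in (vi).

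\textbf{The Grassmannian case of (v).} For $\bG(k,E)$ the whole content is the non-vanishing of $P_b(E):=\pi_!(h^{q+b})\in H^{2b}(B;\bQ)$, with $q=k(r-k)$. This is not ``analogous'' to the Segre computation. For the bundle of Lemma~\ref{lemma:build-a-bundle}, where only $c_b(E)=N\alpha$ is non-zero, one gets $P_b(E)=\lambda_{k,r,b}N\alpha$, with $\lambda_{k,r,b}$ the coefficient of $c_b$ in the universal class. You must prove $\lambda_{k,r,b}\neq0$. The paper does this via the localization formula $\sum_{|I|=k}(-\sum_{i\in I}x_i)^{q+b}/\prod_{i\in I,j\notin I}(x_j-x_i)$, extracting the coefficient of the power sum $p_b$ modulo decomposables.

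This coefficient really can vanish. For $r=2k$, the map $V\mapsto\textup{Ann}(V)$ identifies $\bG(k,E)\cong\bG(k,E^*)$ fibrewise holomorphically. It sends $h$ to $h+\pi^*c_1(E)$, which equals $h$ rationally because $b_2(B)=0$. Since $P_b$ is weighted-homogeneous of degree $b$ in the Chern classes and $c_j(E^*)=(-1)^jc_j(E)$, this forces $P_b(E)=(-1)^bP_b(E)$, so $P_b(E)=0$ for $b$ odd. That is exactly why the statement excludes $k=r/2$, and your sketch would not detect it.

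\textbf{Item (vi).} The $\textup{spin}^c$ compatibility you call ``delicate'' is the missing idea, not a detail. The paper replaces $u$ (taken integral) by $\tilde u:=c+2Nu$, where $c$ is the $\textup{spin}^c$ class.
\begin{itemize}
\item $\langle\tilde u^b,[B]\rangle$ is a polynomial in $N$ with leading coefficient $2^b\langle u^b,[B]\rangle$, so $\tilde u^b\neq0$ for $N\gg1$, and $\tilde u$ is itself characteristic.
\item Hence a sphere $S$ with $\langle\tilde u,[S]\rangle=0$ has $w_2(TB)|_S=0$. Its normal bundle (rank $\geq4$ over $\bS^2$, detected by $w_2$) is therefore trivial, and $2$-surgery is possible.
\item The $\textup{spin}^c$ structure with class $\tilde u$, and $\tilde u$ itself, extend over the trace. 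So $\tilde u^b\neq0$ survives by the cobordism argument of (i), with no general position needed.
\end{itemize}
With an arbitrary $u$, as in your sketch, spheres in $\ker u$ may have non-trivial normal bundle or pair non-trivially with $c$.

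The same choice $c_1(L)=\tilde u$ drives the disk-bundle part. It makes $D(L)$ spin, since $w_2(TD(L))=\pi^*(w_2(TB)+c_1(L))=0$. Interior $1$- and $2$-surgeries on $D(L)$ (dimension $\geq6$) then yield a $2$-connected spin filling $W$ of $S(L)$. The Gysin sequence gives $H^1(B;\bQ)\cong H^1(S(L);\bQ)$ and $\ker(H^2(B;\bQ)\to H^2(S(L);\bQ))=\bQ\,c_1(L)$. Mayer--Vietoris then gives $H^2(M;\bQ)=\bQ\,c_1(L)$, and $x=\textup{PD}_M[B]$ satisfies $\langle x^{b+1},[M]\rangle=\langle\tilde u^b,[B]\rangle\neq0$.

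With your $c_1(L)=u$, one has $w_2(S(L))=\pi^*w_2(TB)$, which vanishes only if $w_2(TB)\in\{0,\,u\bmod 2\}$. Otherwise $S(L)$ bounds no $2$-connected (hence spin) manifold. ``Attaching handles along the $2$-cycles'' does not specify a filling with $H^2(W;\bQ)=0$ and a compatible $\textup{spin}^c$ structure.
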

\begin{proof}
In what follows, we will use the assumption $M \in \cC$ to consider a class $u \in H^2(M;\bR)$ with $u^n \neq 0$, and a $1$-class $\xi$ with $\xi u^n \neq 0$ when $\dim M$ is odd.
For uniformity, let $\xi=1$ for $\dim M$ even.

    \smallskip \noindent \textbf{Property $(i)$.}
The mapping and connected sum  properties follow as in Proposition~\ref{prop:(2,c)-essential-preserved}, with $b_i(M\#N) = b_i(M) + b_i(N)$ for $i\in \{1,2\}$.
    For the surgery property, start from $M \in \cC$ and let
    \[
    M' = \bigl( M \setminus ( \bS^k \times D^{n-k}) \bigr) \cup_{\bS^k \times \bS^{d-k-1}} ( D^{k+1} \times \bS^{d-k-1}).
    \]
    This surgery changes $\bS^k \times \bS^{d-k} \rightsquigarrow D^{k+1} \times \bS^{d-k-1}$ along an embedded sphere $\bS^k \hookrightarrow M$ with trivial normal bundle $\bS^k \times D^{n-k}$.
    Let $W$ be the result of attaching a $(k+1)$-handle to $M \times [0,1]$, so $H^j(W,M) = 0$ for $j \neq k+1$ and $H^j(W,M') \neq 0$ for $j \neq d-k$ produces isomorphisms $H^2(W) \to H^2(M)$ and $H^2(W) \to H^2(M')$.
    Thus, $H^2(M;\bZ) \cong H^2(M';\bZ))$ for $3 \leq k \leq d-4$ and $b_2(M') = b_2(M) = 1$, and likewise $H^1(M;\bZ) \cong H^1(M;\bZ)$.
    Thus, we can extend $u$ (and $\xi$, for $\dim M$ odd) to $\bar{u} \in H^2(W;\bZ)$ and $\bar{\xi} \in H^1(W;\bZ)$, then set $(u',\xi') := ( \bar{u}|_{M'}, \bar{\xi}|_{M'})$ on $M'$.
    Since $\bar{u}^m \in H^{2m}(W)$ and $\partial W = M' \sqcup (-M)$, the two boundary evaluations of $\xi\bar{u}^m$ agree, and $\la \xi' (u')^m, [M'] \rg = \la \xi u^m, [M] \rg \neq 0$.
    Finally, the $\textup{spin}^c$ structure extends over the surgery trace $W$, by the same extension $\bar{c} \in H^2(W;\bZ)$ of $c$, so $M' \in \cC$.

    \smallskip \noindent \textbf{Property $(ii)$.}
    The mapping torus $M_{\varphi}$ is a fibration $M \hookrightarrow M_{\varphi} \to \bS^1$, so the fibration exact sequence gives a class $\tilde{u} \in H^2(M_{\varphi};\bZ)$ whose restriction to the fiber is $u \in H^2(M;\bQ)$.
    Moreover,
    \[
    b_2(M_{\varphi}) = \dim H^2(M;\bQ)^{\varphi^*} + \dim H^1( M ;\bQ)^{\varphi^*}, \qquad \text{for } \; H^i(M;\bQ)^{\varphi^*} = \{ \alpha \in H^i(M;\bQ) : \varphi^* \alpha = \alpha\} ,
    \]
    by the fibration exact sequence, where $H^i(M;\bQ)$ is the space of $\varphi^*$-invariant forms.
    Our assumptions about to $H^2(M;\bQ)^{\varphi^*} = H^2(M;\bQ)$ and $H^1(M;\bQ)^{\varphi^*} = 0$, so $b_2(M_{\varphi}) = b_2(M) = 1$.
    Let $\eta \in H^1(M_{\varphi};\bZ)$ be the pullback of the standard generator of $H^1(\bS^1;\bZ) \cong \bZ$, so $\la \eta \tilde{u}^n , [M_{\varphi}] \rg = \la u^n, [M] \rg \neq 0$ is non-trivial.
    If $\varphi$ preserves a $\textup{spin}^c$ structure on $X$, then the vertical tangent bundle of the fibration $M_{\varphi} \to \bS^1$ carries the induced $\textup{spin}^c$ structure.
    Since $T M_{\varphi} \cong T_{\pi} M_{\varphi} \oplus \underline{\bR}$, we deduce that $M_{\varphi}$  is $\textup{spin}^c$, so $M_{\varphi} \in \cC$.

    \smallskip \noindent \textbf{Property $(iii)$.}
    For $X^{2n}$ an almost-complex closed manifold with $b_2(X) = 0$, $\tilde{X} = \textup{Bl}_pX$ has $b_2 (\tilde{X}) = 1$ and is almost complex, thus $\textup{spin}^c$.
    The exceptional divisor class $e \in H^2 ( \tilde{X};\bZ)$ has $e^n \neq 0$, so $\tilde{X} \in \cC$.

    \smallskip \noindent \textbf{Property $(iv)$.}
    For $(X^{2N}, \omega)$ a smooth projective $N$-fold, let $L \to X$ be an ample line bundle with $c_1(L) = [\omega]$.
    Then, $L$ is an ample line bundle, so some power $L^q$ is very ample and the general divisors in the linear systems $|L^{k_i}|$ for $k_i >q$ are smooth and meet transversely, by Bertini's theorem~\cite{lazarsfeld}*{Theorem1.2.6, and Chapter 3}.
    For smooth divisors $Y_{k_i} \in |L^{k_i}|$ meeting transversely, we let $Y := Y_{k_1} \cap \cdots \cap Y_{k_r} \subset X$, which is a projective manifold of complex dimension $\dim_{\bC} Y = N-r \geq 3$ by assumption.
    The hypersurfaces $Y_{k_i}$ inherit the induced $2$-form $\omega|_{k_i}$, and $\la \omega|_{Y}^{N-r} , [Y] \rg = \bigl( \prod_{j=1}^r k_j \bigr) \la \omega^N, [X] \rg \neq 0$.
    By the Lefschetz hyperplane theorem~\cite{lazarsfeld}*{Ch. 3}, the inclusion $Y \hookrightarrow X$ induces an isomorphism $H^2(X;\bZ)/\textup{tors} \cong H^2(Y;\bZ)/\textup{tors}$, so $b_2(Y) = b_2(X) = 1$ and $Y \in \cC$.

    If $(X^{2N}, \omega)$ is merely symplectic, the existence of Poincar\'e-dual hypersurfaces $Y_{k_i} = \textup{PD}( k_i [\omega])$ for all $k_i$ sufficiently large follows from Donaldson's theorem~\cite{donaldson-symplectic}*{Theorem 1}, and a Lefschetz-type hyperplane theorem is again valid in this case~\cite{donaldson-symplectic}*{Proposition 39}, which implies $Y \in \cC$.

    \smallskip \noindent \textbf{Property $(v)$.}
As in Proposition\hyperref[{prop:(2,c)-essential-preserved}]{~\ref{prop:(2,c)-essential-preserved} $(iii)$}, $Y := \bG(k,E)$ is $\textup{spin}^c$ and has $\dim Y = \dim B + 2q$, where $q = \dim \bG(k,\bC^r) = k(r-k)$.
If $b=0$, then $B \in \{ * , \bS^1 \}$ and the result is automatic; since $b_2(B) = 0$, $s=1$ is impossible by Poincar\'e duality, due to $0 =b_2(B) \neq b_1(B) = 1$.
We thus consider $b \geq 2$ in what follows and take $0 \neq \eta \in H^{\dim B-2b}(B;\bZ) / \textup{tors}$ to uniformize notation.
By Poincar\'e duality, there exists a rational $2b$-class $\alpha$ with $\la \eta \alpha, [B] \rg \neq 0$, which can be chosen in $H^{2b}(B;\bZ)$ after integer multiplication.

Let $S \to \bG(k,E)$ be the tautological rank-$k$ subbundle with $h := c_1( \det S^*) \in H^2(Y;\bZ)$ the hyperplane class.
    Since $\bG(k,\bC^r)$ is simply connected and $H^2( \bG(k,\bC^r);\bQ) \cong \bQ h$, Leray-Hirsch gives $H^2(Y;\bQ) \cong H^2(B;\bQ) \oplus \bQ h$, hence $b_2(Y) = 1$ due to $b_2(B)= 0$, cf.~\cite{mccleary-spectral-sequences}*{Theorem 5.10}.
    We define the universal characteristic class $P^{k,r}_j(E) := \pi_!(h^{q+j}) \in H^{2j}(B;\bZ)$ where $\pi_!$ denotes fiber integration, so $\la (\pi^* \eta) h^{q+b} , [Y] \rg = \la \eta P^{k,r}_b(E) ,[B] \rg$.
    We will construct an $E$ with $P^{k,r}_b(E) \neq 0$.

    Let $\bT^r = (\bS^1)^r = \{ \textup{diag}(z_1,\dots, z_r) : |z_i| = 1 \} \subset U(r)$ be the maximal torus of diagonal unitary matrices and let $\gamma_r \to \textup{BU}(r)$ be the universal bundle, whose pullback onto $\bT^r$ splits as $\gamma_r|_{B \bT^r} = L_1 \oplus \cdots \oplus L_r$.
    Setting $x_i := c_1(L_i) \in H^2( B \bT^r ; \bQ)$ gives $H^{\bullet}( B \bT^r ; \bQ) = \bQ[x_1, \dots, x_r]$ with $|x_i| = 2$ as in~\cite{mccleary-spectral-sequences}*{Theorem 6.38}.
The symmetric polynomials in the $x_i$ define universal classes on $\textup{BU}(r)$, and we denote $p_m := \sum_{i=1}^r x_i^m \in H^{2m}(\textup{BU}(r);\bQ)$.
In the Chern roots $x_1, \dots, x_r$ of the universal bundle, the Gysin localization formula from~\cite{universal-formulas}*{\S 0} expresses $P^{k,r}_b$ as 
    \[
    P^{k,r}_b(x) = \sum_{|I|=k} \frac{( - \sum_{i \in I} x_i)^{q+b}}{\prod_{i \in I, j \not\in I} (x_j - x_i)}, \qquad [P^{k,r}_b]_{\textup{prim}} = C_{k,r,b} \Bigl[ k \bigl( \tfrac{r-k}{r} \bigr)^b + (r-k) \bigl( - \tfrac{k}{r} \bigr)^b \Bigr] p_b,
    \]
    with $[P^{k,r}_b]_{\textup{prim}}$ the primitive degree-$b$ component of $P^{k,r}_b$, modulo decomposable terms in $p_2, \dots, p_{b-1}$, and $C_{k,r,b} \neq 0$ and $C_{k,r,b}$ a dimensional constant.
    Here, we observed that $P^{k,r}_b \in H^{2b}( \textup{BU}(r);\bQ)$ is a symmetric polynomial and the restriction to $\textup{BSU}(r)$ amounts to $\{ x_1 + \cdots + x_r = 0 \}$.
    For $r \neq 2k$, we find $ k \bigl( \tfrac{r-k}{r} \bigr)^b + (r-k) \bigl( - \tfrac{k}{r} \bigr)^b \neq 0$ for $r \neq 2k$, hence we can apply Lemma~\ref{lemma:build-a-bundle} to produce a complex rank-$b$ bundle $E \to B$ with rational Chern character $c(E) = 1 + N \alpha$ for some $N \in \bN^*$, so $c_b(E) = N \alpha$ and $c_i(E) = 0$ for $0 <i<b$.
    Newton's identities then give $p_j(E) = 0$ for $j<b$ and $p_b(E) = (-1)^{b-1} b N \alpha$, hence all lower-order decomposable terms vanish and $P^{k,r}_b(E) = C'_{k,r,b} \alpha \neq 0$.
    By the above construction, $\lfloor \frac{\dim Y}{2} \rfloor = q+b$ and $\la (\pi^* \eta) h^{q+b},[Y] \rg \neq 0$, so $Y \in \cC$.
    Finally, given such a bundle $E$, adding $\underline{\bC}^{\ell}$ produces a manifold $\bP( E \oplus \underline{\bC}^{\ell}) \in \cC$ for arbitrary $\ell \geq 0$.

    The previous argument is always valid because $b_{2b}(B) \neq 0$ produces a non-trivial element of $H^{2b}(B)$ in all cases.
    More generally, if there exist classes $\alpha_j \in H^{2 a_j}(B)$ cupping to a non-trivial element of $H^{2b}(B)$, Lemma~\ref{lemma:build-a-bundle} produces complex rank-$a_j$ bundles $E_j \to B$ with $c(E_j) = 1 + N_j \alpha_j$ for each $j$.
    Consider the rank-$r$ vector bundle $E = E_1 \oplus \cdots \oplus E_d$, for $r = a_1 + \dots + a_d$, so $c(E) = \prod_{j=1}^d (1 + N_j \alpha_j)$, and $Y = \bP(E)$ as above.
    Then, $k=1, q=r-1$, and $P^{1,r}_b(E) = \pi_!(h^{q+b}) = s_b(E)$ is the Segre class, with $s(E) = c(E)^{-1} = \prod_{j=1}^d (1 + N_j \alpha_j)^{-1}$.
    The resulting expansion contains the degree-$2b$ monomial $(-1)^d N_1 \cdots N_d \alpha_1 \cdots \alpha_d$, and choosing $N_j$ successively large makes $s_b(E) \neq 0$ and $Y = \bP(E) \in \cC$.

    \smallskip \noindent \textbf{Property $(vi)$.}
    Applying Lemma~\ref{lemma:polarization}, we may equivalently assume that $B$ admits a rational $2$-class with $u^b \neq 0$.
    To construct $\tilde{B}$ from $B$ when $b \geq 3$, we first replace $u$ by $\tilde{u} := c + 2 Nu$, where $c \equiv w_2(TB) \; ( \on{mod} \; 2)$ is an integral lift given by the $\textup{spin}^c$-structure.
    For all $N \gg 1$, this ha $\tilde{u}^b \neq 0$.
    Using $1$-surgery on embedded $\bS^1$ as in step $(i)$, we can construct $B'$ out of $B$ with $\pi_1(B') = 0$ and $\tilde{u} , \la \tilde{u}^n, [B']\rg$, and the $\textup{spin}^c$-structure extend under this construction as $H^2(\bS^1) =0$.
    Since $\pi_1(B')=0$, the Hurewicz and universal coefficient theorems imply that $H_2(B;\bZ)/\textup{tors}$ is generated by embedded $2$-spheres, and let $K := \ker ( \tilde{u}: H_2(B';\bZ)/\textup{tors} \to \bZ)$.
    Since $\tilde{u}^b \neq 0$, this map is rationally non-trivial, so $\textup{rank}(K) = b_2(B')-1$ has a basis represented by embedded $2$-spheres $S$ with $\la \tilde{u}, [S]\rg =0$, and $w_2(TB|_S) = 0$ due to $\tilde{u} \equiv w_2(TB) \; (\on{mod} \; 2)$.
    Also, $w_2(T \bS^2)=0$ implies that the oriented normal bundle $\nu_S$ of $S$ has $w_2(\nu_S) = w_2(TB|_S) - w_2(T \bS^2) =0$, so it is trivial in codimension $\geq 4$ by the classification of oriented rank-$r$ bundles over $\bS^2$, cf.~\cite{mccleary-spectral-sequences}*{\S 6.3}.
    Thus, we can produce a manifold $\tilde{B}$ by $2$-surgery along the spheres $S$, which preserves the $\textup{spin}^c$-structure and the class $\tilde{u}$, with $\tilde{u}^b \neq 0$ by $\tilde{u}|_S =0$, and produces $H_2(\tilde{B};\bQ) \cong H_2(B';\bQ)/K$.
    Thus, $b_2(\tilde{B}) = 1$ and $\tilde{B} \in \cC$.

    To construct the disk bundle $M$, let $L \to B$ be the complex line bundle with $c_1(L) = \tilde{u} \in H^2(B;\bZ)$, by Lemma~\ref{lemma:complex-line-bundle}, to the disk bundle $\pi: D(L) \to B$ has $T D(L) \cong \pi^* TB \oplus \pi^* L$ as real vector bundles, and thus $w_2(TD(L)) = \pi^* (w_2(TB) + c_1(L)) = 0$ due to $c_1(L) \equiv w_2(TB) \; ( \on{mod} \ ;2)$.
    This means that $D(L)$ is spin and $S(L) = \partial D(L)$ is spin-nullbordant.
    Moreover, $\dim D(L) \geq 6$, so we can repeat the above surgery construction on $D(L)$ to produce a $\textup{spin}^c$-compatible filling $W$ of $S(L)$ with $\pi_1(W) = \pi_2(W)=0$, hence $W$ is also spin and $M = D(L) \cup_{S(L)} W$ is $\textup{spin}^c$.
    The zero-section $B \subset D(L) \subset M$ is a codimension-two submanifold of $M$, and $x := \textup{PD}_M[B] \in H^2(M;\bZ)$ satisfies $x|_B = c_1(L)$, hence $\la x^{b+1}, [M] \rg = \la \tilde{u}^b, [B] \rg \neq 0$.
    By the rational Gysin sequence for $S(L) \to B$,
    \[
    \ker (H^2(B;\bQ) \to H^2(S(L);\bQ)) = \bQ c_1(L), \qquad H^1(B;\bQ) \twoheadrightarrow H^1(S(L);\bQ).
    \]
    Since $W$ is $2$-connected and $D(L) \simeq B$, the Mayer-Vietoris sequence for $M = D(L) \cup_{S(L)} W$ gives $H^2(M;\bQ) \cong \bQ c_1(L)$ as well.
    Thus, $b_2(M)=1$ and $M \in \cC$.
\end{proof}

In particular, Theorem~\ref{thm:optimal-sharp-bound} also covers $\textup{spin}^c$ manifolds that are not even symplectic.
\begin{lemma}\label{lemma:not-even-symplectic}
    Let $N^{4m}$ be a closed oriented $\textup{spin}^c$ manifold with $b_2(N) = 0$ and $\chi(N) \not\equiv 2m+1+(-1)^m (1 + \sigma(N)) \; ( \on{mod} \; 4)$.
    Then, the $4m$-manifold $M^{4m} := \bC \bP^{2m} \# N$ is in the class $\cC$ and is not symplectic.
    In particular, this applies to any $N = \prod_{i=1}^r \bS^{a_i}$ with $r \geq 2$ and $\sum_i a_i = 4m$, where at most one $a_i$ is $1$ and no $a_i$ is $2$.
    When $m \equiv 0,3 \; ( \on{mod} 4)$, this also applies to $N = \bH \bP^m$.
\end{lemma}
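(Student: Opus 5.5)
The proof has two parts: membership in $\cC$ and the obstruction to symplectic structures.

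\textbf{Membership in $\cC$.} Since $b_2(N)=0$, we have $b_2(M)=b_2(\bC\bP^{2m})+b_2(N)=1$. The hyperplane class $h$ of $\bC\bP^{2m}$ pulls back under the collapse map $M\to\bC\bP^{2m}$ to a class $u$ with $\la u^{2m},[M]\rg=\la h^{2m},[\bC\bP^{2m}]\rg=1$. Moreover, $M$ is $\textup{spin}^c$ as a connected sum of $\textup{spin}^c$ manifolds. So $M\in\cC$, which is just Proposition~\ref{prop:many-examples}$(i)$ applied to connected sums with a $\textup{spin}^c$ manifold of $b_2=0$.

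\textbf{Non-symplecticity.} Suppose $\omega$ were a symplectic form on $M$ and $J$ a compatible almost-complex structure. Then $c_1:=c_1(TM,J)$ and $c_2$ exist. The plan is to obtain a contradiction from the congruence that almost-complex structures impose on $\chi$ and $\sigma$. The cleanest route uses the Todd genus and the $\hat{A}$-integrality in \eqref{eqn:todd-genus-equality}, or more directly the classical Hirzebruch--Wu type constraint in dimension $4m$.

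Since $b_2(M)=1$, $H^2(M;\bQ)=\bQ u$, so $c_1=k u$ rationally for some integer $k$ (integrality holds because $u$ is primitive modulo torsion). The mod-$2$ reduction of $c_1$ equals $w_2(TM)$. Since $N$ has $b_2=0$, $w_2$ is detected on the $\bC\bP^{2m}$ summand, where $w_2=(2m+1)h$ mod $2$, so $k\equiv 2m+1\equiv 1 \pmod 2$. The obstruction should come from
\[
\chi(M)=\la c_{2m},[M]\rg
\]
combined with the Todd/holomorphic Euler characteristic integrality. Specifically, the $\chi_y$-genus at $y=-1$ gives $\chi$, at $y=1$ it gives $\sigma$, and at $y=0$ it gives the Todd genus $\textup{Td}$. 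In real dimension $4m$ one has the congruence
\[
\chi+(-1)^m\sigma\equiv 0 \pmod 4
\]
for almost-complex manifolds (from $\chi_y$ evaluated mod $(1+y)^2$, equivalently the classical Wu/Hirzebruch relation $\chi\equiv(-1)^m\sigma \bmod 4$).

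Using $\chi(M)=\chi(\bC\bP^{2m})+\chi(N)-2=2m-1+\chi(N)$ and $\sigma(M)=1+\sigma(N)$, the condition $\chi(M)+(-1)^m\sigma(M)\equiv0\pmod 4$ becomes
\[
\chi(N)\equiv 1-2m-(-1)^m(1+\sigma(N))\pmod 4.
\]
I will need to match this with the stated hypothesis, possibly after rechecking the sign (note $1-2m\equiv 2m+1 \pmod 4$ since $4m\equiv 0$). The hypothesis exactly negates this, so $M$ admits no almost-complex structure, hence no symplectic structure.

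\textbf{Examples.} For products of spheres with $r\ge2$, no $a_i=2$ and at most one $a_i=1$, we have $b_2(N)=0$ by K\"unneth. Also $\sigma(N)=0$ (the middle-degree pairing is hyperbolic or zero for a product of spheres, and $\sigma(S^a\times S^a)=0$). The Euler characteristic is $\chi(N)\in\{0,2^r\}$, which is $0$ or $\equiv0\pmod4$ for $r\ge2$. The congruence then asks $0\equiv 2m+1+(-1)^m \pmod 4$, whose right side is $2m+1\pm1$, i.e.\ $2m+2$ or $2m$. I must verify the cases that fail; where the right side is $\equiv0$, the example would need the other version of the parity sign, so this step requires care. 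Spheres are spin. For $\bH\bP^m$ one has $\chi=m+1$, $\sigma\in\{0,1\}$ by parity of $m$, and $b_2=0$; checking the residues for $m\equiv0,3\pmod4$ is a direct computation.

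\textbf{Main obstacle.} The main obstacle is establishing the correct mod-$4$ almost-complex congruence in dimension $4m$ with the right sign conventions. I would derive it from $\chi_y(M)=\sum_p\chi(M,\Omega^p)y^p$, whose integrality and symmetry $\chi_y=(-y)^{2m}\chi_{1/y}$ give $\chi_{-1}=\chi$ and $\chi_1=\sigma$. Expanding around $y=-1$, the symmetry makes the odd-order Taylor terms determined by lower ones, which yields $\chi\equiv(-1)^m\sigma \pmod 4$. I would then reconcile this with the hypothesis as stated.
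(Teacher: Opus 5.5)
Your strategy is the paper's: $M\in\cC$ via the collapse map onto $\bC\bP^{2m}$, which is fine, and non-symplecticity from Hirzebruch's mod-$4$ congruence for almost-complex $4m$-manifolds together with $\chi(M)=2m-1+\chi(N)$ and $\sigma(M)=1+\sigma(N)$. The problem is that the congruence you actually apply is false.

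The two relations you call equivalent, $\chi+(-1)^m\sigma\equiv 0$ and $\chi\equiv(-1)^m\sigma \pmod 4$, differ by $2\sigma$, and $\sigma(M)=1+\sigma(N)$ can be odd. Only the second holds: for $\bC\bP^2$ one has $3\equiv -1$, but $3-1\not\equiv 0$. Your $\chi_y$ sketch does produce the correct one. With $\chi^p$ the coefficients of $\chi_y$, one has $\sigma+\chi=2\sum_{p\text{ even}}\chi^p$ and $\sigma-\chi=2\sum_{p\text{ odd}}\chi^p$. The symmetry $\chi^p=\chi^{2m-p}$ pairs off the indices whose parity differs from that of $m$. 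This gives $\chi+\sigma\equiv 0\pmod 4$ for $m$ odd and $\chi-\sigma\equiv 0\pmod 4$ for $m$ even.

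With the correct sign, $2m-1+\chi(N)\equiv(-1)^m(1+\sigma(N))$ becomes $\chi(N)\equiv 1-2m+(-1)^m(1+\sigma(N))\equiv 2m+1+(-1)^m(1+\sigma(N))\pmod 4$. That is exactly what the hypothesis excludes. With your sign the derived condition is a different one, so your sentence ``the hypothesis exactly negates this'' is false as written.

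The same slip is the source of your trouble with the examples; there are no failing cases. The quantity $2m+1+(-1)^m$ equals $2m+2$ for $m$ even and $2m$ for $m$ odd, so it is $\equiv 2\pmod 4$ for every $m$, while $\chi(N)\in\{0,2^r\}$ is $\equiv 0$. With your sign the test would instead read $0\not\equiv 1-2m-(-1)^m$, whose right side is $\equiv 0$ for all $m$, so every sphere product would fail.

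For $\bH\bP^m$, with $\chi=m+1$ and $\sigma=1$ or $0$ according as $m$ is even or odd, the hypothesis reduces to $m\not\equiv 2$ for $m$ even and $m\not\equiv 1$ for $m$ odd, i.e.\ $m\equiv 0,3\pmod 4$. Write this out rather than deferring it.

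Finally, your wrong-sign relation is precisely Hirzebruch's constraint for $\overline{M}$, and sphere products satisfy it; for instance $\overline{\bC\bP^2}\#(\bS^1\times\bS^3)$ is almost complex. So the congruence only rules out symplectic forms inducing the given orientation, and the paper's argument has the same implicit restriction. To exclude the opposite orientation as well, note that $H^2(M;\bR)=\bR u$ forces $[\omega]=\lambda u$ and $\int_M\omega^{2m}=\lambda^{2m}\la u^{2m},[M]\rg>0$.
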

\begin{proof}

Under these condition, we have $M = \bC \bP^{2m} \# N \in \cC$ by Proposition~\ref{prop:many-examples}.
Since symplectic manifolds admit are almost-complex, in dimension $4m$ they satisfy $\chi(M) \equiv (-1)^m \sigma(M) \; ( \on{mod} \; 4)$ by Hirzebruch's computation.
On the other hand, the connected sum with $X = \bC \bP^{2m}$ has
\[
\chi( X \# N ) = \chi( X) + \chi(N) - \chi (\bS^{4m}) = 2m-1+\chi(N), \qquad \sigma( X \#N) = \sigma(X) + \sigma(N) = 1+\sigma(N)
\]
by $\chi(\bC \bP^{2m}) = 2m+1$ and $\sigma(\bC \bP^{2m}) = 1$.
Thus, $\chi(M) \not\equiv (-1)^m \sigma(M) \; ( \on{mod} \; 4)$ and $M$ is not symplectic.

In particular, $\sigma ( \bH \bP^m) = 1- (m \on{mod} 2)$ and $\chi( \bH \bP^m) = m+1$, so $N = \bH \bP^m$ satisfies this property for $m \equiv 0,3 \; ( \on{mod} 4)$.
Moreover, our assumptions for $N = \prod_{i=1}^r \bS^{a_i}$ ensure that $b_2(N) = 0$ is $\textup{spin}^c$, has signature $\sigma(N) = 0$, and $\chi(N) =\prod_i \chi(\bS^{a_i}) = \prod_i (1 + (-1)^{a_i}) = 0$ if at least one $a_i$ is odd, while $\chi(N) = 2^r$ if all $a_i$ are even.
Since $r \geq 2$, this implies that $\chi(N) \equiv 0 \; ( \on{mod} \; 4)$, and $0 \not\equiv 2m + 1 + (-1)^m \; ( \on{mod} \; 4)$ for all $m$ shows that $N = \prod_i \bS^{a_i}$ is always a valid example.
This produces manifolds $\bC \bP^{2m} \# N$ where, for instance, $N = \bS^p \times \bS^{4m-p}$ for $p \neq 2$.
\end{proof}

\section{Systolic inequalities from birational geometry}\label{section:birational-proofs}

We now prove the results stated in the Introduction, starting with the fact that K\"ahler manifolds of non-negative total scalar curvature are uniruled or Calabi-Yau, namely Theorem~\ref{thm:scalar-curvature-implies-uniruled}.
From these properties, we will then deduce Corollary~\ref{corollary:gromov} on essential manifolds.

\begin{proof}[Proof of Theorem~\ref{thm:scalar-curvature-implies-uniruled}]
    If $X$ has a metric $\omega$ of positive total scalar curvature, the computation~\eqref{eqn:average-scalar-curvature} yields $c_1(X) \cdot [\omega]^{n-1} > 0$, hence $c_1(K_X) = - c_1(X)$ has $c_1(K_X) \cdot [\omega]^{n-1} < 0$.
    In particular, $K_X$ cannot be pseudo-effective, because currents in a pseudo-effective $(1,1)$-class pair nonnegatively with $[\omega]^{n-1}$; consequently, $X$ is uniruled by Theorem~\ref{thm:uniruled}. 
    Moreover, uniruled compact K\"ahler manifolds have $\pi_2(X) \neq 0$: indeed, Definition~\ref{def:uniruled-manifold} produces a non-constant holomorphic map $f: \bS^2 \simeq \bC \bP^1 \to X$, so $\int_{\bC \bP^1} f^* \omega > 0$.
    If $f$ were nullhomotopic, its induced homology class $f_*[ \bC \bP^1] = 0 \in H_2(X;\bZ)$ would vanish, hence $\int_{\bC\bP^1} f^* \omega = \int_{f_*[\bC \bP^1]} \omega = 0$ since $d \omega =0$, a contradiction; thus, $[f] \neq 0 \in \pi_2(X)$ is non-zero.

    If $X$ is not uniruled, then $c_1(K_X)$ is pseudo-effective, so there exists a closed positive $(1,1)$-current $T \in 2 \pi c_1(K_X)$.
    By the assumption and the above argument, there exists a K\"ahler metric $\omega_0$ on $X$ with $\bar{R}(\omega_0) = 0$.
    We first show that $\bar{R}(\omega) = 0$ for every K\"ahler metric on $X$.
    If $\bar{R}(\omega) < 0$ for some $\omega$, then we can choose a small $\ve > 0$ so that the form $\eta := \omega_0 - \ve \omega$ is still K\"ahler.
    The identity
    \[
    \omega^{n-1}_0 - ( \ve \omega)^{n-1} = \eta \wedge \textstyle{\sum_{j=0}^{n-2} \omega_0^{n-2-j} \wedge ( \ve \omega)^j}
    \]
    expresses $\omega^{n-1}_0 - ( \ve \omega)^{n-1}$ as the sum of wedge products of positive $(1,1)$-forms, where $\eta = \omega_0 - \ve \omega$ is K\"ahler.
    Thus, $\omega_0^{n-1} - ( \ve \omega)^{n-1}$ is a smooth closed strongly positive $(n-1,n-1)$-form, see~\cite{demailly}*{Ch. III}, and since $T \geq 0$ is positive with $T \in - 2 \pi c_1(X)$, we find
    \[
    \int_X T \wedge ( \omega_0^{n-1} - ( \ve \omega)^{n-1} ) \geq 0 \implies \int_X c_1(X) \wedge ( \omega_0^{n-1} - (\ve \omega)^{n-1}) \leq 0.
    \]
    Consequently, the computation~\eqref{eqn:average-scalar-curvature} implies that $0 = \bar{R}(\omega_0) \leq \bar{R}( \ve \omega) = \ve^{-1} \bar{R}(\omega) < 0$, producing a contradiction.
    We conclude that $\bar{R}(\omega) = 0$ for every K\"ahler metric $\omega$ as claimed, and the relation~\eqref{eqn:average-scalar-curvature} implies that $c_1(X) \cdot \alpha^{n-1} = 0$ for every class $\alpha \in \cK_X$.
    Since $\cK_X \subset H^{1,1}(X;\bR)$ is an open cone, for every fixed $\eta \in H^{1,1}(X;\bR)$ and all $|t|<\ve$ sufficiently small we have $\alpha + t \eta \in \cK_X$, hence
    \begin{equation}\label{eqn:relation-c1(X)}
    c_1(X) \cdot ( \alpha + t \eta)^{n-1} = 0 \quad \text{for all } \; |t|<\ve, \qquad \implies \qquad (n-1) c_1(X) \cdot \eta \cdot \alpha^{n-2} = 0
    \end{equation}
    by differentiating at $t=0$.
    By Lemma~\ref{lemma:positive-hodge-pairing}, the bilinear form $(\gamma,\eta) \mapsto \gamma \cdot \eta \cdot \omega^{n-2}$ is non-degenerate on $H^{1,1}(X;\bR)$, so~\eqref{eqn:relation-c1(X)} forces $c_1(X) = 0$.
    The characterization of $(ii)$ now follows from the Beauville-Bogomolov decomposition theorem~\cites{beauville , bogomolov}. 
\end{proof}

This result implies Gromov's conjecture for K\"ahler metrics by the following argument.
\begin{proof}[Proof of Corollary~\ref{corollary:gromov}]
    We prove the stronger statement that essential manifolds cannot admit K\"ahler metrics of positive total scalar curvature.
    Using Theorem~\ref{thm:scalar-curvature-implies-uniruled}, it suffices to show that a uniruled compact K\"ahler manifold $X$ is not Gromov-essential.
    
    Recalling Definition~\ref{def:mrc-quotient}, we know that $X$ admits an MRC fibration $\phi : X \dashrightarrow Z$ where $Z$ is not uniruled while $X$ is, hence $\dim_{\bC} Z \leq n-1$.
    After resolving indeterminacies, we obtain a holomorphic map $f: Y \to Z'$ where $Y,Z'$ are smooth compact K\"ahler manifolds with bimeromorphic maps $Z' \to Z$ and $\mu: Y \to X$, and $f$ is a holomorphic model of the MRC fibration with connected, rationally connected, compact general fibers.
    For compact K\"ahler manifolds covered by rational curves (uniruled), Koll\'ar's theorem shows that the MRC fibration produces an isomorphism $\pi_1(X) \cong \pi_1(Z)$, see~\cite{kollar-shafarevich}*{Theorem 5.2} and~\cite{brunebarbe}*{Corollary 1}.
    Moreover, the biholomorphic transformations producing the smooth blowup $\tilde{\mu} : \tilde{X} := \textup{Bl}_S X \to X$ preserve $\pi_1$, cf.~\cite{claudon-horing-lin}.
    Indeed, $\tilde{\mu}$ is an isomorphism for $r := \textup{codim}_{\bC} S=1$, and $\tilde{\mu}$ attaches $\bC \bP^{r-1}$ along the sphere bundle $\partial N = S(N_{S/X}) \to S$ with fiber $\bS^{2r-1}$ for $r \geq 2$, so $\pi_1(\partial N) \cong \pi_1(S)$ and $\pi_1(X) \cong \pi_1(\tilde{X})$ by van Kampen.
    
    Consequently, in our situation, $\mu : Y \to X$ is a degree-one map inducing an isomorphism on $\pi_1$.  
    We set $\pi := \pi_1(X)$, so $\pi_1(Y) \cong \pi_1(Z') \cong \pi$, and let $c_X : X \to B \pi$ be the classifying map.
    Then, the composite map $c_X \circ \mu : Y \to B \pi$ induces the isomorphism $(c_X \circ \mu)_* = (c_X)_* \circ \mu_* : \pi_1(Y) \xrightarrow{\cong} \pi$, so it describes the universal cover of $Y$ after identifying $\pi_1(Y) \cong \pi_1(X) \cong \pi$.
Since maps $Y \to B \pi$ are classified up to homotopy by the induced homomorphism on $\pi_1$, any classifying map for $Y$ inducing this identification of $\pi_1(Y) \cong \pi$ is homotopic to $c_X \circ \mu$, so we can identify $c_Y := c_X \circ \mu$.
    The MRC fibration induces an isomorphism $f_* : \pi_1(Y) \xrightarrow{\cong} \pi_1(Z')$, so identifying $\pi_1(Y) \cong \pi_1(Z')$ shows that the classifying map $c_Y$ factors, up to homotopy, through the lower-dimensional space $Z'$ and its classifying map $c_{Z'}$, namely $Y \xrightarrow{f} Z' \xrightarrow{c_{Z'}} B \pi_1(Y)$.
    Therefore, $(c_Y)_*[Y] = (c_{Z'} \circ f)_* [Y] = (c_{Z'})_* f_* [Y] = 0$, since $f_* [Y] \in H_{2n} (Z';\bZ)$ and $ \dim_{\bR} Z' < 2n = \dim_{\bR} Y$ by uniruledness.
    Finally, $\mu$ has degree one and $c_Y \simeq c_X \circ \mu$, so $(c_X)_* [X] = (c_Y)_* [Y] = 0$ and $X$ is not Gromov-essential.
\end{proof}
Next, we establish the sharp $2$-systole inequalities of our main K\"ahler Theorem~\ref{thm:2-systole-bound}.
To analyze the equality cases of our systolic bounds, we recall a standard result from K\"ahler geometry.
\begin{lemma}\label{lemma:kahler-chern}
    Let $(X,\omega_0)$ be a compact K\"ahler manifold with first Chern class $c_1(X)$ and $\omega_0$ a K\"ahler metric of constant scalar curvature (cscK) satisfying $c_1(X) = \lambda_0 [\omega_0]$ for $\lambda_0 \neq 0$.
    Then, any cscK metric $\omega$ with $c_1(X) = \lambda [\omega]$ for some $\lambda \in \bR$ is equivalent to $\omega_0$ up to a dilation and a holomorphic automorphism of $X$, meaning that $\omega = \frac{\lambda_0}{\lambda} F^* \omega_0$ for some $F \in \textup{Aut}^0(X)$.
\end{lemma}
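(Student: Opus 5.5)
The plan is to reduce the statement to the uniqueness of cscK metrics in a fixed Kähler class, due to Chen--Tian and Berman--Berndtsson, together with the fact that a cscK metric in a multiple of $c_1(X)$ is Kähler--Einstein. I will use the identity~\eqref{eqn:average-scalar-curvature} to pin down the constant scalar curvature, and then normalize cohomology classes by a dilation.

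First, neither $\lambda$ nor $\lambda_0$ can vanish, since $c_1(X) = \lambda_0[\omega_0] \neq 0$. Rescale $\omega' := \frac{\lambda}{\lambda_0}\omega$. This gives $[\omega'] = \frac{\lambda}{\lambda_0}[\omega] = \frac{1}{\lambda_0}c_1(X) = [\omega_0]$. A cscK metric remains cscK after scaling by a positive constant. If $\lambda/\lambda_0<0$, the rescaled form $\omega'$ would be negative, while $[\omega'] = [\omega_0]$ is a Kähler class; pairing with $[\omega_0]^{n-1}$ rules this out, so $\lambda$ and $\lambda_0$ have the same sign. So $\omega'$ and $\omega_0$ are cscK metrics in the same Kähler class.

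Next, I show that any cscK metric $\omega$ in a class with $c_1(X) = \mu[\omega]$ is Kähler--Einstein. The Ricci form $\rho_\omega$ represents $2\pi c_1(X) = 2\pi\mu[\omega]$, so we can write $\rho_\omega - 2\pi\mu\,\omega = i\partial\bar\partial f$. Tracing with $\omega$ and using~\eqref{eqn:chern-weil} gives $\Delta f = \tfrac12 R_\omega - 2\pi n\mu$, up to normalization. The right-hand side is constant, and its integral vanishes by~\eqref{eqn:average-scalar-curvature}. Hence $\Delta f = 0$, so $f$ is constant and $\rho_\omega = 2\pi\mu\,\omega$.

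Thus $\omega'$ and $\omega_0$ are Kähler--Einstein metrics in the same class with the same Einstein constant. If $\lambda_0<0$, the Aubin--Yau theorem gives uniqueness outright, so $\omega' = \omega_0$ and $F = \mathrm{id}$. If $\lambda_0>0$, $X$ is Fano, and the Bando--Mabuchi uniqueness theorem yields $F \in \textup{Aut}^0(X)$ with $\omega' = F^*\omega_0$. Either way, $\omega = \frac{\lambda_0}{\lambda}F^*\omega_0$.

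The only step beyond linear algebra is this final uniqueness input in the Fano case, which I cite rather than reprove. If one wants to avoid the KE reduction, the alternative is to cite directly the Berman--Berndtsson uniqueness of cscK metrics modulo $\textup{Aut}^0(X)$ in a fixed class.
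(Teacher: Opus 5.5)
Your proposal is correct and follows the paper's argument: the $\partial\bar{\partial}$-lemma shows that a cscK metric whose class is proportional to $c_1(X)$ is K\"ahler--Einstein, and uniqueness then comes from Aubin--Yau when $\lambda_0<0$ and from Bando--Mabuchi when $\lambda_0>0$. Your rescaling into the class $[\omega_0]$, the sign check $\lambda\lambda_0>0$, and the explicit trace computation showing $\Delta f = 0$ supply details that the paper's sketch leaves implicit.
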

By the $\partial \bar{\partial}$ lemma for K\"ahler manifolds, cf.~\cite{voisin}*{Proposition 6.17}, a cscK metric with $[\rho_{\omega}] = 2 \pi \lambda [\omega]$ is K\"ahler-Einstein with $R_{\omega} = 4 \pi n \lambda$.
The uniqueness follows from Aubin-Yau~\cite{yau-theorem} for $\lambda_0<0$, in which case $\textup{Aut}^0(X)$ is trivial, and from Bando-Mabuchi~\cite{uniqueness-einstein} for $\lambda_0>0$.

Using these tools, we prove Theorem~\ref{thm:2-systole-bound}.
Our key observation is that the holomorphic and spherical $2$-systoles of a K\"ahler manifold $(X,\omega)$ can be related to the length of \textbf{extremal rays} on $X$.

\begin{proposition}\label{prop:nef-threshold-bound-length}
    Let $(X,\omega)$ be a K\"ahler manifold with $R_{\omega} > 0$ and K\"ahler class $\alpha := [\omega]$.
    Let
    \begin{align*}
        r(\alpha) &:= \inf \{ t > 0 : K_X + t \alpha \in \overline{\cK_X} \; \textup{ is nef} \}, \\
        \ell(\alpha) &:= \liminf_{t \uparrow r(\alpha)} \{ - K_X \cdot C_t : C_t \; \textup{minimal extremal rational curve with} \; (K_X + t \alpha) \cdot C_t < 0 \}
    \end{align*}
    be the nef threshold and the length of the class $\alpha$.
    Then, $r(\alpha) \in (0,\infty), \ell(\alpha) \in \bN^*$, and
    \begin{equation}\label{eqn:inf-R-omega-ell(alpha)}
        \textup{sys}_2(X,\omega) \cdot \inf_X R_{\omega} \leq 4 \pi n \ell(\alpha) \qquad \textup{and} \qquad \textup{sys}_{\pi_2}(X,\omega) \cdot \inf_X R_{\omega} \leq 4 \pi n \ell(\alpha).
    \end{equation}
    If equality holds in either bound, then $X$ is a projective Fano manifold and $\omega$ is a K\"ahler-Einstein metric with $\omega = c F^* \omega_0$ for $c>0, F \in \textup{Aut}^0(X)$, and $\omega_0$ a cscK metric on $X$.
    Moreover $- K_X \cdot C \geq \ell(\alpha)$ holds for every rational curve on $X$.
\end{proposition}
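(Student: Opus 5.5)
The proof runs through the nef threshold and the analytic cone theorem. First, $R_\omega>0$ gives $\bar R(\alpha)>0$, so by \eqref{eqn:average-scalar-curvature} we have $c_1(X)\cdot\alpha^{n-1}>0$. Hence $K_X$ is not nef, which forces $r(\alpha)>0$. On the other hand, $K_X+t\alpha = t(\alpha + t^{-1}K_X)$ is K\"ahler for $t$ large, since the K\"ahler cone is open. So $r(\alpha)\in(0,\infty)$.

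For $t<r(\alpha)$, the class $K_X+t\alpha$ is not nef. By the duality $\overline{\cK_X}=\overline{\textup{NA}}(X)^\vee$ it is negative on some class of $\overline{\textup{NA}}(X)$. Since $\alpha$ is positive on the non-zero part of $\overline{\textup{NA}}(X)_{K_X\ge0}$, the cone decomposition \eqref{eqn:analytic-mori-cone} shows it must be negative on some $K_X$-negative extremal ray $\bR_{\geq 0}[\Gamma_i]$. Choose a minimal extremal rational curve $C_t$ on that ray as in \eqref{eqn:minimum-extremal}. It satisfies $(K_X+t\alpha)\cdot C_t<0$, so
\[
\alpha\cdot C_t < t^{-1}(-K_X\cdot C_t).
\]
The degrees $-K_X\cdot C_t$ are positive integers bounded by $n+1$ by \eqref{eqn:length-estimate-KX}, as extended by Liu. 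Hence $\ell(\alpha)\in\bN^*$ is attained along a sequence $t_k\uparrow r(\alpha)$, giving rational curves $C$ with $-K_X\cdot C=\ell(\alpha)$ and $\alpha\cdot C< t_k^{-1}\ell(\alpha)$. Any rational curve is homologically non-trivial, because $\int_C\omega>0$, and spherically non-trivial by the argument in the proof of Theorem~\ref{thm:scalar-curvature-implies-uniruled}, after passing to its normalization $\bC\bP^1\to X$. Therefore
\[
\max\{\textup{sys}_2,\textup{sys}_{\pi_2}\}\le \textup{Area}_\omega(C)=\alpha\cdot C\le r(\alpha)^{-1}\ell(\alpha).
\]

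It remains to prove $r(\alpha)^{-1}\le 4\pi n/\inf_X R_\omega$, i.e.\ that $K_X+t\alpha$ is nef for $t=4\pi n/\inf R_\omega$. This is the main obstacle: positivity of the scalar curvature is a trace condition, while nefness is a cone condition. I would test against the generators of $\overline{\textup{NA}}(X)$ given by \eqref{eqn:analytic-mori-cone}. On $\overline{\textup{NA}}(X)_{K_X\ge0}$ positivity is automatic. So the task is to show $(K_X+t\alpha)\cdot\Gamma\ge0$ for every $K_X$-negative extremal rational curve, i.e.\
\[
\int_\Gamma\rho_\omega\le \frac{\inf R_\omega}{2n}\int_\Gamma\omega .
\]
Pointwise this fails in general, since the Ricci form along the tangent line of $\Gamma$ is not controlled by the trace. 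My first attempt would therefore use the extremal contraction instead. For a minimal extremal curve, deformations of $C$ cover a locus $E$ whose dimension the Ionescu–Wi\'sniewski inequality bounds in terms of $-K_X\cdot C$. Averaging $\rho_\omega\wedge\omega^{\dim E-1}$ over the covering family inside $E$, and comparing via \eqref{eqn:chern-weil} restricted to $E$, could convert the trace bound into a bound on the curve class. If this averaging cannot be made to work, I would instead compare with the scalar curvature directly: use \eqref{eqn:chern-weil} globally to get $-K_X\cdot\alpha^{n-1}\ge \frac{\inf R}{4\pi n}\alpha^n$, and combine it with the Hodge-index inequality of Lemma~\ref{lemma:positive-hodge-pairing} applied to the nef boundary class $K_X+r\alpha$. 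That yields $r\ge \frac{\inf R}{4\pi n}$ only when $(K_X+r\alpha)\cdot\alpha^{n-1}\ge0$ with equality analysis. I expect this step to need the most care.

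For rigidity, equality forces every inequality above to be an equality. In particular $R_\omega\equiv\inf R_\omega$, so $\omega$ is cscK, and $K_X+r\alpha$ is a nef class with $(K_X+r\alpha)\cdot\alpha^{n-1}=0$. Lemma~\ref{lemma:positive-hodge-pairing} then gives $K_X+r\alpha=0$, i.e.\ $c_1(X)=r[\omega]>0$. Thus $X$ is Fano, hence projective by Lemma~\ref{lemma:volume-of-fano}, and $\omega$ is K\"ahler--Einstein by the $\partial\bar\partial$-lemma. Lemma~\ref{lemma:kahler-chern} gives $\omega=cF^*\omega_0$. Finally, in this case every rational curve $C$ has class in $\overline{\textup{NA}}(X)$, which is a sum of extremal contributions. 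Minimality in the definition of $\ell(\alpha)$, together with $K_X=-r\alpha$ making all rays $K_X$-negative, gives $-K_X\cdot C\ge\ell(\alpha)$.
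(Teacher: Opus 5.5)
\paragraph{The key step is set up in the wrong direction.} You correctly reduce everything to $r(\alpha)^{-1}\le 4\pi n/\inf_X R_\omega$. That is a \emph{lower} bound $r(\alpha)\ge \inf_X R_\omega/(4\pi n)$ on the nef threshold; note also that $t$ should be $\inf_X R_\omega/(4\pi n)$, not its reciprocal. Your reformulation, that $K_X+t\alpha$ is nef at $t=\inf_X R_\omega/(4\pi n)$, would instead give the reverse inequality $r(\alpha)\le \inf_X R_\omega/(4\pi n)$, and that is false in general.

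Take $\bC\bP^1\times\bC\bP^1$ with a product of round metrics of areas $A_1<A_2$. Then $\inf R_\omega/(4\pi n)=A_1^{-1}+A_2^{-1}$, while $(K_X+t\alpha)\cdot f_i=-2+tA_i$ gives $r(\alpha)=2/A_1>A_1^{-1}+A_2^{-1}$. The fiber $f_1$ violates your curve inequality: $\int_{f_1}\rho_\omega=4\pi$, whereas $\frac{\inf R_\omega}{2n}\int_{f_1}\omega=2\pi(1+A_1/A_2)<4\pi$. So no averaging over covering families and no Ionescu--Wi\'sniewski argument can make your primary plan work.

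\paragraph{Your fallback is the right argument, and it is the paper's.} You leave it conditional on something that is automatic. The class $K_X+r(\alpha)\alpha$ is nef because $\overline{\cK_X}$ is closed, and a nef class pairs non-negatively with $\alpha^{n-1}$ (it is a limit of classes $K_X+r(\alpha)\alpha+\varepsilon\alpha$ that pair positively). Hence
\[
r(\alpha)\,\alpha^n\ge c_1(X)\cdot\alpha^{n-1}, \quad\text{i.e.}\quad r(\alpha)\ge s(\alpha)=\frac{\bar R(\alpha)}{4\pi n}\ge \frac{\inf_X R_\omega}{4\pi n}.
\]
No Hodge index argument is needed here. Lemma~\ref{lemma:positive-hodge-pairing} enters only in the equality case, where you use it correctly. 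With this one line the main bound closes. Your rigidity discussion then goes through: $\bar R=\inf R$ gives cscK, and $r=s$ gives $K_X+r\alpha=0$, so $X$ is Fano.

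\paragraph{The last step has a gap.} Writing $[C]=\sum_i a_i[\Gamma_i]$ with real $a_i\ge 0$ only yields $-K_X\cdot C\ge \bigl(\sum_i a_i\bigr)\min_i(-K_X\cdot\Gamma_i)$. Nothing forces $\sum_i a_i\ge 1$ for a rational curve not lying on a single extremal ray, so minimality on each ray does not bound $-K_X\cdot C$.

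Use the equality hypothesis instead, as the paper does. Equality forces $\textup{sys}^{\textup{hol}}_2(\alpha)$ (respectively $\textup{sys}_{\pi_2}$) to equal $\ell(\alpha)/r(\alpha)$. Every rational curve, via its normalization, is a competitor for both. Hence $\alpha\cdot C\ge \ell(\alpha)/r(\alpha)$, and therefore $-K_X\cdot C=r(\alpha)\,\alpha\cdot C\ge \ell(\alpha)$.
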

\begin{proof}
    Our proof is inspired by~\cite{ye-zhang}*{Theorem 1}.
For any K\"ahler class $\alpha \in \cK_X^+$ pairing positively with $c_1(X)$ on the compact K\"ahler $n$-fold $X$, we define the function $s(\alpha) := \frac{c_1(X) \cdot \alpha^{n-1}}{\alpha^n}$, so $\bar{R}(\alpha) = 4 \pi n s(\alpha)$ by~\eqref{eqn:average-scalar-curvature}.
Since $\textup{sys}_2 (\omega) \leq \textup{sys}^{\textup{hol}}_2 (\alpha)$, it suffices to prove $\textup{sys}_2^{\textup{hol}}(\alpha) s(\alpha) \leq \ell(\alpha)$ and $\textup{sys}_{\pi_2}(\alpha) s(\alpha) \leq \ell(\alpha)$.

\smallskip \noindent \textbf{Step 1.}
We first show that $r(\alpha) \in (0,\infty)$.
    Since $X$ admits a metric $\omega$ of positive scalar curvature, $K_X \cdot [\omega]^{n-1} = - c_1(X) \cdot [\omega]^{n-1} < 0$ shows that $K_X$ is not nef, as in the discussion of Theorem~\ref{thm:scalar-curvature-implies-uniruled}; hence, $r(\alpha) > 0$ for any $\alpha$.
    Moreover, the K\"ahler cone $\cK_X \subset H^{1,1}(X;\bR)$ is open, so we can find some $\ve_0 > 0$ such that $\alpha + \ve K_X \in \cK_X$ for all $\ve \in (0,\ve_0)$, hence $K_X + t \alpha = t ( \alpha + t^{-1} K_X) \in \cK_X$ for $t > \ve_0^{-1}$.
    This implies that $r(\alpha) \in (0,\infty)$, and $K_X + r(\alpha) \alpha \in \overline{\cK_X}$ is nef by construction.
    Consequently,
    \[
    (K_X + r(\alpha) \alpha) \cdot \alpha^{n-1} \geq 0 \implies r(\alpha) \alpha^n \geq - K_X \cdot \alpha^{n-1} = c_1(X) \cdot \alpha^{n-1},
    \]
    which leads to $s(\alpha) \leq r(\alpha)$.
    Moreover, for any $t \in ( 0, r(\alpha))$, the class $K_X + t \alpha$ is not nef by the definition of $r(\alpha)$.
    Using the analytic Kleiman criterion, we have $\overline{\cK_X} = \overline{\textup{NA}}(X)^{\vee}$, so the cone decomposition~\eqref{eqn:analytic-mori-cone} produces a $(1,1)$-current class $\gamma \in \overline{\textup{NA}}(X)$ satisfying $(K_X + t \alpha) \cdot \gamma < 0$.
Since $t > 0$ and $\alpha$ is a K\"ahler class, the divisor $K_X + t \alpha$ is positive on the $K_X$-non-negative part $\overline{\textup{NA}}(X)_{K_X \geq 0}$ of the cone $\overline{\textup{NA}}(X)$, in the sense that $(K_X + t \alpha) \cdot \eta \geq t \alpha \cdot \eta > 0$ for any class $\eta \in \overline{\textup{NA}}(X)_{K_X \geq 0}$.
By the analytic K\"ahler cone decomposition~\eqref{eqn:analytic-mori-cone}, the class $\gamma$ projects non-trivially onto a $K_X$-negative rational-curve ray, so $(K_X + t \alpha) \cdot \Gamma_i < 0$ and $t < \frac{- K_X \cdot \Gamma_i}{\alpha \cdot \Gamma_i}$ for a curve $\Gamma_i$.
Let $C_t \in \bR_+ [\Gamma_i]$ be a minimal extremal rational curve.
Since $C_t$ is an irreducible reduced compact complex curve, it admits a finite holomorphic normalization map $\nu_t: \tilde{C} \to C_t$ with $\tilde{C}$ a smooth compact Riemann surface and $\nu_t$ a biholomorphism over the smooth locus of $C_t$, see~\cite{demailly}*{Ch. II \S 7}.
Since $C_t$ is a rational curve, its normalization is $\tilde{C} \simeq \bC \bP^1$, and composing with the inclusion into $X$ produces a map $f_t : \bC \bP^1 \to X$.

Using the normalization map $\nu_t$, we can express the length of the curve as $\ell(t) := - K_X \cdot C_t = \deg ( \nu^*_t(-K_X)) \in \bN^*$, which is an integer because $- K_X$ is a holomorphic line bundle.
Moreover, $0 < \ell(t) \leq n+1$ by the result~\eqref{eqn:length-estimate-KX} due to Liu~\cite{liu-kahler}*{Proposition 3.2}. 
We therefore obtain
\begin{equation}\label{eqn:will-conclude-the-claim}
(K_X + t \alpha) \cdot C_t < 0 \implies \alpha \cdot C_t < \ell(t) t^{-1}, \qquad \text{for } \; t \in (0,s(\alpha)).
\end{equation}
In particular, $\ell(\alpha) \in \bN^*$ because each length $\ell(t) = - K_X \cdot C_t$ is a positive integer by our definition of $\ell(\alpha)$, and the sequence of $\ell(t_j)$ realizing $\ell(\alpha) = \lim_{j \to \infty} \ell(t_j)$ eventually stabilizes to the same constant value $\ell(\alpha)$.
Thus, we can assume that $\ell(t_j) = \ell(\alpha)$ for all elements of a sequence $t_j \uparrow r(\alpha)$.

To bound $\textup{sys}^{\textup{hol}}_2(\alpha)$, note that the rational curve $C_t$ is an admissible holomorphic $2$-cycle, whereby
\[
\textup{sys}^{\textup{hol}}_2 (\alpha) \leq \alpha \cdot C_{t_j} < \ell(t_j) t_j^{-1} \implies \textup{sys}^{\textup{hol}}_2(\alpha) \leq \frac{\ell(t_j)}{r(\alpha)} \leq \ell(\alpha) \frac{\alpha^n}{c_1(X) \cdot \alpha^{n-1}}
\]
by sending $t \uparrow r(\alpha)$ and using $s(\alpha) \leq r(\alpha)$.
For the second part of~\eqref{eqn:inf-R-omega-ell(alpha)}, from~\eqref{eqn:will-conclude-the-claim} the map $f_{t_j} : \bC \bP^1 \to X$ produced by the normalization $\nu_{t_j}: \bC \bP^1 \to C_{t_j}$ of the curve $C_{t_j}$ has area bounded by
\begin{equation}\label{eqn:pi2-systolic-inequality-epsilon}
\textup{Area}_{\omega} (f_{t_j}) = \int_{\bC \bP^1} f^*_{t_j} \omega = \int_{C_{t_j}} \omega = \alpha \cdot C_{t_j}, \qquad 0 < \textup{Area}_{\omega}(f_{t_j}) \leq \ell(t_j) t_j^{-1}.
\end{equation}
In particular, $\textup{Area}_{\omega} (f_{t_j}) = \alpha \cdot C_{t_j} > 0$ shows that the map $f_{t_j}$ from $\bC \bP^1 \simeq \bS^2$ is not nullhomotopic, so it represents a non-zero element of $\pi_2(X)$ and is a valid competitor for $\textup{sys}_{\pi_2}(X)$.
Thus,~\eqref{eqn:pi2-systolic-inequality-epsilon} implies that $\textup{sys}_{\pi_2}(X) \leq \ell(t_j) t_j^{-1}$, and sending $t_j \uparrow r(\alpha)$ completes the proof of~\eqref{eqn:inf-R-omega-ell(alpha)}.

\smallskip \noindent \textbf{Step 2.}
If~\eqref{eqn:inf-R-omega-ell(alpha)} attains equality, using $s(\alpha) = \frac{c_1(X) \cdot \alpha^{n-1}}{\alpha^n} \leq r(\alpha)$ and $\textup{sys}^{\textup{hol}}_2(\alpha) \leq \frac{n+1}{r(\alpha)}$ forces $s(\alpha) = r(\alpha)$ and $\textup{sys}^{\textup{hol}}_2(\alpha) = \frac{\ell(\alpha)}{r(\alpha)}$, and likewise for $\textup{sys}_{\pi_2}(\alpha)$.
Using $s(\alpha)= r(\alpha)$, we obtain $L \cdot \alpha^{n-1} =0$, where $L := K_X + r(\alpha) \alpha$ is nef and $\alpha$ is a K\"ahler class.
Thus, Lemma~\ref{lemma:positive-hodge-pairing} gives $L = 0$ and $- K_X = r(\alpha) \alpha$.
Because $\alpha$ is a K\"ahler and $r(\alpha) > 0$, we deduce that $- K_X$ is K\"ahler, so $c_1(X) > 0$.
Moreover, the class $-K_X$ is integral, so the Kodaira embedding theorem shows that $X$ is a projective Fano manifold.
In addition, for $R_{\omega} > 0$ we have $\bar{R}(\alpha)^{-1} \leq ( \inf_X R_{\omega})^{-1}$ with equality if and only if the scalar curvature $R_{\omega}$ is constant.
In particular, if $X$ admits a cscK metric $\omega_0$ with $c_1(X) = \lambda_0 [\omega_0]$, then Lemma~\ref{lemma:kahler-chern} together with $c_1(X) = r(\alpha) [\omega]$ will imply that $\omega = c F^* \omega_0$ for some $c > 0$ and $F \in \textup{Aut}^0(X)$ as desired.

Finally, our previous argument shows that any rational curve $C \subset X$ produces a non-trivial map $\bC \bP^1 \to X$ by normalization, hence a non-zero element of $\pi_2$; thus, the equality forces
\[
\alpha \cdot C \geq \textup{sys}^{\textup{hol}}_2(\alpha) = \tfrac{\ell(\alpha)}{r(\alpha)} \quad \text{or} \quad \alpha \cdot C \geq \textup{sys}_{\pi_2}(\alpha) = \tfrac{\ell(\alpha)}{r(\alpha)}, \quad \implies - K_X \cdot C = r(\alpha) \alpha \cdot C \geq \ell(\alpha)
\]
for every rational curve on $X$.
This completes the proof.
\end{proof}

Using these results, we prove Theorem~\ref{thm:2-systole-bound}.
\begin{proof}[Proof of Theorem~\ref{thm:2-systole-bound}]
    The result of Liu~\cite{liu-kahler}*{Proposition 3.2} shows that $\ell(C) \leq n+1$ holds for all minimal extremal rational curves, and if there exists such a curve with $\ell(C) = n+1$, then $X \simeq \bC \bP^n$ is biholomorphic to complex projective space by~\cite{CMSB}.
    Thus, we can apply Proposition~\ref{prop:nef-threshold-bound-length} with $\ell(\alpha) \leq n+1$ to obtain the bound~\eqref{eqn:2-systole-bound} with constant $4 \pi n(n+1)$.
    Since $\omega_{\textup{FS}}$ is a cscK metric on $\bC \bP^n$, the equality case forces $\omega = c F^* \omega_{\textup{FS}}$, demonstrating that the bound is sharp.

    If $X$ is not biholomorphic to $\bC \bP^n$, then it is not even homeomorphic to it by the theorem of Hirzebruch-Kodaira-Yau, cf.~\cite{tosatti-CPn}, so $\ell(\alpha) \in \bN^*$ implies $\ell(\alpha) \leq n$ and implies the bound~\eqref{eqn:2-systole-bound-v2} with constant $4 \pi n^2$.
    If equality holds, Proposition~\ref{prop:nef-threshold-bound-length} shows that $X$ is a projective Fano manifold, $\omega$ is a K\"ahler-Einstein metric on $X$, and $- K_X \cdot C \geq n$ for all rational curves $C \subset X$.
    By Dedieu-H\"oring~\cite{dedieu-horing}*{Theorem C}, this forces $X \simeq Q^n$ if $n \geq 3$, since $X \not\simeq \bC \bP^n$.
For $n=2$, the implication $X \simeq Q^2 \simeq \bC \bP^1 \times \bC \bP^1$ is classical.
Moreover, $\omega_0 = i^* \omega_{\textup{FS}}$ is a cscK metric on $Q^n$, so $\omega = c F^* \omega_0$ by Lemma~\ref{lemma:kahler-chern}.
Finally, Lemma~\ref{lemma:systoles-of-fano} shows that equality in~\eqref{eqn:2-systole-bound-v2} is indeed attained in this case.
\end{proof}

We can further extend Theorem~\ref{thm:2-systole-bound} for projective manifolds beyond the first few exceptional examples $\{ \bC \bP^n, Q^n, \bP(E) \to C \}$.
We use Proposition~\ref{prop:nef-threshold-bound-length}, which shows the bound on the $2$-systoles of $X$ is closely related to the length of extremal rays.
The classification of projective manifolds with long rays is an important topic in birational geometry with foundational contributions of Wi\'sniewski~\cite{wisniewski-contractions}.

\begin{theorem}\label{thm:projective-theorem}
    Let $X$ be a compact K\"ahler manifold of complex dimension $n$.
    Suppose that either $n \leq 3$ or $X$ is projective.
    If $X$ is not biholomorphic to $\{ \bC \bP^n, Q^n\}$, any K\"ahler metric $\omega$ on $X$ has
    \begin{equation}\label{eqn:sys2-X-omega-n(n-1)+2}
    \textup{sys}_2 (X,\omega) \cdot \inf_X R_{\omega} \leq 4 \pi ( n(n-1)+2) \qquad \text{and} \qquad \textup{sys}_{\pi_2} \cdot \inf_{X} R_{\omega} \leq 4 \pi ( n(n-1)+2)
    \end{equation}
    with equality if and only if $(X,\omega) \simeq (\bC \bP^{n-1}, \omega_{\textup{FS}}) \times ( \bC \bP^1, \omega_{\textup{FS}})$.
    If $X$ is also not a $\bC \bP^{n-1}$-bundle over $\bC \bP^1$, then every K\"ahler metric satisfies the sharp bound
    \begin{equation}\label{eqn:kahler-super-refined}
    \textup{sys}_2 (X,\omega) \cdot \inf_X R_{\omega} \leq 4 \pi n(n-1) \qquad \text{and} \qquad \textup{sys}_{\pi_2} \cdot \inf_{X} R_{\omega} \leq 4 \pi n(n-1).
    \end{equation}
\end{theorem}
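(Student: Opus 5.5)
The plan is to refine Proposition~\ref{prop:nef-threshold-bound-length}, which gives $\textup{sys}_2(X,\omega)\cdot\inf_X R_\omega \le 4\pi n\,\ell(\alpha)$ and the same bound for $\textup{sys}_{\pi_2}$, by splitting according to the integer $\ell(\alpha)\le n+1$. If $\ell(\alpha)=n+1$, then $X\simeq\bC\bP^n$ by~\cite{CMSB}, which is excluded. If $\ell(\alpha)\le n-1$, both~\eqref{eqn:sys2-X-omega-n(n-1)+2} and~\eqref{eqn:kahler-super-refined} follow immediately, with the constant $4\pi n(n-1)<4\pi(n(n-1)+2)$. So the real content is the case $\ell(\alpha)=n$, where the crude bound $4\pi n^2$ must be improved to $4\pi(n(n-1)+2)$. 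In that case I will compare $\textup{sys}_2$ directly with $\bar R(\alpha)=4\pi n\,s(\alpha)$, where $s(\alpha)=c_1(X)\cdot\alpha^{n-1}/\alpha^n$, and use $\inf_X R_\omega\le\bar R(\alpha)$.

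\textbf{Step 1 (classification of length-$n$ rays).} Take the extremal ray $R=\bR_+[C_{t_j}]$ with $-K_X\cdot C_{t_j}=n$ produced in the proof of Proposition~\ref{prop:nef-threshold-bound-length}. Its contraction exists: in the projective case by Mori theory, and for $n\le3$ by H\"oring--Peternell~\cite{horing-peternell}. The Ionescu--Wi\'sniewski inequality gives $\dim E+\dim F\ge n+\ell(R)-1=2n-1$ for the exceptional locus $E$ and any fiber $F$. Hence the contraction is of fiber type with fibers of dimension at least $n-1$. By Wi\'sniewski's classification~\cite{wisniewski-contractions}, either it contracts $X$ to a point, so $X$ is Fano with $b_2=1$ and index at least $n$, giving $X\in\{\bC\bP^n,Q^n\}$, which is excluded; or it is a $\bC\bP^{n-1}$-bundle $X=\bP(E)\to B$ over a smooth curve $B$. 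This step, in particular establishing the contraction and the bundle structure in the non-projective threefold case, is where I expect the main obstacle. I would rely on~\cite{horing-peternell} and Liu~\cite{liu-kahler} for it.

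\textbf{Step 2 (computation on $\bP(E)\to B$).} Write $H^{1,1}(X;\bR)=\bR\xi\oplus\bR f$, with $\xi$ the tautological class, $f$ the fiber class, and $\alpha=a\xi+bf$. By Lefschetz-type arguments, $H^2$ of such a bundle is spanned by these two classes, so the description of $\alpha$ is complete. Normalize $E$ so that the minimal section class is well defined, and use $c_1(X)=n\xi+(2-2g(B)-\deg E)f$. Two curves are available as competitors: a line $\ell$ in a fiber, with $\alpha\cdot\ell=a$, and a minimal-degree section curve $\sigma$, which is rational only when $g(B)=0$. Then $s(\alpha)$ is an explicit rational function of $a$ and $b$.
\begin{itemize}
\item If $g(B)\ge1$, the base contribution $2-2g\le0$ gives $s(\alpha)\le (n-1)/a$. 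Using $\ell$ as the competitor, $\textup{sys}\cdot s\le n-1$, which gives the stronger bound~\eqref{eqn:kahler-super-refined}.
\item If $B=\bC\bP^1$ and $E=\bigoplus_i\cO(a_i)$ with $\min_i a_i=0$, the model computation for the product $\bC\bP^{n-1}\times\bC\bP^1$ with $\alpha=ah_1+bh_2$ is
\[
s(\alpha)=\frac{n-1}{a}+\frac{2}{nb},\qquad \min(a,b)\,s(\alpha)\le n-1+\tfrac{2}{n},
\]
with equality exactly when $a=b$. This is precisely $4\pi(n(n-1)+2)$ after multiplying by $4\pi n$.
\end{itemize}
For a twisted bundle with $E$ non-trivial, I will verify that the extra $-\deg E$ term only decreases $s(\alpha)$ relative to the minimal section, giving a strict inequality. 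Both $\ell$ and $\sigma$ are rational curves, so they bound $\textup{sys}_{\pi_2}$ as well, exactly as in Step~1 of Proposition~\ref{prop:nef-threshold-bound-length}.

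\textbf{Step 3 (equality and sharpness).} Equality in~\eqref{eqn:sys2-X-omega-n(n-1)+2} forces three things. First, $E$ is trivial, so $X=\bC\bP^{n-1}\times\bC\bP^1$. Second, $a=b$, so $\alpha$ is proportional to $c_1(X)$. Third, $R_\omega\equiv\bar R$, so $\omega$ is cscK and hence K\"ahler--Einstein. Lemma~\ref{lemma:kahler-chern} then identifies $\omega$ with the product Fubini--Study metric, up to dilation and automorphism. Conversely, that metric attains equality by the computation above, since both the lines and the $\bC\bP^1$ factor have area $\pi$.
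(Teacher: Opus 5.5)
Your overall route matches the paper's. You split on $\ell(\alpha)$ from Proposition~\ref{prop:nef-threshold-bound-length}, use the contraction of a length-$n$ ray and Ionescu--Wi\'sniewski to reduce to $X=\bP(E)\to B$, and then bound $\textup{sys}\cdot s(\alpha)$. For $g(B)\ge1$ you use the fiber line; for $B=\bC\bP^1$ you use $\min\{a,b\}$ via the line and the minimal section. Your inequalities are correct. This includes the observation that $e=\deg E>0$ only lowers $s(\alpha)=\frac{n-1}{a}+\frac{2}{ae+nb}$, so the inequality is strict for nontrivial $E$.

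The genuine gap is the rigidity argument in Step~3. On $\bC\bP^{n-1}\times\bC\bP^1$ your own formula gives $c_1(X)=n\xi+2f$. So $a=b$ yields $\alpha=a(\xi+f)$, which is proportional to $c_1(X)$ only when $n=2$. For $n\ge3$ the equality metric is not K\"ahler--Einstein. The product of Fubini--Study metrics with lines of area $\pi$ has $\rho_\omega=2n\,\textup{pr}_1^*\omega_{\textup{FS}}+4\,\textup{pr}_2^*\omega_{\textup{FS}}$. This has constant trace, so $R_\omega\equiv4(n(n-1)+2)$, but it is not a multiple of $\omega$. Hence ``cscK, hence K\"ahler--Einstein'' fails here, and Lemma~\ref{lemma:kahler-chern}, which presupposes $c_1(X)=\lambda[\omega]$, cannot be invoked. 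To finish, you need uniqueness of cscK metrics in a \emph{fixed} K\"ahler class modulo $\textup{Aut}^0(X)$ (Chen--Tian, Berman--Berndtsson). Apply it to the class $a(\xi+f)$, which contains the product metric as a cscK representative.

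There are also smaller omissions.

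The converse direction needs $\textup{sys}_2,\textup{sys}_{\pi_2}\ge\pi$ for the product metric. Knowing that lines and the $\bC\bP^1$ factor have area $\pi$ only gives $\le\pi$. Calibrating by $\omega$ says nothing about the class $[\ell]-[\sigma]$, on which $\omega$ integrates to zero. Calibrate instead by $\textup{pr}_1^*\omega_{\textup{FS}}$ and $\textup{pr}_2^*\omega_{\textup{FS}}$, each closed of comass one. This gives area at least $\pi\max\{|p|,|q|\}$ on the class $p[\ell]+q[\sigma]$.

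You also never address the asserted sharpness of~\eqref{eqn:kahler-super-refined}. The paper uses a K\"ahler--Einstein cubic $X_3\subset\bC\bP^{n+1}$. It has index $n-1$, hence $R\equiv4n(n-1)$, and contains a line of area $\pi$ (Lemma~\ref{lemma:systoles-of-fano}).

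Finally, in Step~1 the contraction to a point gives pseudo-index $n$, not index $\ge n$; a priori the index only divides $n$. It is the Dedieu--H\"oring characterization that then yields $\bC\bP^n$ or $Q^n$.
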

\begin{proof}
We will use Proposition~\ref{prop:nef-threshold-bound-length}; first, we claim that $\ell(\alpha) \leq n-1$ unless $X$ is a $\bC \bP^{n-1}$-bundle over a smooth complex curve $B$.
Indeed, let $R \subset \overline{\textup{NA}}(X)$ be the $K_X$-negative extremal ray produced by the curve $C$ of Proposition~\ref{prop:nef-threshold-bound-length}, so our previous argument proved $\ell(R) \leq n$ for $X \not\simeq \bC \bP^n$ and it suffices to rule out $\ell(R)=n$.
Under our assumptions, the curve $R$ has a contraction $\varphi_R : X \to Y$ onto a normal projective variety, with connected fibers, such that an irreducible curve $C \subset X$ is contracted by $\varphi_R$ if and only if $[C] \in R$.
For $X$ projective, this comes from Mori's contraction theorem~\cite{kollar-mori}*{Ch.2}, while for $n=3$, this follows from the H\"oring-Peternel minimal model program for K\"ahler threefolds~\cite{horing-peternell}.
The Ionescu-Wi\'sniewski~\cites{wisniewski-contractions , liu-kahler} shows that any non-trivial fiber component $F$ of $\varphi_R$ has
\begin{equation}\label{eqn:Exc-varphi-R}
\dim \textup{Exc}(\varphi_R) + \dim F \geq n+\ell(R) - 1 = 2n-1
\end{equation}
where $\textup{Exc}(\varphi_R)$ denotes the exceptional locus.
The map $\varphi_R$ cannot be birational, else $\dim \textup{Exc}(\varphi_R) \leq n-1$ and $\dim F \leq n-1$ would produce a contradiction.
Thus, $\varphi_R$ is of fiber type, hence $\textup{Exc}(\varphi_R) = X$ and the inequality~\eqref{eqn:Exc-varphi-R} implies $\dim F \geq n-1$.
Consequently, $\dim Y \in \{ 0,1\}$.

If $\dim Y=0$, then $X$ is Fano with $h^{1,1}(X) = 1$ and $\ell(R)=n$, so Dedieu-H\"oring~\cite{dedieu-horing}*{Theorem B} implies that $X \simeq Q^n$, a contradiction.
Thus, $Y$ is a smooth projective curve, every fiber has dimension $n-1$, and the general fiber $F$ satisfies $- K_F \cdot C = - K_X \cdot C \geq n$ for rational curves $C \subset F$.
Since $\dim F = n-1$, the characterization of~\cite{CMSB} forces $F \simeq \bC \bP^{n-1}$, so the extremal contraction is a projective bundle $\varphi_R: X \simeq \bP_Y(E) \to Y$ for some rank-$n$ vector bundle $E \to Y$.
Let $g$ be the genus of $Y$, and we claim that $\textup{sys}^{\textup{hol}}_2(\alpha) s(\alpha) \leq n-1$ and $\textup{sys}_{\pi_2}(\alpha) s(\alpha) \leq n-1$ unless $g=0$.

Let $\xi := c_1( \cO_{\bP(E)}(1))$ and let $e := \deg E = \int_B c_1(E) = c_1(E) \cdot [B]$ be the degree of the vector bundle, so a K\"ahler class $\alpha$ on $X = \bP_Y(E)$ has the form 
\begin{align*}
\alpha &= a \xi + \pi^* \beta, \qquad K_X = - n \xi + \pi^* (K_B + \det E), \qquad a>0, \\
\alpha^n &= a^{n-1}(ae+nb), \qquad c_1(X) \cdot \alpha^{n-1} = a^{n-2} \bigl[ (n-1)(ae+nb) - (2g-2) a \bigr], \qquad b := \det \beta.
\end{align*}
If $g \geq 1$, then every map $\bC \bP^1 \to Y$ is constant and every rational curve in $X$ is vertical, hence $\textup{sys}_2^{\textup{hol}}(\alpha) = \textup{sys}_2^{\textup{hol}}(\alpha) = a$.
Thus, the property~\eqref{eqn:sys2-X-omega-n(n-1)+2} is satisfied, since
\[
\textup{sys}_2^{\textup{hol}}(\alpha) s(\alpha) = \textup{sys}_{\pi_2}(\alpha) s(\alpha) = n-1 - 2 \tfrac{(g-1) a}{ae+nb} \leq n-1
\]
for $g \geq 1$.
For $g=0$, we have $Y \simeq \bC \bP^1$ and Grothendieck's splitting theorem gives $E = \bigoplus_{i=1}^n \cO_{\bC \bP^1}(d_i)$ for $0 = d_1 \leq d_2 \leq \cdots \leq d_n$, up to twisting $E$ by a line bundle.
Then, $e := \deg E = \sum_i d_i \geq 0$, and $a,b>0$ by the ampleness of $\alpha = a \xi + \pi^* \beta$.
In addition to the fiber line $\ell$ above, the section coming from the quotient $E \to \cO_{\bC \bP^1}$ now produces another rational curve $\sigma$ with $\alpha \cdot \sigma = b$, hence
\[
\textup{sys}_{\pi_2}(\alpha), \;\textup{sys}_2^{\textup{hol}}(\alpha) \leq \min \{ a, b\}, \qquad a s(\alpha) = n - 1 + \tfrac{2}{ae+nb}
\]
by following the above computation.
Since $e \geq 0$, computing in the variable $x = \frac{b}{a}$ we find
\[
\textup{sys}^{\textup{hol}}_2(\alpha) s(\alpha) \leq \min \{ 1,x\} \bigl( n-1 + \tfrac{2}{e+nx} \bigr) \leq n - 1 + \tfrac{2}{n}.
\]
The maximum occurs for $(x,e) = (1,0)$, so $X \simeq \bC \bP^{n-1} \times \bC \bP^1$ and equality in~\eqref{eqn:sys2-X-omega-n(n-1)+2} holds for $a=b$.

Finally, if $X$ is not a $\bC \bP^{n-1}$-bundle over $\bC \bP^1$, the above argument implies the inequality~\eqref{eqn:kahler-super-refined}.
This bound is sharp and attained, for example, by the general cubic hypersurface $X_3 \subset \bC \bP^{n+1}$, with $h^{1,1}(X_3) = 1$, equipped with a K\"ahler-Einstein metric as in Theorem~\ref{thm:fano-index-classification} and Lemma~\ref{lemma:systoles-of-fano}.
\end{proof}

The technique of Proposition~\ref{prop:nef-threshold-bound-length} can be extended to bound the higher systoles of projective manifolds $(X,\omega)$.
When the Mori contraction $\varphi_R: X \to Y_R$ has large fibers, we find:
\begin{corollary}\label{corollary:systole}
There exists a dimensional constant $C_n$ such that the following property holds for any smooth projective $n$-fold $(X,\omega)$ with a K\"ahler metric of positive total scalar curvature.
If every $K_X$-negative extremal ray $R \subset \overline{\textup{NE}}(X)$ has a fiber-type contraction $\varphi_R : X \to Y_R$ whose general fiber has dimension at least $p$, for some $1 \leq p \leq n$, then
\[
\textup{sys}_{2p}(X,\omega) \leq C_n \bar{R}(\omega)^{-p}.
\]
In particular, this property applies for any Fano manifold with $h^{1,1}(X) = 1$, in which case
\[
\sum_{p=1}^n \textup{sys}_{2p}(X,\omega) \cdot \bar{R}(\omega)^p \leq C_n, \qquad \text{for every $X$ Fano with $h^{1,1}(X)=1$}.
\]
\end{corollary}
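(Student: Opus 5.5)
We follow the nef-threshold argument of Proposition~\ref{prop:nef-threshold-bound-length}, but now use the whole fiber of an extremal contraction, not just a single rational curve. Fix $\alpha = [\omega]$ with $s(\alpha) = \frac{c_1(X)\cdot \alpha^{n-1}}{\alpha^n} > 0$, and let $r(\alpha)$ be the nef threshold. As in Step~1 of that proof, $s(\alpha) \leq r(\alpha)$. For $t \uparrow r(\alpha)$, the cone theorem~\eqref{eqn:analytic-mori-cone} (in the projective case, Mori's cone theorem) gives a $K_X$-negative extremal ray $R$ with $(K_X + t\alpha)\cdot R < 0$. Since $\overline{\textup{NE}}(X)$ has only finitely many $K_X$-negative extremal rays on which $K_X + t\alpha$ is negative for $t$ near $r(\alpha)$, we may fix one ray $R$ along a sequence $t_j \uparrow r(\alpha)$. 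The limit then gives $(K_X + r(\alpha)\alpha)\cdot R = 0$. Equivalently, $L := K_X + r(\alpha)\alpha$ is a nef class vanishing on $R$, and it is the supporting class of the contraction $\varphi_R : X \to Y_R$.

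By hypothesis, $\varphi_R$ is of fiber type with general fiber $F$ of dimension $d \geq p$. The key computation bounds the volume of $F$. Because $L|_F \equiv 0$, we have $\alpha|_F = r(\alpha)^{-1}(-K_X)|_F = r(\alpha)^{-1}(-K_F)$. The normal bundle of a general fiber is trivial, so adjunction gives $K_X|_F = K_F$, and $F$ is a Fano variety; it is smooth by generic smoothness. Therefore
\[
\int_F \alpha^d = r(\alpha)^{-d}\, (-K_F)^d \leq C_d\, r(\alpha)^{-d} \leq C_d\, s(\alpha)^{-d},
\]
using the boundedness of Fano manifolds from Lemma~\ref{lemma:volume-of-fano}.

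To obtain a $2p$-cycle rather than a $2d$-cycle, we cut $F$ by a complete intersection. Since $-K_F$ is ample, some fixed multiple $m_d(-K_F)$ is very ample with $m_d$ depending only on $d$; this follows from boundedness, for instance by Koll\'ar's effective very ampleness or by finiteness of the Fano families. Intersecting $F$ with $d-p$ general members of $|m_d(-K_F)|$ yields a subvariety $Z \subset F$ of dimension $p$ with
\[
\int_Z \alpha^p = m_d^{\,d-p}\, r(\alpha)^{-p} (-K_F)^d \leq C'_n\, s(\alpha)^{-p}.
\]
$Z$ is a complex subvariety and $\omega$ restricts to a Kähler form, so Wirtinger's inequality says its mass equals $\frac{1}{p!}\int_Z\alpha^p$. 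Since this integral is positive, $[Z] \neq 0$ in $H_{2p}(X;\bZ)/\textup{tors}$. Hence $\textup{sys}_{2p}(X,\omega) \leq C_n s(\alpha)^{-p}$, and substituting $\bar{R}(\omega) = 4\pi n\, s(\alpha)$ from~\eqref{eqn:average-scalar-curvature} gives the first claim.

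For the Fano case with $h^{1,1}(X)=1$, the only extremal ray contracts $X$ to a point. Its fiber is $X$ itself, so the hypothesis holds for every $p \leq n$. The same construction applied to $F = X$ gives each summand a bound $C_n$, and summing over $p$ proves the second claim.

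The main obstacle is the uniformity of the constant. Two points need care. First, the general fiber $F$ must be a smooth Fano variety with $K_X|_F = K_F$; this holds for a general fiber of a fiber-type Mori contraction. Second, the anticanonical volume $(-K_F)^d$ and the very-ampleness multiple $m_d$ must depend only on $d \leq n$; both follow from the boundedness of smooth Fano $d$-folds. A further subtlety is stabilizing the ray along $t_j$: this requires the local finiteness of $K_X$-negative extremal rays away from the $K_X$-nonnegative part, which should follow from the cone theorem because all $\alpha\cdot\Gamma_i$ are bounded below on rays with bounded length.
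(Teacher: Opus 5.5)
Your proof is correct and follows the paper's route. Both arguments take the nef threshold, pass to a $K_X$-negative fiber-type extremal contraction whose general fiber $F$ is a smooth Fano manifold with $K_X|_F=K_F$, use the uniform bounds on $(-K_F)^{\dim F}$ and on a very ample multiple of $-K_F$, and cut $F$ down by a complete intersection to a $p$-cycle of $\omega$-volume $\frac{1}{p!}\int\alpha^p$. The only difference is at the threshold: you stabilize the ray and use $(K_X+r(\alpha)\alpha)|_F\equiv 0$ to get the equality $\alpha|_F=r(\alpha)^{-1}(-K_F)$, whereas the paper works at each $t<s(\alpha)$, using ampleness of $(-K_X-t\alpha)|_F$ on $F$ and nonnegative mixed intersections to get $t^p\,\alpha^p\cdot V\le(-K_X)^p\cdot V$; this avoids both the finiteness-of-rays step and identifying the numerically trivial class on $F$.
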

\begin{proof}
Preserving the notation and setup in the proof of Proposition~\ref{prop:nef-threshold-bound-length}, we have
    \[
    0 < r (\alpha) < \infty, \qquad K_X + r(\alpha) \alpha \in \overline{\cK_X}, \qquad s(\alpha) := \frac{c_1(X) \cdot \alpha^{n-1}}{\alpha^n} \leq r(\alpha),
    \]
and $\bar{R}(\omega) = 4 \pi n s(\alpha)$, for $r(\alpha)$ the nef threshold $r(\alpha)$ of the K\"ahler class $\alpha = [\omega] \in \cK_X$.
    For any $t \in (0,s(\alpha))$, Kleiman's criterion and Mori's cone theorem produce a $K_X$-negative extremal ray $R_t \subset \overline{\textup{NE}}(X)$ with Mori contraction $\varphi_t : X \to Y_t$, whose general fiber $F := \varphi_t^{-1}(y)$ is a smooth projective manifold of dimension $m \geq p$, by assumption and the smoothness of $X$.
    
    Since $\varphi_t$ is a $K_X$-negative extremal contraction of fiber type, the divisor $(-K_X)$ is $\varphi_t$-ample, hence the restriction $(-K_X)|_F$ is ample.
    The normal bundle $N_{F/X} \cong \cO_F^{\oplus \dim Y_t}$ is trivial, so adjunction shows that $- K_F = (-K_X)|_F$ is ample, thus $F$ is a smooth Fano manifold of dimension $m$.
    For any curve $\Gamma \subset F$, the class $[\Gamma]$ lies in the extremal ray $R_t$ by the definition of the Mori contraction $\varphi_t$, so
    \[
    (K_X + t \alpha) \cdot \Gamma < 0, \qquad \implies \qquad ( - K_X - t \alpha ) \cdot \Gamma > 0
    \]
    for every irreducible curve $\Gamma \subset F$.
    By Kleiman's criterion, the class $A_t := (- K_X - t \alpha)|_F$ on $F$ is ample.

    Using the compactness theory for smooth Fano varieties of dimension at most $n$ as in Lemma~\ref{lemma:volume-of-fano}, we can produce an integer $q_n$ such that the divisor class $| - q_n K_F|$ is very ample for every Fano $n$-fold, and let $\tilde{V} \subset F$ be the $p$-dimensional subvariety given by a general complete intersection of $(m-p)$ members of $| - q_n K_F|$.
    Applying the uniform bound $( - K_F)^{\dim F} \leq B_n$ from Lemma~\ref{lemma:volume-of-fano}, we obtain
    \[
    ( - K_F)^p \cdot \tilde{V} \leq q_n^{m-p} ( - K_F)^{\dim F} \leq q_n^n B_n.
    \]
    Since $\tilde{V} \subset F \subset X$ and $F$ is the general fiber, we obtain a $p$-dimensional irreducible subvariety $V \subset X$ such that $(-K_X)^p \cdot V = (- K_F)^p \cdot \tilde{V} \leq q^n A_n$ due to $-K_F = (-K_X)|_F$.
    Moreover, 
    \[
    t \alpha|_F = - K_X|_F - A_t \implies t^p \alpha^p \cdot V \leq (- K_X)^p \cdot V \leq q_n^n B_n 
    \]
    because $A_t$ is ample, so $(\alpha|_F)^j A_t^{p-j} \tilde{V} \geq 0$ for all $0 \leq j \leq p$.
    The subvariety $V$ defines a non-zero real integral $2p$-cycle with Riemannian volume $\textup{Vol}_{2p}(V,\omega) = \frac{1}{p!} \alpha^p \cdot V$, hence
    \[
    \textup{sys}_{2p} (X,\omega) \leq \tfrac{1}{p!} \alpha^p \cdot V \leq \tfrac{1}{p!} q_n^n B_n t^{-p}, \qquad \textup{sys}_{2p}(X,\omega) \leq q^n_n B_n t^{-p}
    \]
    Finally, we can take $t \uparrow s(\alpha)$ and use $\bar{R}(\omega) = 4 \pi n \, s(\alpha)$ to conclude as in Theorem~\ref{thm:2-systole-bound}, with the purely dimensional constant $C_n = q_n^n B_n (4 \pi n)^n$.
    Since Fano manifolds with $h^{1,1}(X) = 1$ have $X \to \{ * \}$ as the only $K_X$-negative extremal contraction, the result holds for each $p$, and the second bound follows.
\end{proof}

\begin{remark}\label{rmk:higher-sys-2p}
It is clear that the higher systoles $\textup{sys}_{2p}$ cannot be bounded in the same way without additional assumptions, even in the Fano case; Corollary~\ref{corollary:systole} provides one set of such assumptions.
For a simple counterexample, the K\"ahler metric $\omega_{a,b} = a \omega_1 + b \omega_2$ on $X = \bC \bP^1 \times \bC \bP^{n-1}$ has scalar curvature $R_{a,b} = c_1 a^{-1} + c_2 b^{-1}$, for dimensional constants $c_1,c_2$.
We can choose $a = a(b)$ to make $\bar{R}(\omega_{a,b}) = 1$ while $b \to \infty$ and $a(b)$ remains uniformly bounded.
For $p \geq 2$, the basic $2p$-classes are represented by $\{ * \} \times \bC \bP^p$ and $\bC \bP^1 \times \bC \bP^{p-1}$, with calibrating volumes behave as $b^p$ and $a b^{p-1}$.
Thus,
\[
\textup{stsys}_{2p} (X, \omega_{a,b}) \sim b^{p-1} \to \infty \qquad \text{as } \; b \to \infty, \qquad \text{for every } \; 2 \leq p \leq n.
\]
Hence, $\textup{sys}_{2p}(X,\omega) \to \infty$, and in particular, $\textup{Vol}(X,\omega) \to \infty$; see also Proposition~\ref{prop:fano-big-nef}.

In fact, let $X = \prod_{i=1}^m F_i \times Y$ be a product of Fano manifolds with $h^{1,1}(F_j) = 1$ where $K_Y$ is nef, i.e., $Y$ is Calabi-Yau or canonically polarized.
Then, the $K_X$-negative extremal projections are precisely the projections $X \to \prod_{i \neq k} F_i \times Y$ with fiber $F_k$, so the above result applies for $p \leq \min_j \dim_{\bC} F_j$.

For a non-product example, let $B$ be a projective manifold containing no rational curves and form any flag bundle $X := \textup{Fl}_{a_1,\dots,a_k}(E;B) \xrightarrow{\pi} B$ with $E \to B$ a rank-$r$ bundle.
For such $B$, the only $K_X$-negative extremal contractions are the relative elementary contractions given by forgetting an $a_i$-plane, 
\[
\psi_i: X = \textup{Fl}_{a_1,\dots,a_k}(E) \to \textup{Fl}_{a_1,\dots,\hat{a}_i, \dots, a_k}(E)
\]
with general fiber $\bG( a_i - a_{i-1}, \underline{\bC}^{a_{i+1}-a_i})$ of  dimension $(a_i - a_{i-1}) (a_{i+1}-a_i)$.
Then, the above results again apply for $p \leq \min_{1 \leq i \leq s} (a_i - a_{i-1}) (a_{i+1} - a_i)$; in particular, this holds for $p \leq k(r-k)$ when $X = \bG(r,E) \to B$ is a Grassmannian bundle.
Notably, these properties apply to flag varieties.

Another large source of examples comes from smooth complete intersection $X \subset \prod_{i=1}^m \bC \bP^{N_i}$ cut out by $r$ divisors $D_{\alpha} \in \bigl| \sum_{i=1}^m d_{\alpha_i }H_i \bigr|$ of positive multi-degrees $(d_{\alpha i})$, where $H_i = \cO_{\bC \bP^{N_i}}(1)$ are the hyperplane classes.
If $\dim_{\bC} X = \sum_{i=1}^m (N_i-r) \geq 3$, the Lefschetz hyperplane theorem~\cite{lazarsfeld}*{Ch. 3} gives $H^{1,1}(X) = \bigoplus_{i=1}^m \bR H_i|_X$, so $h^{1,1}(X) = m$, and adjunction computes the canonical divisor as $K_X = \sum_{i=1}^m \Bigl( \sum_{\alpha=1}^r d_{\alpha i} - N_i - 1 \bigr) H_i \bigr|_X$.
Then, the coordinate projection $\pi_{\hat{k}} : X \to \prod_{j \neq k} \bC \bP^{N_j}$ forgetting the $k$-th factor is a fiber-type Mori contraction with general fiber a smooth complete intersection of $r$ hypersurface in $\bP^{N_i}$, of dimension $N_k - r \geq 1$.
Under these conditions, the nef cone of $X$ is $\overline{\cK_X} = \sum_{i=1}^m \bR_{\geq0} H_i|_X$, so the maps $\pi_{\hat{k}}$ describe all the contractions and are $K_X$-negative precisely when $\sum_{\alpha=1}^r d_{\alpha i} \leq N_i$.
Thus, every extremal ray is $K_X$-negative and of fiber type, $X$ is Fano, $- K_X = \sum_{i=1}^m (N_i+1-\sum_{\alpha=1}^r d_{\alpha i}) ) H_i|_X$ is Fano, and our result applies with $p \leq \min_i (N_i-r)$.
For a bidegree hypersurface $X_{d,e} \subset \bC \bP^a \times \bC \bP^b$, this bounds $\textup{sys}_{2p}(X,\omega)$ for $p \leq \min \{ a-1,b-1 \}$ whenever
\[
K_X = (d-a-1) H_a + (e-b-1) H_b, \qquad \dim X = a+b-1 \geq 3, \qquad (d,e) \preccurlyeq (a,b). 
\]
\end{remark}

Next, we turn to the Gromov width bound of Theorem~\ref{thm:gromov-width-simple}.
In fact, we can establish a more general result after introducing some terminology involving the MRC fibration from Definition~\ref{def:mrc-quotient}.

\begin{definition}\label{def:MRC-horizontal}
    We call a K\"ahler class $\alpha \in \cK_X$ \textbf{horizontal} (modulo algebraic classes) if there exists a resolution $\mu: Y \to X$ and $f: Y \to Z'$ of the MRC fibration $\phi : X \dashrightarrow Z$ that expresses
    \[
    \mu^* \alpha = f^* \beta + \mu^* H
    \]
    for some ample class $H \in N^1(X)_{\bR} \cap \cK_X$ and a pseudo-effective class $\beta \in H^{1,1}(Z';\bR)$. 
\end{definition}
The above construction is well-defined up to birational equivalence, as is the map $\phi$.
In particular, every algebraic K\"ahler class $\alpha \in N^1(X)_{\bR} \cap \cK_X$ is horizontal by definition.
\begin{theorem}\label{thm:gromov-width-general}
    Let $(X,\omega)$ be a compact K\"ahler manifold with $[\omega]$ a horizontal class with positive total scalar curvature.
    Then, the Gromov width of $(X,\omega)$ satisfies
    \begin{equation}\label{eqn:gromov-width-bound}
        w_G(X,\omega) \leq 8 \pi n^2 \bar{R}(\omega)^{-1}.
    \end{equation}
\end{theorem}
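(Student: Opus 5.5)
The plan is to bound the Gromov width by the $\omega$-area of a suitable rational curve through a general point, and then bound that area using the scalar curvature via the cone/MRC structure. For the first ingredient I would use Gromov's nonsqueezing-type area argument. If through \emph{every} point of $X$ there passes a (possibly nodal) rational curve $C$ with $\alpha\cdot C\le A$, lying in a class with non-vanishing genus-zero Gromov--Witten invariant with a point constraint, then $w_G(X,\omega)\le A$. The paper's route is likely softer: use the covering family and a monotonicity argument. If $B^{2n}(r)\hookrightarrow X$ symplectically, choose a compatible almost complex structure standard on the image of the ball. The rational curve through the centre persists as a $J$-holomorphic curve, by Gromov compactness applied to degenerations of the covering family in the uniruled class. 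Monotonicity in the standard ball then gives $\pi r^2\le \alpha\cdot C$. This requires the minimal-degree covering family to yield a non-zero GW invariant. I would invoke Kollár–Ruan: uniruled Kähler manifolds have a non-zero point-constrained genus-zero GW invariant in some class $\Gamma$ of a covering family, and Ou's Theorem~\ref{thm:uniruled} supplies uniruledness from $\bar R>0$, as in the proof of Theorem~\ref{thm:scalar-curvature-implies-uniruled}.

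The main step, and the main obstacle, is the area estimate $\alpha\cdot\Gamma\le 2n^2/(4\pi n\,s(\alpha))\cdot 4\pi$, i.e. $\alpha\cdot\Gamma\le 2n/s(\alpha)$, where $s(\alpha)=c_1(X)\cdot\alpha^{n-1}/\alpha^n$. I would use the horizontal decomposition $\mu^*\alpha=f^*\beta+\mu^*H$ from Definition~\ref{def:MRC-horizontal}. By Lemma~\ref{lemma:covering-family}, a general rational curve of a covering family is vertical for the MRC fibration, so $f^*\beta\cdot\tilde\Gamma=0$ for its strict transform and $\alpha\cdot\Gamma=H\cdot\Gamma$. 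The task thus reduces to the algebraic class $H$, where projective techniques apply on the rationally connected general fibre $F$ of $f$. Specifically, I would apply Miyaoka–Mori bend-and-break to the nef/ample class $H$. Through a general point there is a rational curve with
\[
H\cdot\Gamma\le 2n\,\frac{H\cdot\alpha^{n-1}}{c_1(X)\cdot\alpha^{n-1}},
\]
obtained by choosing a movable curve class $\alpha^{n-1}$ on which $K_X$ is negative (exactly the hypothesis $c_1(X)\cdot\alpha^{n-1}>0$). One takes complete intersection curves in the BDPP/movable-class sense, approximated by algebraic movable classes, and the free rational curve produced by Miyaoka–Mori has $H$-degree at most $2n\,(H\cdot D)/(-K_X\cdot D)$.

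Finally I would bound $H\cdot\alpha^{n-1}\le\alpha^n$. This follows since $\alpha-H$ pulls back to the pseudo-effective class $f^*\beta$, which pairs non-negatively with the movable class $\mu^*\alpha^{n-1}$. Combining the estimates,
\[
w_G(X,\omega)\le\alpha\cdot\Gamma\le \frac{2n\,\alpha^n}{c_1(X)\cdot\alpha^{n-1}}=\frac{2n}{s(\alpha)}=\frac{8\pi n^2}{\bar R(\omega)}
\]
by \eqref{eqn:average-scalar-curvature}. The delicate points I expect are two. First, making bend-and-break work with the non-algebraic movable class $\alpha^{n-1}$; I would first try approximating by Kähler classes near $H$ plus the vertical contribution, restricting to the projective fibres of $f$. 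Second, ensuring that the curve so produced lies in a class with non-zero GW invariant, so that the symplectic width argument applies.
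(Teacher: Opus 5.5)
Your proposal is correct and is essentially the paper's argument. It uses the same ingredients: projectivity coming from the ample class $H$, verticality of covering rational curves via Lemma~\ref{lemma:covering-family} so that $\alpha\cdot C=H\cdot C$, Miyaoka--Mori bend-and-break against a BDPP movable curve, and the horizontal decomposition paired with $(\mu^*\alpha)^{n-1}$. The only difference is bookkeeping. The paper optimizes over all movable classes through the pseudo-effective threshold $e(H)$ and then proves $s(\alpha)\le e(H)$ separately. You instead plug in the single class $\alpha^{n-1}$, which lies in $\overline{\mathrm{Mov}}_1(X)$ by BDPP duality since it is non-negative on pseudo-effective divisors. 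You then use $(\alpha-H)\cdot\alpha^{n-1}=f^*\beta\cdot(\mu^*\alpha)^{n-1}\ge 0$; this gives a slightly weaker intermediate bound than $2n/e(H)$, but it is still enough.

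Your second worry, matching the Gromov--Witten class, is resolved as in the paper. Among covering families of rational curves whose $H$-degree is below the bend-and-break bound, take one of minimal $H$-degree. Such a family is unbreakable through a general point, so it carries the non-vanishing point-constrained invariant; this is exactly the cited bound of the width by the area of a minimal rational curve. Being vertical, that family still has $\alpha$-area equal to its $H$-degree, so the estimate goes through.
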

\begin{proof}
We first reduce the problem to a property of ample integral classes $H_j \in N^1(X)_{\bZ} \cap \cK_X$ and prove that any complex manifold admitting a horizontal K\"ahler metric is projective.

\smallskip \noindent \textbf{Step 1:}
Since the class $H$ is ample, there is a sequence of rational points $\tilde{H}_j \to H$ in the open cone
\[
\tilde{H}_j \in (H + \cK_X ) \cap N^1(X)_{\bQ}, \qquad \tilde{H}_j \to H \quad \text{in } \; N^1(X)_{\bR} \cap \cK_X
\]
converging to $H$, with $\tilde{H}_j - H$ nef.
Since $\tilde{H}_j \in N^1(X)_{\bQ}$, we can take $m_j \in \bN^*$ large to make $H_j := m_j \tilde{H}_j \in N^1(X)_{\bZ} \cap \cK_X$, so Definition~\ref{def:neron-severi} gives $H_j = c_1(L_j) \in \cK_X$ for some holomorphic line bundle $L_j$ on $X$.
Since $c_1(L_j) \in \cK_X$ is a K\"ahler class, the Kodaira embedding theorem~\cite{lazarsfeld}*{\S 1.2.A} shows that $L_j$ is ample, so $L_j^{\otimes M_j}$ defines a holomorphic embedding into $\bC \bP^N$ and $X$ is projective.

    Following the notation used in the proof of Theorem~\ref{thm:2-systole-bound}, we write $s(\alpha) := \frac{c_1(X) \cdot \alpha^{n-1}}{\alpha^n}$ so $\bar{R}(\omega) = 4 \pi n s(\alpha)$ by~\eqref{eqn:average-scalar-curvature}, and the claimed inequality is $w_G(X,\omega) \leq \frac{2n}{s(\alpha)}$.
    Modifying the argument used in the proof of Theorem~\ref{thm:2-systole-bound}, we consider the pseudo-effective threshold of the ample class $H$, namely
    \[
    e(H) := \inf \{ t > 0 : K_X + tH \in \overline{\textup{Eff}}^1(X) \; \text{is pseudo-effective} \}.
    \]
    Since $K_X \cdot \alpha^{n-1} = - c_1(X) \cdot \alpha^{n-1} < 0$, the class $K_X$ is not pseudo-effective, so $e(H)>0$; moreover, $e(H) < \infty$ as in Theorem~\ref{thm:2-systole-bound} because the class $H$ is ample.
    We claim that
    \begin{equation}\label{eqn:gromov-e(H)-inequality}
        w_G(X,\omega) \leq 2n \, e(H)^{-1} \qquad \text{and} \qquad s(\alpha) \leq e(H)
    \end{equation}
    from which the desired inequality will follow.

\smallskip \noindent \textbf{Step 2:} To prove the bound $w_G(X,\omega) \leq \frac{2n}{e(H)}$, we first reduce the problem to a property of ample integral classes $H_j \in N^1(X)_{\bZ} \cap \cK_X$ by the following approximation argument.

Moreover, it is known that the Gromov width defines a lower semicontinuous map $\omega \mapsto w_G(X,\omega)$ on the space of symplectic forms on $X$ equipped with the $C^1$-topology, cf.~\cite{gromov-width}*{Lemma 2.6}.
For any $t \in (0 , e(H))$, the approximation property shows that
\[
\tilde{H}_j - H \in \cK_X \quad \text{is nef}, \qquad K_X + t \tilde{H}_j \not\in \overline{\textup{Eff}}^1(X), \qquad \text{for } \; j \gg 0 \quad \text{large}.
\]
We will prove the existence of a covering family of rational curves whose general member $C_t$ satisfies
\begin{equation}\label{eqn:H-dot-KX:H-claim-and-alpha-claim}
    \tilde{H}_j \cdot C_t \leq 2n t^{-1}, \qquad \implies \qquad H \cdot C_t \leq \tilde{H}_j \cdot C_t \leq 2n t^{-1},
\end{equation}
where the second property will follow from $\tilde{H}_j - H$ being nef, so $(\tilde{H}_j - H) \cdot C_t \geq 0$.

The inequality~\eqref{eqn:H-dot-KX:H-claim-and-alpha-claim} is preserved under replacing $(t, \tilde{H}_j) \mapsto ( \tfrac{1}{m_j} t, m_j \tilde{H}_j)$ for $m_j>0$, and since $\tilde{H}_j \in N^1(X)_{\bQ}$, we can arrange $m_j \in \bN^*$ sufficiently large to make $H_j := m_j \tilde{H}_j \in N^1(X)_{\bZ}$.
Therefore, possibly after such a rescaling, it suffices to prove the bound~\eqref{eqn:H-dot-KX:H-claim-and-alpha-claim} for $H_j \in N^1(X)_{\bZ}$ an integral class.

Since $K_X + t H_j$ is not pseudo-effective, the duality theory of~\cites{BDPP , nystrom } shows that $K_X + tH_j$ is negative along a movable curve $\gamma_t$, so there exists a covering family of curves on $X$ whose general member $\gamma_t$ has $(K_X + t H_j) \cdot \gamma_t < 0$.
Now, $K_X \cdot \gamma_t < - t H_j \cdot \gamma_t < 0$, so the bend-and-break estimates of Miyaoka-Mori~\cite{miyaoka-mori} in the form recorded in~\cite{kollar-IW-inequality}*{Ch. IV \S 1} for the covering family $\gamma_t$, with polarization $H_j$, produce a covering family of rational curves whose general member $C_t$ satisfies
    \[
    H_j \cdot C_t \leq 2n \frac{H_j \cdot \gamma_t}{ - K_X \cdot \gamma_t} \leq \frac{2n}{t}.
    \]
    This proves the claim~\eqref{eqn:H-dot-KX:H-claim-and-alpha-claim} for $H_j$, thus also for $H$.
    Moreover, by Lemma~\ref{lemma:covering-family}, a general curve $C_t$ of the covering is contracted to a point by $\phi$, hence the strict transform $\tilde{C}_t$ of $C_t$ satisfies $f_* \tilde{C}_t = 0$ under the resolution $(\mu: Y \to X , f : Y \to Z)$ of the MRC fibration $\phi$ with respect to which $\mu^* \alpha = f^* \beta + \mu^* H$ is horizontal.
    The property $f_* \tilde{C}_t = 0$ implies $f^* \beta \cdot \tilde{C}_t = 0$, whereby
\begin{equation}\label{eqn:alpha-dot-Ct-bound}
\alpha \cdot C_t = \pi^ *\alpha \cdot \tilde{C}_t = (\mu^* H + f^* \beta) \cdot \tilde{C}_t = H \cdot C_t \leq 2n t^{-1}
\end{equation}
by using the inequality~\eqref{eqn:H-dot-KX:H-claim-and-alpha-claim}.
For a uniruled projective K\"ahler manifold, the Gromov width is bounded above by the symplectic area of a minimal rational curve $C$, namely 
\begin{equation}\label{eqn:gromov-width-C}
    w_G(X,\omega) \leq [\omega] \cdot C, \qquad C \quad \text{minimal rational curve}.
\end{equation}
This result is~\cite{gromov-width}*{Theorem 1.1}, proved using the Gromov-Witten of~\cite{gromov-symplectic}.
The above results produce a minimal covering family among rational covering families $\cV_{t,j}$ whose general members have $H_j$-degree at most $2n t^{-1}$; then, a general member $C_{t,j,\min}$ of such a family still satisfies
\[
\alpha \cdot C_{t , j, \min} = H \cdot C_{t , j, \min} \leq H_j \cdot C_{t, j, \min} \leq 2n t^{-1}
\]
due to~\eqref{eqn:alpha-dot-Ct-bound}.
Applying the bound~\eqref{eqn:gromov-width-C}, we obtain $w_G(X,\omega) \leq \alpha \cdot C_{t , j,\min} \leq 2 n t^{-1}$, and sending $t \uparrow e(H)$ shows that $w_G(X,\omega) \leq \frac{2n}{e(H)}$ as claimed.
This proves the first step of~\eqref{eqn:gromov-e(H)-inequality}.

\smallskip \noindent \textbf{Step 3:}
To prove $s(\alpha) \leq e(H)$, observe that $K_X + e(H) H$ is pseudo-effective by the definition of $e(H)$, hence so is its pullback to $Y$.
Also, $e(H)>0$ and $\beta$ is pseudo-effective, thus $\mu^* K_X + e(H) \mu^* \alpha = \mu^* (K_X + e(H) H) + e(H) f^* \beta$ is pseudo-effective, so it pairs non-negatively with nef classes.
Since the birational equivalence $\mu$ preserves intersection products, we conclude that
\[
(K_X + e(H) \alpha ) \cdot \alpha^{n-1} = (\mu^* K_X + e(H) \mu^* \alpha) \cdot ( \mu^* \alpha)^{n-1} \geq 0 , \qquad \implies \quad s(\alpha) \le e(H)
\]
as in Theorem~\ref{thm:2-systole-bound}.
This proves~\eqref{eqn:gromov-e(H)-inequality}, and $w_G(X,\omega) \leq \frac{2n}{s(\alpha)} = 8 \pi n^2 \bar{R}(\omega)^{-1}$ proves~\eqref{eqn:gromov-width-bound}.
\end{proof}

\begin{proof}[Proof of Theorem~\ref{thm:gromov-width-simple}]
For $X$ a compact K\"ahler manifold no holomorphic $2$-forms, Hodge symmetry gives $h^{2,0} = h^{0,2} = 0$, so $H^2(X;\bR) = H^{1,1}(X;\bR)$.
Applying the Lefschetz $(1,1)$-theorem as in Definition~\ref{def:neron-severi}, we then obtain $N^1(X)_{\bR} = H^{1,1}(X;\bR)$, so every K\"ahler class is algebraic, and arguing as in Step 1 of Theorem~\ref{thm:gromov-width-general} show that $X$ is projective.
Therefore, every K\"ahler class $\alpha \in \cK_X$ lies in $N^1(X)_{\bR} \cap \cK_X$, so it is algebraic and satisfies the decomposition of Definition~\ref{def:MRC-horizontal} with $\beta=0$.
\end{proof}

Finally, we establish the volume and higher stable systole bounds of Theorem~\ref{thm:nef-and-big-constant}.
Given a K\"ahler metric $\omega$ on $X$ with $\alpha = [\omega]$, the expression~\eqref{eqn:average-scalar-curvature} for the average scalar curvature rearranges into
\begin{equation}\label{eqn:volume-exact-PhiX(alpha)}
\alpha^n = (4 \pi n)^n \Phi_X(\alpha) \bar{R}(\alpha)^{-n}, \qquad \text{where} \quad \; \Phi_X(\alpha) := \frac{( c_1(X) \cdot \alpha^{n-1})^n}{(\alpha^n)^{n-1}}.
\end{equation}
We denote by $\cK^+_X := \{ \alpha \in \cK_X : c_1(X) \cdot \alpha^{n-1} > 0 \}$ the open subset of the K\"ahler cone consisting of classes that pair positively with the Chern class $c_1(X)$.
Using $\on{Vol}(X,\omega) = \frac{1}{n!} \alpha^n$, we find
\begin{equation}\label{eqn:volume-g-sup-alphan}
\begin{split}
    &\on{Vol}(X,\omega) \leq \frac{1}{n!} \left( \frac{4 \pi n}{ \bar{R}(\alpha) } \right)^n M_X, \qquad M_X := \sup_{ \alpha \in \cK^+_X } \Phi_X(\alpha)=
    \sup_{ \alpha \in \cK_X } \frac{( \max \{ c_1(X) \cdot \alpha^{n-1} , 0 \})^n}{(\alpha^n)^{n-1}}.
\end{split}
\end{equation}
To obtain a uniform volume bound on $(X,\omega)$, we study the supremum of $\Phi_X(\alpha)$.
First, we will obtain a precise characterization of nef and big classes on Fano manifolds.
\begin{proposition}\label{prop:fano-big-nef}
    Let $X$ be a smooth Fano manifold.
    On the nef cone $\overline{\cK_X}$, we define the function
    \[
    \Phi_X(\alpha) := \frac{\max ( \{ c_1(X) \cdot \alpha^{n-1} , 0 \})^n }{(\alpha^n)^{n-1}}, \qquad \text{where } \; \alpha \in \overline{\cK_X}.
    \]
    Then, $\Phi_X$ is unbounded on $\cK_X$ if and only if there exists a projective, surjective morphism $f: X \to Y$ onto a normal projective variety $Y$ of Fano type, with $0<\dim Y<n$.
\end{proposition}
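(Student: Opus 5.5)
The plan is to use that $\Phi_X$ is homogeneous of degree $0$ and that the nef cone of a Fano manifold is a rational polyhedral cone. I would reduce unboundedness of $\Phi_X$ to the existence of a non-zero nef class that is not big. I would then turn such a class into a contraction via the basepoint-free theorem.

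\emph{Reduction to a compact slice.} Since $X$ is Fano, the cone theorem shows that $\overline{\textup{NE}}(X)$ is rational polyhedral, generated by finitely many $K_X$-negative extremal rays. By Kleiman's criterion, $\overline{\cK_X}$ is then a rational polyhedral cone. Set $A := -K_X = c_1(X)$, which is ample, and consider the slice $S := \{\alpha \in \overline{\cK_X} : \alpha \cdot A^{n-1} = 1\}$. It is a compact polytope not containing $0$. On $S$ the numerator of $\Phi_X$ is bounded by a constant, since $c_1(X)\cdot\alpha^{n-1}$ is a polynomial on a compact set. Moreover $\Phi_X$ is continuous wherever $\alpha^n > 0$. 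So if every $\alpha \in S$ has $\alpha^n > 0$, then $\Phi_X$ is bounded on $S$ by compactness, hence on $\cK_X$ by homogeneity. Conversely, if $\Phi_X(\alpha_j) \to \infty$ with $\alpha_j \in S$, pass to a limit $\alpha_j \to L \in S$. Then $L \neq 0$, $L$ is nef, and $L^n = 0$.

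\emph{Rationality and contraction.} Let $F$ be the minimal face of $\overline{\cK_X}$ containing $L$ in its relative interior; $F$ is a rational face. Pick a rational class $\beta$ in the relative interior of $F$. For some $c > 0$ we have $L - c\beta \in F$, so it is nef. Expanding $L^n = (c\beta + (L - c\beta))^n$ as a sum of mixed intersections of nef classes, all of which are non-negative, gives $0 = L^n \geq c^n\beta^n \geq 0$, so $\beta^n = 0$. Replacing $L$ by a multiple of $\beta$, we may take $L$ to be a nef Cartier divisor. Since $L - K_X$ is ample, the Kawamata basepoint-free theorem makes $L$ semiample, and its Stein factorization gives a projective surjective morphism $f : X \to Y$ with connected fibers onto a normal projective variety, with $L = f^*H$ for some ample $H$. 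Here $\dim Y = \kappa(L) \geq 1$ because $L \neq 0$, and $\dim Y < n$ because $L^n = 0$.

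\emph{Fano type of $Y$.} This is the step I expect to be the main obstacle, and I would not try to prove it directly. I would invoke Fujino--Gongyo (images of Fano-type varieties under contractions are of Fano type), or Prokhorov--Shokurov, for the surjective morphism $f$ with $f_*\cO_X = \cO_Y$ out of the smooth Fano manifold $X$. This gives that $Y$ is of Fano type.

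\emph{Converse direction.} Suppose such an $f : X \to Y$ exists, with $k := \dim Y \in [1, n-1]$. Let $L := f^*H$ for $H$ ample on $Y$, so that $L^{k+1} = 0$ and $L^k \cdot A^{n-k} > 0$. Take $\alpha_\ve := L + \ve A \in \cK_X$. The lowest-order terms are
\[
\alpha_\ve^n \sim \tbinom{n}{k}\,\ve^{n-k}\,L^k A^{n-k}, \qquad c_1(X)\cdot\alpha_\ve^{n-1} \sim \tbinom{n-1}{k}\,\ve^{n-1-k}\,L^k A^{n-k},
\]
and both leading coefficients are positive. Therefore $\Phi_X(\alpha_\ve) \asymp \ve^{\,n(n-1-k)-(n-k)(n-1)} = \ve^{-k} \to \infty$ as $\ve \downarrow 0$. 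This proves that $\Phi_X$ is unbounded.

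Combining the three steps, $\Phi_X$ is unbounded if and only if $X$ has a non-zero nef, non-big class, which holds if and only if a fibration $f : X \to Y$ as in the statement exists.
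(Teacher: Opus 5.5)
Your proposal is correct and follows essentially the same route as the paper: rational polyhedrality of $\overline{\cK_X}$ via the cone theorem and Kleiman's criterion; extraction of a rational nef, non-big class using non-negativity of mixed intersections (your minimal-face argument is equivalent to the paper's decomposition into rational extremal rays); the basepoint-free theorem with Stein factorization; and the expansion $\Phi_X(L+\ve A)\asymp \ve^{-\dim Y}$ for the converse. Your compact-slice limit argument and your explicit citation of Prokhorov--Shokurov or Fujino--Gongyo for the Fano-type property of $Y$ only make explicit what the paper handles through the compactness step in the proof of Theorem~\ref{thm:nef-and-big-constant} and through its bare assertion that contractions preserve Fano type.
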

\begin{proof}
In the proof of Theorem~\ref{thm:nef-and-big-constant}, given below, we will show that the expression $\Phi_X$ attains a finite maximum on $\overline{\cK_X}$ provided that every non-zero nef class is big.
Note that $\max \{ c_1(X) \cdot \alpha^{n-1},0\} = c_1(X) \alpha^{n-1}$ here because $c_1(X)>0$ and $\alpha$ is K\"ahler. 
Conversely, we will show that a non-zero, non-big nef class corresponds to a fiber-type contraction $f: X \to Y$.
The main observation is that on a smooth Fano manifold, nef classes are semiample after passing to rational classes, and non-big nef rational faces correspond to Mori contractions.
        We first prove that the nef cone $\overline{\cK_X}$ of Fano manifolds is rational polyhedral.
    By the truncated finite form of Mori's cone theorem~\cite{kollar-mori}*{Theorem 3.7}, for every ample divisor $H$ and every $\ve > 0$, the $(K_X + \ve H)$-negative part of the Mori cone is generated by a finite collection of rational curves $\{ C_i \}_{i=1}^N$, namely $\overline{\textup{NE}}(X)_{K_X + \ve H < 0} = \sum_{i=1}^N \bR_{\geq 0} [C_i]$.
    Since $X$ is Fano, $-K_X$ is ample, so taking $(H, \ve) = ( - K_X, \ve = \frac{1}{2})$ gives $K_X + \ve H = \frac{1}{2} K_X$, which pairs negatively with every non-zero effective curve class.
    Thus, $\overline{\textup{NE}}(X)_{K_X + \ve H \geq 0} = \{ 0 \}$ and $\overline{\textup{NE}}(X) = \sum_{i=1}^N \bR_{\geq 0} [C_i]$ is rational polyhedral, hence so is $\overline{\cK_X} = \overline{\textup{NE}}(X)^{\vee}$ by Kleiman's criterion.

    We now suppose that there exists a non-zero, non-big nef class $0 \neq \beta \in \overline{\cK_X}$ with $\beta^n = 0$.
    Using the rational polyhedral property of $\overline{\cK_X}$, we can write $\beta$ as a finite non-negative linear combination of rational extremal nef divisor classes, so $\beta = \sum_{i=1}^J \lambda_i L_i$ where each $\lambda_i \geq 0$, each ray $\bR_{\geq 0}L_i$ is an extremal nef ray of $\overline{\cK_X}$, and each ray $L_i$ may be chosen rational, hence integral after scaling. 
    Moreover,
    \[
    0 = \beta^n = \bigl( \textstyle{ \sum_{i=1}^J} \lambda_i L_i \bigr)^n = \sum_{i_1,\dots, i_n} \lambda_{i_1} \cdots \lambda_{i_n} L_{i_1} \cdots L_{i_n}
    \]
    is a sum of non-negative terms, since every mixed intersection number $L_{i_1} \cdots L_{i_n} \geq 0$ is non-negative due to the $L_i$ being nef.
    In particular, every extremal nef class $L_j$ appearing with positive coefficient $\lambda_j > 0$ in $\beta$ is itself nef and non-big.
    We therefore obtain an integral nef divisor $L \neq 0$ with $L^n = 0$.

    We can now apply the basepoint-free theorem for nef Cartier divisors $D$, cf.~\cite{kollar-mori}*{Theorem 3.24} and~\cite{lazarsfeld}*{Theorem 2.1.27}, which asserts that if $aD - K_X$ is nef and big for some $a>0$, then $D$ is semiample, meaning that $|mD|$ is basepoint-free for all sufficiently divisible $m$.
    Taking $a=1$ and $D=L$ shows that $L - K_X = L + (-K_X)$ is ample, as the sum of a nef divisor and an ample divisor, so it is nef and big.
    Thus, there exists some $m \gg 1$ such that $|mL|$ is basepoint-free, so it defines a morphism $f: X \to Y$, with connected fibers after Stein factorization, such that $mL = f^* A$ for some ample divisor $A$ on a normal projective variety $Y$ of Fano type, since contractions preserve Fano type.
    That is, $Y$ admits an effective $\bQ$-divisor $\Delta$ for which the pair $(Y,\Delta)$ is Kawamata log terminal (klt) and $- (K_Y + \Delta)$ is ample.
    Finally, $L \neq 0$ makes $\dim Y> 0$, while $L^n = 0 $ implies that $\dim Y < n$: if $\dim Y= n$, then $f$ is generically finite and $L^n = m^{-n} (f^* A)^n = m^{-n} ( \deg f) A^n > 0$ contradicts $L^n =0$.
    
    We conclude that $0 < \dim Y<n$, so $f: X \to Y$ is a fiber-type contraction as claimed.
    Conversely, given such a map with $0 < d := \dim Y < n$, we consider an ample class $A \in \cK_Y$ and set $\beta := f^* A \in \overline{\cK_X}$, which defines a non-zero, non-big nef class since $\beta^d \neq 0$ and $\beta^{d+1} = 0$.
    Fix any K\"ahler class $\omega \in \cK_X$ and define the K\"ahler class $\alpha_{\ve}$ by $\alpha_{\ve} := \beta + \ve \omega \in \cK_X$, for $\ve > 0$, so $\alpha_{\ve} \to \beta$ as $\ve \downarrow 0$.
    Using $\beta^{d+1}=0$ and $\beta^d \neq 0$, we can expand the volume of $\alpha_{\ve}^n$ as
    \begin{align*}
    \alpha^n_{\ve} &= (\beta + \ve \omega)^n = { \textstyle \binom{n}{d} } \beta^d \cdot \omega^{n-d}\ve^{n-d} + O ( \ve^{n-d+1}) = C_1 \ve^{n-d} + O (\ve^{n-d+1}), \\
    c_1(X) \cdot \alpha_{\ve}^{n-1} &= { \textstyle \binom{n}{d} } c_1(X) \cdot \beta^d \cdot \omega^{n-1-d} \ve^{n-1-d} + O (\ve^{n-d}) = C_2 \ve^{n-1-d} + O (\ve^{n-d}).
    \end{align*}
    By construction, $C_1, C_2 > 0$ since $\beta^d \cdot \omega^{n-d}, c_1(X) \cdot \beta^d \cdot \omega^{n-1-d} > 0$, hence $\Phi_X(\alpha_{\ve}) = \frac{(c_1(X) \cdot \alpha_{\ve}^{n-1})^n}{( \alpha_{\ve}^n)^{n-1}}$ satisfies $\Phi_X(\alpha_{\ve}) = C_1^{1-n} C_2^n \ve^{-d}$ for $d>0$.
    Thus, $\Phi_X(\alpha_{\ve}) \to + \infty$ as $\ve \downarrow 0$ is unbounded on $\cK_X$.
\end{proof}

\begin{remark}\label{rmk:fano-examples}
    The property that $Y$ is normal, projective, and of Fano type is optimal, and cannot be improved to smoothness; the standard example is the cone over the Veronese surface, for $Z = \bP^2_{\bC \bP^2}( \cO_{\bC \bP^2} \oplus \cO_{\bC \bP^2}(2))$ a projective bundle over $\bC \bP^2$.
    The section $S \simeq \bC \bP^2$ corresponds to the quotient $\cO_{\bC \bP^2} \oplus \cO_{\bC \bP^2}(2) \twoheadrightarrow \cO_{\bC \bP^2}$, and contracting $S$ to a point maps $X \to Y_0$ onto the singular projective cone $Y_0 \simeq \bP(1,1,1,2)$ over the quadratic Veronese surface $v_2 ( \bC \bP^2) \subset \bC \bP^5$.
    Then, the contraction $f: X = Z \times \bC \bP^1 \to Y_0$ is of Fano type, corresponding to the semiample, nef, non-big class $L := \textup{pr}^*_Z \cO_Z(1)$ on $X$, with $\dim X=4$ and $L^4=0$. 
    On the other hand,~\cite{kollar-miyaoka-mori}*{\S 2, Corollary 2.9} implies that for a surjective smooth morphism of projective varieties $f: X \to Y$ with $X$ Fano, $Y$ is also Fano.
    Moreover, if the blowup $\pi : X = \textup{Bl}_{p_1, \dots, p_m} Y \to Y$ of a smooth projective $n$-fold at finitely many points $p_i$ is Fano, then $Y$ is Fano.
    This fact follows from the canonical divisor formula $K_X = \pi^* K_Y + (n-1) \sum_i E_i$ for the blowup, where $E_i \simeq \bC \bP^{n-1}$ are the exceptional divisors of the points, together with the Kleiman and Nakai-Moishezon ampleness criteria, cf.~\cite{lazarsfeld}*{Theorem 1.2.23}.

    It is interesting to examine which Fano manifolds admit contractions $f: X \to Y$ as above.
    For $X = \textup{Bl}_p V$ with $V$ a Fano $n$-fold with $h^{1,1}(V) = 1$, a simple criterion is available: any divisor class on $X$ has the form $L = aH - bE$ where $H$ is the primitive ample generator and $E$ is the exceptional divisor, so $H \cdot E=0$ and $E^n=1$ implies $L^n = H^n a^n - b^n$.
    Recalling that $- K_V = i_V H$ from Definition~\ref{def:fano-index} for $i_X \in \bN^*$, we deduce that $L^n \neq 0$ always if $c_1(V)^n$ is not a rational $n$-th power, so $\Phi_X$ is uniformly bounded on $\cK_X$.
    This criterion is sharp, since $\bC \bP^n$ has $H^n=1$ and $X = \textup{Bl}_p \bC \bP^n$ carries the non-big nef class $L = H-E$ corresponding to the blowup morphism $f: \textup{Bl}_p \bC \bP^n \to \bC \bP^{n-1}$ of $\bC \bP^n \dashrightarrow \bC \bP^{n-1}$.

    Conversely, hypersurfaces and complete intersections $V = V_{d_1,\dots, d_r} \subset \bC \bP^{n+r}$ with $n \geq 3$ and $\sum_{j=1}^r d_j \leq n+r$ are Fano and have $h^{1,1}(V) = \bZ H$ by Lefschetz, $- K_V = \bigl(n+r+1 - \sum_{j=1}^r d_j \bigr) H$ by adjunction, and $H^n = d_1 \cdots d_r$.
    Thus, $X = \textup{Bl}_p V$ has no contractions when $\prod_{j=1}^r d_j \neq (\bN^*)^n$.
\end{remark}

\begin{proof}[Proof of Theorem~\ref{thm:nef-and-big-constant}]
We first obtain the uniform bound for K\"ahler manifolds whose every nef class is big, then apply Proposition~\ref{prop:fano-big-nef} for Fano $n$-folds.
For a compact K\"ahler manifold $X$, the vector space $V := H^{1,1}(X;\bR)$ is finite-dimensional, so let $\| - \|$ be any norm on $V$ and consider the normalized nef cone $\Sigma := \{ \alpha \in \overline{\cK_X} : \| \alpha \| = 1 \}$ is compact.
    By the Demailly--P\u{a}un theorem~\cite{demailly-paun}, the assumption that every nef class is big implies that the continuous function $\alpha \mapsto \alpha^n$ is strictly positive on $\Sigma$, so it has a positive minimum $c_X := \inf_{\alpha \in \Sigma} \alpha^n > 0$.
    Moreover, $A_X := \sup_{\alpha \in \Sigma} |c_1(X) \cdot \alpha^{n-1}| < \infty$ is finite, hence $\sup_{\alpha_0 \in \Sigma} \Phi_X(\alpha_0) \leq c_X^{1-n} A^n_X$ in the notation of Proposition~\ref{prop:fano-big-nef}.
    This function is homogeneous of degree $0$ in $\alpha$, so writing $\alpha \in \cK_X$ as $\alpha = t \alpha_0$ for $(\alpha_0,t) \in \Sigma \times \bR_+$ makes $M_X = \sup_{\alpha_0 \in \Sigma} \Phi_X(\alpha_0) \leq c_X^{1-n} A^n_{\Sigma}$.
    Taking $\tilde{C}_X := \frac{1}{n!} (4 \pi n)^n c_X^{1-n} A_X^n$ here and using~\eqref{eqn:volume-g-sup-alphan} proves that $\on{Vol}(X,\omega) \leq \tilde{C}_X \bar{R}(\alpha)^{-n}$.

    Next, to obtain the bound on the higher stable systoles, we use the result of Bangert-Katz, building on the work of Gromov and Hebda~\cites{gromov-filling , hebda-collars}.
    Since the top cohomology $H^{2n}(X;\bR) \cong \bR$ is $1$-dimensional and $X$ is a compact K\"ahler manifold, for any K\"ahler form $\omega \in H^2(X;\bR)$ and every partition $n = p_1 + \cdots + p_d$ the cup product $\omega^{p_1} \smile \cdots \smile \omega^{p_d} = \omega^n \neq 0$ spans $H^{2n}(X;\bR)$.
    Thus, we can apply~\cite{bangert-katz}*{Theorem 2.1} with $k=2n$ to obtain a bound
    \[
    {\textstyle \prod_{i=1}^d \textup{stsys}_{2p_i}(X,\omega)} \leq C'_X (p_1, \dots, p_d) \textup{Vol}(X,\omega) \leq C'_X(p_1, \dots, p_d) \tilde{C}_X \bar{R}(\alpha)^{-n}
    \]
    upon estimating $\textup{stsys}_{2n}(X,\omega) = \textup{Vol}(X,\omega) \leq \tilde{C}_X \bar{R}(\alpha)^{-n}$ by the previous step, since $(X,\omega)$ is compact and oriented.
    The constant $C'_X$ here is given in~\cite{bangert-katz}*{Theorem 2.1} as
    \begin{equation}\label{CX-Betti}
    C'_X(p_1, \dots, p_d) = C(n) \, {\textstyle\prod_j} \, b_{p_j}(X) (1 + \log b_{p_j}(X)).
    \end{equation}
    The number of partitions $p_1 + \dots + p_d = n$ is finite, so we can let
    \[
    C'_X := |\{ p_1 + \cdots + p_d = n \} | \cdot \max \{ C'_X(p_1, \dots, p_d) : p_1 + \cdots + p_d = n \} < + \infty.
    \]
    Finally, setting $C_X = 2 \cdot \max \{ 1+\tilde{C}_X, (1+\tilde{C}_X) C'_X \}$ proves the claimed bound~\eqref{eqn:stsys-general-CX}.

Turning to Fano manifolds, we consider the space of all smooth Fano $n$-folds satisfying the condition that every non-zero nef class is big.
By Proposition~\ref{prop:fano-big-nef}, this property is equivalent to the non-existence of projective, surjective morphisms $f: X \to Y$ onto a lower-dimensional projective variety of Fano type.
As discussed in Lemma~\ref{lemma:volume-of-fano}, such manifolds are projective, form a bounded class, and occur in finitely many algebraic families, by Koll\'ar-Miyaoka-Mori~\cite{kollar-miyaoka-mori}; in particular, there exist only finitely many deformation families after choosing a bounded parameter space.
Moreover, the work of Wi\'sniewski~\cite{rigidity-mori} shows that the Mori cone is rigid in smooth connected families of Fano manifolds; thus, the dual nef cone is also rigid, hence it is constant after choosing an identification of $N^1(X)_{\bR}$ along a smooth connected Fano family $\cX \to \cS$.
Combining these properties, we deduce for Fano manifolds of dimension $\leq n$, there exist only finitely many possible quadruples of data
\[
(N^1(X_s)_{\bR}, \; - K_{X_s} ,  \; \text{intersection form} , \; \overline{\cK_{X_s}} )
\]
up to deformation type.
The resulting normalized cone $\Sigma(X_s)$ is rigid on such families, so the expression $M(X_s) = \sup_{\alpha_0 \in \Sigma(X_s)} \Phi_{X_s}(\alpha_0)$ is constant along each of the finitely many smooth connected Fano deformation families $\pi : \cX \to \cS$.
Thus, $M(X_s)$ has a finite maximum $C''_n$ and $\textup{Vol}(X, \omega) \leq C''_n \bar{R}(\alpha)^{-n}$.

Moreover, the finiteness of Fano manifolds in algebraic families implies that the Betti numbers $b_{p_j}(X)$ attain finitely many values as $X$ ranges over all Fano $n$-folds, due to Lemma~\ref{lemma:volume-of-fano}, so there exists a constant $C'_n$ such that the expression $C'_X(p_1, \dots, p_d)$ of~\eqref{CX-Betti} satisfies $C'_X(p_1,\dots, p_d) \leq C'_n$ for any Fano $n$-fold and any tuple with $p_1 + \dots + p_d= n$.
Finally, as in the bound~\eqref{eqn:stsys-general-CX} we can take 
\[
C_n = 2 (1+C''_n) C'_n | \{ p_1 + \cdots + p_d = n \}|
\]
by analogy with Step 1; this proves the desired bound~\eqref{eqn:stsys-fano-proof}.

Finally, we note that if $h^{1,1}(X) = 1$, then $c_1(X) = \mu \alpha$ for some $\mu \in \bR$ and $\alpha := [\omega]$.
Since $\bar{R}(\alpha) > 0$, the identity~\eqref{eqn:average-scalar-curvature} implies that $c_1(X) \cdot \alpha^{n-1} = \mu \alpha^n > 0$, hence $\mu > 0$ and $c_1(X)>0$ is Fano.
Thus, $h^{2,0}(X) = h^{0,2}(X) = 0$ by Lemma~\ref{lemma:volume-of-fano}.
Since the function $\Phi_X(\alpha)$ is homogeneous of degree $0$ on $\cK_X = \bR_+ \alpha$, we obtain $\Phi_X(\alpha) = \frac{(c_1(X)^n)^n}{(c_1(X)^n)^{n-1}} = c_1(X)^n$, whereby $M_X = c_1(X)^n \leq C_n$ due to Lemma~\ref{lemma:volume-of-fano}.
Thus, all our previous considerations apply.
The application to $\bC \bP^n$ is immediate since $H^{\bullet}(\bC \bP^n; \bZ) = \bZ[H]/ (H^{n+1})$ and $c_1( \bC \bP^n) = (n+1)H$, for $H$ the hyperplane class with $H^n=1$.
\end{proof}

\begin{remark}\label{rmk:riemannian-PSC}
    The bounds of Theorem~\ref{thm:nef-and-big-constant} are special to the K\"ahler setting and cannot be extended to all Riemannian manifolds $(M^m,g_0)$ of positive scalar curvature.
    Indeed, for any $m \geq 3$ the torpedo metrics $h_L$ on $\bS^m$ have uniformly positive scalar curvature and arbitrarily large volume as $L \to \infty$.
    Then, any connected sum $(M^m, g_0) \# (\bS^m, h_L)$ is diffeomorphic to $M$ and admits a metric with $\inf R \geq 1$ and volume tending to infinity with $L \to \infty$ by the Gromov-Lawson surgery construction~\cite{gromov-lawson-surgery}.
\end{remark}

\section{Systolic inequalities for \texorpdfstring{\(\textup{Spin}^c\)}{Spin-c} manifolds}\label{section:sharp-bound-riemannian}

We now turn to Theorems~\ref{thm:optimal-sharp-bound} and~\ref{thm:uniform-non-sharp} and bound the stable $2$-systole of Riemannian metrics with positive scalar curvature on $\textup{spin}^c$ manifolds under the natural $(2,c)$-essentialness assumption of Definition~\ref{def:(2,c)-essential}.
Our main analytic tools will be the results~\cite{sven-2-systole}*{Theorems 3.2 and 3.4} proved in the recent work of Cecchini-Hirsch-Zeidler; the class of manifolds $\cF_0$ is introduced in Lemma~\ref{lemma:index-admissible manifolds} precisely to absorb the assumptions of their result.
We recall the properties of the comass norm on forms, discussed following Definition~\ref{def:systoles}.

To prove Theorem~\ref{thm:optimal-sharp-bound} and its refinement, Theorem~\ref{thm:fano-refinement}, we first establish an analogue of Proposition~\ref{prop:nef-threshold-bound-length} in terms of the $\textup{spin}^c$ index of the manifold $M$.
\begin{proposition}\label{prop:spin-c-polynomial-preparation}
     Let $X$ be a compact smooth $\textup{spin}^c$ manifold with $b_2(X) = 1$ and primitive generator $x \in H^2(X;\bZ)$ satisfying $0 \neq x^n \in H^{2n}(X;\bR)$, where $n = \lfloor \frac{\dim X}{2} \rfloor$.
     When $\dim X = 2n+1$, we also assume that $X$ carries a class $\xi \in H^1(X;\bR)$ with $\la \xi \smile u^n, [X] \rg \neq 0$.
     Let $c_{\bR}$ be the image of the characteristic class $c$ in $H^2(X;\bR)$, so $c_{\bR} = q_0 x_{\bR}$ for some $q_0 \in \bZ$.
    We define
    \[
    \ell(X) := \begin{cases}
    \min_{a \in \bZ} \{ |q_0 + 2a| : \la [X], e^{ax} e^{c_{\bR}/2} \hat{A}(TX) \rg \neq 0 \}, & \text{if } \; \dim X = 2n, \\
    \min_{a \in \bZ} \{ |q_0 + 2a| : \la [X], \xi \smile e^{ax} e^{c_{\bR}/2} \hat{A}(TX) \rg \neq 0 \}, & \text{if } \; \dim X = 2n+1.
    \end{cases}
    \]
    Then, $\ell(X) \leq \lfloor \frac{\dim X}{2} \rfloor +1$ and every Riemannian metric $g$ on $M$ satisfies
    \begin{equation}\label{eqn:improved-inequality}
        \textup{stsys}_2(X,g) \cdot \inf_X R_g \leq 4 n \pi \, \ell(X).
    \end{equation}
    If equality holds in \eqref{eqn:improved-inequality} and $\ell(X)>0$, then $X$ has a $\textup{spin}^c$-class with $\| c_{\bR} \|_{\textup{cm}} = \ell(X) \cdot \textup{stsys}_2(X,g)^{-1}$, a parallel $2$-form $\omega$ with $[\omega] = \frac{c_{\bR}}{ \| c_{\bR} \|_{\textup{cm}} }$, and one of the following occurs:
    \begin{enumerate}[(i)]
        \item If $\dim X = 2m$, then $(X,g)$ is K\"ahler-Einstein with K\"ahler form $\omega$.
        \item If $\dim X = 2m+1$, then the universal cover $(\tilde{X}, \tilde{g})$ of $(X,g)$ is isometric to $Y^{2n} \times \bR$, where $(Y, \omega_Y)$ is K\"ahler-Einstein with $\textup{pr}_Y^* \omega_Y = \tilde{\omega}$, for $\tilde{\omega}$ the lift of $\omega$ to $Y \times \bR$.
    \end{enumerate}
\end{proposition}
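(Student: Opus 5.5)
The plan is to derive \eqref{eqn:improved-inequality} from the sharpened form \eqref{eqn:spinc-systole-bound-improved} of Proposition~\ref{prop:K-cowaist-bound}, applied to every $\textup{spin}^c$ structure obtained by twisting $c$ by multiples of $x$. For $a \in \bZ$, the class $c_a := c + 2ax$ is still an integral lift of $w_2(TX)$, and $e^{c_a/2}\hat{A}(TX) = e^{ax} e^{c/2}\hat{A}(TX)$. Take $j$ maximal, i.e.\ $2j = 2n$, so that the class
\[
Z_a := [e^{c_a/2}\hat{A}(TX)]_{2n} \frown [X]
\]
lies in $H_0(X;\bQ)$ when $\dim X = 2n$, or in $H_1(X;\bQ)$ when $\dim X = 2n+1$. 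In the even case $Z_a \neq 0$ exactly when the index $\la [X], e^{ax}e^{c_{\bR}/2}\hat{A}(TX)\rg$ is non-zero. In the odd case $Z_a \neq 0$ exactly when its pairing with $\xi$ is non-zero. By the convention in Definition~\ref{def:area-enlargeable}, every non-zero class in degree $0$ or $1$ is area-enlargeable. So for every admissible $a$ in the definition of $\ell(X)$, \eqref{eqn:spinc-systole-bound-improved} gives
\[
\inf_X R_g \leq 4n\pi \, \|(c_a)_{\bR}\|_{\textup{cm}} = 4n\pi\, |q_0 + 2a| \, \|x_{\bR}\|_{\textup{cm}}.
\]

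Next I would identify $\|x_{\bR}\|_{\textup{cm}}$ with $\textup{stsys}_2(X,g)^{-1}$. Since $b_2 = 1$, the lattice $H_2(X;\bZ)/\textup{tors}$ has rank one, generated by some $h$ with $\la x, h\rg = 1$ because $x$ is primitive. The duality \eqref{eqn:stable-chain-norm} is exact on this one-dimensional space, so $\|x_{\bR}\|_{\textup{cm}} = \|h\|_{\textup{st}}^{-1} = \textup{stsys}_2(X,g)^{-1}$. Minimizing $|q_0+2a|$ over admissible $a$ then yields \eqref{eqn:improved-inequality}.

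For the bound $\ell(X) \leq n+1$, consider the index polynomial $P(a) := \la [X], \xi \smile e^{ax} e^{c_{\bR}/2}\hat{A}(TX)\rg$, with $\xi = 1$ in the even case. It is a polynomial in $a$ of degree exactly $n$, since its leading coefficient is $\frac{1}{n!}\la [X], \xi x^n\rg \neq 0$. Hence $P$ cannot vanish at $n+1$ consecutive integers. The values $q_0 + 2a$ for $n+1$ consecutive $a$ form an arithmetic progression of step $2$ and total span $2n$. Choosing the window centred near $-q_0/2$, this progression fits inside $[-(n+1), n+1]$, which gives $\ell(X) \leq n+1$.

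The main obstacle is the rigidity statement. If equality holds with $\ell(X) > 0$, every inequality in the even-dimensional Weitzenb\"ock argument of Proposition~\ref{prop:K-cowaist-bound} must be saturated in the limit $\delta, \ve \downarrow 0$. The difficulty is that equality is only approached through the approximating forms $\eta_\delta$, and these need not converge.

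To handle this, I would first pass to a limiting harmonic spinor. Normalize $\|\psi_\delta\|_{L^2} = 1$; the integrated identity then gives $\|\nabla \psi_\delta\|_{L^2} \to 0$, so elliptic estimates produce a subsequence converging to a non-zero parallel spinor $\psi$ for a limiting connection $A$ whose curvature $2\pi i\eta$ has $\|\eta\|_* = \|c_{\bR}\|_{\textup{cm}}$. Thus $\eta$ is comass-minimizing, and moreover $R_g$ is constant.

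Pointwise equality in the trace inequality $\sum_j |\lambda_j| \leq m\|\eta\|_*$ forces all $|\lambda_j|$ to be equal and constant. It also forces $\psi$ to be a simultaneous extremal eigenvector of the Clifford actions $ic(e_j \wedge e_{m+j})$. Such a parallel pure spinor defines a parallel complex structure $J$ with $\eta = \|c_{\bR}\|_{\textup{cm}}\,\omega$, where $\omega$ is a parallel K\"ahler form. Finally, the curvature identity for a parallel $\textup{spin}^c$ spinor identifies $F_A$ with the Ricci form up to a constant factor, which makes $(X,g)$ K\"ahler--Einstein.

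In the odd case I would follow \cite{sven-2-systole}*{Theorem 3.4}. The spectral flow produces a parallel spinor at some point of the path, and the non-zero class $\xi$ yields a parallel $1$-form. De Rham splitting of the universal cover then gives $Y \times \bR$, and the even-dimensional argument applied to $Y$ shows it is K\"ahler--Einstein.
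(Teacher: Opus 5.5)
Your proposal is correct and follows the paper's proof: twist $c$ to $c+2ax$, apply \eqref{eqn:spinc-systole-bound-improved} with the top-degree class $[e^{c_a/2}\hat{A}(TX)]_{2n}\frown[X]$ in $H_0$ or $H_1$ (area-enlargeable by convention), use $\|x_{\bR}\|_{\textup{cm}}=\textup{stsys}_2(X,g)^{-1}$ from $b_2(X)=1$, and minimize over admissible $a$. The minor differences are these: your window of $n+1$ consecutive values of $a$ is a cleaner, case-free replacement for the paper's parity analysis for $\ell(X)\le n+1$, and avoids its appeal to Lichnerowicz when $q_0$ and $n$ are both even; for rigidity the paper simply cites \cite{sven-2-systole}*{Theorems 3.2 and 3.4}, whose argument your limiting parallel-spinor sketch reproduces, except that in the odd case the parallel direction comes most directly from $\ker\eta$ of the parallel $2$-form rather than from $\xi$.
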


\begin{proof}
Suppose $\inf_X R_g > 0$, else the result is vacuous.
Using~\eqref{eqn:stable-chain-norm} with $\alpha = t x$ for $t \in \bR$ and $h$ the dual generator of the homology, so $H_2(X;\bZ) / \textup{tors} = \bZ h$ with $\la x, h \rg = 1$ by Poincar\'e duality, we find $\| h \|_{\textup{st}} = \sup \{ t : |t| \cdot \|x_{\bR}\|_{\textup{cm}} \leq 1 \} = \| x_{\bR} \|_{\textup{cm}}^{-1}$.
Thus, $\textup{stsys}_2(X,g) = \| x_{\bR} \|^{-1}_{\textup{cm}}$ for $x_{\bR} \in H^2 (X ; \bR)$ the image of $x$.
Let $n := \lfloor \frac{\dim X}{2} \rfloor$ and $0 \neq \xi \in H^{\dim X - 2n}(X;\bQ)$ by assumption, where $\xi=1$ if $\dim X=2n$.

By construction, each class $c_a := (c + 2ax)$ has $c_a \equiv c \equiv w_2(TX) \; (\on{mod} \; 2)$; geometrically, this property amounts to twisting the $\textup{spin}^c$ structure of $X$ by $L$, where $L$ is the complex line bundle corresponding to $x$ via the Chern-Weil isomorphism as in Lemma~\ref{lemma:complex-line-bundle}.
Since torsion disappears in real cohomology, we have $(c_a)_{\bR} = c_{\bR} + 2 a x_{\bR}$.
We define the $\textup{spin}^c$ index polynomial $P(a)$ by 
\begin{equation}\label{eqn:spinc-index-polynomial}
P(a) := \la [X], \xi \smile e^{ax} e^{c_{\bR}/2} \hat{A}(TX) \rg
\end{equation}
By construction, $P(a)$ is a polynomial in $a$ of degree exactly $n$, whose leading term is $\frac{1}{n!} a^n \la \xi x^n, [X] \rg \neq 0$ in either case.
Thus, $P(a)$ has at most $n$ zeroes, so there exists some $a$ with $P(a_0) \neq 0$, hence the manifold $X$ equipped with the class $c_{a_0} \in H^2(X;\bZ)$ has $X \in \cF_n$ in~\eqref{eqn:Fn-definition}.
By construction, 
\[
\| (c_a)_{\bR} \|_{\textup{cm}} = |q_0 + 2a| \cdot \| x_{\bR} \|_{\textup{cm}} = |q_0 + 2a| \cdot \textup{stsys}_2(X,g)^{-1}
\]
due to our earlier observations.
Consequently, the bound~\eqref{eqn:spinc-systole-bound-improved} from Theorem~\ref{prop:K-cowaist-bound} implies
\[
\inf_X R_g \leq 4 n \pi \| (c_a)_{\bR} \|_{\textup{cm}} = 4 n \pi |q_0 + 2a| \cdot \textup{stsys}_2(X,g)^{-1}.
\]
Taking the minimum $\min_{a \in \bZ} \{ |q_0 + 2a| : P(a) \neq 0 \}$ over all admissible $a$ as above proves~\eqref{eqn:improved-inequality}.

Next, to show that $\ell(X) \leq n+1$, we let $d := \lfloor \frac{n}{2} \rfloor$ and consider two cases for the parity of $q_0$.
If $q_0 \equiv 1 \; ( \on{mod} \; 2)$, then the set of $a \in \{ \pm k \pm \tfrac{1}{2} - \tfrac{q_0}{2} \}$ for $0 \leq k \leq d$ consists of $2 d + 2$ elements, so $P(a)$ cannot vanish on all of them due to $2 \lfloor \frac{n}{2} \rfloor + 2 > n$. 
Moreover, $q_0 + 2a \in \{ \pm (2k+1) \}$ and $|q_0 + 2a| = 2(k+1) \leq 2 \lfloor \frac{n}{2} \rfloor + 1 \leq n+1$ for all such elements, hence $\ell(X) \leq n+1$ in this case.

If $q_0 \equiv 0 \; ( \on{mod} \; 2)$ and $n = 2d+1$, then the set $\{ - \frac{q_0}{2} \} \cup \{ - \frac{q_0}{2} \pm k \}$ for $1 \leq k \leq d+1$ consists of $2d+3=n+2$ elements, so $P(a)$ cannot vanish on all of them.
Moreover, $q_0 + 2a \in \{ 0 \} \cup \{ \pm 2k \}$, so $|q_0 + 2a| \leq 2k \leq 2(d+1) = n+1$ for all such elements.
Thus, we may again find some $a$ realizing $\ell(X) \leq n+1$.
Finally, if $n = 2d$, then we must have $I(- \frac{q_0}{2}) = 0$: indeed, if $I ( - \frac{q_0}{2}) = \la [X], \hat{A}(TX) \rg \neq 0$, then $X$ does not admit a metric of positive scalar curvature by the Lichnerowicz theorem, or by directly applying the bound~\eqref{eqn:improved-inequality} with $\ell(X) = 0$ in this case.
Therefore, $P(a)$ can have at most $n-1 = 2d-1$ zeroes besides $- \frac{q_0}{2}$.
The set $a \in \{ - \frac{q_0}{2} \pm k \}$ for $1 \leq k \leq d$ has $2d$ elements, so $P(a_0) \neq 0$ for some $a_0$.
Then, $q_0 + 2a_0 \in \{ \pm 2k \}$ has $|q_0 + 2a_0| \leq 2k \leq 2d = n$, which again realizes $\ell(M) \leq n+1$.

If~\eqref{eqn:improved-inequality} attains equality, the equality cases of~\cite{sven-2-systole}*{Theorem 3.2 and Theorem 3.4} show that the underlying manifold $(X,g)$ satisfies the rigidity requirements of cases $(i)$ or $(ii)$ for $\dim X = 2n$ or $2n+1$, respectively, which provide the above characterization.
This completes the proof.
\end{proof}

\begin{lemma}\label{lemma:fano-manifold-bound}
    Let $X$ be a compact smooth manifold with $b_2(X) = 1$.
    If $X$ is diffeomorphic to a Fano $n$-fold $\tilde{X}$ with Fano index $i_{\tilde{X}}$ as in Definition~\ref{def:fano-index}, then $b_1(X) = 0$ and $\ell(X) \leq i_{\tilde{X}}$ in Proposition~\ref{prop:spin-c-polynomial-preparation}.
\end{lemma}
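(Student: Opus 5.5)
We transport everything from $\tilde{X}$ to $X$ along a diffeomorphism $\phi: X \to \tilde{X}$, so we may assume $X = \tilde{X}$ as smooth manifolds, equipped with the canonical $\textup{spin}^c$ structure with $c = c_1(T\tilde X) = -K_{\tilde X}$. The vanishing $b_1(X) = 0$ follows from Lemma~\ref{lemma:volume-of-fano}: $h^{1,0} = h^{0,1} = 0$, so $b_1 = 0$ by Hodge decomposition (equivalently, $\tilde X$ is rationally connected, hence simply connected). In particular $\dim X = 2n$ is even and only the even case of Proposition~\ref{prop:spin-c-polynomial-preparation} is relevant. Since $b_2 = 1$, $h^{1,1}(\tilde X) = 1$ and $H^2(X;\bZ)/\textup{tors} = \bZ x$ with $x = H$ the primitive ample generator; Definition~\ref{def:fano-index} gives $c_{\bR} = i_{\tilde X} x_{\bR}$, so $q_0 = i_{\tilde X}$ (up to replacing $x$ by $-x$, which does not affect $\ell$).

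Next we evaluate the index polynomial. By~\eqref{eqn:todd-genus-equality}--\eqref{eqn:indx-ex-Td(X)} and Hirzebruch--Riemann--Roch,
\[
P(a) = \la [X], e^{ax} e^{c_{\bR}/2}\hat{A}(TX) \rg = \la [X], e^{ax}\textup{Td}(T\tilde X)\rg = \chi(\tilde X, \cO(aH)).
\]
The value $a = 0$ gives $P(0) = \chi(\tilde X, \cO_{\tilde X}) = 1 \neq 0$ by Lemma~\ref{lemma:volume-of-fano}. Hence $a = 0$ is admissible in the definition of $\ell(X)$, and $\ell(X) \leq |q_0 + 0| = i_{\tilde X}$.

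The main point needing care is that $\ell(X)$ is defined with respect to the given $\textup{spin}^c$ class $c$ on $X$, which may differ from the canonical one. However, any $\textup{spin}^c$ lift $c'$ of $w_2$ differs from $c$ by $2ax$ modulo torsion, since $b_2 = 1$ and $H^2$ has rank one. The minimum in $\ell(X)$ ranges over all shifts $c + 2ax$, so $\ell$ is independent of the chosen lift: reparametrizing $a$ shows that the set $\{ q_0 + 2a : P(a) \neq 0\}$ is intrinsic. Torsion differences do not affect real characteristic classes. Thus it suffices to compute with the canonical choice, and the same holds after pulling back by the diffeomorphism $\phi$, since $w_2$, $\hat{A}$ and the rational cohomology are diffeomorphism invariants. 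A sign ambiguity of $x$ replaces $a$ by $-a$ and $q_0$ by $-q_0$, leaving $|q_0 + 2a|$ unchanged.
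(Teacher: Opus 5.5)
Your proof is correct and follows the paper's argument: pull back the canonical $\textup{spin}^c$ structure $c = c_1(T\tilde X) = i_{\tilde X} H$ along the diffeomorphism, so $q_0 = i_{\tilde X}$. Then Hirzebruch--Riemann--Roch with $\chi(\tilde X, \cO_{\tilde X}) = 1$ shows that $a = 0$ is admissible, which gives $\ell(X) \le i_{\tilde X}$. Your extra remarks fill in points the paper leaves implicit: you derive $b_1 = 0$ from $h^{1,0} = h^{0,1} = 0$, and you check that $\ell(X)$ does not depend on the chosen lift of $w_2$, since $c \mapsto c + 2bx$ only reparametrizes $P(a) \mapsto P(a+b)$.
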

\begin{proof}
On the Fano $n$-fold $\tilde{X}$, the class $c_1(T \tilde{X}) = c_1(\tilde{X}) = - c_1(K_{\tilde{X}}) = i_{\tilde{X}} c_1(H)$ is characteristic, and 
    \[
    \la [\tilde{X}], e^{c_1(T\tilde{X})/2} \hat{A}(T\tilde{X}) \rg = \la [\tilde{X}], \textup{Td}(T\tilde{X}) \rg =  \chi(\tilde{X}, \cO_{\tilde{X}}) = 1
    \]
    by Lemma~\ref{lemma:fano-manifold-bound} and the Hirzebruch-Riemann-Roch formula~\cite{lazarsfeld}*{Theorem 1.1.24}.
    Pulling back this $\textup{spin}^c$ structure from $\tilde{X}$ to $X$ via the diffeomorphism produces an admissible class $c$ with $\| c_{\bR} \|_{\textup{cm}} = i_{\tilde{X}} \equiv q_0 \; ( \on{mod} \; 2)$ in Proposition~\ref{prop:spin-c-polynomial-preparation}, so $\ell(X) = \min_{a \in \bZ} |q_0 + 2a| \leq i_{\tilde{X}}$ as claimed.
\end{proof}

\begin{lemma}\label{lemma:length-of-product}
    In the setting of Theorem~\ref{thm:optimal-sharp-bound} and~\eqref{eqn:Fn-definition}, $X \times N \in \cF_n$ has $\ell(X \times N) \leq \ell(X)$.
\end{lemma}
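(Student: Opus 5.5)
The plan is to work directly with the index polynomial of Proposition~\ref{prop:spin-c-polynomial-preparation} on the product. Fix data on $X$: the primitive generator $x$, the characteristic class $c_X$ with $c_{X,\bR}=q_0x_{\bR}$, the class $\xi_X$ (or $\xi_X=1$), and an integer $a_0$ realizing $\ell(X)=|q_0+2a_0|$, so that
\[
P_X(a_0)=\la [X], \xi_X \smile e^{a_0x} e^{c_{X,\bR}/2}\hat{A}(TX)\rg \neq 0 .
\]
On $N$ take the class $c_N$ and $\eta$ (or $\eta=1$) from condition $(b)$, so that $\la [N],\eta\smile e^{c_N/2}\hat{A}(TN)\rg\neq 0$. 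Since $b_2(X\times N)=1$, we have $H^2(X\times N;\bR)=\bR\,\textup{pr}_X^*x_{\bR}$, so $b_2(N)=0$ and $b_1(X)b_1(N)=0$ by K\"unneth. In particular $c_{N,\bR}=0$. Moreover $\textup{pr}_X^*x$ is a primitive generator modulo torsion, because $\textup{pr}_X^*$ is split by restriction to a slice $X\times\{*\}$.

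Equip $X\times N$ with the product $\textup{spin}^c$ structure $c=\textup{pr}_X^*c_X+\textup{pr}_N^*c_N$. Its real image is $q_0\,\textup{pr}_X^*x_{\bR}$, so the same $q_0$ appears. Set $\Xi:=\textup{pr}_X^*\xi_X\smile\textup{pr}_N^*\eta$, which has degree $\dim(X\times N)-2\lfloor\dim(X\times N)/2\rfloor$ when at most one of the dimensions is odd. The multiplicativity $\hat{A}(T(X\times N))=\textup{pr}_X^*\hat{A}(TX)\,\textup{pr}_N^*\hat{A}(TN)$, together with $e^{c/2}=\textup{pr}_X^*e^{c_X/2}\,\textup{pr}_N^*e^{c_N/2}$ and the K\"unneth formula for the Kronecker pairing, gives
\[
P_{X\times N}(a)=\la [X\times N],\Xi\smile e^{a\,\textup{pr}_X^*x}e^{c_{\bR}/2}\hat{A}\rg = \pm P_X(a)\cdot \la [N],\eta\smile e^{c_N/2}\hat{A}(TN)\rg .
\]
At $a=a_0$ this is non-zero, and the weight is still $|q_0+2a_0|=\ell(X)$. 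Therefore $\ell(X\times N)\le \ell(X)$ for the minimum in the definition of $\ell$, and $X\times N\in\cF_n$ with this structure. Here $\ell$ is read with the same characteristic parity $q_0$; the minimum over structures with the same real image is unaffected.

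The main obstacle is the case where both $\dim X$ and $\dim N$ are odd. Then $\dim(X\times N)$ is even, and $\Xi=\xi_X\smile\eta$ has degree $2$, not $0$. Since it lies in $H^2=\bR x$, we may write $\Xi=\lambda\,\textup{pr}_X^*x$ with $\lambda\ne 0$; this is non-zero because its pairing above is non-zero. So the index condition involves $x\smile e^{ax}\cdots$ rather than $e^{ax}\cdots$. However, $b_2=1$ with both $b_1$ positive contradicts $b_1(X)b_1(N)=0$, so this case is excluded by hypothesis. It remains only to check that $n$ in the statement means $\lfloor\dim(X\times N)/2\rfloor$ and that the leading-degree hypothesis $x^{n'}\ne 0$ of Proposition~\ref{prop:spin-c-polynomial-preparation} is not needed for the inequality $\ell\le$, which uses only the existence of a non-vanishing value.
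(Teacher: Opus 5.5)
Your proposal is correct and follows essentially the paper's route: you take the product structure $c_M=\textup{pr}_X^*c_X+\textup{pr}_N^*c_N$, use K\"unneth to get $b_2(N)=0$ and $b_1(X)b_1(N)=0$ (which rules out both dimensions being odd), and note that $(c_M)_{\bR}$ is the same multiple of $\textup{pr}_X^*x_{\bR}$ as $(c_X)_{\bR}$ is of $x_{\bR}$. You also fill in three steps the paper leaves implicit: the explicit factorization of the index pairing, the slice argument showing $\textup{pr}_X^*x$ remains primitive, and the remark that the hypothesis $x^{n'}\neq 0$ of Proposition~\ref{prop:spin-c-polynomial-preparation} fails on $X\times N$ when $\dim N\geq 2$ but is not needed for the inequality.
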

\begin{proof}
    Since $0 \neq u^n \in H^{2n}(X;\bR)$, we can find a non-trivial element $0 \in u \in H^2(X;\bR)$, hence $b_2(X) \geq 1$.
On the other hand, $b_2(X) \leq 1$ due to $b_2(X \times N) = 1$, so the K\"unneth formula gives
\begin{equation}\label{eqn:kunneth-bi}
b_2(X) = 1, \qquad b_2(N) = 0, \qquad b_1(X) b_1(N) = 0.
\end{equation}
We choose a $\textup{spin}^c$ characteristic class $c_N$ and $\eta \in H^{2 \{ \frac{\dim N}{2}\} }(N;\bQ)$ with $\la [N], \eta e^{c_N/2} \hat{A}(TN) \rg \neq 0$.
For $\dim X$ or $\dim N$ odd, the universal coefficient theorem property~\eqref{eqn:universal-coefficient} implies $b_1(X) \geq 1$ or $b_1(N) \geq 1$, so $\dim X, \dim N$ cannot be odd simultaneously.
Using $b_2(X) = b_2(X \times N) = 1$ shows that $H^2( X \times N ; \bR)$ is generated by $\textup{pr}_X^* x$, where $x$ is the generator $H^2(X;\bZ) / \textup{tors} = \bZ x$.
We identify $x$ with its image in $M$, so $H^2(M;\bZ) / \textup{tors} = \bZ x$ as well.
The definition of $\ell(X)$ implies that for $\ell_0 = \pm \ell(X)$, there exists a characteristic class $c_X$ satisfying $(c_X)_{\bR} = \ell_0 x_{\bR}$ and $\la [X], \xi e^{c_{\bR}/2} \hat{A}(TX) \rg \neq 0$.

We define the class $c_M = \textup{pr}_X^* c_X + \textup{pr}_N^* c_N$, which is a lift of $w_2(TM)$ and makes $M = X \times N \in \cF_n$ for $\cF_n$ the class from~\eqref{eqn:Fn-definition} as showed in Lemma~\ref{lemma:index-admissible manifolds}.
Moreover, $b_2(N) = 0$ shows that $H^2(N;\bZ)$ is torsion, so $(c_N)_{\bR} = 0$ implies that $(c_M)_{\bR} = \textup{pr}_X^* (c_X)_{\bR} = \ell_0 \textup{pr}_X^* (x_{\bR})$, hence $\| (c_M)_{\bR} \|_{\textup{cm}} = \| (c_X)_{\bR} \|_{\textup{cm}}$.
Thus, $\ell(M) \leq |\ell_0|$ for this choice of $c_M$ and $\ell(X \times N ) \leq \ell(X)$.
\end{proof}

We may now combine the above steps to prove Theorem~\ref{thm:optimal-sharp-bound}.
To characterize the equality case, we will rely on a topological cancellation property from~\cite{simplification}*{Theorem A and Proposition 3.3}.

\begin{lemma}\label{lemma:simplification}
    Let $X, Y$ be compact smooth manifolds such that $Y$ is simply connected.
    If the manifolds $X \times \bS^1$ and $Y \times \bS^1$ are diffeomorphic, or $X \times \bR$ and $Y \times \bR$ are diffeomorphic, then $X$ and $Y$ are homeomorphic.
    Moreover, if $\dim_{\bR}Y \neq 4$ then $X$ and $Y$ are diffeomorphic.
\end{lemma}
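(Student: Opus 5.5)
The plan is to deduce this from the $h$-cobordism and stable cancellation machinery, as in the cited \cite{simplification}. First reduce the $\bR$-case to the $\bS^1$-case: a diffeomorphism $X \times \bR \cong Y \times \bR$ lets us compactify. Since $Y$ is compact, $Y \times \{0\}$ sits inside $X \times (-T,T)$ for large $T$. The region $W$ between $Y \times \{0\}$ and $X \times \{T\}$ is then a compact cobordism, and the inclusions of both ends are homotopy equivalences, because both ends are deformation retracts of the product. Hence $W$ is an $h$-cobordism between $X$ and $Y$, with $\pi_1(W) = \pi_1(Y) = 1$. In the $\bS^1$-case, I would pass to the infinite cyclic covers $X \times \bR$ and $Y \times \bR$. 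This needs the diffeomorphism to respect the $\bS^1$-factors on $\pi_1$. Since $\pi_1(Y \times \bS^1) = \bZ$ and $\pi_1(X \times \bS^1) = \pi_1(X) \times \bZ \cong \bZ$, we get $\pi_1(X)$ trivial or $\pi_1(X) \cong \bZ$ with the second factor absorbed. Comparing abelianized ranks, $\bZ \times \bZ \not\cong \bZ$, so $\pi_1(X) = 1$. The kernel of the projection to $\bZ$ is then the same subgroup on both sides, so the coverings agree, and the $\bS^1$-case reduces to the $\bR$-case.

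With the simply connected $h$-cobordism $W$ in hand, I split by dimension $d = \dim Y$. For $d \geq 5$, Smale's $h$-cobordism theorem gives $W \cong X \times [0,1]$, hence a diffeomorphism $X \cong Y$. For $d \leq 3$, simply connected closed manifolds are classified: $d=2$ gives $\bS^2$, and $d=3$ gives $\bS^3$ by Perelman. Homotopy equivalence then gives diffeomorphism, since these dimensions have no exotic structures. For $d = 4$, Freedman's topological $h$-cobordism theorem for simply connected $4$-dimensional cobordisms gives a homeomorphism $X \approx Y$ but not a diffeomorphism. This is exactly the stated conclusion.

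The main obstacle is the bookkeeping in the first step. One must check that $W$ really is an $h$-cobordism: its ends must be disjoint, and the retraction of $X \times [-T,T]$ must restrict to homotopy equivalences on both ends. This should follow from excision and Whitehead's theorem applied to the simply connected pieces. I would cite \cite{simplification}*{Theorem A and Proposition 3.3} for the precise form rather than redo it.
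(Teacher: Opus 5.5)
Your proposal is correct. The paper gives no argument for this lemma: it imports it directly from \cite{simplification}*{Theorem A and Proposition 3.3}. You instead reconstruct the proof in the simply connected setting, in three moves:
\begin{enumerate}
\item Turn a diffeomorphism $F\colon Y\times\bR\to X\times\bR$ into a compact $h$-cobordism $W$ between $F(Y\times\{0\})$ and $X\times\{T\}$.
\item Reduce the $\bS^1$-case to the $\bR$-case. Since $\pi_1(X)\times\bZ\cong\bZ$ forces $\pi_1(X)=1$, the diffeomorphism lifts to the universal covers $X\times\bR\cong Y\times\bR$.
\item Apply Smale for $\dim Y\geq 5$, Freedman for $\dim Y=4$, and Perelman plus Moise for $\dim Y\leq 3$.
\end{enumerate}

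The step you flag as the main obstacle is easier than you suggest and needs neither excision nor Whitehead's theorem. After possibly reversing the $\bR$-factor, $F(Y\times(-\infty,0])$ lies in $X\times(-\infty,T)$. So $X\times(-\infty,T]$ is $W$ with the collar $F(Y\times(-\infty,0])$ glued onto $\partial_0W$. Hence $W\hookrightarrow X\times(-\infty,T]$ is a homotopy equivalence, and therefore so is $X\times\{T\}\hookrightarrow W$. Symmetrically, $F(Y\times[0,\infty))$ is $W$ with the collar $X\times[T,\infty)$ attached, which handles the other end. Your phrase ``both ends are deformation retracts of the product'' is not by itself a justification, but this collar argument supplies it.

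What your route buys is transparency. Once $Y$ is simply connected, nothing beyond the classical $h$-cobordism theorems is needed. Simple connectivity forces $\pi_1(X)=1$, makes the $\bS^1$- and $\bR$-versions equivalent, and removes the Whitehead-torsion obstruction that makes cancellation subtle in general. The paper's citation buys brevity and whatever extra generality \cite{simplification} provides.

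Two small points:
\begin{enumerate}
\item You implicitly take $X,Y$ closed. This is how the lemma is used in Proposition~\ref{prop:deduce-the-mapping-torus} and Theorem~\ref{thm:fano-refinement}, but the statement says ``compact''.
\item Freedman's theorem concerns $5$-dimensional $h$-cobordisms between simply connected closed $4$-manifolds, not ``$4$-dimensional cobordisms''.
\end{enumerate}
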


We also observe the following property, which refines the conclusions of Proposition~\ref{prop:spin-c-polynomial-preparation}.

\begin{proposition}\label{prop:deduce-the-mapping-torus}
    Let $(M^{2n+1},g)$ be a compact smooth manifold equipped with a parallel $2$-form and positive scalar curvature.
    Suppose that the universal cover satisfies
    \[
    ( \tilde{M}, \tilde{g}, \tilde{\omega}) \cong ( Y^{2n} \times \bR, \lambda g_Y + dt^2, \lambda \, \textup{pr}_Y^* \omega_Y)
    \]
    where $(Y, J_Y, \omega_Y, g_Y)$ is a compact K\"ahler-Einstein manifold.
    Then, $Y$ is a Fano manifold and there exist a $L > 0$ and $\phi \in \textup{Isom}^{\textup{hol}} ( Y, \omega_Y, g_Y)$ such that $(M,g,\omega)$ is the mapping torus of a holomorphic isometry of the K\"ahler-Einstein fiber $Y$, namely
    \[
    (M,g,\omega) \cong (Y \times \bR) / \la (y,t) \mapsto ( \phi(y) , t+L) \rg.
    \]
    If the group $\textup{Aut}(Y)$ is connected, then $M$ is diffeomorphic to $Y \times \bS^1$.
    
    In particular, if $M = X \times \bS^1$ for some compact manifold $X$, then $X$ is homeomorphic to $Y$, and for $n \geq 3$ there exists a complex structure on $X$ for which $(X,J) \simeq Y$ as complex manifolds.
    Finally, if $b_2(X)=1$, then $Y$ has Fano index $i_Y = \frac{1}{4 \pi n} \| [ \omega] \|_{\textup{cm}} \textup{stsys}_2(M,g) \cdot R_g$.
\end{proposition}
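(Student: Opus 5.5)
The plan is to work entirely with the deck group of the Riemannian universal cover and the de Rham splitting. First, $Y$ is Fano: $(Y,g_Y)$ is K\"ahler--Einstein and the product $Y\times\bR$ has scalar curvature $\lambda^{-1}R_{g_Y}$. This equals the lift of $R_g$, which is positive, so the Einstein constant of $Y$ is positive. Then $c_1(Y)=\frac{\rho}{2\pi}$ is a positive multiple of $[\omega_Y]$, so $Y$ is Fano, and by Myers and Lemma~\ref{lemma:volume-of-fano} it is compact and simply connected.

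Next, let $\Gamma=\pi_1(M)$ act on $\tilde M=Y\times\bR$ by isometries preserving $\tilde\omega$. Since $Y$ is compact, simply connected and has no flat factor (its Ricci curvature is positive), the de Rham decomposition is unique, so every isometry splits as $\gamma=(\phi_\gamma,\tau_\gamma)$. Here $\phi_\gamma\in\textup{Isom}(Y)$, and $\tau_\gamma$ is an isometry of $\bR$. Preserving $\tilde\omega=\lambda\,\textup{pr}_Y^*\omega_Y$ forces $\phi_\gamma^*\omega_Y=\omega_Y$, so $\phi_\gamma$ is a holomorphic isometry. Preserving the orientation of $\tilde M$ together with $\tilde\omega^n$ forces $\tau_\gamma$ to preserve orientation, so it is a translation $t\mapsto t+L_\gamma$. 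The action is free and $Y$ is compact, so $\gamma\mapsto L_\gamma$ is injective: a kernel element would act on the compact slice $Y\times\{0\}$ with a fixed point by Lefschetz, since $\chi(Y)\neq0$ and holomorphic isometries are homotopic to the identity on components, or, more simply, since $\Gamma$ acts properly discontinuously and freely. The image of $\gamma\mapsto L_\gamma$ is then a discrete subgroup of $\bR$, nontrivial because $M$ is compact. So $\Gamma\cong\bZ$ is generated by $(\phi,t+L)$, which gives the mapping torus description.

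If $\textup{Aut}(Y)$ is connected, then $\phi$ is isotopic to the identity, and $M$ is diffeomorphic to $Y\times\bS^1$. More generally, $M\times\bR\cong$ (mapping torus)$\times\bR$ and the mapping torus fibres over $\bS^1$. When $M=X\times\bS^1$, the universal cover gives $\tilde X\times\bR\cong Y\times\bR$. Since $\pi_1(X)\subset\pi_1(M)\cong\bZ$ must map injectively to the $\bS^1$ factor, $\pi_1(X)$ is trivial, so $X$ is compact and simply connected. Lemma~\ref{lemma:simplification} then yields that $X$ is homeomorphic to $Y$, and diffeomorphic when $2n\neq4$. Pulling back $J_Y$ through the diffeomorphism for $n\ge3$ gives the complex structure.

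Finally, for $b_2(X)=1$, I would compute $R_g=\lambda^{-1}4\pi n\, i_Y$ in the normalization $[\omega_Y]=\pi H$ of Lemma~\ref{lemma:systoles-of-fano}. I would also compute the stable systole via the primitive class $x$ with $\textup{stsys}_2=\|x_\bR\|^{-1}_{\textup{cm}}$, as in the proof of Proposition~\ref{prop:spin-c-polynomial-preparation}. The parallel form $\omega$ calibrates, so its comass equals its pointwise comass $1$ after normalization. Tracking the factors of $\lambda$ and $\pi$ then gives $i_Y=\frac{1}{4\pi n}\|[\omega]\|_{\textup{cm}}\,\textup{stsys}_2\,R_g$.

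The main obstacle is the freeness and discreteness argument showing $\Gamma\cong\bZ$ acts by pure translations in $t$. A secondary obstacle is the bookkeeping of normalization constants in the final index formula.
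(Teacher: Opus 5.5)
There is a genuine gap at the step you yourself flag: the injectivity of $\gamma\mapsto L_\gamma$, i.e.\ that the finite group $K$ of deck transformations acting slicewise is trivial. Neither of your justifications works.

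\begin{itemize}
\item The topological Lefschetz argument needs $\chi(Y)\neq 0$ and $\phi_\gamma\simeq\textup{id}$, and both fail for Fano manifolds in general. The intersection of two quadrics $X_{2,2}\subset\bC\bP^5$ is a K\"ahler--Einstein Fano threefold with $b_3=4$, hence $\chi(Y)=0$. Holomorphic isometries need not be homotopic to the identity: the factor swap on $\bC\bP^1\times\bC\bP^1$ acts nontrivially on $H^2$, and $K$ may lie outside the identity component.
\item Proper discontinuity and freeness only show that $K$ is a finite group acting freely on the slice $Y\times\{0\}$. That is no contradiction for a general simply connected compact manifold. If $K\neq 1$, $M$ would be a mapping torus of $Y/K$ rather than of $Y$.
\end{itemize}

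The missing input is that a Fano manifold admits no free holomorphic action of a nontrivial finite group. The paper gets this from the Atiyah--Bott holomorphic Lefschetz formula. Every holomorphic automorphism $\phi$ of $Y$ has $\sum_q(-1)^q\textup{tr}(\phi^*|_{H^{0,q}(Y)})=1\neq 0$, because $h^{0,q}(Y)=0$ for $q>0$, so $\phi$ has a fixed point. Equivalently, $Y\to Y/K$ would be finite \'etale, giving $1=\chi(Y,\cO_Y)=|K|\,\chi(Y/K,\cO_{Y/K})$, so $|K|=1$.

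The other steps are sound and in places cleaner than the paper's.
\begin{itemize}
\item De Rham uniqueness replaces the paper's use of $\ker\tilde\omega=\bR\partial_t$.
\item Your orientation argument excludes $t\mapsto -t+a$ directly. The paper's claim that such a reflection ``would have a fixed point'' actually needs the same fixed-point property of $\phi_\gamma$.
\item For $M=X\times\bS^1$, going through $\pi_1(X)=1$ and the $\bR$-version of Lemma~\ref{lemma:simplification} avoids assuming $\textup{Aut}(Y)$ is connected. The paper's proof of that clause leans on that assumption. Justify $\pi_1(X)=1$ correctly, though: $\pi_1(X)$ is a subgroup of $\pi_1(M)\cong\bZ$ with quotient $\bZ$, hence trivial. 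It maps trivially, not injectively, to the $\bS^1$ factor.
\end{itemize}

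The index formula is only sketched and contains a slip. With $[\omega_Y]=\pi H$ one has $R_g=4n\,i_Y/\lambda$, not $4\pi n\,i_Y/\lambda$; your version would yield $\pi\, i_Y$ at the end. You also skip the one non-formal point. The generator $x$ of $H^2(X;\bZ)$ must correspond, under $\tilde M\cong X\times\bR\cong Y\times\bR$, to a primitive class $\pm H$ on $Y$. In addition, $b_2(M)=1$ by K\"unneth, so $\textup{stsys}_2(M,g)=\|x_\bR\|^{-1}_{\textup{cm}}$. Only with both facts does the coefficient of $c_1(Y)$ equal the Fano index.
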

\begin{proof}
    Since $Y$ is K\"ahler-Einstein, we have $\rho_Y = c \lambda \omega_Y$ for some constant $c \in \bR$, so $\tilde{g} = \lambda g_Y + dt^2$ has scalar curvature $R_{\tilde{g}} = 2nc$ and $c>0$ due to $R_g > 0$.
    Therefore, $c_1(Y) = \frac{1}{2 \pi} [ \rho_Y ] = \frac{c}{2 \pi} [ \omega_Y]$ is a Fano manifold.
    Let $\Gamma = \pi_1(M)$ be the group of deck transformations of the universal cover $Y \times \bR \to M$, so every $\gamma \in \Gamma$ is an isometry of $(Y \times \bR, \lambda g_Y + dt^2)$ and preserves the lifted parallel form $\tilde{\omega} = \lambda \, \textup{pr}_Y^* \omega_Y$.
    Since $\ker \tilde{\omega} = \bR \partial_t$, we have $\gamma_* ( \bR \partial_t ) = \bR \partial_t$, and since $\gamma$ is an isometry, we have $\gamma_* (\partial_t) = \epsilon_{\gamma} \partial_t$ for some $\epsilon_{\gamma} \in \{ \pm 1 \}$.
    Consequently, writing $\gamma(y,t) = (F_{\gamma}(y,t) , s_{\gamma}(y,t))$ gives $\partial_t F_{\gamma} = 0$ and $\partial_t s_{\gamma}$, whereby $F_{\gamma}(y,t) = \phi_{\gamma}(y)$ and $s_{\gamma}(y,t) = \epsilon_{\gamma} t + a_{\gamma}(y)$.
    Moreover, any $v \in T_y Y$ has $v \perp \partial_t$, so the isometry implies
    \[
    0 = \tilde{g} ( \gamma_* v, \gamma_* \partial_t) = \tilde{g} ( \gamma_* v , \epsilon_{\gamma} \partial_t), \qquad \implies \qquad \la \gamma_* v, \partial_t \rg = 0, \quad \implies \quad d a_{\gamma}(v) = 0.
    \]
    Since $Y$ is connected, $a_{\gamma}$ is constant, so deck transformations have the form $\gamma(y,t) = ( \phi_{\gamma}(y), \epsilon_{\gamma} t + a_{\gamma})$ for $\epsilon_{\gamma} = \{ \pm 1 \}$.
    Since $\gamma$ preserves the product metric and $\textup{pr}^*_Y \omega_Y$, we find $\phi^*_{\gamma} g_Y = g_Y$ and $\phi^*_{\gamma} \omega_Y = \omega_Y$, hence also $\phi^*_{\gamma} J_Y = J_Y$ because $g_Y, \omega_Y$ determine $J_Y$ by $\omega_Y(\cdot , \cdot ) = g_Y(J_Y \cdot , \cdot)$, hence $\phi_{\gamma} \in \textup{Isom}^{\textup{hol}}(Y, \omega_Y, g_Y)$.

    Consider the homomorphism $\Pi: \Gamma \to \textup{Isom}(\bR)$ given by $\Pi(\gamma)(t) = \epsilon_{\gamma} t + a_{\gamma}$, whose kernel consists of deck transformations having the form $\gamma(y,t) = ( \phi_{\gamma}(y),t)$, acts on each compact slice $Y \times \{ t \}$, and is finite because $\Gamma$ acts properly discontinuously on $Y \times \bR$.
    Any $e \neq \gamma \in \ker \Pi$ has finite order, so $\phi_{\gamma}$ is a finite-order holomorphic automorphism of $Y$ that has a fixed point $y_0$, by the holomorphic Lefschetz fixed-point formula~\cite{atiyah-bott}, so $\gamma(y_0,t) = ( \phi_{\gamma}(y_0),t) = (y_0,t)$ for every $t$ would contradict the freeness of the deck action.
    Therefore, $\ker \Pi = \{ e\}$ is injective and $\Pi(\Gamma) \subset \textup{Isom}(\bR)$ is discrete, since the deck action is properly discontinuous and $Y$ is compact, and cocompact, because $M$ is compact.
    Thus, $\Pi(\Gamma)$ is either an infinite cyclic group of translations or an infinite dihedral group; the latter situation is impossible because $\Pi(\Gamma)$ contains no reflections, else it would have a fixed point.
    Consequently, $\Pi(\Gamma) = L \bZ$ for some $L>0$, and $\Gamma \cong \bZ$ with generator $\gamma_0: (t \mapsto t+L)$, hence $(M,g,\omega) \cong (Y \times \bR) / \la (y,t) \mapsto ( \phi(y), t+L) \rg$ with the quotient metric and induced parallel form.

    If $\textup{Aut}(Y)$ is connected, the Bando-Mabuchi uniqueness theorem~\cite{uniqueness-einstein} gives $\textup{Isom}^{\textup{hol}}(Y, \omega_Y, g_Y) \subset \textup{Aut}(Y)$, so $\phi$ is smoothly isotopic to the identity and its mapping torus is diffeomorphic to the product $Y \times \bS^1$.
    Now, $Y$ is a Fano manifold, so it is simply connected by Lemma~\ref{lemma:volume-of-fano}; thus, if $M= X \times \bS^1$ is diffeomorphic to $Y \times \bS^1$, then Lemma~\ref{lemma:simplification} produces a homeomorphism $X \approx Y$, and a diffeomorphism for $n \geq 3$.
    Under a diffeomorphism $\tilde{F} : X \to Y$, pulling back the complex structure to $X$ by $J_X := \tilde{F}^* J_Y$ produces an integrable complex structure on $X$ and a biholomorphism $(X,J_X) \simeq (Y,J_Y)$.

    Finally, since $X$ is diffeomorphic to a Fano manifold, we have $\pi_1(X) = 0$ so the universal coefficient theorem as in~\eqref{eqn:universal-coefficient} shows that $H^2(X;\bZ)$ is torsion-free of rank $b_2(X)$.
    Thus, $X \times \bR = \widetilde{X \times \bS^1} = \tilde{M}$ is the universal cover, so there exists a diffeomorphism $F: Y \times \bR \to X \times \bR$ of universal covers factoring over the standard cover $\mu: X \times \bR \to X \times \bS^1$, be the standard cover with $q = \mu \circ F$.
    For $b_2(X) = 1$, K\"unneth gives $H^2( X \times \bS^1; \bZ) \cong H^2(X; \bZ) \cong \bZ$, so $H^2(X;\bZ)$ is torsion-free and generated modulo sign by a class $x$. 
    In particular, $[\omega] = a \, \textup{pr}^*_X x_{\bR}$ for some $a>0$.
    We claim that the class $h_Y \in H^2(Y;\bZ)$ defined by $q^* \textup{pr}^*_X x = \textup{pr}^*_Y h_Y$ is primitive; indeed, $q^* = F^* \mu^*$ and $\mu^* \textup{pr}^*_X x$ corresponds to $x$ under the deformation-retraction $X \times \bR \simeq X$, so $\mu^* \textup{pr}^*_X x$ is also primitive.
Since the isomorphism $F^* : H^{\bullet}(X \times \bR ; \bZ) \to H^{\bullet}( Y \times \bR ; \bZ)$ preserves primitive elements, we deduce that $\textup{pr}^*_Y h_Y = q^* \textup{pr}^*_X x$ is primitive.
Identifying $H^{\bullet}(Y \times \bR ; \bZ)$ and $H^{\bullet}(Y;\bZ)$ via the deformation-retraction $Y \times \bR \simeq Y$, this implies that $h_Y \in H^2(Y;\bZ)$ is primitive.
Pulling back $[\omega] = a \, \textup{pr}^*_X x_{\bR}$ to $\tilde{M} \cong Y^{2n} \times \bR$ gives
\[
[\tilde{\omega}] = q^* [\omega] = a \, q^* \textup{pr}^*_X x_{\bR} = a \, \textup{pr}^*_Y (h_Y)_{\bR} 
\]
while $\tilde{\omega} = \lambda \, \textup{pr}^*_Y \omega_Y$ due to the splitting of the universal cover $\tilde{M} \cong Y^{2n} \times \bR$.
Under the isomorphism $\textup{pr}^*_Y : H^2(Y;\bR) \to H^2(Y \times \bR ; \bR)$ induced by the deformation-retraction $Y \times \bR \simeq Y$, we deduce that the K\"ahler form of the fiber metric $\lambda g_Y$ has $\lambda [ \omega_Y] = a(h_Y)_{\bR}$.
Since $Y \times \bR$ has the same scalar curvature as $\tilde{M}$, hence as $M$, the Chern-Weil identity~\eqref{eqn:chern-weil} computes the K\"ahler-Einstein constant of $(Y, \lambda_Y)$ as
\[
\rho_Y = \tfrac{R_g}{2n} \lambda \omega_Y, \qquad R_g = 2nc, \qquad c_1(Y) = \tfrac{1}{2 \pi} [\rho_Y] = \tfrac{1}{2 \pi} \cdot \tfrac{R_g}{2n} \lambda [\omega_Y] = \tfrac{R_g}{4 \pi n} a h_Y.
\]
Using the stable norm-comass duality relation~\eqref{eqn:stable-chain-norm} and the facts that $b_2(M) = 1$ and the free part of $H^2(M;\bZ)$ is generated by the class $\textup{pr}^*_X x$, we find 
\[
\| \textup{pr}^*_X x_{\bR} \|_{\textup{cm}} = \textup{stsys}_2(M,g)^{-1}, \qquad [\omega] = a \, \textup{pr}^*_X x_{\bR} \implies \| [\omega] \|_{\textup{cm}} = a \cdot \textup{stsys}_2(M,g)^{-1}.
\]
Finally, since $h_Y$ is primitive, the property $c_1(Y) = \frac{a R_g}{4 \pi n} h_Y$ implies that $Y$ has Fano index $i_Y = \frac{a R_g}{4 \pi n}$, and substituting the above relation for $a$ completes the proof.
\end{proof}

Combining these ingredients, we now prove Theorem~\ref{thm:optimal-sharp-bound}.

\begin{proof}[Proof of Theorem~\ref{thm:optimal-sharp-bound}]
Using Lemma~\ref{lemma:length-of-product}, we have that $M = X \times N \in \cF_n$ satisfies~\eqref{eqn:Fn-definition} with $\ell(M) \leq \ell(X)$, meaning that there exists a $\textup{spin}^c$ class $c_M$ with $\| (c_M)_{\bR}\|_{\textup{up}} \leq \ell(X)$ in terms of which $M \in \cF_n$.
Moreover, since $\dim X$ and $\dim N$ cannot simultaneously by odd, by the observation of Lemma~\ref{lemma:length-of-product}, we have $\lfloor \frac{\dim(X \times N)}{2} \rfloor = n + \lfloor \frac{\dim N}{2} \rfloor$, where $n = \lfloor \frac{\dim X}{2} \rfloor$.
Recalling that $\ell(X) \leq \lfloor \frac{\dim X}{2} \rfloor +1$, we can apply Proposition~\ref{prop:spin-c-polynomial-preparation} to obtain
\[
\textup{stsys}_2(X \times N,g) \cdot \inf_{X \times N} R_g \leq 4 \pi \bigl( n + \lfloor \tfrac{\dim N}{2} \rfloor \bigr) (n+1)
\]
as claimed.
To analyze the case of equality, let $r := \lfloor \frac{\dim N}{2} \rfloor$, so $\lfloor \frac{\dim M}{2} \rfloor = n +r$.
By Proposition~\ref{prop:spin-c-polynomial-preparation}, the equality produces a parallel $2$-form on $M$ with $[\omega] = \lambda \cdot (c_M)_{\bR}= \tilde{\lambda} \, \textup{pr}^*_X (x)$ due to $(c_M)_{\bR} = \ell_0 \textup{pr}_X^* (x_{\bR})$ in Lemma~\ref{lemma:length-of-product}.
Moreover, either $\omega$ is K\"ahler, or the universal cover of $(M,g)$ is isometric to $Y^{2(n+r)} \times \bR$, and the lift $\tilde{\omega}$ of $\omega$ to $Y^{2(n+r)} \times \bR$ is K\"ahler.
In either case, this implies that $\omega$ is a non-degenerate form in degree $(n+r)$, hence $0 \neq [\omega]^{n+r} = \tilde{\lambda}^{n+r} \, \textup{pr}^*_X ( x^{n+r})$.
Since $x$ has degree $2$ and $\dim_{\bR} X \in \{ 2n, 2n+1\}$, we have $x^{n+1} = 0$ on $X$.
Thus, $r=0$, $\dim N \in \{0 , 1\}$, and $N \in \{ *, \bS^1 \}$.

If $\dim M$ is even, then $\dim X$ and $\dim N$ are both even, so $N = \{ * \}$ and $(X,J,g)$ is a K\"ahler-Einstein manifold with positive scalar curvature and K\"ahler form $\omega$.
As in Lemma~\ref{lemma:systoles-of-fano}, this makes $X$ Fano with $\ell(X) = n+1$.
Moreover, $b_2(X) = 1$, so Lemma~\ref{lemma:fano-manifold-bound}, we have $\ell(X) \leq i_X$ for $i_X \leq n+1$ the Fano index, therefore $i_X = n+1$ and $X \simeq \bC \bP^n$ by the Kobayashi-Ochiai theorem~\cite{kobayashi-ochiai}.
Also, $\bC \bP^n$ is unique up to homeomorphism by the theorem of Hirzebruch-Kodaira-Yau~\cite{tosatti-CPn}.

If $\dim M$ is odd and $N = \bS^1$, the universal cover is $\tilde{M} \cong Y^{2n} \times \bR$, for $(Y, \omega_Y)$ a K\"ahler-Eistein manifold, and the lifted $2$-form is the K\"ahler form on $Y$.
We now apply Proposition~\ref{prop:deduce-the-mapping-torus}, with $\omega = \frac{c_{\bR}}{\| c_{\bR} \|_{\textup{cm}}}$ of unit comass, to see that $(Y,\omega_Y)$ is a Fano K\"ahler-Einstein manifold with $b_2(Y) = 1$ and index $i_Y = n+1$, so $Y \simeq \bC \bP^n$ by Kobayashi-Ochiai as in Theorem~\ref{thm:fano-index-classification}.
Since $\textup{Aut}(Y) = \textup{PU}(n+1)$ is connected, Proposition~\ref{prop:deduce-the-mapping-torus} produces a biholomorphism $(X,J) \simeq \bC \bP^n$.
If instead $N = \{ * \}$, our argument applies with $\tilde{X}$ in place of $\tilde{M}$, so again $Y \simeq \bC \bP^n$ and $X$ has the claimed description.
\end{proof}
Next, we use the ingredients of the above argument to refine the systolic bound of Theorem~\ref{thm:fano-refinement} for arbitrary Riemannian metrics on Fano manifolds, classified in terms of their index in Theorem~\ref{thm:fano-index-classification}.
\begin{theorem}\label{thm:fano-refinement}
    Let $X$ be compact smooth $2n$-manifold diffeomorphic to a Fano $n$-fold with $b_2(X) = 1$ and consider $N \in \cF_n$ in~\eqref{eqn:Fn-definition} with $b_2(N) = 0$.
    Any Riemannian metric $g$ on $X \times N$ satisfies
    \[
        \textup{stsys}_2(X \times N, g) \cdot \inf_{X \times N} R_g \leq 4 \pi \bigl( n + \lfloor \tfrac{\dim N}{2} \rfloor \bigr) \cdot i_X, \qquad \textup{where:}
    \]
    \begin{enumerate}[(i)]
        \item If there is no biholomorphism $(X,J) \simeq \bC \bP^n$, then $i_X \leq n$.
        \item If there is no biholomorphism of $(X,J)$ to $\bC \bP^n$ or the complex quadric $Q^n$, then $i_X \leq n-1$.
        \item If there is no biholomorphism of $(X,J)$ to $\bC \bP^n, Q^n$, or a del Pezzo $n$-fold, then $i_X \leq n-2$.
        \item If there is no biholomorphism of $(X,J)$ to $\bC \bP^n, Q^n$, or a del Pezzo or Mukai $n$-fold, $i_X \leq n-3$.
    \end{enumerate}
    Each inequality is sharp.
    If the respective equality holds, then there is a biholomorphism $(X,J) \simeq X_0$, for some complex structure $J$, to a K\"ahler-Einstein Fano $n$-fold $(X_0, \omega_0, g_0)$ with 
    \begin{align*}
        (i) &: \; (X_0,\omega_0) = (Q^n, \omega_0), &\qquad (ii) &: \;(X_0, \omega_0) \; \textup{KE del Pezzo } n\textup{-fold}, \\
        (iii) &: \; (X_0, \omega_0) \; \textup{KE Mukai } n\textup{-fold}, & \qquad (iv)&: \; (X_0, \omega_0) \; \textup{KE } n\textup{-fold with index } i_X=n-3. 
    \end{align*}
    In addition, either $N = \{ * \}$ and $(X,g) \cong (X_0, g_0)$ are isometric, or $N = \bS^1$ and $(X \times \bS^1, g)$ is the mapping torus $(X_0 \times \bR , \lambda g_0 + dt^2) / \la (z,t) \mapsto ( \phi(z) , t+L) \rg$ for some $\phi \in \textup{Isom}^{\textup{hol}}(X_0, \omega_0, g_0)$.
\end{theorem}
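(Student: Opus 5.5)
The plan is to follow the proof of Theorem~\ref{thm:optimal-sharp-bound} almost verbatim, but with the sharper length estimate coming from the Fano index instead of $\ell(X) \leq n+1$. First, fix a diffeomorphism $X \approx \tilde{X}$ to a Fano $n$-fold with $b_2 = 1$. Lemma~\ref{lemma:fano-manifold-bound} then gives $b_1(X) = 0$ and $\ell(X) \leq i_{\tilde{X}}$. Since $b_2(N) = 0$ and $b_1(X) = 0$, the Künneth constraints \eqref{eqn:kunneth-bi} hold, so $b_2(X\times N) = 1$. Lemma~\ref{lemma:length-of-product} then yields a $\textup{spin}^c$ class $c_M$ on $M = X\times N$ with $M \in \cF_n$ and $\ell(M) \leq \ell(X) \leq i_X$. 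Because $\dim X$ is even, $\lfloor \dim M/2\rfloor = n + \lfloor \dim N/2\rfloor$, and Proposition~\ref{prop:spin-c-polynomial-preparation} gives
\[
\textup{stsys}_2(M,g)\cdot\inf_M R_g \leq 4\pi\bigl(n+\lfloor \tfrac{\dim N}{2}\rfloor\bigr)\, i_X .
\]
The case distinctions (i)--(iv) are then read off from Theorem~\ref{thm:fano-index-classification}. If $\tilde X$ is not $\bC\bP^n$, then $i_X \leq n$ by Kobayashi--Ochiai. Excluding $Q^n$ gives $i_X \leq n-1$, and so on down the del Pezzo and Mukai lists. Here the hypothesis ``no biholomorphism of $(X,J)$'' is understood for the complex structure $J$ pulled back from $\tilde X$.

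For sharpness, I will use the K\"ahler--Einstein models of Lemma~\ref{lemma:systoles-of-fano}. With $[\omega_0] = \pi H$ these satisfy $R_{\omega_0} = 4ni_X$ and $\textup{stsys}_2 = \pi$, so the product is $4\pi n i_X$ when $N = \{*\}$. The existence of KE members in each index class is supplied by the last paragraph of Theorem~\ref{thm:fano-index-classification}: $Q^n$, $X_{2,2}$ or a general cubic, the relevant Mukai examples, and, for (iv), any KE Fano of index $n-3$. Taking the product with a flat $\bS^1$ of any length keeps the product unchanged, which handles $N = \bS^1$.

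For rigidity I will copy the equality analysis in the proof of Theorem~\ref{thm:optimal-sharp-bound}. Equality produces a parallel $2$-form $\omega$ with $[\omega]$ proportional to $\textup{pr}_X^* x$, and $\omega$ is nondegenerate in degree $n+r$, where $r = \lfloor \dim N/2\rfloor$. Since $x^{n+1} = 0$ on $X$, this forces $r = 0$, so $N \in \{*,\bS^1\}$; here $N = \bS^1$ uses $b_2(N) = 0$ and $N\in\cF_n$ with $\dim N = 1$.

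If $N = \{*\}$, then $(X,g)$ is KE with K\"ahler form $\omega$, hence Fano. Equality also forces $\ell(X) = i_X$, since $\ell(M) \leq \ell(X) \leq i_{\tilde X}$ is saturated. The index computation of Lemma~\ref{lemma:systoles-of-fano} (or the last clause of Proposition~\ref{prop:deduce-the-mapping-torus}, adapted to the even case) shows that the Fano index of $(X,\omega)$ equals $i_X$. The classification then identifies $(X,J_\omega)$ with $X_0$ in the appropriate class.

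If $N = \bS^1$, Proposition~\ref{prop:deduce-the-mapping-torus} gives the mapping-torus structure over a KE Fano $Y$ with $i_Y = i_X$. When $n \geq 3$ it also gives a diffeomorphism, hence a biholomorphism, $(X,J) \simeq Y$.

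The main obstacle I expect is the index bookkeeping in the equality case. One must check that the Fano index of the KE limit $Y$ (or of $(X,\omega)$) equals the bound $i_X$, and not merely $\ell(X)$. One must also check that this forces $Y$ into the stated family, and not into some other Fano manifold of the same index diffeomorphic to $X$. In addition, for $N = \bS^1$ with disconnected $\textup{Aut}(Y)$, the diffeomorphism $M \approx Y\times\bS^1$ is not automatic. I would first try to sidestep that point by noting that $M = X\times \bS^1$ already, so the mapping torus with $\pi_1 = \bZ$ is compared with $X\times\bR$ directly via Lemma~\ref{lemma:simplification} on universal covers.
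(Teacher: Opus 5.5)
Your proposal is correct and follows the paper's proof essentially step for step. Lemma~\ref{lemma:length-of-product} and Lemma~\ref{lemma:fano-manifold-bound} give $\ell(X\times N)\le \ell(X)\le i_X$, Proposition~\ref{prop:spin-c-polynomial-preparation} then gives the bound, and the index classification gives (i)--(iv); rigidity comes from the equality analysis of Theorem~\ref{thm:optimal-sharp-bound} together with Proposition~\ref{prop:deduce-the-mapping-torus}, using exactly the paper's sidestep for disconnected $\textup{Aut}(Y)$, namely applying Lemma~\ref{lemma:simplification} to the universal cover $X\times\bR$ of $X\times\bS^1$. The one case you leave open, which the paper addresses explicitly, is $n=2$, where Lemma~\ref{lemma:simplification} only yields a homeomorphism; there the hypotheses of (i)--(iv) are vacuous because the only Fano surface with $b_2=1$ is $\bC\bP^2$.
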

\begin{proof}
Arguing as in the proof of Theorem~\ref{thm:2-systole-bound}, the product manifold $M = X \times N$ is in the class $\cF_n$ of~\eqref{eqn:Fn-definition} and has length $\ell(M) = \ell(X \times N) \leq \ell(X)$.
Moreover, Lemma~\ref{lemma:fano-manifold-bound} shows that
\[
\ell(X) \leq \inf \{ i_{(X,J)} : (X,J) \; \text{is a Fano structure on the manifold } X \}.  
\]
Combining the bound~\eqref{eqn:improved-inequality} from Proposition~\ref{prop:spin-c-polynomial-preparation} with this property, Theorem~\ref{thm:fano-index-classification} gives:
    \begin{enumerate}[(i)]
        \item If $X$ is not biholomorphic to $\bC \bP^n$, then $\ell(X) \leq n$ with equality if and only if $X \simeq Q^n$.
        \item If $X$ is not biholomorphic to $\bC \bP^n$ or $Q^n$, then $\ell(X) \leq n-1$ with equality if and only if $X$ is biholomorphic to a del Pezzo $n$-fold.
        \item If $X$ is not biholomorphic to $\bC \bP^n, Q^n$, or a del Pezzo $n$-fold, then $\ell(X) \leq n-2$ with equality if and only if $X$ is biholomorphic to a Mukai $n$-fold.
        In all other cases, $\ell(X) \leq n-3$.
    \end{enumerate}
    The refined bound for the $2$-systole follows from these considerations.
    It is clear that this argument is not specific to the Fano manifolds in the classification, and produces a general stratification of the moduli space of Fano $n$-folds into strata $\{ \cF_a \}_{1 \leq a \leq n+1}$ according to the value $i_X = a$.
    The cases $(i)$ - $(iii)$ presented above correspond to the strata $n-2 \leq a \leq n+1$.

    The equality case also follows from the same analysis as in the proof of Theorem~\ref{thm:optimal-sharp-bound}, which forces $\dim N \in \{ 0, 1 \}$ and $N = \{ * \}$ or $N = \bS^1$.
    For $N = \{ * \}$, we directly find that $(X,J)$ is a Fano manifold with index $i_X$ determined by the respective case $(i)$-$(iv)$, so $(X,J) \simeq X_0$ for $(X_0, \omega_0)$ a K\"ahler-Einstein manifold in the respective index class; by Theorem~\ref{thm:fano-index-classification}, all such classes contain K\"ahler-Einstein manifolds with $b_2(X) = 1$.
    Concretely, a general smooth degree-$d$ hypersurface $X_d \subset \bC \bP^{n+1}$ has index $i_X = n+2-d$, so the quartic, cubic, and quadric realize $i_X \in \{ n-2,n-1,n\}$, respectively, and admit K\"ahler-Einstein metrics $\omega_d$.
    Thus, Lemma~\ref{lemma:systoles-of-fano} shows all such $n$-folds $(X_d, \omega_d)$ have $2$-systoles equal to $\pi$ for $i_X \geq n-2$, and for $i_X \geq n-3$ if $n \geq 6$.
    Finally, a smooth quintic hypersurface $X_5 \subset \bC \bP^{n+1}$ contains a line by Schubert calculus as in Eisenbud-Harris~\cite{eisenbud-harris}: for $\Lambda \simeq \bC \bP^4 \subset \bC \bP^{n+1}$ a general line subspace, $\hat{X} := X_5 \cap \Lambda$ is a smooth quintic threefold in $\bC \bP^4$ by Bertini's theorem, which contains $2875$ lines by~\cite{eisenbud-harris}*{Corollary 6.35}.
    Hence, Lemma~\ref{lemma:systoles-of-fano} shows that all $2$-systoles are equal to $\pi$ as claimed.

    Finally, for $N = \bS^1$ all the conditions of Proposition~\ref{prop:deduce-the-mapping-torus} are satisfied.
    If $n=2$, $X \approx \bC \bP^2$ is the unique compact Fano (del Pezzo) surface with $b_2(X) = 1$, by the classification of complex surfaces, and is ruled out since $(X,J) \not\simeq \bC \bP^2$.
    For $n = \dim_{\bC} X \geq 3$, and a general Fano manifold $Y$, where $\textup{Aut}(Y)$ may not be connected, we deduce the diffeomorphism $X \cong Y$ as follows: since $X$ is diffeomorphic to a Fano manifold, $\pi_1(X) =0$ by Lemma~\ref{lemma:volume-of-fano}, so $X \times \bR = \widetilde{X \times \bS^1}$ is the universal cover of $M$.
    For $\dim_{\bC} X \ge 3$, the property $\tilde{M} \cong Y^{2n} \times \bR$ together with the diffeomorphism $X \times \bR \approx Y \times \bR$ implies that $X$ is diffeomorphic to $Y$ by Lemma~\ref{lemma:simplification}, so the rest of the argument proceeds unchanged.
    In particular, $\omega = \frac{c_{\bR}}{\| c_{\bR} \|_{\textup{cm}}}$ has unit comass, so $i_Y = i_X \in \{n-2, n-1, n\}$ in the respective equality case.
    Therefore, $Y$ is again a K\"ahler-Einstein Fano manifold among the list of Theorem~\ref{thm:fano-index-classification}, and $M = X \times \bS^1$ has the desired mapping torus identification due to Proposition~\ref{prop:deduce-the-mapping-torus}.
    This completes the proof.
\end{proof}
Finally, we prove Theorem~\ref{thm:uniform-non-sharp} and its consequence for Fano manifolds, Corollary~\ref{cor:fano-admissible}.

\begin{proof}[Proof of Theorem~\ref{thm:uniform-non-sharp}]
This result will follow from combining the $\hat{A}_c$-cowaist inequality of Proposition~\ref{prop:K-cowaist-bound} with the fact that $(2,c)$-essential manifolds have stable $2$-systole bounded by the $\hat{A}_c$-cowaist.

Let $n := \dim M$; the result is vacuous for $r:= b_2(M) = 0$.
For $r \geq 1$, we use Lemma~\ref{lemma:uniform-bound-lattice} to produce a $\bZ$-basis $u_1, \dots, u_r$ of $H^2(M;\bZ) / \textup{tors}$ with $\| u_i \|_{\textup{cm}} \leq r^2 \cdot \textup{stsys}_2(M,g)^{-1}$ as in~\eqref{eqn:u-i-comass}.
Now, $M\in \cF$ satisfies the property~\eqref{eqn:alpha-q-A-condition-index} for some $q$.
If $q\geq1$, Proposition~\ref{prop:bound-stsys-from-cowaist} followed by the bound~\eqref{eqn:spinc-systole-bound} implies
\begin{equation}\label{eqn:infM-Rg}
\inf_M R_g \leq 2 n \pi \| c_{\bR} \|_{\textup{cm}} + C_n \cdot \textup{stsys}_2(M,g)^{-1}.
\end{equation}
If $q=0$, the bound~\eqref{eqn:spinc-systole-bound-improved} shows that the same inequality remains valid.
Fix a $\textup{spin}^c$ structure $c_0 \in H^2(M;\bZ)$ with image $\bar{c}_0$ in the free quotient, so $\bar{c}_0 = \sum_{i=1}^r a_i u_i$ in terms of the $u_i$-basis, for some $a_i \in \bZ$.
Then, for each $i$ we can find $\epsilon_i \in \{ 0,1\}$ such that $\epsilon_i \equiv a_i \; ( \on{mod} \; 2)$, so the class  $\sum_{i=1}^r ( \epsilon_i - a_i) u_i$ is divisible by $2$ in $H^2(M;\bZ) / \textup{tors}$.
Let us choose a class $b \in H^2(M;\bZ)$ with image $\bar{b} = \frac{1}{2} \sum_{i=1}^r ( \epsilon_i - a_i) u_i \in H^2(M;\bZ)/\textup{tors}$, so $c = c_0 + 2 \bar{b}$ is an admissible $\textup{spin}^c$ structure on $M$.
Since the torsion component is irrelevant after passing to real cohomology, we obtain
\[
c = c_0 + 2 \bar{b} = \sum_{i=1}^r \epsilon_i u_i, \qquad
 c_{\bR} = \sum_{i=1}^r \epsilon_i (u_i)_{\bR}, \qquad \| c_{\bR} \|_{\textup{cm}} \leq \sum_{i=1}^r \| u_i \|_{\textup{cm}} \leq r^3 \cdot \textup{stsys}_2(M,g)^{-1}.
\]
The uniform bound of Theorem~\ref{thm:uniform-non-sharp} now follows by combining this property with~\eqref{eqn:infM-Rg}.
\end{proof}

\begin{proof}[Proof of Corollary~\ref{cor:fano-admissible}]
Proposition~\ref{prop:(2,c)-essential-preserved} shows that each of the manifolds $M$ considered in the statement are in $\cF$, and Lemma~\ref{lemma:volume-of-fano} shows that the Betti numbers $b_i(X) \leq C_n$ are uniformly bounded for Fano $n$-folds.
By our assumption, the factors and summands $F$ and the fiber $Q$ have uniformly bounded dimension and topology, so $\dim F, b_1(F) , b_2(F) \leq r$ and $\dim Q, b_1(Q) , b_2(Q) \leq r$, while the divisor $Z \subset \textup{Bl}_Z X$ has uniformly bounded number of components $b_0(Z) \leq r$.
We conclude that all such manifolds $M$ satisfy a uniform bound $\dim M, b_i(M) \leq \tilde{C}_{n,r}$, and Theorem~\ref{thm:uniform-non-sharp} implies that
\[
\textup{stsys}_2(M,g) \cdot \inf_M R_g \leq C_{\dim M, b_2(M)} \leq C_{n,r}
\]
as desired.
This completes the proof.
\end{proof}

\bibliography{ref}

\end{document}